\documentclass[11pt]{amsart}


\usepackage{amsmath, amsthm, amssymb}
\usepackage[pdftex]{hyperref}
\usepackage{upgreek}
\usepackage{graphicx}
\usepackage{enumerate}
\usepackage{hyperref}
\usepackage{comment}
\pdfoutput=1

\usepackage{fullpage}
\usepackage{amsfonts}
\usepackage{MnSymbol}

\usepackage[usenames,dvipsnames]{xcolor}

        \definecolor{darkspringgreen}{rgb}{0.09, 0.45, 0.27}
        \definecolor{lava}{rgb}{0.81, 0.06, 0.13}

\usepackage{todonotes}


\newtheorem{thm}{Theorem}[section]
\newtheorem{lem}[thm]{Lemma}
\newtheorem{cor}[thm]{Corollary}
\newtheorem{prop}[thm]{Proposition}

\newtheorem{defn}[thm]{Definition}
\newtheorem{rmk}[thm]{Remark}

\numberwithin{equation}{section}


\newcommand{\RR}{\mathbb{R}}
\newcommand{\sfd}{{\sf d}}

\newcommand{\RCD}{\mathsf{RCD}}
\newcommand{\RCDst}{\mathsf{RCD}^*}
\newcommand{\supp}{\mathsf{supp}}
\newcommand{\Lip}{{\rm Lip}}
\def\square{\hfill${\vcenter{\vbox{\hrule height.4pt \hbox{\vrule width.4pt
height7pt \kern7pt \vrule width.4pt} \hrule height.4pt}}}$}

\newcommand{\op}{\operatorname}

\newcommand{\Vol}{\mbox{Vol}}

\newcommand{\Ric}{\mbox{Ric}}

\newcommand{\ra}{\rightarrow}



\begin{document}

\title{Maximal Volume Entropy Rigidity \\ for $\RCDst(-(N-1),N)$ Spaces}


\author[Connell]{Chris Connell}
\address[Chris Connell]{115 Rawles Hall, Indiana University, Bloomington, IN 47405 }
\email{connell@indiana.edu}

\author[Dai]{Xianzhe Dai}
\address[Xianzhe Dai]{Department of Mathematics, UCSB, Santa Barbara CA 93106}
\email{dai@math.ucsb.edu}

\author[N\'u\~nez-Zimbr\'on]{Jes\'us N\'u\~nez-Zimbr\'on}
\address[Jes\'us N\'u\~nez-Zimbr\'on]{Centro de Investigaci\'on en Matem\'aticas CIMAT, Guanajuato, Mexico}
\email{jesus.nunez@cimat.mx}

\author[Perales]{Raquel Perales}
\address[Raquel Perales]{Instituto de Matem\'aticas, Universidad Nacional Aut\'onoma de M\'exico, Oaxaca, Mexico}
\email{raquel.perales@matem.unam.mx}

\author[Su\'arez-Serrato]{Pablo Su\'arez-Serrato}
\address[Pablo Su\'arez-Serrato]{Instituto de Matem\'aticas, Universidad Nacional Aut\'onoma de M\'exico, Mexico City and UCSB, Santa Barbara CA 93106}
\email{pablo@im.unam.mx}

\author[Wei]{Guofang Wei}
\address[Guofang Wei]{Mathematics Department, University of California, Santa Barbara, CA 93106}
\email{wei@math.ucsb.edu}


\thanks{The authors thank The University of California Institute for Mexico and the United States (UC MEXUS) for support for this project through the grant CN-16-43. CC was partially supported by a grant from the Simons Foundation \#210442. XD was partially supported by NSF DMS 1611915 and the Simons Foundation. JNZ was partially supported by a UCMEXUS postdoctoral grant under the project 'Alexandrov Geometry' and a MIUR SIR-grant 'Nonsmooth Differential Geometry' (RBSI147UG4). PSS was partially supported by DGAPA-UNAM PAPIIT  IN102716 and DGAPA UNAM PASPA program. GW was partially supported by NSF DMS grant 1506393, 1811558.}

\maketitle
\date{\today}


\begin{abstract}
For $n$-dimensional Riemannian manifolds  $M$ with Ricci curvature bounded below by $-(n-1)$, the volume entropy is bounded above by $n-1$. If $M$ is compact, it is known that the equality holds if and only if $M$ is hyperbolic. We extend this result to $\RCDst(-(N-1),N)$ spaces.  While the upper bound is straight forward, the rigidity case is quite involved due to the lack of a smooth structure in $\RCDst$ spaces. As an application, we obtain an almost rigidity result which partially recovers a result by Chen-Rong-Xu  for Riemannian manifolds.   
\end{abstract}


\section{Introduction}\label{intro}
Volume entropy is  a fundamental geometric invariant, related to the topological entropy of geodesic flows, minimal volume, simplicial volume, bottom spectrum of the Laplacian of the universal cover, among others.   For a compact Riemannian manifold $(M^n, g)$, the volume entropy is defined as,
\[
h(M,g) = \lim_{ R \ra \infty} \frac{\ln \Vol (B(x, R))}{R}.
\]
Here $B(x, R)$ is a ball in the universal cover $\tilde{M}$ of $M$. For $M$ compact, the limit exists and is independent of the base point $x \in \tilde{M}$ \cite{Mann}.  Thus, the volume entropy  measures the exponential growth rate of the volume of balls in the universal cover. It is non-zero if and only if the fundamental group $\pi_1(M)$ has exponential growth.  

When $\Ric_M \ge -(n-1)$, the Bishop-Gromov volume comparison gives the upper bound $h(M,g) \le  n-1$, which is the volume entropy of any hyperbolic $n$-manifold. Ledrappier-Wang \cite{Ledrappier-Wang2010} showed that if $h(M,g) =  n-1$, then $M$ is isometric to a hyperbolic manifold. This is called the maximal volume entropy rigidity. Liu found a simpler proof \cite{Liu2011}, and recently Chen-Rong-Xu gave a quantitative version of this rigidity result \cite{Chen-Rong-Xu}. 

In this paper we will show the same kind of maximal entropy rigidity holds for a class of metric measure spaces---known by now as \textit{$\RCDst(K,N)$ spaces}---that is of interest in both optimal transport and in the theory of limits of Riemannian manifolds with bounded Ricci curvature (known as {\em Ricci limit spaces}).

Alexandrov geometry can be seen as a synthetic approach to the spaces that occur as limits of smooth manifolds with sectional curvature bounded below. In this same spirit, $\RCDst(K,N)$ spaces can be thought of as the synthetic analog to Ricci curvature being bounded below by $K$, for dimension at most $N$. These spaces include Ricci limit spaces and Alexandrov spaces \cite{Pet}, and have been studied extensively, see Section 2 for details. 

The last-named author jointly with Mondino proved that the universal cover of an $\RCDst(K, N)$ space with $1<N<\infty$ exists and is also an $\RCDst(K, N)$ space \cite{MW}. This allows us to define the volume entropy similarly for compact $\RCDst(K, N)$ spaces.
 
That is, let $(X,d,m)$ be a compact $\RCDst(K, N)$ space, and  $(\tilde{X},\tilde{d}, \tilde{m})$ its universal cover.   We define the \textit{volume growth entropy}  of $(X,d,m)$ as
\[
h(X,d,m):= \limsup_{R \rightarrow \infty} \frac{1}{R} \ln \tilde{m} ( B_{\tilde{X}} (x,R)).
\]
The volume growth entropy is well defined, and it is independent of $x$ and the measure $m$, (see \cite{Rev, BCGS}). Observe that if $(M,g)$ is a Riemannian manifold then with the induced distance $d=d_g$ and  the volume measure $m=d{\rm vol}_g$, both definitions coincide. 

Our main results are: 
 
 \begin{thm}\label{thm-main1}
Let $1 < N < \infty$ and $(X, d, m)$ be a compact  $\RCDst(-(N-1), N)$ space. Then $h(X) \le N-1$. Furthermore,  the equality holds if and only if $N$ is an integer and the universal cover $(\tilde{X},\tilde{d}, \tilde{m})$ is isomorphic to the $N$-dimensional real hyperbolic space $(\mathbb H^{N}, d_{\mathbb H^N},  c_1{\mathcal H^{N}})$ for some $c_1 \in (0, \infty)$. Here ${\mathcal H^{N}}$ denotes the Hausdorff measure.
\end{thm}

As in the smooth case the compactness of $X$ is essential here.  For $N>1$ the well known smooth metric measure space $((0, \infty),\, |\, \cdot \, |, \,  \sinh^{N-1}(x) \, dx)$ is an $\mathsf{RCD}^*(-(N-1),N)$ space with volume entropy exactly $N-1$. This  example does not contradict our theorem as  it is not the universal cover of a compact $\mathsf{RCD}^*(-(N-1),N)$ space. 

The key step in proving the above theorem is the following result, which is of independent interest. In the statement we use the language of differential calculus developed by Gigli. We refer to Section \ref{Preliminaries} for definitions and more details.

\begin{thm}\label{thm-main2}
Let $1<N < \infty$ and $(X, d, m)$ be a complete $\RCDst(-(N-1), N)$ space. If there exists a function $u$ in $D_{loc}(\Delta)$ such that $|\nabla u| =1$ $m$-a.e. and   $\Delta u =N-1$, then $X$ is isomorphic to a warped product  space $\mathbb R \times_{e^t} X'$, where $X'$ is an $\RCDst (0, N)$ space. 
\end{thm}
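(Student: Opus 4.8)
The plan is to adapt Gigli's proof of the splitting theorem in the $\RCD$ setting, with the harmonic Busemann function replaced by $u$ and the splitting line replaced by a warped line. First I would verify that $u\in W^{2,2}_{loc}(X)$, so that $\Hess u$ is a well-defined element of the $L^2_{loc}$-module of symmetric $2$-tensors; this follows by localizing (via cut-off functions) Gigli's result that $f\in D(\Delta)$ with $\Delta f\in W^{1,2}$ implies $f\in W^{2,2}$, using that $\Delta u\equiv N-1$ is constant. Then I would apply the improved (dimensional) Bochner inequality for $\RCDst(-(N-1),N)$ spaces (Gigli, Han) to $u$. Since $|\nabla u|^2\equiv1$, the measure-valued Laplacian of $\tfrac12|\nabla u|^2$ vanishes; since $\Delta u\equiv N-1$, the term $\langle\nabla u,\nabla\Delta u\rangle$ vanishes. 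Writing $n$ for the essential dimension of $(X,\sfd,m)$ (a.e.\ well-defined and constant), the inequality becomes, $m$-a.e.,
\[
0\;\ge\;|\Hess u|^2-(N-1)+\frac{1}{N-n}\bigl(\operatorname{tr}\Hess u-(N-1)\bigr)^2.
\]

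Next I would extract rigidity of the Hessian. Differentiating $|\nabla u|^2\equiv1$ gives $\Hess u(\nabla u,\cdot)=0$, so $\Hess u$ is concentrated on a module of rank at most $n-1$; if $\lambda_1,\dots,\lambda_{n-1}$ denote its eigenvalues there and $T:=\operatorname{tr}\Hess u=\sum_i\lambda_i$, then $|\Hess u|^2\ge T^2/(n-1)$ by Cauchy--Schwarz. A one-variable minimization shows $T^2/(n-1)+(T-(N-1))^2/(N-n)\ge N-1$, with equality precisely when $T=n-1$. Combined with the displayed inequality, this forces equality everywhere: $\operatorname{tr}\Hess u=n-1$, all $\lambda_i$ equal $1$, hence
\[
\Hess u\;=\;g-du\otimes du\qquad m\text{-a.e.}
\]
(Here $n$ may be strictly smaller than $N$, which is why $N$ need not be an integer.) Equality in Bochner simultaneously pins down the Ricci term in the $\nabla u$ direction to equal $-(N-1)$.

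With this rigidity in hand I would build the warped product. Since $\Hess u(\nabla u,\cdot)=0$, the integral curves of the gradient flow $(\mathcal F_t)_{t\in\mathbb R}$ of $\nabla u$ are unit-speed geodesics, globally defined by completeness, and $u\circ\mathcal F_t=u+t$; so $\mathcal F$ gives a bijection $\mathbb R\times X'\to X$ with $X':=u^{-1}(0)$. The identity $\Hess u=g-du\otimes du$ forces vectors tangent to the level sets of $u$ to be stretched by the flow by the factor $e^t$ (heuristically, $\tfrac{d}{dt}|V|^2=2\Hess u(V,V)=2|V|^2$), so that the length metric of $\{u=t\}$ equals $e^t$ times that of $X'$, i.e.\ $\mathcal F$ intertwines $\sfd$ with the warped metric $dt^2+e^{2t}\sfd_{X'}^2$. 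At the level of measures, $\operatorname{div}_m\nabla u=\Delta u=N-1$ makes the level-set slices of $m$ grow by $e^{(N-1)t}$ along the flow, so the coarea/disintegration of $m$ yields $m\cong e^{(N-1)t}\,dt\otimes m'$ for a suitable Borel measure $m'$ on $X'$. Thus $\mathcal F$ is a measure-preserving isometry of $\mathbb R\times_{e^t}X'$ onto $X$. Finally, since this warped product is isometric to the $\RCDst(-(N-1),N)$ space $X$, the characterization of $\RCDst$ warped products over an interval (Ketterer; Gigli--Han) gives that $(X',\sfd_{X'},m')$ is $\RCDst(-(N-1),N-1)$.

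The delicate point is the metric assertion in the previous paragraph: without a smooth structure, showing that $\mathcal F$ is a genuine isometry of metric spaces --- not merely a measure-preserving bijection --- requires controlling minimizing geodesics of $X$, upgrading the $m$-a.e.\ Hessian identity to exact metric stretching, and verifying the warped-product distance formula. This is exactly what makes the rigidity part of the nonsmooth splitting theorem technical, and it is where most of the work lies. The remaining technical points --- justifying the improved Bochner inequality, the identities $\Hess u(\nabla u,\cdot)=0$ and $\operatorname{div}_m\nabla u=\Delta u$, and the good behaviour of the gradient flow of a function only in $D_{loc}(\Delta)$ --- are handled by localization together with the now well-developed second-order differential calculus on $\RCDst$ spaces.
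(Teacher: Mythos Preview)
Your overall strategy is the paper's: compute the Hessian of $u$ via Bochner-type rigidity, run the gradient flow of $\nabla u$, identify $X'=u^{-1}(0)$, and assemble the warped product. But the execution differs in two substantive places.

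\textbf{Hessian rigidity.} You invoke the improved Bochner inequality with $|\Hess u|^2$ and the essential dimension $n$, then optimize in $T=\operatorname{tr}\Hess u$. The paper avoids essential dimension entirely: following Ketterer/Savar\'e, it applies the Bakry--\'Emery condition to the auxiliary function $\tilde u=\tfrac12 u^2+(1-u(x))u+t(fg-f(x)g-g(x)f)$, expands $\Gamma_2(\tilde u)$ as a quadratic in $t$, and reads off $\Hess[u](f,g)=\langle\nabla f,\nabla g\rangle-\langle\nabla u,\nabla f\rangle\langle\nabla u,\nabla g\rangle$ from the discriminant. Both routes yield the same identity; theirs is shorter and sidesteps the case distinction $n=N$ versus $n<N$ in your argument.

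\textbf{The metric isometry.} You correctly flag this as the delicate point but do not propose a mechanism. The paper does \emph{not} try to control minimizing geodesics or upgrade the a.e.\ Hessian identity to pointwise stretching. Instead it works entirely at the Sobolev level: from the Hessian identity it derives the exact formula $|\nabla(f\circ F_t)|^2=e^{2t}\,\Hess[u](\nabla f,\nabla f)\circ F_t+\langle\nabla f,\nabla u\rangle^2\circ F_t$ (first integrally, then localized), uses this together with Sobolev-to-Lipschitz to show each $F_t$ is bi-Lipschitz with the expected constants, defines $d'$ intrinsically on $X'$, and finally proves that right composition with $S(x)=(\pi(x),u(x))$ is an isometry $W^{1,2}(X'_w)\to W^{1,2}(\tilde X)$. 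Gigli's ``isomorphisms via duality with Sobolev norms'' then upgrades this to a metric-measure isomorphism. This Sobolev-duality mechanism is the heart of the paper and is what your outline is missing.

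\textbf{The $\RCDst$ condition on $X'$.} You appeal to a converse characterization of $\RCDst$ warped products; such a result is not available off the shelf for the warping $e^t$. The paper instead verifies the weak Bochner inequality on $X'$ directly, by extending test functions on $X'$ to $X'_w$, applying Bochner there, and projecting back.
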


An immediate consequence of  Theorem \ref{thm-main1} and the inequality provided in  \cite[Theorem 5]{Stu3},
$\sqrt {\lambda_{(\tilde X,d,m)}}  \leq  \frac 1 2 \limsup_{R \rightarrow \infty} \frac{1}{R} \ln {m} ( B_{{\tilde X}} (x,R))$,   is the following corollary. 

\begin{cor}
Let $1<N < \infty$ and $(X, d, m)$ be a compact $\RCDst(-(N-1), N)$ space.  If
\[ 
\lambda_{(\tilde{X},d,m)} := \inf \left\{\frac {\int_{\tilde X} |\nabla f|^2\, \mathrm{d}m} {\int_{\tilde X}  f^2 \, \mathrm{d}m} \mid f \in \mathrm{Lip}(\tilde X,d) \cap L^2(\tilde X,m), \, \int_{\tilde X}  f^2\, \mathrm{d}m\neq 0 \right\} = \frac{(N-1)^2}{4},
\]
then $\tilde X$ is isomorphic  to  the $N$-dimensional real hyperbolic space up to a scaling of the measure.
\end{cor}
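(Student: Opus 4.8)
The plan is to deduce the corollary from Theorem~\ref{thm-main1} together with a Brooks-type bound of the bottom of the spectrum by the volume growth entropy. Abbreviate $\lambda := \lambda_{(\tilde X,\tilde d,\tilde m)}$ (the quantity in the statement) and $h := h(X,\sfd,m)$, and write $\rho := \tilde d(x_0,\cdot)$ for the distance from a fixed point $x_0 \in \tilde X$. If $\pi_1(X)$ is finite then $\tilde X$ is compact and $\lambda = 0 \neq (N-1)^2/4$, so we may assume $\pi_1(X)$ is infinite, in which case $\tilde X$ has infinite diameter. The claim I will establish is
\[
\lambda \;\le\; \frac{h^{2}}{4}.
\]
Granting this, if $\lambda = (N-1)^2/4$ then $h \ge N-1$, while Theorem~\ref{thm-main1} gives $h \le N-1$; hence $h = N-1$, and the rigidity part of Theorem~\ref{thm-main1} shows that $N$ is an integer and $\tilde X$ is isometric to the $N$-dimensional real hyperbolic space, which is the desired conclusion.

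To prove the displayed inequality I would run the classical cutoff/test-function argument, which is available in the $\RCDst$ setting because $\rho$ is $1$-Lipschitz and hence has minimal weak upper gradient $|\nabla\rho| \le 1$ $\tilde m$-a.e. Fix $\alpha > h/2$. Choosing $\varepsilon > 0$ with $2\alpha > h+\varepsilon$, the definition of $h$ as a $\limsup$ yields $\tilde m(B_{\tilde X}(x_0,R)) \le e^{(h+\varepsilon)R}$ for all large $R$, so by Cavalieri's formula $\int_{\tilde X} e^{-2\alpha\rho}\,\mathrm{d}\tilde m = 2\alpha\int_0^\infty e^{-2\alpha t}\,\tilde m(B_{\tilde X}(x_0,t))\,\mathrm{d}t < \infty$. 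Take Lipschitz cutoffs $\eta_R$ with $\eta_R \equiv 1$ on $B_{\tilde X}(x_0,R)$, $\eta_R \equiv 0$ off $B_{\tilde X}(x_0,2R)$ and $|\nabla\eta_R| \le 1/R$, and set $f_R := \eta_R\, e^{-\alpha\rho} \in \mathrm{Lip}(\tilde X,\tilde d)\cap L^2(\tilde X,\tilde m)$. The chain rule and the Leibniz rule for the minimal weak upper gradient, combined with $|\nabla\rho| \le 1$, give, for every $\delta > 0$,
\[
\int_{\tilde X} |\nabla f_R|^2\,\mathrm{d}\tilde m \;\le\; (1+\delta)\,\alpha^2 \int_{\tilde X} e^{-2\alpha\rho}\,\mathrm{d}\tilde m \;+\; C_\delta \int_{\tilde X\setminus B_{\tilde X}(x_0,R)} e^{-2\alpha\rho}\,\mathrm{d}\tilde m ,
\]
where the last term is the tail of a convergent integral and tends to $0$ as $R\to\infty$, while $\int_{\tilde X} f_R^2\,\mathrm{d}\tilde m \to \int_{\tilde X} e^{-2\alpha\rho}\,\mathrm{d}\tilde m > 0$. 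Since $\lambda$ does not exceed the Rayleigh quotient of any admissible $f_R$, letting $R\to\infty$ gives $\lambda \le (1+\delta)\alpha^2$; then $\delta\downarrow 0$ and $\alpha\downarrow h/2$ yield $\lambda \le h^2/4$.

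The only genuinely technical point is to check that the nonsmooth calculus behaves as expected in the estimate above---namely the chain and Leibniz rules for minimal weak upper gradients, the bound $|\nabla\rho| \le 1$ $\tilde m$-a.e., and the dominated-convergence steps---but these are all standard properties of the Cheeger energy on $\RCDst(-(N-1),N)$ spaces, and indeed the inequality $\lambda \le h^2/4$ is essentially contained in the literature on volume entropy of covers (cf.\ \cite{BCGS}). (An alternative, more self-contained route would feed the ground state $\phi$ of $-\Delta$ on $\tilde X$ into Theorem~\ref{thm-main2} via $u = -\tfrac{2}{N-1}\log\phi$, which satisfies $\Delta u = \tfrac{N-1}{2}(1+|\nabla u|^2)$; but turning this into the hypotheses $|\nabla u| = 1$, $\Delta u = N-1$ of Theorem~\ref{thm-main2} requires the sharp gradient estimate $|\nabla\log\phi| \le (N-1)/2$ for the ground state, which seems harder to obtain directly at this level of generality.) With the displayed inequality in hand, the corollary follows at once.
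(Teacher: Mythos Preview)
Your proof is correct and follows essentially the same route as the paper: the paper simply invokes the inequality $\sqrt{\lambda_{(\tilde X,d,m)}}\le \tfrac12\limsup_{R\to\infty}\tfrac1R\ln \tilde m(B_{\tilde X}(x,R))$ from \cite[Theorem~5]{Stu3} and then applies Theorem~\ref{thm-main1}, whereas you reprove this Brooks-type bound from scratch via the standard $e^{-\alpha\rho}$ test functions before applying Theorem~\ref{thm-main1}. Your argument is thus a self-contained version of the paper's one-line deduction.
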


The corresponding results for Alexandrov spaces have recently been proved by Jiang \cite{Jiang}.  

Rigidity results for $\RCDst$ spaces often imply almost rigidity results given that $\RCDst$ spaces are closed under measured Gromov-Hausdorff convergence. Theorem~\ref{thm-main1} implies an almost rigidity result assuming the volume entropy is continuous under measured Gromov-Hausdorff convergence, which is true when (the first) systole is uniformly bounded from below \cite[Proposition 38]{Rev}, c.f.  Proposition \ref{reviron-prop-38}. As a result we have: 

\begin{thm}  \label{stability}
	Let  $1< N < \infty$, $s>0,\ D>0$. There exists $\epsilon(N, s, D)>0$ such that for $0 < \epsilon < \epsilon(N, s, D)$, if $(X, d, m)$ is a compact  $\RCDst(-(N-1), N)$  space, satisfying ${\rm diam}(X) \le D, \ h(X) \ge N-1 -\epsilon,\  \rm{sys} (X, \sfd) \ge s$, then $X$ is homeomorphic and $\Psi(\epsilon | N, s, D)$ measured Gromov-Hausdorff close to an $N$-dimensional hyperbolic manifold.  
\end{thm}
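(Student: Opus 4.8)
The plan is to argue by contradiction, combining the precompactness of the relevant class of $\RCDst$ spaces, the stability of the $\RCDst$ condition under measured Gromov--Hausdorff (mGH) convergence, and the rigidity part of Theorem~\ref{thm-main1}. Suppose the conclusion fails. Unwinding the $\Psi$-notation, there is a fixed $\delta_0>0$ and a sequence of compact $\RCDst(-(N-1),N)$ spaces $(X_i,\sfd_i,m_i)$ with $\diam(X_i)\le D$, $m_i(X_i)\ge v$ and $h(X_i)\ge N-1-\tfrac{1}{i}$, such that no $X_i$ is $\delta_0$-mGH-close to an $N$-dimensional hyperbolic manifold.

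First I would extract a convergent subsequence. By Bishop--Gromov volume comparison, the diameter bound together with $m_i(X_i)\ge v$ yields a uniform lower bound $m_i(B(x,r))\ge c(N,v,D)\,V_{-(N-1),N}(r)$ (with $c(N,v,D)>0$) for all $x\in X_i$ and $r\le D$, so the sequence is uniformly non-collapsing; Gromov's precompactness theorem then provides, after passing to a subsequence, an mGH limit $(X_i,\sfd_i,m_i)\to(X_\infty,\sfd_\infty,m_\infty)$. The limit is compact with $\diam(X_\infty)\le D$; by the stability of the curvature--dimension condition it is again an $\RCDst(-(N-1),N)$ space, and the convergence is non-collapsing since the lower volume bound passes to the limit.

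Next I would identify $X_\infty$. Theorem~\ref{thm-main1} gives $h(X_\infty)\le N-1$, and likewise $h(X_i)\le N-1$ for every $i$, whence $h(X_i)\to N-1$. Since the volume entropy is continuous along non-collapsing mGH-convergent sequences with a uniform diameter bound (see \cite{BCGS} and the discussion preceding the statement), $h(X_\infty)=\lim_i h(X_i)=N-1$. The rigidity part of Theorem~\ref{thm-main1} then applies: $N$ is an integer and the universal cover of $X_\infty$ is isometric, measure-preservingly, to the $N$-dimensional real hyperbolic space. Since $X_\infty$ is compact and the deck transformation group of its universal cover (which exists and is $\RCDst$ by \cite{MW}) acts freely, properly discontinuously and cocompactly, $X_\infty$ is a closed hyperbolic $N$-manifold carrying a constant multiple of its Riemannian volume.

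Finally, for $i$ large the space $(X_i,\sfd_i,m_i)$ lies within mGH-distance $\delta_0$ of $(X_\infty,\sfd_\infty,m_\infty)$, which is an $N$-dimensional hyperbolic manifold with its canonical measure, contradicting the choice of the sequence; this produces the required threshold $\epsilon(N,v,D)$. The step I expect to be the main obstacle is the one used in the third paragraph, the continuity of the volume entropy under non-collapsing mGH convergence: since $h$ is defined through the universal cover, proving it amounts to upgrading $X_i\to X_\infty$ to an equivariant convergence $\widetilde X_i\to\widetilde X_\infty$ of universal covers, which requires controlling the deck-group actions (uniform discreteness and the absence of collapsing short loops, as permitted by the non-collapsing bound). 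It is precisely here that the hypothesis $m(X)\ge v$, rather than merely the diameter bound, is indispensable --- the statement being false for collapsing sequences.
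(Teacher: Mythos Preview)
Your proposal is correct and follows essentially the same approach as the paper, which merely sketches the argument in the introduction: precompactness plus stability of the $\RCDst$ condition, continuity of the volume entropy along non-collapsing mGH sequences, and then the rigidity Theorem~\ref{thm-main1} applied to the limit. You have in fact written out the details more carefully than the paper does, and you correctly isolate the one nontrivial ingredient (continuity of $h$ via equivariant convergence of universal covers, which is where the non-collapsing hypothesis $m(X)\ge v$ enters).
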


 When $X$ is a Riemannian manifold this is proved without the systole condition in \cite[Theorem D]{Chen-Rong-Xu}, as the continuity of the entropy is proven for non-collapsing sequences of Riemannian manifolds with Ricci curvature bounded from below and diameter bounded from above converging to a manifold \cite[Theorem 0.5]{Chen-Rong-Xu}. The volume entropy is not necessarily continuous when the limit is a non-collapsing Ricci limit space, as the fundamental group could jump from having exponential growth for the sequence to  a trivial one for the limit space, see \cite[Remark 6.2]{Pan-Wei}.  We still conjecture that Theorem~\ref{stability} is true without the  systole  condition.

 The strategy and techniques used in proving our results are inspired by those of Gigli's Splitting Theorem  in the non-smooth context \cite{Gig}, as well as the ``Volume cone implies metric cone" Theorem by De Philippis-Gigli \cite{DePG}.
One of the key ideas for proving these results is to work at the level of the Sobolev spaces. 
In this way we overcome obstacles that appear due to the lack of analytical tools available in the smooth category. 
Once a result is obtained at this level it can be {\em transported} to a statement at the level of the metric measure space itself. 

We now present a summary of our strategy. In order to show that the universal cover $(\tilde{X}, \tilde{d}, \tilde{m})$ of  an $\RCDst(-(N-1), N)$ space $(X, d, m)$  with maximal volume entropy is isomorphic--- i.e. via a measure preserving isometry---to a real hyperbolic space (up to a scaling of the measure), it is sufficient to show that $\tilde{X}$ is isomorphic to a warped product space of the form $X'\times_{e^{t}} \mathbb{R}$, and then show that $X'$ is regular enough. 
At this point an analogy with \cite{DePG} becomes clear, as now our problem can be considered as a warped splitting theorem under the assumption of maximality of volume entropy.

To obtain a metric measure space which is a candidate for the role of $X'$, we reconstruct in our context Liu's ideas \cite{Liu2011} and build a Busemann-type function $u:\tilde{X}\to\mathbb{R}$ in $D_{loc}(\Delta)$, which is regular enough to admit a Regular Lagrangian Flow  $F:\mathbb{R}\times \tilde{X}\to \tilde{X}$ associated to $\nabla u$  (in the sense of Ambrosio-Trevisan \cite{AT}). The issue with the noncompact space here is dealt with by making use of the good cut-off functions of \cite{MN} and the local uniqueness of the Regular Lagrangian Flow. The trajectories $F_{(\cdot)}(x)$ of our flow induce a partition of $\tilde{X}$. The high regularity of $u$ provides useful information on how the reference measure $\tilde{m}$ changes under the flow. Still, the regularity of the Regular Lagrangian Flow takes serious effort, using the heat flow to regularize first and some uniform estimate following the ideas of \cite{DePG}. With the regularity issue addressed, an analysis of how the Cheeger energy of Sobolev functions changes once composed with the flow shows that a representative of $F$ can be chosen such that the maps $F_t$ are bi-Lipschitz. Then we proceed to obtain estimates of the local Lipschitz constants of F. 

Therefore, the natural candidate for $X'$ is $u^{-1}(0)$, the slice at time $0$ of the partition induced by $F$, endowed with the natural intrinsic metric and an appropriately defined measure which agrees with the data provided by $F$.   Given that $X' $ is non compact the measure defined on it is written in a similar way to \cite{Gig} and not as in \cite{DePG}. The proof that it is a complete, separable and geodesic space is more involved than in \cite{Gig} and \cite{DePG}. In \cite{Gig},  the distance in $X'$ can be seen as  the restriction of the metric of $\tilde d$ and in \cite{DePG} $X'$ is compact. We also have to show that $X'$ is locally doubling and not doubling as in \cite{DePG}.

At this point we need to show that the natural maps from and into $\tilde{X}$ and $\mathbb{R}\times_{e^t}X'$ are isomorphisms of metric measure spaces. As mentioned above, we obtain this at the level of the Sobolev spaces. The relation between the Sobolev spaces $W^{1,2}(\tilde{X})$ and $W^{1,2}(\mathbb{R}\times_{e^t} X')$ is explained by studying the metric speeds of curves in $\tilde{X}$ in relation with those in $X'$. This leads to a relationship between the minimal weak upper gradients of Sobolev functions in $X'$ and $\tilde{X}$.  Putting everything together, and combining them with the work of Gigli-Han \cite{GigHan} on the structure of Sobolev spaces of warped products, we obtain the desired isomorphism. 

Finally, the structure of a warped product space naturally implies via Bochner's inequality and a limiting argument that $X'$ is an $\RCDst(0,N)$ space. To complete the proof of the main theorem, we adapt Chen-Rong-Xu's argument \cite{Chen-Rong-Xu} and make use of the structure result of \cite{MN} to show that $\mathbb{R}\times_{e^t} X'$ is isomorphic to the $N$-dimensional hyperbolic space up to scaling of the measure. 

The article is organized as follows.   In Section \ref{Preliminaries},   we review definitions and properties of metric measure spaces and, in particular, $\RCDst$ spaces that will be needed in the paper.  In Section \ref{sec-Busemann}, using the Bishop-Gromov volume comparison theorem we provide the upper estimate of the volume entropy for $\RCDst(-(N-1),N)$ spaces. For the rigidity case, we construct the Busemann function $u$, calculate its Hessian and construct a Regular Lagrangian Flow $F$ associated to $\nabla u$. In Section \ref{sec-CheegerE} we estimate the minimal weak upper gradient of functions of the form  $ f \circ F_t$ for $f \in W^{1,2}(\tilde X, \tilde d, \tilde m)$. In the next section we use this to improve the regularity of the Regular Lagrangian Flow $F$, define the metric measure space quotient $(X',d',m')$ and estimate the minimal weak upper gradients of functions $g \in W^{1,2}(X')$ in terms of functions in $W^{1,2}(\tilde X)$. Moreover, we prove that $(X',d',m')$ is an infinitesimally Hilbertian space.  In Section  \ref{sec-Isom}  we use Gigli's Contraction By Local Duality Lemma, and his proposition on isomorphisms via duality with Sobolev norms, to show that the warped product space $\mathbb R \times_{e^t} X'$
 is isometric to $(\tilde X, \tilde d, \tilde m)$.  In Section \ref{sec-RCDcond} we prove that $(X',d',m')$ is an $\RCDst(0, N)$ space.  In the final section we see that $N \in \mathbb N$ and  $\mathbb R \times_{e^t} X'$ is isometric to the hyperbolic space $\mathbb H^N$, and prove the stability result, Theorem~\ref{stability}. 

On a complementary direction, the work of Besson-Courtois-Gallot \cite{BCGlong, BCG-acta} treated the {\it minimal} entropy of smooth manifolds and established major rigidity results for locally symmetric spaces of negative curvature.  Their work implies that negatively curved locally symmetric Riemannian metrics with given total volume cannot be perturbed to non-symmetric ones without increasing the volume entropy. A number of important corollaries in geometric rigidity and applications to dynamics then follow. We have also extended these barycenter techniques to $\RCDst$ spaces in \cite{RCD-Bary}.

\medskip

{\bf Acknowledgements.} The authors deeply thank Nicola Gigli for numerous very helpful communications on the theory of non-smooth differential geometry as a whole and, in particular, on his work on the Splitting Theorem in non-smooth context, and the ``Volume cone implies metric cone" theorem. We also thank Luigi Ambrosio and Dario Trevisan for explaining the main aspects of their work on the theory of Regular Lagrangian Flows on metric measure spaces, Christian Ketterer for explaining his work on cones over $\RCDst$ spaces, Lina Chen for communication on their work \cite{Chen-Rong-Xu}, Fabio Cavalletti for clarifying the equivalence between $\RCDst$ and $\RCD$ spaces, and Jaime Santos and Gerardo Sosa for commenting on this paper. This work was carried out while the third named author was visiting the Department of Mathematics of the University of California Santa Barbara and the Scuola Internazionale Superiore di Studi Avanzati. He would like to thank both institutions for their hospitality and excellent research conditions. The fifth named author thanks the warm hospitality of UC Santa Barbara Department of Mathematics during the period in which this work was produced. Finally we thank the anonymous referee for pointing out errors in the earlier version and very helpful comments and suggestions.

\tableofcontents


\section{Preliminaries}\label{Preliminaries}
The following is a review of the necessary definitions and results. First we recall the concepts pertaining to first order calculus on metric spaces, we refer readers to \cite{Gig15, Gig} for further details. 

\subsection{Calculus on metric measure spaces} \label{ssec-calculus}

We will consider a proper metric space $(X,d)$. Let $C([0,1];X)$ be the set of continuous curves in $(X,d)$. A curve $\gamma\in C([0,1];X)$ is said to be \textit{absolutely continuous} if there exists an integrable function $f$ on $[0,1]$ such that for every $0 \leq t<s \leq 1$,
\[
d(\gamma_t,\gamma_s)\leq \dint_t^s \! f(r)\, \mathrm{d}r.
\]
Absolutely continuous curves $\gamma$ have a well defined \textit{metric speed},
\[
|\dot{\gamma}_t|:=\lim_{t\to 0}\frac{d(\gamma_{t+h},\gamma_t)}{|h|},
\]
which is a function in $L^1([0,1]).$  The set of absolutely continuous curves in $(X,d)$ will be denoted by  $AC([0,1];X)$. 

Let $m$ be a non-negative Radon measure on $X$ and 
$\mathcal{P}(C([0,1];X))$ be the space of probability measures on $C([0,1];X)$. A measure $\pi\in \mathcal{P}(C([0,1];X))$ is called a \textit{test plan} if there exists $C> 0$ such that for every $t\in [0,1]$, 
\[
(e_t)_{\sharp}\pi\leq Cm
\]
and
\[
\dint\!\dint_{0}^1|\dot{\gamma}_t|^2 \, \mathrm{d}t\, \mathrm{d}\pi(\gamma)<\infty.
\]
Here, $e_t:C([0,1];X)\to X$ is the evaluation map $e_t(\gamma)=\gamma_t$. 

\bigskip

The \textit{Sobolev class} $S^2(X):=S^2(X,d,m)$ (respectively $S^2_{loc}(X):=S^2_{loc}(X,d,m)$) is the space of all Borel functions $f:X\to\mathbb{R}$ such that there exists a non-negative function $G\in L^2(X):=L^2(X,m)$ (respectively $G\in L^2_{loc}(X):=L^2_{loc}(X,m)$)---called \textit{weak upper gradient}---such that for any test plan $\pi$ the following inequality is satisfied
\[
\dint|f(\gamma_1)-f(\gamma_0)|\, \mathrm{d}\pi(\gamma)\leq \dint\!\!\!\dint_0^1 G(\gamma_t)|\dot\gamma_t| \, \mathrm{d}t \, \mathrm{d}\pi(\gamma).
\]
 
It is possible to prove that there exists a minimal $G$, which we denote by $|\nabla f|$, called the \textit{minimal weak upper gradient} of $f$. We now recall the following fundamental result.

\begin{prop}\cite[Definition 5.6, Proposition 5.7]{AGS14}
, \cite[Definition B.2, Theorem B.4]{Gig15}
\label{prop-weakUg}
Let $f,G: X  \to \mathbb R$  be two functions. The following are equivalent, 
\begin{itemize}
\item[(i)] $f \in S^2(X)$ and $G$ is a weak upper gradient.
\item[(ii)] For every test plan $\pi$ the following holds: For $\pi$-a.e. $\gamma$ the function $t \mapsto f(\gamma_t)$ is equal at $t=0$, $t=1$ and almost everywhere else on $[0,1]$ to an absolutely continuous function $f_\gamma:[0,1]\to\mathbb{R}$ whose derivative  for a.e. $t \in [0,1]$ satisfies $|f_\gamma'|(t) \leq G(\gamma_t) |\dot\gamma_t|$. 
\end{itemize}
\end{prop}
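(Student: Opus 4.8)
The plan is to prove the two implications separately; (ii)$\Rightarrow$(i) is immediate, and essentially all the work is in (i)$\Rightarrow$(ii). Here $G$ denotes a fixed nonnegative function in $L^2(X)$. For (ii)$\Rightarrow$(i), fix a test plan $\pi$ and use the $\pi$-a.e.\ absolutely continuous representative $f_\gamma$ of $t\mapsto f(\gamma_t)$ to write $|f(\gamma_1)-f(\gamma_0)|=|f_\gamma(1)-f_\gamma(0)|\le\int_0^1|f_\gamma'(t)|\,\mathrm dt\le\int_0^1 G(\gamma_t)|\dot\gamma_t|\,\mathrm dt$, then integrate against $\pi$. The only point to check is that the right-hand side is finite, which follows from Cauchy--Schwarz together with the compression bound $(e_t)_\#\pi\le Cm$ (which bounds $\int\!\int_0^1 G^2(\gamma_t)\,\mathrm dt\,\mathrm d\pi$ by $C\|G\|_{L^2}^2$) and the finite-energy property of $\pi$. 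This yields the defining inequality of $S^2(X)$, so $f\in S^2(X)$ with $G$ a weak upper gradient.

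For (i)$\Rightarrow$(ii), the key tool is the stability of test plans under two operations: restriction to a Borel set $\Gamma\subseteq C([0,1];X)$ (the restricted measure $\pi_\Gamma$ still satisfies the compression bound with the same constant $C$ and has finite energy) and affine reparametrization via $\mathrm{Restr}_s^t\colon C([0,1];X)\to C([0,1];X)$, $(\mathrm{Restr}_s^t\gamma)_r:=\gamma_{(1-r)s+rt}$, which for $0\le s\le t\le1$ sends such a measure to one with the same compression bound and energy multiplied by $(t-s)$. Applying the weak upper gradient inequality for $f$ — which is linear in the plan, so normalization is irrelevant — to $(\mathrm{Restr}_s^t)_\#\pi_\Gamma$ and changing variables gives, for every Borel $\Gamma$ and all $0\le s\le t\le1$,
\[
\int_\Gamma|f(\gamma_t)-f(\gamma_s)|\,\mathrm d\pi(\gamma)\ \le\ \int_\Gamma\int_s^t G(\gamma_r)|\dot\gamma_r|\,\mathrm dr\,\mathrm d\pi(\gamma);
\]
since $\Gamma$ is arbitrary this forces $|f(\gamma_t)-f(\gamma_s)|\le\int_s^t G(\gamma_r)|\dot\gamma_r|\,\mathrm dr$ for $\pi$-a.e.\ $\gamma$. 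As in the first part, $\int\!\int_0^1 G(\gamma_r)|\dot\gamma_r|\,\mathrm dr\,\mathrm d\pi<\infty$, so for $\pi$-a.e.\ $\gamma$ the function $g_\gamma(t):=\int_0^t G(\gamma_r)|\dot\gamma_r|\,\mathrm dr$ is absolutely continuous with $g_\gamma'=G(\gamma_\cdot)|\dot\gamma_\cdot|$ a.e.

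To conclude, intersect the exceptional $\pi$-null sets over all rational pairs $s\le t$ in $[0,1]$, endpoints included. On the resulting $\pi$-conull set, $g_\gamma$ is absolutely continuous and $|f(\gamma_t)-f(\gamma_s)|\le g_\gamma(t)-g_\gamma(s)$ for all rational $s\le t$, so uniform continuity of $g_\gamma$ makes $q\mapsto f(\gamma_q)$ uniformly continuous on $\mathbb Q\cap[0,1]$ and hence uniquely extendable to a continuous $f_\gamma\colon[0,1]\to\mathbb R$ that agrees with $f(\gamma_\cdot)$ at $0$ and $1$ and satisfies $|f_\gamma(t)-f_\gamma(s)|\le g_\gamma(t)-g_\gamma(s)$ for all $s\le t$; thus $f_\gamma$ is absolutely continuous with $|f_\gamma'(t)|\le G(\gamma_t)|\dot\gamma_t|$ a.e. Finally, to see $f_\gamma(t)=f(\gamma_t)$ for a.e.\ $t$, apply the displayed inequality with one endpoint a fixed $t$ and the other a rational $q\to t$: this gives $f(\gamma_t)=\lim_q f(\gamma_q)=f_\gamma(t)$ for $\pi$-a.e.\ $\gamma$, and since this holds for each fixed $t$ and both $(t,\gamma)\mapsto f(\gamma_t)$ and $(t,\gamma)\mapsto f_\gamma(t)$ are jointly measurable, Fubini upgrades it to an identity for $\mathcal L^1\times\pi$-a.e.\ $(t,\gamma)$, i.e.\ for $\pi$-a.e.\ $\gamma$ and a.e.\ $t$. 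This is precisely (ii). I expect the genuinely delicate step to be exactly this passage from the $\pi$-averaged inequality to the $\pi$-a.e.\ pointwise statement, and then to the $\mathcal L^1$-a.e.\ identification of the representative — which is where the stability of test plans under restriction and reparametrization is essential; everything else is bookkeeping with the test plan axioms.
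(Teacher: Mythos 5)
Your proof is correct. Note that the paper does not prove this proposition at all --- it is quoted verbatim from Ambrosio--Gigli--Savar\'e and Gigli (the cited Definition 5.6/Proposition 5.7 and Definition B.2/Theorem B.4) --- and your argument reproduces the standard proof from those references: the easy direction by integrating the a.e.\ pointwise bound, and the hard direction via restriction and affine reparametrization of test plans, localization over arbitrary Borel sets of curves, intersection over rational endpoint pairs, and a Fubini argument to identify the continuous representative $f_\gamma$ with $f\circ\gamma$ for a.e.\ $t$. The only cosmetic slip is the claim that the restricted plan keeps ``the same constant $C$'': the normalized restriction $\pi(\Gamma)^{-1}\pi|_\Gamma$ has compression constant $C/\pi(\Gamma)$, but as you observe the inequality is linear in the plan, so this is harmless.
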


A local version of the Sobolev class is produced in the following manner: A function $f:\Omega\subset X \to \mathbb{R}$, with $\Omega$ an open set, is an element of $S^2_{loc}(\Omega):=S^2_{loc}(\Omega,d,m)$ if for any Lipschitz function $\chi:X\to\mathbb{R}$ with $\mathrm{supp}(\chi)\subset \Omega$
we have that $f\chi\in S^2_{loc}(X)$. In this case $|\nabla f|:\Omega\to\mathbb{R}$ is given by 
\[
|\nabla f|:=|\nabla(f \chi)| \ \ m- \,a.e. \ {\rm on} \ \chi=1. 
\]
Then, the set $S^2(\Omega)$ is defined as the subset of $S^2_{loc}(\Omega)$ of functions $f$ such that $|\nabla f|\in L^2(\Omega,m)$.

The \textit{Sobolev space} is defined as
\[
W^{1,2}(X,d,m):=L^2(X,m)\cap S^2(X,d,m) 
\]
 endowed with the norm
\[
||f||^{2}_{W^{1,2}(X)}:= ||f||^{2}_{L^2(X)} + |||\nabla f|||^{2}_{L^2(X)}= \dint_{X} (f^2 + |\nabla f|^2) \, \mathrm{d}m.
\]

We say that a proper metric measure space $(X,d,m)$ is \textit{infinitesimally Hilbertian} if $W^{1,2}(X)$ is a Hilbert space, i.e., if  $||\cdot ||^{2}_{W^{1,2}(X)}$ is induced by an inner product. This happens if and only if the parallelogram rule is satisfied, that is
\[
|||\nabla(f+g)|||^{2}_{L^2(X)} + |||\nabla(f-g)|||^{2}_{L^2(X)} = 2\left( |||\nabla f|||^{2}_{L^2(X)} + |||\nabla g|||^{2}_{L^2(X)}\right)
\]
for all $f,g\in S^2(X)$.
On an infinitesimally Hilbertian metric measure space $(X,d,m)$, for $\Omega\subset X$ open  and any $f,g\in S^2_{loc}(\Omega)$ the functions $D^{\pm}:\Omega\to \mathbb{R}$ defined $m$-a.e. by 
\begin{eqnarray*}
D^+f(\nabla g)=\inf_{\varepsilon >0} \frac{|\nabla(g+\varepsilon f)|^2-|\nabla g|^2}{2\varepsilon}\\
D^-f(\nabla g)=\sup_{\varepsilon >0} \frac{|\nabla(g+\varepsilon f)|^2-|\nabla g|^2}{2\varepsilon}
\end{eqnarray*}
coincide $m$-a.e. on $\Omega$. We denote the common value by $\left\langle \nabla f, \nabla g\right\rangle$. 

An important tool is the following \textit{first differentiation formula} (see (1.11) in \cite{Gig15}). Recall that a test plan $\pi$ is said to represent the gradient of $f\in S^2(X)$ if 
\[
\liminf_{t\downarrow 0} \dint\frac{f(\gamma_t)-f(\gamma_0)}{t}\,\mathrm{d}\pi(\gamma)\geq \frac{1}{2} \dint |\nabla f|^2(\gamma_0)\, \mathrm{d}\pi(\gamma) + \frac{1}{2} \limsup_{t\downarrow 0} \dint\dint_0^t |\dot{\gamma}_s|^2\,\mathrm{d}s\,\mathrm{d}\pi(\gamma).
\]
In the case that $f\in S^2(\Omega)$ for some open set $\Omega\subset X$, one adds to the definition the requirement that $(e_t)_{\#}\pi$ is concentrated on $\Omega$ for every $t\in [0,1]$ sufficiently small. Then, given $f,g\in S^2(\Omega)$ with $\Omega \subset X$ open and a test plan $\pi$ representing the gradient of $f$ it holds that
\begin{equation}
\label{eq-first-differentiation-formula}
\lim_{t\downarrow 0} \dint_{X} \frac{g(\gamma_t)-g(\gamma_0)}{t}\, \mathrm{d}\pi(\gamma) = \dint_{X} \left\langle \nabla f, \nabla g \right\rangle(\gamma_0)\, \mathrm{d}\pi(\gamma). 
\end{equation}

Let $(X,d,m)$ be an infinitesimally Hilbertian metric measure space and $\Omega\subset X$ an open set. Let $g: \Omega \to \mathbb{R}$ be a locally Lipschitz function. We say that $g$ \textit{has a measure valued Laplacian}, provided there exists a Radon measure $\mu$ on $\Omega$ such that 
\[
-\dint_{\Omega} \left\langle \nabla f, \nabla g\right\rangle \, \mathrm{d}m = \dint_{\Omega} f\, \mathrm{d}\mu
\] 
for all $f: \Omega \to \mathbb R$ Lipschitz and compactly supported in $\Omega$. In this case $\mu$ is the measure valued Laplacian of $g$, and it is denoted by $\mathbf{\Delta} g|_{\Omega}$. The set of all locally Lipschitz functions $g$ admitting a measure valued Laplacian is denoted by $D(\mathbf{\Delta},\Omega)$. A particular instance of the notation is that $D(\mathbf{\Delta}, X)= D(\mathbf{\Delta})$ and then $\mathbf{\Delta} g|_{X}=\mathbf{\Delta} g$.

A different definition is that of the $L^2$-Laplacian operator defined as follows. The domain $D(\Delta)$ of the $L^2$-Laplacian is the subset of $W^{1,2}(X)$ of all $g$ such that for some $h\in L^2(X)$,
\begin{equation} \label{Lap}
-\dint\left\langle \nabla f, \nabla g\right\rangle \, \mathrm{d}m = \dint fh \, \mathrm{d}m
\end{equation} 
 for all $f\in W^{1,2}(X)$, written as $\Delta g=h$. Both definitions agree in the sense that $g\in D(\Delta)$ if and only if $g\in W^{1,2}(X)\cap D(\mathbf{\Delta})$ and $\mathbf{\Delta} g= h\, m$ with $h \in L^2(X)$ (see \cite[Definition 4.6]{Gig}). We similarly define $D_{loc}(\Delta)$ to be the corresponding subset of $W^{1,2}_{loc}(X) := L_{loc}^2(X,m) \cap S_{loc}^2 (X,d,m)$, namely the subset of all $g\in W^{1,2}_{loc}(X)$ such that (\ref{Lap})  holds for all $f\in {\rm Test}_{bs}(X)$. Here  
 \begin{equation}  \label{eq-test}
 \mathrm{Test}(X):= \left\{f\in D(\Delta)\cap L^{\infty}(X,m) \mid |\nabla f|\in L^{\infty}(X,m)\ \text{and} \ \Delta f\in W^{1,2}(X)  \right\},
 \end{equation}
 and $\mathrm{Test}_{bs}(X)$ is the subset of  $\mathrm{Test}(X)$ consisting of functions with bounded support.

\subsection{Tangent and cotangent modules} We will now give a brief account of some of the tools of the tangent and cotangent modules as defined and developed in detail by Gigli \cite{Gig2} (see also the section on preliminaries of \cite{DePG}).

Given an infinitesimally Hilbertian metric measure space $(X,d,m)$, recall that there is a unique couple $(L^2(T^*X),d)$ (up to isomorphism) where $L^2(T^*X)$ is an $L^2(m)$-normed $L^{\infty}(m)$-module (see \cite[Definition 1.2.10]{Gig2}) and $d:S^2(X)\to L^2(T^*X)$ is a linear operator such that  the following two conditions hold
\begin{itemize}
\item[(i)] $|df| = |\nabla f|$ $m$-a.e.\ for every $f\in S^2(X)$. Here $|df|$ denotes the pointwise norm of $df$ in $L^2(T^*X)$.
\item[(ii)] $L^2(T^*X)$ is spanned by $\{ df \mid f\in S^2(X)\}$. 
\end{itemize}

The module $L^2(T^*X)$ is called the \textit{cotangent module of $X$} and $d$ is the \textit{differential}. Note that we abuse the notation slightly by using $d$ for the differential of a function and the distance of the space. 

The tangent module of $X$, denoted by $L^2(TX)$ is defined as the dual module of $L^2(T^*X)$ and the \textit{gradient} $\nabla f\in L^2(TX)$ of a function $f\in W^{1,2}(X)$ is the unique element associated to $df$ via the Riesz isomorphism. 

Let $(Y,d_Y,m_Y)$ be a metric measure space. We will say that a map $F:Y\to X$ has \textit{bounded compression} if $F_{\sharp}m_Y\leq Cm$ for some $C>0$. Given an $L^2$-normed $L^{\infty}$-module $\mathcal{M}$ over $X$, the \textit{pullback module $F^*\mathcal{M}$} is an $L^2$-normed $L^{\infty}$-module over $Y$  carrying a \textit{pullback operator} $F^*:\mathcal{M}\to F^*\mathcal{M}$ defined (uniquely up to isomorphism) in the following way: $F^*$ is linear and satisfies the following,
\begin{itemize}
\item[(i)] $|F^*v|=|v|\circ F$, $m_Y$-a.e. for all $v\in \mathcal{M}$,
\item[(ii)] $\left\{F^*v \mid v\in \mathcal{M} \right\} $ generates $F^*\mathcal{M}$ as a module. 
\end{itemize}  
Denote by $\mathcal{M}^*$ the dual module of $\mathcal{M}$. Then, we have the \textit{unique duality relation}, 
\[
F^*\mathcal{M}^*\times F^*\mathcal{M}\to L^{1}(Y,m_Y), 
\]
which is $L^{\infty}(Y)$-bilinear, continuous and satisfies 
\[
F^*w(F^*v)=w(v)\circ F, \  \text{$m_Y$-a.e. for all}\ v\in \mathcal{M}, w\in\mathcal{M}^*.
\]
For $\mathcal{M}=L^2(T^*X)$ (respectively $\mathcal{M}=L^2(TX)$) the pullback is denoted by $L^2(T^*X, F, m_Y)$ (respectively $L^2(TX, F, m_Y)$). A special instance of this construction occurs when $Y=C([0,1];X)$ equipped with the sup distance and a test plan $\pi$ as reference measure. The evaluation maps $e_t$ have bounded compression and there exists a unique element $\pi'_t\in L^2(TX, e_t, \pi)$ such that 
\[
\lim_{h\to 0} \frac{f\circ e_{t+h}-f\circ e_t}{h}= (e_t^*¨df)(\pi'_t)
\]
for all $f\in W^{1,2}(X)$,  where the limit is intended in the strong topology of $L^1(C([0,1];X)),\pi)$, that is, the space of integrable functions on $C([0,1];X))$ with respect to the test plan $\pi$ (see \cite[Theorem 2.3.18]{Gig2}). It follows from this result that for $\pi$-a.e. $\gamma$ and a.e. $t\in [0,1]$,  
\[
|\pi'_t|(\gamma)=|\dot{\gamma}_t|.
\]


\subsection{$\mathsf{CD}^*(K,N)$ and $\RCDst(K,N)$-spaces}

Here we briefly recall the synthetic notions of lower Ricci curvature bounds on metric measure spaces. 

A notion of metric measure spaces with Ricci curvature bounded below by $K\in \mathbb{R}$ and dimension bounded above by $N\in (1,\infty]$ was first considered in the setting of Optimal Transport Theory by Lott-Sturm-Villani \cite{LotVil, Stu1, Stu2}, resulting in the class of spaces with the \textit{curvature dimension condition} or briefly \textit{$\mathsf{CD}(K,N)$ spaces}.  It was then proved by Ohta that smooth compact Finsler manifolds are $\mathsf{CD}$ spaces \cite{Oht}. In contrast, a Finsler manifold can only arise as a limit of Riemannian manifolds with Ricci curvature uniformly bounded below if and only if it is Riemannian. Recall that a Finsler manifold is Riemannian if and only if the Cheeger energy is quadratic or, equivalently, if the heat flow is linear. 

To address the problem of isolating the class of Riemannian-like $\mathsf{CD}$-spaces, Gigli  \cite{Gig15} proposed to reinforce the definition of a $\mathsf{CD}(K,N)$ space $(X,d,m)$ with the functional-analytic condition of \textit{infinitesimal Hilbertianity}, that is, that the Sobolev space $W^{1,2}(X,d,m)$ is a Hilbert space (see Definition \ref{def-RCD}). 
This definition came out as a result of a program initiated by Gigli \cite{Gig10},  further developed by Gigli-Kuwada-Ohta \cite{GKO} and Ambrosio-Gigli-Savar\'e \cite{AGS14}, with the aim of investigating the heat flow on metric measure spaces and the introduction of $\mathsf{RCD}(K,\infty)$ spaces \cite{AGSrcd, AGMR}. The finite dimensional case, i.e. $\mathsf{RCD}(K,N)$ for $N \in (1, \infty)$  was then analyzed
independently in \cite{EKS} and \cite{AMS}).

At the emergence of $\mathsf{CD}(K,N)$ spaces, it was not clear whether this class exhibited a \textit{local-to-global} property, i.e. whether satisfying $\mathsf{CD}(K,N)$ for all subsets of a covering implies the condition on the full space. To address this issue, Bacher-Sturm introduced an apriori slightly weaker condition of Ricci curvature bounded below by $K$ with dimension at most $N$, namely the \textit{reduced curvature-dimension condition} or $\mathsf{CD}^*(K,N)$ \cite{BaSt}.

 To state the definitions and results in this section, we begin by recalling the so called \textit{distortion coefficients}. Given $K,N\in \mathbb R$ with $N\geq0$,  for $(t,\theta) \in[0,1] \times \mathbb R_{+}$ we define 
\begin{equation}
\label{E:sigma}
\sigma_{K,N}^{(t)}(\theta):= 
\begin{cases}
\infty, & \textrm{if}\ K\theta^{2} \geq N\pi^{2}, \crcr
\displaystyle  \frac{\sin(t\theta\sqrt{K/N})}{\sin(\theta\sqrt{K/N})} & \textrm{if}\ 0< K\theta^{2} <  N\pi^{2}, \crcr
t & \textrm{if}\ K \theta^{2}<0 \ \textrm{and}\ N=0, \ \textrm{or  if}\ K \theta^{2}=0,  \crcr
\displaystyle   \frac{\sinh(t\theta\sqrt{-K/N})}{\sinh(\theta\sqrt{-K/N})} & \textrm{if}\ K\theta^{2} \leq 0 \ \textrm{and}\ N>0.
\end{cases}
\end{equation}

For $N\geq 1, K \in \mathbb R$ and $(t,\theta) \in[0,1] \times \mathbb R_{+}$ we define
\begin{equation} \label{E:tau}
\tau_{K,N}^{(t)}(\theta): = t^{1/N} \sigma_{K,N-1}^{(t)}(\theta)^{(N-1)/N}.
\end{equation}

Let $\mathcal{P}_{2}(X,d,m)$ denote the family of probability measures with finite second moment, ${\rm Opt}(\mu_{0},\mu_{1})$ the set of optimal transports between $\mu_0$ and $\mu_1$ and $\mathrm{Geo}(X)$ the set of geodesics of $X$. 

\begin{defn}[$\mathsf{CD}$ condition]\label{D:CD}
A metric measure space $(X,d,m)$ is a $\mathsf{CD}(K,N)$ space if for each pair 
$\mu_{0}, \mu_{1} \in \mathcal{P}_{2}(X,d,m)$ there exists $\pi \in {\rm Opt}(\mu_{0},\mu_{1})$ such that
\begin{equation}\label{E:CD}
\rho_{t}^{-1/N} (\gamma_{t}) \geq 
 \tau_{K,N}^{(1-t)}(d( \gamma_{0}, \gamma_{1}))\rho_{0}^{-1/N}(\gamma_{0}) 
 + \tau_{K,N}^{(t)}(d(\gamma_{0},\gamma_{1}))\rho_{1}^{-1/N}(\gamma_{1}), \qquad \pi\text{-a.e.} \, \gamma \in \mathrm{Geo}(X),
\end{equation}
for all $t \in [0,1]$, where $\rho_t$ is such that $({e}_{t})_\sharp \, \pi = \rho_{t} m$.
\end{defn}

It is worth remembering here that for a Riemannian manifold $(M,g)$ of dimension $n$ and 
$h \in C^{2}(M)$ with $h > 0$, the metric measure space $(M,g,h \,d{\rm vol}_g)$ verifies condition $\mathsf{CD}(K,N)$ with $N\geq n$ if and only if  (see Theorem 1.7 of \cite{Stu2})
$$
{\rm Ric}_{g,h,N} \geq  K g, \qquad {\rm Ric}_{g,h,N} : =  {\rm Ric}_{g} - (N-n) \frac{\nabla_{g}^{2} h^{\frac{1}{N-n}}}{h^{\frac{1}{N-n}}}.  
$$
Here, we follow the convention that if $N = n$ the generalized Ricci tensor $Ric_{g,h,N}= Ric_{g}$ makes sense only if $h$ is constant. 

The reduced $\mathsf{CD}^{*}(K,N)$ condition requires the same inequality \eqref{E:CD} of $\mathsf{CD}(K,N)$ but with the coefficients $\tau_{K,N}^{(t)}(\sfd(\gamma_{0},\gamma_{1}))$ and $\tau_{K,N}^{(1-t)}(d(\gamma_{0},\gamma_{1}))$ replaced by $\sigma_{K,N}^{(t)}(\sfd(\gamma_{0},\gamma_{1}))$ and $\sigma_{K,N}^{(1-t)}(d(\gamma_{0},\gamma_{1}))$, respectively. Hence while the distortion coefficients of the $\mathsf{CD}(K,N)$ condition 
are formally obtained by imposing one direction with linear distortion and $N-1$ directions affected by curvature, 
the $\mathsf{CD}^{*}(K,N)$ condition imposes the same volume distortion in all the $N$ directions.

Now we will recall the \textit{generalized Bishop-Gromov comparison theorem} for $\mathsf{CD}^*(K,N)$-spaces with $K<0$. Let $B(x,R)$ be the metric ball around $x$ with radius $R$ and we denote its metric closure by $\overline{ B(x,R)}$. Note that the sharp version of this result is valid for $\mathsf{CD}^*(K,N)$ spaces as a consequence of \cite[Theorem 1.1]{CavStu} and \cite[Theorem 5.1]{Ohta}.

\begin{thm}[Generalized Bishop-Gromov volume growth inequality for $\mathsf{CD}^*(K,N)$]
\label{thm-BishopGrom}
Assume that the metric space $(X,d,m)$ satisfies the $\mathsf{CD}^*(K,N)$-condition for some $K<0$ and $N\in [1,\infty)$. Then  for all $r\leq R$,
\[
\frac{m ( \overline{B(x,r)})}{m (\overline{ B(x,R)})} \geq \frac{\int_{0}^{r}\sinh^{N-1} (\sqrt{-K/(N-1)}t) \  dt}{\int_{0}^{R}\sinh^{N-1}(\sqrt{-K/(N-1)}t) \  dt} .
\]

Furthermore, for the function $s_m(x,r) = \limsup_{\delta \to 0} \frac{1}{\delta} m ( \overline{B(x,r+ \delta)}  \setminus B(x,r))$ the following inequality holds
\[
\frac{s_m (x,r)}{s_m (x,R)}      \geq      \frac{\sinh^{N-1} (\sqrt{-K/(N-1)}r) }{\sinh^{N-1}(\sqrt{-K/(N-1)}R) }.
\]
\end{thm}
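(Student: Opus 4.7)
The plan is to derive both inequalities from the $\mathsf{CD}^*(K,N)$ distortion estimate applied to optimal transports between a measure concentrating at $x$ and a normalized restriction of $m$. Fix $x\in X$ and, for small $\varepsilon>0$, set $\mu_0^\varepsilon := m|_{\overline{B(x,\varepsilon)}}/m(\overline{B(x,\varepsilon)})$. For a bounded Borel set $A$ of positive measure set $\mu_1^A := m|_A/m(A)$, and apply the reduced condition to obtain an optimal plan whose geodesic interpolation $\mu_t=\rho_t\, m$ satisfies the $\mathsf{CD}^*$ distortion inequality $\pi$-almost surely.

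To obtain the surface-measure inequality I would take $A$ to be a thin spherical shell $\overline{B(x,R+\delta)}\setminus B(x,R)$ and choose $t=r/R$. Letting $\varepsilon\to 0$ collapses the source onto $x$, so the first summand of the $\mathsf{CD}^*$ inequality vanishes, and along almost every geodesic starting at $x$ one obtains the Jacobian-type bound
\[
\rho_t(\gamma_t) \leq \sigma_{K,N}^{(t)}\bigl(d(\gamma_0,\gamma_1)\bigr)^{-N}\,\rho_1(\gamma_1).
\]
Integrating in the angular variable of geodesics emanating from $x$ and then sending $\delta\to 0$ yields a first ratio bound for $s_m(x,r)/s_m(x,R)$ in terms of $\sigma_{K,N}$. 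This naive bound, however, involves $\sinh(\sqrt{-K/N}\,\cdot)^N$ rather than the sharp profile $\sinh(\sqrt{-K/(N-1)}\,\cdot)^{N-1}$, and the next step is to upgrade it using the one-dimensional localization and essential non-branching machinery of \cite{CavStu} and \cite{Ohta}, which promotes the $\mathsf{CD}^*(K,N)$ condition to the measure-contraction property $\mathsf{MCP}(K,N)$ and thereby to the sharp surface estimate.

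The volume-ratio inequality then follows from the sharp surface estimate via a standard monotonicity principle: if $r\mapsto s_m(x,r)/\sinh^{N-1}(\sqrt{-K/(N-1)}\,r)$ is non-increasing then so is
\[
r \mapsto \frac{m(\overline{B(x,r)})}{\int_0^r \sinh^{N-1}(\sqrt{-K/(N-1)}\,t)\,dt},
\]
which is equivalent to the claimed inequality for $r\leq R$. The principal difficulty throughout is that $\delta_x$ is not an admissible source in Definition \ref{D:CD}, so both the limit $\varepsilon\to 0$ and the upgrade from the $\sigma_{K,N}$ coefficient to the sharp hyperbolic profile demand nontrivial control of the branching behavior of geodesics at $x$. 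This is precisely the role of \cite[Theorem 1.1]{CavStu} and \cite[Theorem 5.1]{Ohta}, so in practice the cleanest proof invokes these results directly rather than reconstructing the transport-theoretic derivation in full.
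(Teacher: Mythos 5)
Your proposal is correct and lands exactly where the paper does: the paper offers no independent proof of this theorem, but simply notes that the sharp inequality follows from \cite[Theorem 1.1]{CavStu} (local curvature-dimension implies $\mathsf{MCP}(K,N)$) together with \cite[Theorem 5.1]{Ohta} (Bishop--Gromov for $\mathsf{MCP}$ spaces), which is precisely the route you settle on after correctly observing that the naive transport argument only yields the non-sharp $\sigma_{K,N}$ profile. Your preliminary sketch and the monotonicity reduction from the surface estimate to the volume estimate are standard and sound, so there is nothing to correct.
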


We now recall the definition of the reduced Riemannian curvature-dimension condition.

\begin{defn}[$\RCDst$ condition]\label{def-RCD}
A metric measure space $(X,d,m)$ is an $\RCDst(K,N)$ space if it is an infinitesimally Hilbertian $\mathsf{CD}^*(K,N)$ space.
\end{defn}

Cavalletti-Milman have shown the equivalence of the $\mathsf{CD}$ and $\mathsf{CD}^*$ conditions when the space is essentially non-branching and has finite measure \cite[Corollary 13.7]{CavMil}. In particular under the assumption of finite measure, $\mathsf{RCD}(K,N)$ is equivalent to $\RCDst(K,N)$. It is expected that $\mathsf{RCD}(K,N)$ is equivalent to $\RCDst(K,N)$ without any further assumptions. 

 Now we state the \textit{Laplacian comparison for distance functions} originally proved by Gigli for $\mathsf{CD}(K,N)$ spaces \cite[Corollary 5.15]{Gig15} with some extra assumption and shown to hold sharply on essentially nonbranching  $\mathsf{CD}^*(K,N)$ spaces (and more generally on $\mathsf{MCP}(K,N)$ spaces) in \cite{CavMon}. We will use this result in the following section. For simplicity we only state the result for $K<0$.  

\begin{thm}[Laplacian comparison for distance functions]
Let $K < 0$, $N \in (1, \infty)$, and $(X,d,m)$ be an $\mathsf{RCD}^*(K,N)$ space. Let $r: X \to \mathbb R$ be the function given by $r(x)=d(x,o)$, where $o \in X$.  Then
$r \in D(\mathbf{\Delta}, X \setminus \{o\})$ and
\begin{equation}\label{eq-lapComp}
\mathbf{\Delta} r|_{X\setminus \{o\}} \leq    \sqrt{-K (N-1)} \coth(\sqrt{-K /(N-1)}r)m.  
\end{equation}
\end{thm}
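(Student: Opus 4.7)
The plan is to establish the Laplacian bound via the one-dimensional localization (needle decomposition) machinery applied to the $1$-Lipschitz function $r$. Since $r(x)=\sfd(x,o)$ is globally $1$-Lipschitz, it is locally Lipschitz on $X\setminus\{o\}$, which is the baseline regularity requirement for belonging to $D(\mathbf{\Delta}, X\setminus\{o\})$.

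First I would invoke the localization theorem (valid on $\mathsf{CD}^{*}(K,N)$ essentially non-branching spaces, hence on $\RCDst(K,N)$) applied to $r$. This decomposes $m|_{X\setminus\{o\}}$ as an integral
\[
m|_{X\setminus\{o\}} \;=\; \int_{Q} m_\alpha \, d\mathfrak{q}(\alpha),
\]
where each conditional measure $m_\alpha$ is concentrated on a maximal transport ray $X_\alpha$ issuing from $o$ along which $r$ is an arclength parameter, and the pushforward of $m_\alpha$ to the interval $I_\alpha\subset\mathbb{R}_+$ under $r$ is a measure $h_\alpha(t)\,dt$ whose density $h_\alpha$ satisfies the one-dimensional $\mathsf{CD}^*(K,N)$ condition.

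Second, I would exploit the one-dimensional curvature-dimension condition, which forces $h_\alpha^{1/(N-1)}$ to satisfy a concavity inequality of Jacobi type. A standard ODE comparison then yields that $h_\alpha$ is locally Lipschitz on $I_\alpha$ and
\[
(\log h_\alpha)'(t) \;\le\; \sqrt{-K(N-1)} \, \coth\!\bigl(\sqrt{-K/(N-1)}\, t\bigr)
\qquad \text{for a.e.\ } t\in I_\alpha,
\]
matching the logarithmic derivative of the hyperbolic model density $\sinh^{N-1}(\sqrt{-K/(N-1)}\,t)$.

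Third, for a nonnegative Lipschitz test function $\varphi$ compactly supported in $X\setminus\{o\}$, I would rewrite $\int \langle \nabla\varphi, \nabla r\rangle\, dm$ ray-by-ray: along each needle, $\langle\nabla\varphi,\nabla r\rangle$ coincides with the derivative of $\varphi$ along $X_\alpha$, so
\[
-\int_X \langle \nabla\varphi, \nabla r\rangle \, dm
\;=\; \int_Q\!\!\left(-\!\int_{I_\alpha} (\varphi\circ X_\alpha)'(t)\, h_\alpha(t)\, dt\right) d\mathfrak{q}(\alpha).
\]
Integrating by parts on each needle (the boundary terms vanish by compact support), applying the density estimate above, and reassembling via $\mathfrak{q}$, I obtain the bound
\[
-\int \langle \nabla\varphi,\nabla r\rangle\, dm
\;\le\; \int \varphi \cdot \sqrt{-K(N-1)}\,\coth\!\bigl(\sqrt{-K/(N-1)}\, r\bigr) \, dm,
\]
which both establishes membership $r\in D(\mathbf{\Delta}, X\setminus\{o\})$ and gives \eqref{eq-lapComp}.

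The main obstacle is the rigorous justification of the needle decomposition and the identification of the tangential component $\langle \nabla\varphi,\nabla r\rangle$ with the ray-derivative of $\varphi$ in the infinitesimally Hilbertian setting; this is the content of the Cavalletti--Mondino localization theorem and requires the essentially non-branching property of $\RCDst(K,N)$ spaces. Once the $1$D reduction is in place, the remainder is a direct ODE comparison with the hyperbolic model.
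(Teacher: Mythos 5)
The paper does not prove this statement at all: it is quoted as a known result, with Gigli's original (non-sharp) proof for $\RCD(K,N)$ spaces \cite[Corollary 5.15]{Gig15} and the sharp version on essentially non-branching $\mathsf{CD}^*(K,N)$ (indeed $\mathsf{MCP}(K,N)$) spaces due to Cavalletti--Mondino \cite{CavMon}. Your sketch is essentially a summary of the Cavalletti--Mondino localization argument, so the route is the right one; everything you defer --- existence of the needle decomposition subordinate to $r$, the one-dimensional $\mathsf{CD}^*(K,N)$ condition on the conditional densities $h_\alpha$, and the identification of $\langle\nabla\varphi,\nabla r\rangle$ with the derivative of $\varphi$ along the ray in the infinitesimally Hilbertian setting --- is exactly the content of that reference, and none of it is routine. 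For contrast, Gigli's earlier proof goes through the Wasserstein geometry (plans representing gradients and the convexity of the entropy) rather than needles, which is why it yields the comparison only with $\tau$-coefficients on $\RCD$ spaces, whereas localization gives the sharp constant on the larger class.

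Two points in your outline deserve correction or at least a sentence. First, the needle-wise boundary terms do not all ``vanish by compact support'': compact support of $\varphi$ in $X\setminus\{o\}$ kills the term at $t=0$, but a maximal ray may terminate at a finite $b_\alpha$ inside $\mathrm{supp}\,\varphi$, producing the term $-\varphi(X_\alpha(b_\alpha))\,h_\alpha(b_\alpha^-)$. Since $\varphi\ge 0$ and $h_\alpha\ge 0$ this is non-positive --- it is precisely the negative singular part of $\mathbf{\Delta}r$ concentrated at the far ends of rays --- so it only strengthens the inequality, but it must be kept, not discarded. Second, the one-dimensional comparison gives $(\log h_\alpha)'(t)\le \sqrt{-K(N-1)}\,\coth\bigl(\sqrt{-K/(N-1)}\,(t-a_\alpha)\bigr)$ where $a_\alpha$ is the left endpoint of $I_\alpha$; to obtain the stated bound you need $a_\alpha=0$ for $\mathfrak q$-a.e.\ $\alpha$, which holds because the geodesic from $o$ to any point of a transport ray is contained in that ray by maximality, but this is where the specific structure of the distance function from a point enters and it should be said explicitly.
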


A useful tool for localization is a system of ``good'' cut-off functions. One such characterization is the following.
\begin{lem}[Lemma 3.1 of \cite{MN}]\label{lem:good-cut-off}
	Let $(X, d, m)$ be an $\mathsf{RCD}^{*}(K, N)$ space for some $K \in \mathbb{R}$ and $ N \in(1, \infty)$
	Then for every $x \in X, R>0, 0<r<R$ there exists a Lipschitz function $\rho=\rho^{r}: X \rightarrow \mathbb{R}$
	satisfying:
\begin{enumerate}
	\item $0 \leq \rho^{r} \leq 1$ on $X, \rho^{r} \equiv 1$ on $B(x,r)$ and $\op{supp} \rho^{r} \subset B(x,2r)$
	\item $r^{2}\left|\Delta \rho^{r}\right|+r\left|D \rho^{r}\right| \leq C(K, N, R)$.
\end{enumerate}
\end{lem}

Observe that for any compact set $K$ contained in an open set $U$ we can apply this lemma to find a cut-off function  $\rho_K \in W^{1,2}(X)$ which is Lipschitz, identically $1$ on $K$, identically $0$ on $X \backslash U$ and such that $\rho_K \in D(\mathbf{\Delta})$ with $\mathbf{\Delta} \rho_K \ll \mathrm{m}$ with bounded density. This version is formulated in Theorem 3.12 of \cite{GM}.

In order to introduce the notion of Hessian we define $\mathrm{Test}_{loc}(X)$ as the set of functions $f:X\to \mathbb{R}$ with the following property: For every bounded Borel set $B\subseteq X$, there exists a function $f_B\in \mathrm{Test}(X)$ such that $f_B=f$ $m$-a.e. in $B$. It is clear that $\mathrm{Test}_{bs}(X)\subset \mathrm{Test}(X)\subset \mathrm{Test}_{loc}(X)$. 

An important fact is that if $X$ satisfies $\RCDst(K,N)$ then $\mathrm{Test}_{bs}(X)$ is dense in $W^{1,2}(X)$. Furthermore, if $f\in \mathrm{Test}_{loc}(X)$ then $|\nabla f|^2\in W^{1,2}_{loc}(X)$ and by polarization, for every $f,g\in \mathrm{Test}_{loc}(X)$ we have that $\left\langle \nabla f,\nabla g\right\rangle \in W^{1,2}_{loc}(X)$ (see for example \cite[Proposition 3.1.3]{Gig2}). Moreover we have the following characterization.

\begin{lem} \label{lem:test:loc}
The set $\mathrm{Test}_{loc}(X)$ admits the description,
\begin{equation}\label{eq-test}
 \mathrm{Test}_{loc}(X)= \left\{f\in D_{loc}(\Delta)\cap L^{\infty}_{loc}(X,m) \mid |\nabla f|\in L^{\infty}_{loc}(X,m)\ \text{and} \ \Delta f\in W^{1,2}_{loc}(X)  \right\},
 \end{equation}
\end{lem}

\begin{proof}
Let $\mathrm{Test}_{loc}'(X)$ denote the right hand side of the above expression. For $f\in \mathrm{Test}_{loc}(X)$ from the definition and the discussion above we have $f\in D_{loc}(\Delta)\cap L^{\infty}_{loc}(X,m)$, $|\nabla f|\in L^{\infty}_{loc}(X,m)$ and $\Delta f\in W^{1,2}_{loc}(X)$. Since on each compact set $K$, $f$ agrees with $f_K$ when restricted to $K$ and these containments hold for $f_K\in \mathrm{Test}(X)$, we obtain $f\in \mathrm{Test}_{loc}'(X)$. 

Conversely, if $f\in \mathrm{Test}_{loc}'(X)$ then by Lemma \ref{lem:good-cut-off} for any compact set $K$ contained in an open set $U$ there exist a ``good''  cut-off function $\rho_K \in W^{1,2}(X)$ which is Lipschitz, identically $1$ on $K$, identically $0$ on $X \backslash U$ and such that $\rho_K \in D(\mathbf{\Delta})$ with $\Delta \rho_K \ll \mathrm{m}$ with bounded density. For any bounded Borel set $B$ we let $K=\overline{B}$ and define $f_B=\rho_K f$, where $\rho_K$ is the cut-off function for $K$ and any bounded open set $U\supset K$. By the Leibniz rule \cite[(3.9)]{Gig} we have $f_B\in \mathrm{Test}(X)$ and thus $f\in \mathrm{Test}_{loc}(X)$.
\end{proof}

For a function $u\in \mathrm{Test}_{loc}(X)$ we define the \textit{Hessian} of $u$ 
\[ 
\mathrm{Hess}[u]: \mathrm{Test}_{loc}(X)\times \mathrm{Test}_{loc}(X)\to L_{loc}^2(X,m),
\]
by the following expression
\begin{equation}\label{Hu}
\mathrm{Hess}[u](f,g):=\frac{1}{2}\left(  \left\langle \nabla f, \nabla\left\langle \nabla u, \nabla g\right\rangle \right\rangle + \left\langle \nabla g, \nabla\left\langle \nabla u, \nabla f\right\rangle \right\rangle - \left\langle \nabla u, \nabla\left\langle \nabla f, \nabla g\right\rangle \right\rangle \right).
\end{equation}
We note that this is a symmetric bilinear operator and it restricts to
\[ 
\mathrm{Hess}[u]: \mathrm{Test}_{bs}(X)\times \mathrm{Test}_{bs}(X)\to L^2(X,m).
\]

The space $W^{2,2}_{loc}(X)$ consists of the functions $f\in W^{1,2}_{loc}(X)$ such that for any $g_1$, $g_2$, $h\in \mathrm{Test}_{bs}(X)$,  there exists an $A\in L^2(T^*X)\otimes L^2(T^*X)$ such that

\[
2\int hA(\nabla g_1,\nabla g_2)\, \mathrm{d}m =\int -\left\langle \nabla f, \nabla g_1\right\rangle \mathrm{div}(h\nabla g_2)-\left\langle \nabla f, \nabla g_2\right\rangle \mathrm{div}(h\nabla g_1)-h\left\langle \nabla f, \nabla \left\langle \nabla g_1, \nabla g_2\right\rangle\right\rangle\, \mathrm{d}m.
\]
There is a unique such $A$ in $L^2(T^*X)\otimes L^2(T^*X)$ which is denoted by $\mathrm{Hess}(f)$ (see \cite[Section 1.5]{Gig2} for details). A very important result \cite[Theorem 3.3.8]{Gig2} states that $\mathrm{Test}(X)\subset W^{2,2}(X)$ and that for every $g_1,g_2\in \mathrm{Test}(X)$, 
\begin{equation}
\label{Hu-Hess}
\mathrm{Hess}[f](g_1,g_2)= \mathrm{Hess}(f)(\nabla g_1,\nabla g_2).
\end{equation}

It can be readily checked that $\mathrm{Test}_{loc}(X)\subset W^{2,2}_{loc}(X)$ as well, and that \eqref{Hu-Hess} is still valid for $f\in \mathrm{Test}_{loc}(X)$ and every $g_1,g_2\in \mathrm{Test}_{bs}(X)$.

The notion of \textit{divergence} of a vector field is defined as follows. Recall that $L^2_{loc}(TX)$ consists of those vector fields $V$ such that $|V|\in L^2_{loc}(X,m)$. We say that $V\in L^2_{loc}(TX)$ has a divergence in $L^2_{loc}$ and denote it by $V\in D_{loc}(\mathrm{div})$ if there exists $h\in L^2_{loc}(X,m)$ such that for every $f\in \mathrm{Test}_{bs}(X)$ it holds that 
\[
\int fh\,\mathrm{d}m =-\int df(V)\,\mathrm{d}m.
\]
In this case we write $\mathrm{div}V=h$.


\subsection{Bakry-\'Emery condition and Bochner's inequality}
\label{ss-bochner}
We begin this section by recalling the weak version of Bochner's inequality obtained by Ambrosio-Mondino-Savare \cite{AMS} and Erbar-Kuwada-Sturm \cite{EKS}.

\begin{thm}[Weak Bochner's inequality \cite{EKS, AMS}]
\label{thm-weak-bochner-inequality}
Let $(X,d,m)$ be an $\RCDst(K,N)$-space. Then, for all $f\in D(\Delta)$ with $\Delta f\in W^{1,2}(X,d,m)$ and all $g\in D(\Delta)\cap L^{\infty}(X,m)$ non-negative with $\Delta g\in L^{\infty}(X,m)$ we have
\begin{equation}\label{eq-Bochner}
\frac{1}{2}\dint\Delta g |\nabla f|^2\, \mathrm{d}m - \dint g\left\langle\nabla(\Delta f),\nabla f\right\rangle\, \mathrm{d}m\geq K\dint g|\nabla f|^2\, \mathrm{d}m + \frac{1}{N}\dint g(\Delta f)^2\,\mathrm{d}m.
\end{equation}
\end{thm}

A remarkable property is the equivalence of the $\mathsf{RCD}^{*}(K,N)$ condition and the Bochner inequality under some conditions (namely the Sobolev to Lipschitz property---which we recall below---and a certain volume growth estimate). The infinite dimensional case was settled in \cite{AGSrcd}, while the (technically more involved) finite dimensional refinement was established in \cite{EKS} and \cite{AMS}. 

Let $f,g \in \mathrm{Test}_{loc}(X)$ and define the measure-valued map 
\[
\Gamma_2(f,g):= \frac{1}{2}\mathbf{\Delta}\left\langle \nabla f, \nabla g\right\rangle -\frac{1}{2}\left(\left\langle \nabla f, \nabla \Delta g\right\rangle + \left\langle \nabla g, \nabla \Delta f\right\rangle \right)m.
\]
Let $\Gamma_2(f):=\Gamma_2(f,f)$. It was shown by Ambrosio-Mondino-Savar\'e  \cite{AMS} and Erbar-Kuwada-Sturm \cite{EKS} that the following non-smooth Bakry-\'Emery condition is satisfied on an $\RCDst(K,N)$-space:  For every $f\in \mathrm{Test}(X)$ 
\begin{equation}
\label{eq-bakry-emery-condition}
\Gamma_2(f)\geq \left(K|\nabla f|^2 + \frac{1}{N}(\Delta f)^2 \right)m.
\end{equation}

It follows immediately from the definition of local test functions that \eqref{eq-bakry-emery-condition} is satisfied as well for every $f\in \mathrm{Test}_{loc}(X)$. 
 
 Now we state a fundamental technical tool (see \cite{Sav}) which is useful when ``changing variables''. For simplicity, we state a weaker version than that in \cite{Sav}, suitable for our purposes. This result follows from the fact that $\mathrm{Test}_{loc}(X)$ is an algebra and from the Chain and Leibniz Rules for differentiation. 
 
\begin{prop}[\cite{Sav}]
\label{prop:change_of_variables}
Let $n\in \mathbb{N}$ and $\Psi:\mathbb{R}^n\to \mathbb{R}$ be a polynomial with no constant term. Let us fix  $f_1,\ldots, f_n \in \mathrm{Test}_{loc}(X)$ and denote $\Psi(f):=\Psi(f_1,\ldots,f_n):X\to\RR$ and $\Psi_{ij}:=\partial_{ij}\Psi$. Then, $\Psi(f)$ is in $\mathrm{Test}_{loc}(X)$ and the following formulae  hold true

\begin{itemize}
\setlength\itemsep{1em}
\item[(i)] 
$\begin{aligned}[t]
\left| \nabla \Psi(f) \right|^2m = \sum_{i,j}^{n} \Psi_i(f)\Psi_j(f)\left\langle \nabla f_i, \nabla f_j\right\rangle m,
\end{aligned}$

\item[(ii)]
$\begin{aligned}[t]
\mathbf{\Delta}(\Psi(f)) = \sum_i\Psi_i(f)\mathbf{\Delta}(f_i)+\sum_{i,j}^{n}\Psi_{ij}(f)\left\langle \nabla f_i, \nabla f_j\right\rangle m,
\end{aligned}$

\item[(iii)] 
$\begin{aligned}[t]
\Gamma_2(\Psi(f))& = \sum_{i,j}^{n}\Psi_i(f)\Psi_j(f)\Gamma_2(f_i,f_j) + 2\sum_{i,j,k}^{n}\Psi_i(f)\Psi_{jk}(f) \mathrm{Hess}[f_i](f_j,f_k)m\\
 &+ \sum_{i,j,k,h}^{n}\Psi_{ik}(f)\Psi_{jh}(f)\left\langle \nabla f_i, \nabla f_j\right\rangle\left\langle \nabla f_k, \nabla f_h\right\rangle m.
\end{aligned}$
\end{itemize} 
\end{prop}


\subsection{Isomorphisms of metric measure spaces}

This is an account of several results in \cite{Gig}.
We consider metric measure spaces $(X,d,m)$ such that $(X,d)$ is complete and separable and $m$ is a non-negative Radon measure on $X$. We begin by recalling the definition of isomorphism of metric measure spaces.

\begin{defn}[Isomorphisms between metric measure spaces]
We say that two metric measure spaces $(X_1,d_1,m_1)$ and $(X_2,d_2,m_2)$ are  \emph{isomorphic} provided there exists an isometry $T:(\supp(m_1),d_1)\to(\supp(m_2),d_2)$ such that $T_\sharp m_1= m_2$. Any such $T$ is called an isomorphism.
\end{defn}

The following property will allow us to study isomorphisms between metric measure spaces in terms of isometries between their $W^{1,2}$ spaces, see Proposition \ref{prop:isom}. 

\begin{defn}[Sobolev to Lipschitz property]
\label{def:sobtolip}
Let $(X,d,m)$ be a metric measure space. We say that $(X,d,m)$ has the \textit{Sobolev to Lipschitz property} if any $f\in W^{1,2}(X,d,m)$ with $|\nabla f|\leq 1$ $m$-a.e. admits a 1-Lipschitz representative, that is,  a 1-Lipschitz map $g:X\to\mathbb R$ such that $f=g$ $m$-a.e..
\end{defn}

Gigli showed (using a result of Rajala \cite{Raj}) that, for finite $N$,  $\mathsf{CD}(K,N)$-spaces have the Sobolev to Lipschitz property. Furthermore, Ambrosio-Gigli-Savar\'e showed that $\mathsf{RCD}(K,\infty)$-spaces also have the Sobolev to Lipschitz property for $N\in(1,\infty)$ (see the paragraph after \cite[Definition 4.9]{Gig}). As $\mathsf{CD}^*(K,N)$ spaces are $\mathsf{CD}(K^*,N)$ spaces for a suitable value of $K^*$ (see \cite{Cav} and \cite{CavStu}), $\RCDst(K,N)$ with $N\in(1,\infty)$ spaces also satisfy the Sobolev to Lipschitz property.
 
\begin{lem}[Contractions by local duality \protect{\cite[Lemma 4.19]{Gig}}]
\label{le:contrdual}
Let $(X_1,d_1,m_1)$ and  $(X_2,d_2,m_2)$ be two metric measure spaces with the Sobolev to Lipschitz property where $m_2$ gives finite mass to bounded sets, and   $T:X_1\to X_2$ a  Borel map such that $T_\sharp m_1\leq C m_2$ for some $C>0$. Then the following are equivalent
\begin{itemize}
\item[i)] $T$ is $m_1$-a.e. equivalent to a 1-Lipschitz map  from $(\supp(m_1),d_1)$ to $(\supp(m_2),d_2)$
\item[ii)] For any $f\in W^{1,2}(X_2,d_2,m_2)$ we have  $f\circ T\in  W^{1,2}(X_1,d_1,m_1)$, and moreover,
\[
|\nabla (f\circ T)| \leq | \nabla f| \circ T,\qquad m_1-a.e..
\]
\end{itemize} 
\end{lem}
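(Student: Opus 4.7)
The plan is to prove each implication separately. The forward direction exploits the characterization of weak upper gradients via test plans, while the reverse builds on a countable family of truncated distance functions on $X_2$ together with the Sobolev-to-Lipschitz property applied on $X_1$.

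For (i)$\Rightarrow$(ii), I would fix a $1$-Lipschitz representative of $T$ from $(\supp(m_1),d_1)$ to $(\supp(m_2),d_2)$ and let $\pi$ be an arbitrary test plan on $X_1$. Because $(e_t)_\sharp\pi\le Cm_1$ is concentrated on $\supp(m_1)$, Fubini and the continuity of trajectories force $\pi$-a.e.\ curve to take values in $\supp(m_1)$, so $\Phi(\gamma):=T\circ\gamma$ is well defined and absolutely continuous with $|(T\circ\gamma)'_t|\le|\dot\gamma_t|$. The measure $\eta:=\Phi_\sharp\pi$ then satisfies $(e_t)_\sharp\eta=T_\sharp(e_t)_\sharp\pi\le C^2 m_2$ and has finite kinetic energy, hence it is a test plan on $X_2$. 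Testing the weak upper gradient property of $|\nabla f|$ against $\eta$ and changing variables through $\Phi$ gives
\[
\int|f(T\gamma_1)-f(T\gamma_0)|\,\mathrm{d}\pi \le \int\!\!\int_0^1(|\nabla f|\circ T)(\gamma_t)\,|\dot\gamma_t|\,\mathrm{d}t\,\mathrm{d}\pi(\gamma),
\]
so $|\nabla f|\circ T$ is a weak upper gradient for $f\circ T$. The $L^2$ bounds $\|f\circ T\|_{L^2(m_1)}^2\le C\|f\|_{L^2(m_2)}^2$ and the analogous bound for $|\nabla f|\circ T$ follow from $T_\sharp m_1\le Cm_2$, yielding both the membership $f\circ T\in W^{1,2}(X_1)$ and the pointwise estimate $|\nabla(f\circ T)|\le|\nabla f|\circ T$ $m_1$-a.e.

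For (ii)$\Rightarrow$(i), I would select a countable dense set $\{y_n\}\subset\supp(m_2)$ and introduce the truncated distance functions $g_{n,k}(z):=(k-d_2(z,y_n))^+$ for $n,k\in\mathbb{N}$. Each $g_{n,k}$ is $1$-Lipschitz, bounded by $k$, and supported in $\overline{B(y_n,k)}$, which has finite $m_2$-mass by hypothesis, so $g_{n,k}\in W^{1,2}(X_2,d_2,m_2)$. By (ii), $g_{n,k}\circ T\in W^{1,2}(X_1,d_1,m_1)$ with $|\nabla(g_{n,k}\circ T)|\le|\nabla g_{n,k}|\circ T\le 1$ $m_1$-a.e. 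The Sobolev-to-Lipschitz property of $(X_1,d_1,m_1)$ provides $1$-Lipschitz representatives $\tilde g_{n,k}:X_1\to\mathbb{R}$ with $g_{n,k}\circ T=\tilde g_{n,k}$ on a set $A_{n,k}$ of full $m_1$-measure. Since $T_\sharp m_1\le Cm_2$, the set $\{T(x)\in\supp(m_2)\}$ is likewise $m_1$-conull, so $A:=\bigcap_{n,k}A_{n,k}\cap\{T\in\supp(m_2)\}$ is $m_1$-conull as well.

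For any $x,x'\in A\cap\supp(m_1)$ and any $n$, I would take $k$ large enough that $d_2(T(x),y_n),d_2(T(x'),y_n)<k$, so the truncations are inactive and
\[
|d_2(T(x),y_n)-d_2(T(x'),y_n)|=|g_{n,k}(T(x))-g_{n,k}(T(x'))|=|\tilde g_{n,k}(x)-\tilde g_{n,k}(x')|\le d_1(x,x').
\]
Choosing a subsequence $y_n\to T(x)$ in $\supp(m_2)$ (available since $T(x)\in\supp(m_2)$), the left-hand side converges to $d_2(T(x),T(x'))$. Thus $T$ is $1$-Lipschitz on $A\cap\supp(m_1)$, and since the latter is dense in $\supp(m_1)$ (its complement being $m_1$-negligible) and $(\supp(m_2),d_2)$ is complete, $T|_A$ extends uniquely to a $1$-Lipschitz map $\widetilde T:(\supp(m_1),d_1)\to(\supp(m_2),d_2)$ that coincides with $T$ $m_1$-almost everywhere. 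The main obstacle I anticipate is the countable bookkeeping of this last step: choosing the family $\{g_{n,k}\}$ rich enough that the distance-reconstruction identity $d_2(y,y')=\sup_n|d_2(y,y_n)-d_2(y',y_n)|$ applies, while simultaneously intersecting the associated null sets (together with the pullback of $\supp(m_2)$) so that the pointwise estimate upgrades safely to a genuine $1$-Lipschitz map into $\supp(m_2)$.
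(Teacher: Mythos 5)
Your proof is correct and follows exactly the standard route of the cited result (Gigli's Lemma 4.19, which the paper quotes without reproving): pushing test plans forward through the $1$-Lipschitz representative of $T$ to transfer the weak upper gradient inequality for (i)$\Rightarrow$(ii), and truncated distance functions $g_{n,k}$ combined with the Sobolev-to-Lipschitz property and the reconstruction of $d_2$ from them for (ii)$\Rightarrow$(i). This is also the same device the paper itself uses when it invokes the lemma later (the family $f_{n,k}$ in the proof of Theorem \ref{thm:rapprcont}), so there is nothing to add.
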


\begin{prop}[Isomorphisms via duality with  Sobolev norms \protect{\cite[Proposition 4.20]{Gig}}]\label{prop:isom}
Let $(X_1,d_1,m_1)$ and $(X_2,d_2,m_2)$ be two metric measure spaces with the Sobolev to Lipschitz property and $T:X_1\to X_2$ a Borel map. Assume that both $m_1$ and $m_2$ give finite mass to bounded sets. Then the following are equivalent.
\begin{itemize}
\item[i)] Up to a modification on a $m_1$-negligible set, $T$ is an isomorphism of the metric measure spaces
\item[ii)] The following two are true.
\begin{itemize}
\item[ii-a)] There exist a Borel $m_1$-negligible set $\mathcal N\subset X_1$ and a Borel map $S:X_2\to X_1$ such that $S(T(x))=x$, $\forall x\in X_1\setminus \mathcal N$.
\item[ii-b)]The right composition with $T$ produces an isometry of $W^{1,2}(X_2,d_2,m_2)$ in $W^{1,2}(X_1,d_1,m_1)$, i.e. $f\in W^{1,2}(X_2,d_2,m_2)$ if and only if $f\circ T\in W^{1,2}(X_1,d_1,m_1)$ and in this case $\|f\|_{W^{1,2}(X_2)}=\|f\circ T\|_{W^{1,2}(X_1)}$.
\end{itemize}
\end{itemize}
\end{prop}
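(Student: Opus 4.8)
\textbf{Proof proposal for Proposition \ref{prop:isom}.}

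The plan is to prove the two implications separately, with the forward direction (i) $\Rightarrow$ (ii) being essentially immediate and the converse (ii) $\Rightarrow$ (i) being the substantive one. For (i) $\Rightarrow$ (ii): if $T$ is an isomorphism after modification on a null set, then (ii-a) follows by taking $S := T^{-1}$ on $\supp(m_2)$ (extended arbitrarily as a Borel map), and (ii-b) follows because an isomorphism of metric measure spaces preserves the distance and pushes $m_1$ to $m_2$, hence preserves test plans, minimal weak upper gradients and $L^2$ norms; consequently $f \mapsto f \circ T$ is a bijective isometry between the two $W^{1,2}$ spaces. One should be slightly careful that modifying $T$ on an $m_1$-null set does not affect $f \circ T$ as an element of $W^{1,2}(X_1)$ (equivalence classes), so this is harmless.

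For (ii) $\Rightarrow$ (i), the key tool is Lemma \ref{le:contrdual}. First I would apply it in the form (ii) $\Rightarrow$ (i) of that lemma directly: hypothesis (ii-b) gives in particular that $f \circ T \in W^{1,2}(X_1)$ with $\|f \circ T\|_{W^{1,2}(X_1)} = \|f\|_{W^{1,2}(X_2)}$ for all $f \in W^{1,2}(X_2)$, and since an isometry of Sobolev norms forces $|\nabla(f\circ T)| = |\nabla f|\circ T$ $m_1$-a.e.\ (the $L^2$ norms of both sides agree and the inequality $\le$ would be available once we know $T$ has bounded compression), Lemma \ref{le:contrdual} yields that $T$ agrees $m_1$-a.e.\ with a $1$-Lipschitz map $\bar T:(\supp(m_1),d_1)\to(\supp(m_2),d_2)$. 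The bounded-compression hypothesis $T_\sharp m_1 \le C m_2$ needed to invoke the lemma must itself be extracted: I would argue that if it failed, one could build an $f\in W^{1,2}(X_2)$ (e.g.\ a suitable truncation of a distance function, using that $m_2$ gives finite mass to bounded sets) whose pullback $f\circ T$ is not in $L^2(X_1)$, contradicting (ii-b); alternatively this is part of the standing setup inherited from \cite{Gig} and can simply be cited.

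Next I would run the same argument with the roles reversed, using $S$ from (ii-a). The point is that (ii-b) is an \emph{isometry}, hence invertible, so for every $g \in W^{1,2}(X_1)$ there is $f \in W^{1,2}(X_2)$ with $f \circ T = g$ $m_1$-a.e.; one checks that this forces $g \circ S = f$ $m_2$-a.e.\ (using $S \circ T = \mathrm{id}$ off a null set together with the pushforward relation $T_\sharp m_1$ absolutely continuous w.r.t.\ $m_2$ and, symmetrically, $S_\sharp m_2 \ll m_1$, which again follows from finiteness of mass on bounded sets and (ii-b)), so that right composition with $S$ is the inverse isometry $W^{1,2}(X_1) \to W^{1,2}(X_2)$. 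Applying Lemma \ref{le:contrdual} to $S$ then gives a $1$-Lipschitz representative $\bar S:(\supp(m_2),d_2)\to(\supp(m_1),d_1)$. Finally, $\bar S \circ \bar T = \mathrm{id}$ $m_1$-a.e.\ and $\bar T \circ \bar S = \mathrm{id}$ $m_2$-a.e.; since both composites are continuous and agree with the identity on the dense sets $\supp(m_1)$, $\supp(m_2)$ respectively (being $1$-Lipschitz, they are continuous, and two continuous maps equal a.e.\ on the support of the measure are equal everywhere on the support), they are genuine inverses. Hence $\bar T$ is a bijective $1$-Lipschitz map with $1$-Lipschitz inverse $\bar S$, i.e.\ an isometry of $(\supp(m_1),d_1)$ onto $(\supp(m_2),d_2)$; and $\bar T_\sharp m_1 = m_2$ because $f \mapsto \|f\|_{L^2}$ is preserved, so $\int f\, d(\bar T_\sharp m_1) = \int f \circ \bar T \, dm_1 = \int f\, dm_2$ for a rich enough class of $f$. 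Thus $\bar T$ is the desired isomorphism, and it equals $T$ off an $m_1$-null set.

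The main obstacle I expect is the bookkeeping around null sets and supports: one must pass from the $m_i$-a.e.\ identities $S\circ T = \mathrm{id}$ and the a.e.\ pullback identities to honest pointwise statements on the supports, and this requires knowing that the $1$-Lipschitz representatives $\bar T,\bar S$ restrict to maps \emph{between the supports} (not just between the ambient spaces) and that the relevant pushforward measures are mutually absolutely continuous so that ``$m_1$-a.e.'' and ``$m_2$-a.e.'' can be transferred back and forth. All of this is standard in the \cite{Gig} framework — indeed the proposition is quoted verbatim from there — so in the paper itself I would simply cite \cite[Proposition 4.20]{Gig} rather than reproduce the argument; the sketch above is how one would reconstruct it from Lemma \ref{le:contrdual} and the Sobolev-to-Lipschitz property.
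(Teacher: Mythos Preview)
The paper does not include a proof of this proposition; it is stated in the Preliminaries as a verbatim citation of \cite[Proposition 4.20]{Gig} and used as a black box. Your sketch is a faithful reconstruction of how that result is proved in \cite{Gig}: the direction (i)$\Rightarrow$(ii) is immediate, and for (ii)$\Rightarrow$(i) one applies Lemma~\ref{le:contrdual} to both $T$ and $S$ to obtain $1$-Lipschitz representatives, checks they are mutual inverses on the supports, and reads off measure preservation from the $L^2$-isometry. Your closing remark that in the paper one would simply cite the reference is exactly what happens here.
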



\subsection{Warped product of metric measure spaces}\label{ssec-warp}

Here we review the main definitions and results concerning the warped products of metric measure spaces following Gigli-Han \cite{GigHan}. 

Let $(X, d_X, m_X)$ and $(Y, d_Y, m_Y)$ be two complete and separable metric measure spaces and $w_d,w_m:Y\to \mathbb [0, \infty)$ two continuous functions such that $\{w_d=0\}\subset\{w_m=0\}$.  
The $l_w$-length of an absolutely continuous curve $\gamma=(\gamma^Y,\gamma^X)$ in $Y \times X$ is defined by
\[
l_w[\gamma]=\int_0^1 \sqrt{|\dot{\gamma}^Y_t|^2+w^2_d(\gamma^Y_t)|\dot{\gamma}^X_t|^2} \, \mathrm{d} t.
\]
The function $d_w: (Y \times X )^2 \to \mathbb R$ given by
\[
d_w(p,q)=\inf\{l_w[\gamma]:\gamma ~\text{is an absolutely continuous curve from}~ p ~\text{to}~ q \}
\]
is a pseudometric. Hence, it induces an equivalence relation on $Y \times X$.  By taking the quotient and then its completion we obtain a metric space denoted by $Y\times_wX$ and an induced distance denoted also by $d_w$.  If $w_d(y)>0$ there is no abuse in denoting the elements of $Y\times_wX$ by $(y,x)$ with $y\in Y$ and $x\in X$, because points in the completion not coming from points in $Y\times X$ will be negligible with respect to the measure of $Y\times_wX$. The same holds for the set  of  elements $(y,x)$ that satisfy $w_d(y)=0$. 

The measure $m_w$ on $Y\times_wX$ is defined as
\begin{equation}
\label{eq:warpmeas}
\int f(x)g(y)\, \mathrm{d} m_{w}(y,x)= \int \left(\int f(x)w_m(y)\,\mathrm{d}m_X(x) \right) g(y)\,\mathrm{d}m_Y(y),
\end{equation}
for any Borel non-negative functions $f: X \to \mathbb R$ and $g: Y \to \mathbb R$. 

The warped product of $(X, d_X, m_X)$ and $(Y, d_Y, m_Y)$ via the functions $w_d$ and $w_m$, called warping functions, is the metric measure space denoted by $(Y\times_wX,d_w,m_w)$. By definition $(Y\times_wX,d_w,m_w)$ is complete, separable and is a length space.

\begin{defn}[Almost everywhere locally doubling space]\label{def:aeLoc}
Let $(X,d,m)$ be a metric measure space. We say that it is an almost everywhere locally doubling space 
provided there exists a Borel set $B$ with $m$-negligible complement such that for every $x \in B$ there exists an open
 set $U$ containing $x$ and constants $C,R > 0$ for which 
 \[
 m (B(y,2r)) \leq C m(B(y,r))
 \]
 for $r \in (0,R)$ and $y \in U$.
\end{defn}

\begin{defn}[Measured-length space]\label{def:measL}
Let $(X,d,m)$ be a metric measure space. We say that it is  measured-length if there exists a Borel set $A \subset X$ with $m$-negligible complement that satisfies the following. For all  $x_0,x_1 \in A$, there exist $\varepsilon > 0$ and a map $(0,\varepsilon]^2 \to \mathcal P(C([0,1],X))$, $(\varepsilon_0, \varepsilon_1) \mapsto \pi ^{\varepsilon_0, \varepsilon_1}$, such that 
\begin{itemize}
\item For any $\varphi \in C_b(C([0,1], X))$, the map $(0,\varepsilon]^2 \to \mathbb R$ given by 
\[
(\varepsilon_0, \varepsilon_1) \mapsto \int \varphi d \pi ^{\varepsilon_0, \varepsilon_1},
\]
is Borel. 
\item  For every $\varepsilon_0, \varepsilon_1 \in (0,\varepsilon]$  and $i=0,1$,
we have 
 \[
 (e_i)_\sharp \pi^{\varepsilon_0, \varepsilon_1} = \frac {1_{B (x_i, \varepsilon_i) } } {m(B (x_i,  \varepsilon_i))}m.
  \] 
\item We have 
 \[
  \limsup_{\varepsilon_0, \varepsilon_1 \downarrow 0} \int \int_0^1 |  \gamma^{\cdot}_t|^2 \,dt \,d \pi ^{\varepsilon_0, \varepsilon_1} (\gamma) \leq d^2(x_0,x_1).
  \]
\end{itemize}
\end{defn}

\begin{thm}[\protect{\cite[Theorem 3.22]{GigHan}}]\label{thm:StoLipGH}
Let $(X,d,m)$ be an a.e.\ locally doubling and measured-length space, $I\subset \mathbb R$ a closed, possibly unbounded, interval and $w_d,w_m:I\to\mathbb [0,\infty)$ a couple of warping functions.  Assume that $w_m$ is strictly positive in the interior of $I$. Then the warped product space $(X_w,d_w,m_w)$, where $X_w= I \times_w X$, is almost everywhere doubling and a measured-length space. Hence, it has the Sobolev to Lipschitz property.
\end{thm}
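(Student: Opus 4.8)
The plan is to deduce Theorem~\ref{thm:StoLipGH} by combining two facts: (a) the Sobolev-to-Lipschitz property is implied by the conjunction of almost-everywhere local doubling and the measured-length property (a general criterion from \cite{AmbrosioColomboDiMarino} / \cite{GigHan}, valid on complete separable metric measure spaces), and (b) both of these structural properties are stable under the warped product construction with the warping functions as in the hypothesis. Thus the substantive content is the stability statements, and the Sobolev-to-Lipschitz conclusion is then automatic. I would organize the argument in three steps.

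\medskip
\noindent\textbf{Step 1: Almost everywhere local doubling for $X_w$.} Start from a Borel set $B\subset X$ of full $m$-measure witnessing a.e.\ local doubling of $X$. For a point $(t,x)\in I\times_w X$ with $t$ in the interior of $I$ and $x\in B$, pick a small parameter $r_0$ so that on $[t-2r_0,t+2r_0]$ the continuous warping function $w_d$ is bounded above and below by positive constants $c\le w_d\le C$ (using $w_m>0$ hence $w_d>0$ in the interior; for $w_d$ vanishing only at endpoints of $I$, one simply restricts to interior points, which still form a full-measure set in $X_w$ because the endpoints contribute a single fiber of $m_w$-measure zero once $w_m$ vanishes there, or a product fiber that is itself a.e.\ locally doubling). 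On that neighborhood the warped distance $d_w$ is bi-Lipschitz equivalent to the product distance $\sqrt{d_I^2+d_X^2}$, and the measure $m_w$ has density $w_m(t)$ which is pinched between positive constants. Since the product of two a.e.\ locally doubling spaces is a.e.\ locally doubling (the interval $I$ with Lebesgue measure being trivially locally doubling), and local doubling is a bi-Lipschitz- and bounded-density-change-invariant notion at small scales, $X_w$ is a.e.\ locally doubling with witnessing set $B_w := (\mathrm{int}\,I)\times B$.

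\medskip
\noindent\textbf{Step 2: Measured-length for $X_w$.} Let $A\subset X$ be the full-measure Borel set from the measured-length property of $X$, and set $A_w := (\mathrm{int}\,I)\times A$. Given two points $(t_0,x_0),(t_1,x_1)\in A_w$, I would build the plans $\pi^{\varepsilon_0,\varepsilon_1}$ on $C([0,1];X_w)$ as a ``warped product'' of the plans $\pi^{\varepsilon_0,\varepsilon_1}_X$ furnished by the measured-length property of $X$ (joining $x_0$ to $x_1$) with a deterministic reparametrized interval geodesic in $I$ from $t_0$ to $t_1$—more precisely, one should use the constant-speed geodesic of the one-dimensional warped metric on $I\times X$ projected to $I$, and let the base point in $X$ fluctuate according to $\pi_X$. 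The marginal condition $(e_i)_\sharp\pi^{\varepsilon_0,\varepsilon_1}=\mathbf 1_{B_{\varepsilon_i}(x_i,t_i)}/m_w(B_{\varepsilon_i})\, m_w$ is arranged by taking the product of the corresponding normalized-ball marginals on $X$ and on $I$ (using again the bi-Lipschitz comparison of balls in $X_w$ with product balls at small scale, and absorbing the harmless constant discrepancy by a further normalization / by noting that as $\varepsilon\downarrow0$ the warping density is asymptotically constant). Borel measurability in $(\varepsilon_0,\varepsilon_1)$ is inherited from that of $\pi_X$ together with continuity in the interval factor. Finally the crucial asymptotic energy bound
\[
\overline{\lim_{\varepsilon_0,\varepsilon_1\downarrow0}}\ \iint_0^1 |\dot\gamma_t|^2\,dt\,d\pi^{\varepsilon_0,\varepsilon_1}(\gamma)\ \le\ d_w^2\big((t_0,x_0),(t_1,x_1)\big)
\]
follows by splitting the warped metric speed as $|\dot\gamma_t|^2 = |\dot\gamma^I_t|^2 + w_d^2(\gamma^I_t)|\dot\gamma^X_t|^2$, using the analogous bound in $X$ to control the second term (up to $\sup w_d^2$ on the relevant interval, which converges to $w_d^2$ at the limiting point as $\varepsilon\to0$), controlling the first term by $d_I^2(t_0,t_1)$ plus an error vanishing with $\varepsilon$, and recognizing that the resulting sum is exactly the length of the optimal warped curve joining the two points, i.e.\ $d_w^2$.

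\medskip
\noindent\textbf{Step 3: Conclusion.} Having shown $X_w$ is a.e.\ locally doubling and measured-length, invoke the general implication (that these two properties together force the Sobolev-to-Lipschitz property) to finish. The main obstacle is Step~2, specifically matching the prescribed normalized-ball marginals exactly (Definition~\ref{def:measL} is rigid about this) while simultaneously keeping the energy estimate sharp: the warping function distorts both balls and curve lengths, so one must be careful that all the distortion factors are of the form $1+o(1)$ as $\varepsilon\downarrow0$ and hence do not spoil the sharp constant $d_w^2$ on the right-hand side. The cleanest way around this is to first prove the statement when $I$ is a compact subinterval on which $w_d$ is bounded away from $0$ and $\infty$ (so that $d_w$ is globally bi-Lipschitz to the product distance and $m_w$ has density pinched between positive constants, making both properties transparent from the product case plus a bi-Lipschitz-invariance lemma), and then pass to general $I$ by exhausting its interior by such compact subintervals and using the locality of both defining properties.
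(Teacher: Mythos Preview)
The paper does not supply its own proof of this statement: Theorem~\ref{thm:StoLipGH} is quoted verbatim from Gigli--Han \cite[Theorem~3.22]{GigHan} as a preliminary result in Section~\ref{ssec-warp}, with no argument given. So there is nothing in the present paper to compare your proposal against.

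That said, your three-step outline is a faithful sketch of how such a proof is organized in \cite{GigHan}: one checks that a.e.\ local doubling and the measured-length property are each stable under the warped product construction (your Steps~1 and~2), and then invokes the general criterion that these two properties jointly imply Sobolev-to-Lipschitz. You correctly flag the delicate point in Step~2: Definition~\ref{def:measL} demands exact normalized-ball marginals, and warped balls are not product balls, so the naive ``product of plans'' construction does not immediately give the right marginals. Your proposed fix---localize to compact subintervals of $\mathrm{int}\,I$ where $w_d$ is pinched between positive constants, so that the warped and product structures are bi-Lipschitz equivalent with constants tending to $1$ as the scale shrinks---is the right idea, though making the marginal condition hold \emph{exactly} (rather than up to bi-Lipschitz distortion) still requires care; in \cite{GigHan} this is handled by a more direct construction of the test plans adapted to the warped geometry rather than by reduction to the Cartesian product case.
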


The following result may be shown from the equivalence of the Beppo-Levi space (\cite[Definitions 3.8, 3.9]{GigHan}) and the Sobolev space on warped products obtained by Gigli-Han. For simplicity, we will not restate here the precise definition of the Beppo-Levi space, rather only summarize their results in a manner suitable for our purposes (cf. \cite[Propositions 3.10, 3.13, 3.14]{GigHan}). Given $f:X_w\to\mathbb R$, let $f^{(t)}:X\to\mathbb R$ and $f^{(x)}:I\to\mathbb R$ denote the functions $f^{(t)}(x)=f(t,x)$ and $f^{(x)}(t)=f(t,x)$.

\begin{thm}[\cite{GigHan}] \label{def-warped-BL} 
Let $(X,d,m)$ be a metric measure space, $I\subset \mathbb R$ a closed, possibly unbounded, interval and $w_d,w_m:I\to\mathbb [0,\infty)$ warping functions. Suppose that $\{ w_m=0\} $ is finite and for some $C >0$, $w_m(t) \leq C \inf_{\{s:\, w_m(s)=0\}} |t-s|$ for all $t \in  I$, then the following two are equivalent:
\begin{itemize}
\item[1.] $f \in  W^{1,2}(X_w,d_w,m_w)$
\item[2.]
\begin{itemize}
	\item [(i)] For $m$-a.e.\ $x\in X$ we have $f^{(x)} \in W^{1,2}(\mathbb R, d_{Euc}, w_m\mathcal L^1)$,
	\item[(ii)] For ${w_m}\mathcal L^1$-a.e. $t\in \mathbb R$ we have $f^{(t)}\in W^{1,2}(X)$,
	\item[(iii)] For all $(t,x) \in X_w$, 
	\begin{equation}
	\label{eq:warpedgrad}
	|\nabla  f|^2_{X_w}(t,x)=w_d^{-2}(t)|\nabla  f^{(t)}|_X^2(x)+|\nabla f^{(x)}|_{L^2(\mathbb R,w_m\mathcal L^1)}.
	\end{equation}
\end{itemize}
\end{itemize}
\end{thm}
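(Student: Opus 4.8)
The statement to prove is Theorem~\ref{def-warped-BL}, the characterization of $W^{1,2}(X_w,d_w,m_w)$ in terms of slicewise Sobolev regularity and the pointwise gradient formula. The plan is to deduce it from the Gigli--Han identification of the Sobolev space $W^{1,2}(X_w)$ with the Beppo--Levi space $\mathsf{BL}(X_w)$ under the stated hypotheses on the warping functions.

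\medskip

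First I would recall the precise content of \cite[Propositions 3.10, 3.13, 3.14]{GigHan}: the Beppo--Levi space $\mathsf{BL}(X_w)$ is defined as the set of $f\in L^2(X_w,m_w)$ such that (a) for $m$-a.e.\ $x$ the curve $t\mapsto f^{(x)}(t)$ belongs to $W^{1,2}(I,d_{Euc},w_m\mathcal L^1)$, (b) for $w_m\mathcal L^1$-a.e.\ $t$ the slice $f^{(t)}$ belongs to $W^{1,2}(X,d,m)$, and (c) the quantity obtained by integrating $w_d^{-2}(t)|\nabla f^{(t)}|_X^2(x) + |{f^{(x)}}'|^2(t)$ against $m_w$ is finite; with the Beppo--Levi ``energy'' being exactly that integral. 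The hypotheses ``$\{w_m=0\}$ finite'' and ``$w_m(t)\le C\inf_{s:w_m(s)=0}|t-s|$'' are precisely the ones under which Gigli--Han prove $W^{1,2}(X_w,d_w,m_w)=\mathsf{BL}(X_w)$ with equality of the two energies. Granting that identification, the equivalence of items 1 and 2 in the theorem is immediate: conditions (i), (ii), (iii) of item 2 are a restatement of the defining conditions of $\mathsf{BL}(X_w)$ once one observes that finiteness of $\int |\nabla f|^2_{X_w}\,dm_w$ is automatic for $f\in W^{1,2}(X_w)$ and that $f\in L^2(X_w)$ is part of both sides. So the only genuine work is to match the local/pointwise gradient formula~\eqref{eq:warpedgrad} to the integrand appearing in the Beppo--Levi energy.

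\medskip

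The key step is therefore establishing the pointwise identity~\eqref{eq:warpedgrad}, not merely its integrated version. I would proceed as follows. From the $\mathsf{BL}$ description we already get, for $f\in W^{1,2}(X_w)$, that $g(t,x):=w_d^{-2}(t)|\nabla f^{(t)}|_X^2(x)+|{f^{(x)}}'|^2(t)$ is a well-defined element of $L^1(X_w,m_w)$ and that $\int_{X_w}|\nabla f|^2_{X_w}\,dm_w=\int_{X_w} g\,dm_w$. To upgrade this to $|\nabla f|^2_{X_w}=g$ $m_w$-a.e.\ I would use a localization argument: apply the same energy identity to $f\cdot\chi$ (or to $\eta(t)f$, $\eta$ a cutoff in $I$, and to $f$ restricted over a ball in $X$) for a sufficiently rich family of Lipschitz cutoffs, using the locality of the minimal weak upper gradient (Proposition~\ref{prop-weakUg}) and of the slice gradients, together with the Leibniz/chain rules for $d$ in the tangent module formalism, to conclude that the integrated equality over every such localized region forces the pointwise equality a.e. Alternatively, and perhaps more cleanly, one can note that for curves $\gamma=(\gamma^Y,\gamma^X)$ in $X_w$ the definition of $l_w$ gives $|\dot\gamma_t|^2 = |\dot\gamma^Y_t|^2 + w_d^2(\gamma^Y_t)|\dot\gamma^X_t|^2$, so testing the weak-upper-gradient inequality along ``horizontal'' plans (moving only in the $X$ factor, at frozen $t$) and along ``vertical'' plans (moving only in $I$, at frozen $x$) yields the two one-sided bounds $|\nabla f|^2_{X_w}(t,x)\ge w_d^{-2}(t)|\nabla f^{(t)}|^2_X(x)$ and $|\nabla f|^2_{X_w}(t,x)\ge |{f^{(x)}}'|^2(t)$ a.e.; the reverse inequality $|\nabla f|^2_{X_w}\le g$ then follows because, by the Pythagorean splitting of metric speed and the definition of weak upper gradient, $\sqrt{g}$ is itself a weak upper gradient of $f$ on $X_w$, so by minimality $|\nabla f|^2_{X_w}\le g$. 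Combining gives~\eqref{eq:warpedgrad}.

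\medskip

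The main obstacle I anticipate is the pointwise (as opposed to integrated) character of~\eqref{eq:warpedgrad}, and the careful handling of the degeneracy locus $\{w_m=0\}=\{w_d=0\}$. Away from that finite set everything is comfortable because $w_d$ is bounded above and below on compact sub-intervals, so the warped metric is bi-Lipschitz to the product metric locally and the slice decomposition is well-behaved; the linear-vanishing hypothesis $w_m(t)\le C\,\mathrm{dist}(t,\{w_m=0\})$ is exactly what Gigli--Han need to ensure that no Sobolev mass concentrates near the zeros and that the $W^{1,2}(I,w_m\mathcal L^1)$ slices behave well there. I would treat the gradient identity first on $X_w\cap(I^\circ\setminus\{w_m=0\})\times X$ by the plan above, then argue that both sides are unaffected by the $m_w$-negligible degenerate fibers, invoking Theorem~\ref{thm:StoLipGH} (a.e.\ local doubling plus measured-length, hence Sobolev-to-Lipschitz) only insofar as it is needed to justify that the slice constructions and the cutoff approximations are legitimate. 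Modulo quoting the Gigli--Han identification as a black box, which is legitimate since we are only ``summarizing their results in a manner suitable for our purposes,'' the proof reduces to this matching of energies and the standard localization/minimality argument for weak upper gradients.
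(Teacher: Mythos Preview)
The paper does not give its own proof of this theorem; it simply states it as a summary of Gigli--Han's results, pointing to their identification $W^{1,2}(X_w)=\mathsf{BL}(X_w)$ via \cite[Propositions 3.10, 3.13, 3.14]{GigHan}. Your approach---deducing the statement from that Beppo--Levi identification---is exactly what the paper indicates, and your extra care about the pointwise (versus integrated) gradient formula, while correct in spirit, is already contained in Gigli--Han's Proposition~3.13, so no additional localization argument is actually needed.
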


\begin{rmk}
In the statements of Theorems \ref{thm:StoLipGH} and \ref{def-warped-BL}, $m$ is assumed to be a finite measure. However as explained in the remark after Definition 2.9 of \cite{GigHan}, if $w_m$ never vanishes, as in our application, then the results still hold when $m$ is infinite.
\end{rmk}

\begin{cor}\label{cor:sezioniprod} 
With the same notation and assumptions of Theorem \ref{def-warped-BL} the following are true.
\begin{itemize}
\item[i)] Let $f\in S^2_{\rm loc}(X_w)$. Then for $m$-a.e. $x$,  $f^{(x)}\in S^2_{\rm loc}( \omega_m \mathcal L^1)$.  For $\omega_m \mathcal L^1$-a.e. $t$, $f^{(t)}\in S^2_{\rm loc}(X)$.  Furthermore, \eqref{eq:warpedgrad} holds in this setting.
\item[ii)] Let $f_1\in S^2_{\rm loc}(w_m \mathbb R)$ and define $f: X_w \to \mathbb R$ by $f(t,x)=f_1(t)$. Then $f\in S^2_{\rm loc}(X_w)$ and
\[
|\nabla f|_{X_w}(t,x)=|\nabla f_1|_{w_m\mathbb R}(t),\qquad m_w-\text{a.e.} \ (t,x).
\]
\item[iii)] Let $f_2\in S^2_{\rm loc}(X)$ and define $f:X_w \to \mathbb R$ by $f(t,x):=f_2(x)$. Then $f\in S^2_{\rm loc}(X_w)$ and
\[
|\nabla f|_{X_w}(t,x)= w_d^{-1}(t)  |\nabla f_2|_{X}(x),\qquad m_w-\text{a.e.} \ (t,x).
\]
\end{itemize}
\end{cor}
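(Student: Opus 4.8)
The plan is to deduce this as the local counterpart of Theorem~\ref{def-warped-BL}, which is stated for $W^{1,2}$, via the standard truncation–and–exhaustion device combined with the locality of minimal weak upper gradients on $X$, on $(\mathbb R,w_m\mathcal L^1)$ and on $X_w$. Throughout I write $\tau_n(s):=\max\{-n,\min\{n,s\}\}$ and use that $|\nabla\tau_n(h)|=|\nabla h|\,\mathbf 1_{\{|h|<n\}}$ (in each of the three spaces), that a one–dimensional $S^2_{\rm loc}$ function has a locally absolutely continuous, hence locally bounded, representative, and that $m_w$ disintegrates as $\mathrm{d}m_w(t,x)=w_m(t)\,\mathrm{d}m_X(x)\,\mathrm{d}t$ by \eqref{eq:warpmeas}; I also use that bounded subsets of $X_w$ have finite $m_w$-measure, which is what legitimizes the truncations.

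For (i), I would choose bounded open sets $U_k$ with $\bigcup_k U_k=X_w$ and Lipschitz cutoffs $\chi_k\equiv 1$ on $U_{k-1}$ with $\mathrm{supp}\,\chi_k\subset U_k$, and set $h_{k,n}:=\tau_n(f)\,\chi_k$. Since $\tau_n(f)$ is bounded and lies in $S^2_{\rm loc}(X_w)$, the function $h_{k,n}$ is bounded with bounded support, so $h_{k,n}\in W^{1,2}(X_w,d_w,m_w)$. The implication ``$1\Rightarrow 2$'' of Theorem~\ref{def-warped-BL} then yields: for $m$-a.e.\ $x$, $h_{k,n}^{(x)}\in W^{1,2}(\mathbb R,d_{Euc},w_m\mathcal L^1)$; for $w_m\mathcal L^1$-a.e.\ $t$, $h_{k,n}^{(t)}\in W^{1,2}(X)$; and \eqref{eq:warpedgrad} for $h_{k,n}$ holds $m_w$-a.e. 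On $\{|f|<n\}\cap\{\chi_k=1\}$ one has $h_{k,n}=f$, so by locality $|(h_{k,n}^{(x)})'|$, $|\nabla h_{k,n}^{(t)}|_X$ and $|\nabla h_{k,n}|_{X_w}$ agree there with the corresponding objects built from $f$; moreover \eqref{eq:warpedgrad} bounds $|(h_{k,n}^{(x)})'|^2\le|\nabla f|^2_{X_w}(\cdot,x)$, which is $m_w$-integrable on the bounded set $\{\chi_k=1\}$ and hence, by Fubini, $w_m\mathcal L^1$-integrable in $t$ for $m$-a.e.\ $x$, uniformly in $n$ — and symmetrically for the $X$-sections. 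Letting first $n\to\infty$ (monotone convergence along $\{|f|<n\}$) and then $k\to\infty$, and discarding the countable union of the resulting exceptional null sets, I obtain $f^{(x)}\in S^2_{\rm loc}(w_m\mathbb R)$ for $m$-a.e.\ $x$, $f^{(t)}\in S^2_{\rm loc}(X)$ for $w_m\mathcal L^1$-a.e.\ $t$, and \eqref{eq:warpedgrad} $m_w$-a.e.

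For (ii) and (iii) I would instead run ``$2\Rightarrow 1$'' of Theorem~\ref{def-warped-BL} on truncated, cut-off building blocks. For (ii), fix Lipschitz $\eta\colon I\to\mathbb R$ and $\psi\colon X\to\mathbb R$ of bounded support and $M>0$, and put $g(t,x):=\tau_M(f_1)(t)\,\eta(t)\,\psi(x)$. Its sections $g^{(x)}(t)=\psi(x)\,\tau_M(f_1)(t)\eta(t)$ and $g^{(t)}(x)=\tau_M(f_1)(t)\eta(t)\,\psi(x)$ lie in $W^{1,2}(\mathbb R,d_{Euc},w_m\mathcal L^1)$ and $W^{1,2}(X)$ respectively (using that $f_1$ is locally bounded and $S^2_{\rm loc}$), and the right-hand side of \eqref{eq:warpedgrad} is bounded with bounded support; hence $g\in W^{1,2}(X_w)$ and
\[
|\nabla g|^2_{X_w}(t,x)=w_d^{-2}(t)\big(\tau_M(f_1)\eta\big)^2(t)\,|\nabla\psi|^2_X(x)+\big((\tau_M(f_1)\eta)'\big)^2(t)\,\psi^2(x).
\]
On $\{\eta=1\}\cap\{\psi=1\}\cap\{|f_1|<M\}$ locality makes $\eta'$ and $|\nabla\psi|_X$ vanish and $(\tau_M(f_1))'=f_1'$, so there $g=f_1$ and $|\nabla g|_{X_w}(t,x)=|\nabla f_1|_{w_m\mathbb R}(t)$. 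Since any Lipschitz $\chi$ on $X_w$ of bounded support agrees, on a neighbourhood of $\mathrm{supp}\,\chi$, with such a $g$ for suitable $\eta,\psi,M$, this gives $f\chi=g\chi\in S^2(X_w)$, hence $f\in S^2_{\rm loc}(X_w)$; letting $M\to\infty$ and $\eta,\psi$ exhaust then yields the gradient identity $m_w$-a.e. Part (iii) is identical with the two factors interchanged: take $g(t,x):=\eta(t)\,\tau_M(f_2)(x)\,\psi(x)$ with $\psi$ Lipschitz of bounded support so that $\tau_M(f_2)\psi\in W^{1,2}(X)$, apply ``$2\Rightarrow 1$'', and read off from \eqref{eq:warpedgrad} on $\{\eta=1\}\cap\{\psi=1\}\cap\{|f_2|<M\}$ that $|\nabla g|_{X_w}(t,x)=w_d^{-1}(t)\,|\nabla f_2|_X(x)$, then exhaust.

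The hard part will be the bookkeeping in (i): amalgamating the $n$- and $k$-dependent exceptional null sets, verifying that the monotone limits of the sectional minimal weak upper gradients really are the \emph{minimal} weak upper gradients — not merely weak upper gradients — in the correct $L^2_{\rm loc}$ spaces, and that \eqref{eq:warpedgrad} is preserved under all the limits. The hypotheses of Theorem~\ref{def-warped-BL} on $\{w_m=0\}$ are inherited verbatim and play no further role — in the application $w_d=w_m=e^t>0$.
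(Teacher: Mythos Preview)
Your proposal is correct and follows exactly the approach the paper indicates: the paper's proof is a single sentence stating that ``all the properties follow from the previous theorem with a truncation and cut-off argument based on the locality property of minimal weak upper gradients,'' and you have supplied precisely those details. Your self-identified ``hard part'' in (i) is handled by locality as you suggest: on each set $\{\chi_k=1\}\cap\{|f|<n\}$ the minimal weak upper gradients of $h_{k,n}$ and of $f$ coincide by locality, so no genuine limit of minimal gradients is needed --- the identity \eqref{eq:warpedgrad} holds pointwise on these sets and they exhaust $X_w$ up to a null set.
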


\begin{proof}
All the properties follow from the previous theorem with a truncation and cut-off argument based on the locality property of minimal weak upper gradients, see subsection \ref{ssec-calculus}.
\end{proof}

\begin{cor}
	\label{cor:warpHil}
With the same notation and assumptions of Theorem \ref{def-warped-BL}, 
if $(X,d,m)$ is infinitesimally Hilbertian then the metric measure space $(X_w,d_w,m_w)$ is  infinitesimally Hilbertian. 
\end{cor}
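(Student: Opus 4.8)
The plan is to verify the parallelogram rule directly on $W^{1,2}(X_w,d_w,m_w)$ by means of the pointwise splitting of the minimal weak upper gradient supplied by Theorem \ref{def-warped-BL}. Since the term $\|f\|_{L^2(X_w)}^2$ in the $W^{1,2}$-norm is always induced by an inner product, it suffices to show that $f\mapsto |||\nabla f|||^2_{L^2(X_w)}$ obeys the parallelogram rule on $W^{1,2}(X_w)$. First I would record that both factors of the warped product are infinitesimally Hilbertian: $(X,d,m)$ by hypothesis, and the weighted interval $(I,d_{Eucl},w_m\mathcal L^1)$ because its Cheeger energy is the visibly quadratic form $h\mapsto\int_I|h'|^2\,w_m\,\mathrm{d}\mathcal L^1$ (the minimal weak upper gradient of $h$ in one dimension being $|h'|$). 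Hence the energies $\mathcal E_X(h):=|||\nabla h|||^2_{L^2(X)}$ and $\mathcal E_{w_m\mathbb R}(h):=|||\nabla h|||^2_{L^2(\mathbb R,d_{Eucl},w_m\mathcal L^1)}$ both satisfy the parallelogram rule.

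Next, for $f\in W^{1,2}(X_w)$ I would integrate \eqref{eq:warpedgrad} against $m_w$. The set $\{w_d=0\}\subset\{w_m=0\}$ is finite and therefore $m_w$-negligible, so $w_d^{-2}$ is well defined $m_w$-a.e.; moreover, on $I\times X$ the measure $m_w$ is, by \eqref{eq:warpmeas} applied with $Y=I$ and $m_Y=\mathcal L^1$, the product $(w_m\mathcal L^1)\otimes m_X$. Fubini's theorem then gives
\[
|||\nabla f|||^2_{L^2(X_w)}=\int_I w_d^{-2}(t)\,w_m(t)\,\mathcal E_X(f^{(t)})\,\mathrm{d}t+\int_X\mathcal E_{w_m\mathbb R}(f^{(x)})\,\mathrm{d}m_X(x),
\]
where, by Theorem \ref{def-warped-BL}, $f^{(t)}\in W^{1,2}(X)$ for $\mathcal L^1$-a.e.\ $t$ and $f^{(x)}\in W^{1,2}(\mathbb R,d_{Eucl},w_m\mathcal L^1)$ for $m_X$-a.e.\ $x$, and both integrals are finite because their sum is.

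Finally, for $f,g\in W^{1,2}(X_w)$ I would use the elementary identities $(f\pm g)^{(t)}=f^{(t)}\pm g^{(t)}$ and $(f\pm g)^{(x)}=f^{(x)}\pm g^{(x)}$ to apply the parallelogram rule for $\mathcal E_X$ under the first integral (for a.e.\ $t$) and for $\mathcal E_{w_m\mathbb R}$ under the second (for $m_X$-a.e.\ $x$); summing the displayed formula evaluated at $f+g$ and at $f-g$ then produces
\[
|||\nabla(f+g)|||^2_{L^2(X_w)}+|||\nabla(f-g)|||^2_{L^2(X_w)}=2\big(|||\nabla f|||^2_{L^2(X_w)}+|||\nabla g|||^2_{L^2(X_w)}\big),
\]
which is precisely infinitesimal Hilbertianity of $(X_w,d_w,m_w)$. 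I do not expect a genuine obstacle here: the whole argument is an immediate consequence of the gradient splitting, and the only point needing a little care is the joint measurability of $(t,x)\mapsto|\nabla f^{(t)}|_X(x)$ and $(t,x)\mapsto|\nabla f^{(x)}|(t)$ that legitimizes the use of Fubini, together with the a.e.\ validity of the slicewise Sobolev statements — but this is exactly what is encoded in the Gigli--Han Beppo--Levi description of $W^{1,2}(X_w)$ underlying Theorem \ref{def-warped-BL} (and is also recorded in Corollary \ref{cor:sezioniprod}).
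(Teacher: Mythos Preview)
Your proof is correct and follows essentially the same approach as the paper's: both exploit the gradient splitting \eqref{eq:warpedgrad} together with the parallelogram rule on each factor $(X,d,m)$ and $(\mathbb R,d_{Eucl},w_m\mathcal L^1)$. The only cosmetic difference is that the paper verifies the parallelogram identity pointwise for $|\nabla\cdot|^2_{X_w}$ (which is slightly stronger and avoids the Fubini step), while you integrate first and check it at the level of $\||\nabla\cdot|\|^2_{L^2(X_w)}$; either route immediately yields infinitesimal Hilbertianity.
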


\begin{proof}
Let $f,g\in S^2_{loc}(X_w)$.  For simplicity, in this proof we will write $|\nabla f \cdot |_{w_m \mathbb R}$ to refer to the weak upper gradient of a Sobolev function $f$ in $S^{2}(\mathbb{R},d_{Euc},w_m\mathcal{L}^{1})$. By Theorem \ref{def-warped-BL} we get
\[
|\nabla(f+g)|^2_{X_w}+|\nabla (f-g)|^2_{X_w} = 
w_d^{-2}(|\nabla(f+g)^{(t)}|^2_{X}+|\nabla (f-g)^{(t)}|^2_{X}) +
(|\nabla(f+g)^{(x)}|^2_{w_m\mathbb R}+|\nabla (f-g)^{(x)}|^2_{w_m \mathbb R}).
\]

Now, by Corollary \ref{cor:sezioniprod} above we know that 
$f^{(t)},g^{(t)} \in S^2_{loc}(X)$ and $f^{(x)},g^{(x)} \in S^2_{loc}(w_m \mathcal L^1)$. 
As $(X,d,m)$ is infinitesimally Hilbertian, 
\[
|\nabla(f^{(t)}+g^{(t)})|^2_X+ |\nabla (f^{(t)}-g^{(t)})|^2_X=2\big(|\nabla f^{(t)}|_X^2+|\nabla g^{(t)}|_X^2\big), \qquad m-\text{a.e}.
\]
In a similar way, because $(\mathbb R, d_{Euc}, \omega_m \mathcal L^1)$ is infinitesimally Hilbertian we obtain
\[
|\nabla(f^{(x)}+g^{(x)})|^2_{w_m\mathbb R}+|\nabla (f^{(x)}-g^{(x)})|^2_{w_m\mathbb R}=2\big(|\nabla f^{(t)}|_{w_m \mathbb R}^2+|\nabla g^{(t)}|_{w_m\mathbb R}^2\big),\qquad w_m \mathcal L^1-\text{a.e.}.
\]

Putting the equations together and because the choices of  
$f,g\in S^2_{loc}(X_w)$ were arbitrary, we get the result.
\end{proof}

Now we define,
\begin{align*}
\mathcal G = &\Big\{g \in S^2_{\rm loc}(X_w) \ |\  g(x,t)=\tilde g(x)\textrm{ for some }  \tilde g\in S^2(X)\cap L^\infty(X) \Big\},\\
\mathcal H  = &\Big\{h \in S^2_{\rm loc}(X_w)\ |\  h(x,t)=\tilde h(t)\textrm{ for some }  \tilde h\in S^2( w_m\mathbb R)\cap L^\infty(\mathbb R) \Big\},\\
\mathcal A =  & \textrm{ algebra generated by }\mathcal G\cup\mathcal H \subset S^2_{\rm loc}(X_w).
\end{align*}

\begin{prop}\label{prop:approximation}
Let $(X,d,m)$ be a metric measure space and $w_d,w_m:\mathbb R \to\mathbb [0,\infty)$ warping functions. Suppose that $\{ w_m=0\} $ is finite and for some $C \in \mathbb R$, 
\[
w_m(t) \leq C \inf_{ \{s: w_m(s)=0  \}} |t-s|
\]
 for all $t \in  I$, then
the set $\mathcal A\cap W^{1,2}(X_w)$ is dense in $W^{1,2}(X_w)$.
\end{prop}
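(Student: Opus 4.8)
The plan is to approximate a given $f\in W^{1,2}(X_w)$ in three stages: first by a bounded function with bounded support, then by a mollification of it along the $\mathbb R$-factor, and finally by a continuous piecewise-linear interpolation along the $\mathbb R$-factor whose nodal coefficients are functions of the $X$-variable alone --- and the last of these automatically lies in $\mathcal A$. To set up the first stage, standard truncation and cut-off arguments reduce matters to $f\in W^{1,2}(X_w)\cap L^\infty(X_w)$ with $\mathrm{supp}(f)\subset J\times B$, where $J\subset\mathbb R$ is a compact interval and $B\subset X$ a closed ball: one replaces $f$ by $((-k)\vee f\wedge k)\,\psi(t)\chi(x)$ with $\psi,\chi$ Lipschitz, $0\le\psi,\chi\le1$, equal to $1$ on large intervals and balls, and uses that the cut-off cost $\int_{X_w}|f|^2\big(|\psi'|^2+|\nabla\chi|^2\big)\,\mathrm dm_w\le \tfrac{C}{R^2}\|f\|_{L^2(X_w)}^2\to0$ as $R\to\infty$ (here one only cuts off where $w_m$ is bounded below, so in the intended application, where $w_m=e^t$, this is entirely routine; for general $w_m$ one additionally localizes away from the finitely many zeros of $w_m$, which is the one delicate point and where the standing bound $w_m(t)\le C\inf_{s:\,w_m(s)=0}|t-s|$ is used).

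Next, for a standard mollifier $\rho_\epsilon$ on $\mathbb R$ set $f_\epsilon(t,x):=\int_{\mathbb R}f(t-s,x)\rho_\epsilon(s)\,\mathrm ds$. By the Fubini structure \eqref{eq:warpmeas} of $m_w$ and the gradient splitting \eqref{eq:warpedgrad} of Theorem~\ref{def-warped-BL}, the error $\|f_\epsilon-f\|_{W^{1,2}(X_w)}$ is controlled by the one-dimensional mollification errors of $f^{(x)}$ in $L^2(w_m\mathcal L^1)$ and in $W^{1,2}(w_m\mathcal L^1)$, and of $t\mapsto\nabla_Xf^{(t)}$ in $L^2(w_d^{-2}w_m\mathcal L^1;L^2(TX))$, each of which tends to $0$ as $\epsilon\to0$ by elementary properties of mollifiers (the continuous weights being bounded above and below on the relevant compact $t$-interval). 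The gain of this step is that $t\mapsto f_\epsilon^{(t)}$ is now a \emph{continuous} (indeed smooth) map from a slightly enlarged compact interval $J'$ into $W^{1,2}(X)\cap L^\infty(X)$, vanishing outside $J'$ and at its endpoints; here one uses the slice regularity of Corollary~\ref{cor:sezioniprod}.

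Finally, fix $\epsilon$, write $J'=[a',b']$, take a partition $a'=t_0<\dots<t_n=b'$ of mesh $h$ with associated continuous piecewise-linear Lagrange basis $\{T_k\}$, and set
\[
I_hf_\epsilon(t,x):=\sum_{k=1}^{n-1}f_\epsilon^{(t_k)}(x)\,T_k(t).
\]
Since $f_\epsilon^{(t_k)}\in S^2(X)\cap L^\infty(X)$ (so each summand's $x$-factor lies in $\mathcal G$) and $T_k\in S^2(w_m\mathbb R)\cap L^\infty(\mathbb R)$ (so its $t$-factor lies in $\mathcal H$), we get $I_hf_\epsilon\in\mathcal A$; moreover $I_hf_\epsilon$ is bounded with support in $J'\times B$, its $\mathbb R$-slices are finite combinations of tent functions (hence lie in $W^{1,2}(w_m\mathbb R)$) and its $X$-slices are finite combinations of the $f_\epsilon^{(t_k)}$ (hence lie in $W^{1,2}(X)$), so Theorem~\ref{def-warped-BL} together with Corollary~\ref{cor:sezioniprod} gives $I_hf_\epsilon\in W^{1,2}(X_w)$; thus $I_hf_\epsilon\in\mathcal A\cap W^{1,2}(X_w)$. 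It remains to check $I_hf_\epsilon\to f_\epsilon$ in $W^{1,2}(X_w)$ as $h\to0$, which via \eqref{eq:warpedgrad} splits into three parts. For $m$-a.e.\ $x$ the slice $f_\epsilon^{(x)}$ is Lipschitz, so the classical one-dimensional interpolation estimate gives $\|I_hf_\epsilon^{(x)}-f_\epsilon^{(x)}\|_{L^2}+\|(I_hf_\epsilon^{(x)})'-(f_\epsilon^{(x)})'\|_{L^2}\to0$ with $\|(I_hf_\epsilon^{(x)})'\|_{L^2}\le\|(f_\epsilon^{(x)})'\|_{L^2}$ (the derivative of a piecewise-linear interpolant being the $L^2$-projection onto piecewise-constant functions), and integrating in $x$ by dominated convergence controls the $L^2(X_w)$ and $\partial_t$ contributions; for the $X$-gradient contribution, $\nabla_X(I_hf_\epsilon)^{(t)}=\sum_kT_k(t)\,\nabla_Xf_\epsilon^{(t_k)}$ is exactly the piecewise-linear interpolation of the \emph{continuous} map $t\mapsto\nabla_Xf_\epsilon^{(t)}\in L^2(TX)$ from the previous step, hence converges to it uniformly on $J'$, and integration over $J'\times X$ against the bounded weight $w_d^{-2}w_m$ finishes the estimate. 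Combining the three stages, every $f\in W^{1,2}(X_w)$ is a $W^{1,2}(X_w)$-limit of elements of $\mathcal A\cap W^{1,2}(X_w)$, which is the assertion.

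The main obstacle is the $X$-gradient contribution in the last stage: $W^{1,2}(X_w)$ controls $\partial_tf$ and $\nabla_Xf$ separately but gives no control on the mixed quantity $\partial_t\nabla_Xf$, so without preliminary mollification the map $t\mapsto\nabla_Xf^{(t)}$ is merely $L^2$ in $t$ and a piecewise-linear interpolation in $t$ need not converge in the $L^2(TX)$-norm; the mollification stage is inserted precisely to upgrade this dependence to a continuous one. The only other genuinely nontrivial point is the bookkeeping of the weighted measures near the finitely many zeros of $w_m$ in the reduction stage, which is vacuous in the intended application $w_m=e^t$ and in general is handled using the hypotheses imposed on $w_m$.
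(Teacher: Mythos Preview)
Your argument is correct and takes a genuinely different route from the paper's. The paper does not construct approximants directly; instead it localizes to slabs $[a,b]\times_w X$ with $[a,b]\subset\{w_m>0\}$, invokes the already--established density result for (weighted) cartesian products from \cite[Proposition~6.6]{Gig} (see also \cite[Proposition~3.35]{DePG}) to get that $\mathcal A\cap W^{1,2}$ is dense in $W^{1,2}$ of each such slab, and then appeals to \cite[Proposition~3.14]{GigHan} for the identification $BL_0(X_w)=BL(X_w)=W^{1,2}(X_w)$, which is precisely the statement that functions vanishing near $\{w_m=0\}\cup\{\infty\}$ are already dense. So the paper's proof is a two--line reduction to existing black boxes, while yours is a self--contained three--stage construction (truncation/cut--off, mollification in $t$ to gain continuity of $t\mapsto f^{(t)}_\epsilon$ into $W^{1,2}(X)$, then nodal interpolation). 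Your identification of the ``mixed regularity'' obstacle for $\partial_t\nabla_X f$ and the cure via mollification is exactly the substance behind the cited product--density results, so you are in effect reproving that black box. The trade--off is clear: the paper's route is shorter but relies on machinery from three separate references; yours is longer but transparent and would work verbatim in the paper's actual application ($w_m=e^t$, no zeros).

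Two remarks. First, in the mollification step you implicitly use that $t\mapsto f^{(t)}$ lies in $L^2_{\mathrm{loc}}\big(\mathbb R;W^{1,2}(X)\big)$ as a Bochner--measurable map, so that convolution with $\rho_\epsilon$ yields a $C^\infty$ curve into $W^{1,2}(X)$ and commutes with $\nabla_X$; this needs $W^{1,2}(X)$ separable (fine for doubling $X$) and, for the commutation with the minimal weak upper gradient, is cleanest under infinitesimal Hilbertianity---which is the only setting in which the proposition is used. Second, the reduction away from $\{w_m=0\}$ that you flag as ``the one delicate point'' is genuinely nontrivial: the naive Lipschitz cut--off on an interval of length $\delta$ produces an error $\sim\delta^{-2}\!\int_{\{\delta<|t-s|<2\delta\}}|f|^2 w_m$ that need not vanish, and one needs a logarithmic cut--off (equivalently, the zero--capacity of $\{w_m=0\}$ under the hypothesis $w_m(t)\le C\,\mathrm{dist}(t,\{w_m=0\})$). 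This is exactly the content of \cite[Proposition~3.14]{GigHan} that the paper invokes; you correctly isolate where the hypothesis enters but do not carry out that estimate.
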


\begin{proof} 
Consider the algebra
	\begin{equation*}
\mathcal A_a^b = \textrm{ algebra generated by }(\mathcal G\cup\mathcal H \cap S^2_{\rm loc}(([a,b] \times _w X,d_w,m_w)).
\end{equation*}
By the Cartesian product case proved in \cite[Proposition 6.6]{Gig} (see also \cite[Proposition 3.35]{DePG}), $\mathcal A_a^b \cap W^{1,2}(X_w)$ is dense in $W^{1,2}([a,b] \times _w X,d_w,m_w)$ whenever $[a,b]\subset \mathbb R\setminus \{w_m=0\}$.

It follows that $\mathcal A\cap W^{1,2}(X_w)$ is dense in $BL_0(X_w)$ which is the closure in $BL(X_w)$ of the space of functions which vanish in a neighborhood of $\{w_m=0\}\cup\{\infty\}$. (See \cite{GigHan} for the definitions of the Beppo-Levi spaces  $BL_0(X_w)$ and  $BL(X_w)$.) However, under the hypotheses,  \cite[Proposition 3.14]{GigHan} shows that $BL_0(X_w)=BL(X_w)=W^{1,2}(X_w)$ which implies the statement.
\end{proof}

\subsection{Universal covers of $\RCDst$ spaces}

A metric space $(Y,d_Y)$ is a \textit{covering space} of $(X,d_X)$ if there exists a continuous map $p: Y\to X$ such that for every point $x\in X$ there exists a neighborhood $U_x\subset X$ with the property that $p^{-1}(U_x)$ is a disjoint union of open subsets of $Y$ each of which is mapped homeomorphically onto $U_x$ by $p$. 

A (connected) metric space $(\tilde X, d_{\tilde{X}})$ is a \textit{universal cover} of $X$, with covering map $\tilde{p}$,  if for any other covering space $Y$ of $X$ with covering map $p$ there exists a continuous map $f:\tilde{X}\to Y$ such that $p \circ f= \tilde{p}$. Whenever a universal cover exists, it is unique. (Note that we do not require $X$ to be semilocally simply connected, so $\tilde{X}$ need not be simply connected.)

In the presence of the $\RCD^*$ condition, the following theorem was obtained by Mondino-Wei \cite[Theorem 1.1]{MW}.
\begin{thm} \label{thm-MW}
Let $(X,d,m)$ be an $\RCD^*(K,N)$-space for some $K\in\mathbb{R}$, $N\in (1,\infty)$. Then $(X,d,m)$ admits a universal cover $(\tilde X, \tilde d, \tilde m)$, with $\tilde m$ given by the pullback measure via the covering map, which is itself an $\RCD^*(K,N)$-space. 
\end{thm}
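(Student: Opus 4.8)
The plan is to establish two separate facts: (a) that the categorical universal cover $\tilde p\colon \tilde X\to X$ exists as a genuine covering space of the length space $(X,\sfd)$; and (b) that, once equipped with the lifted length distance $\tilde\sfd$ (for which $\tilde p$ is a local isometry) and the pull-back measure $\tilde m$ (which coincides locally with $m$), the metric measure space $(\tilde X,\tilde\sfd,\tilde m)$ again satisfies $\RCDst(K,N)$. Note first that, by Bishop--Gromov volume comparison (Theorem~\ref{thm-BishopGrom}) together with completeness, $(X,\sfd,m)$ is a proper geodesic space; hence so is $\tilde X$, and $\tilde m$, being finite on the compact (= bounded) subsets of $\tilde X$, is a non-negative Radon measure, so the statement is meaningful.

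For (a) I would use the Sormani--Wei theory of $\delta$-covers. For each $\delta>0$ let $\tilde X^{\delta}\to X$ be the covering associated with the normal subgroup $\pi_1(X,x_0;\delta)\trianglelefteq\pi_1(X,x_0)$ generated by the classes $[\alpha\cdot\beta\cdot\alpha^{-1}]$, where $\beta$ is a loop lying in some metric ball of radius $\delta$. These covers form an inverse system as $\delta\downarrow 0$, and by the Sormani--Wei criterion the universal cover exists and coincides with the inverse limit $\varprojlim_{\delta}\tilde X^{\delta}$ precisely when this system stabilizes, i.e.\ when there is $\delta_0>0$ with $\tilde X^{\delta}=\tilde X^{\delta_0}$ for every $\delta\le\delta_0$. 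Thus (a) reduces to proving this stabilization, and this is the one place where the curvature--dimension hypothesis is used essentially: one invokes the local structure of $\RCDst(K,N)$ spaces---locally uniform measure doubling coming from Theorem~\ref{thm-BishopGrom}, pointed measured Gromov--Hausdorff precompactness of rescalings of small balls, and the fact that blow-up limits are $\RCDst(0,N)$ spaces---and argues by a compactness/contradiction scheme that the subgroups $\pi_1(X,x_0;\delta)$ cannot keep shrinking indefinitely (a failure would produce, in a blow-up limit, an infinite family of distinct nontrivial deck transformations with arbitrarily small normalized displacement, incompatible with the discreteness of the deck action). Granting (a), one sets $\tilde\sfd$ to be the induced length distance and $\tilde m$ to be the pull-back of $m$.

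For (b) we treat the two defining properties in turn, using throughout that $\tilde p$ is a local isometry which is locally measure preserving, so that $\tilde X$ is a countable union of Borel pieces each identified isometrically and measure-preservingly with an open subset of $X$. Infinitesimal Hilbertianity transfers by locality of the first-order calculus: being infinitesimally Hilbertian is equivalent to requiring, for all $f,g$ in the Sobolev class, the pointwise parallelogram identity
\[
|\nabla(f+g)|^{2}+|\nabla(f-g)|^{2}=2|\nabla f|^{2}+2|\nabla g|^{2}
\]
between minimal weak upper gradients ($m$-a.e.), and since minimal weak upper gradients are local (subsection~\ref{ssec-calculus}), the identity on $X$ forces the corresponding identity $\tilde m$-a.e.\ on $\tilde X$. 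For the $\mathsf{CD}^{*}(K,N)$ condition: first, $\tilde X$ is essentially non-branching, because $X$ is (by Rajala--Sturm, being $\RCDst(K,N)$), geodesics of $\tilde X$ project to geodesics of $X$ under $\tilde p$, and the piecewise identification above shows the branching set of $\tilde X$ is $\tilde m$-negligible. Second, $\mathsf{CD}^{*}(K,N)$ for $X$ trivially implies $\mathsf{CD}^{*}_{\mathrm{loc}}(K,N)$, and this local condition transfers to $\tilde X$: the $\mathsf{CD}^{*}$ inequality \eqref{E:CD} (with the $\sigma_{K,N}$-coefficients) for measures supported in a sufficiently small ball $B\subset\tilde X$ follows by pushing forward to $X$, applying $\mathsf{CD}^{*}_{\mathrm{loc}}(K,N)$ there, and lifting the resulting optimal geodesic plan back through $\tilde p$---lifts of geodesics are geodesics (since $\tilde p$ is a local isometry and never increases distances), and $\tilde p$ preserves the relevant densities. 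Finally, the local-to-global property of the reduced curvature--dimension condition (Bacher--Sturm, in the essentially non-branching setting) upgrades $\mathsf{CD}^{*}_{\mathrm{loc}}(K,N)$ on $\tilde X$ to $\mathsf{CD}^{*}(K,N)$; combined with infinitesimal Hilbertianity this is exactly $\RCDst(K,N)$ for $(\tilde X,\tilde\sfd,\tilde m)$, and this argument also shows the deck isometries are measure preserving.

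The main obstacle is step (a), specifically the stabilization of the $\delta$-cover system: this is where the $\RCDst(K,N)$ assumption enters in a non-formal way, through volume comparison, Gromov--Hausdorff compactness of blow-ups, and the geometry of tangent cones. By contrast, step (b) is comparatively soft: it is essentially the combination of the locality of the Sobolev calculus with the local-to-global property of $\mathsf{CD}^{*}$, neither of which requires any new geometric input beyond what is already available.
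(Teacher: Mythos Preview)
The paper does not give a proof of this statement: it is quoted from Mondino--Wei \cite[Theorem 1.1]{MW} in the preliminaries (\S2.7) and used as a black box throughout, so there is no ``paper's own proof'' to compare your attempt against.

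That said, your outline is essentially the Mondino--Wei strategy. They too use the Sormani--Wei $\delta$-cover machinery for existence (your step (a)), establishing stabilization of the $\delta$-cover system via the structure theory of $\RCDst(K,N)$ spaces, and then transfer the $\RCDst(K,N)$ condition to the cover by combining locality of the Sobolev calculus for infinitesimal Hilbertianity with the Bacher--Sturm local-to-global property for $\mathsf{CD}^*(K,N)$ (your step (b)). One remark on (a): your description of the stabilization argument (``infinite family of deck transformations with small displacement in a blow-up, incompatible with discreteness'') is somewhat impressionistic; the actual mechanism in \cite{MW} passes through the Mondino--Naber structure theorem \cite{MN}---the $m$-a.e.\ existence of Euclidean tangent cones---and a uniform local contractibility argument near regular points, rather than a direct limiting discreteness argument. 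The architecture you give is correct, but if you intend this as a self-contained proof you would need to make that step precise.
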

 

\section{Construction of a Busemann function}\label{sec-Busemann}

In this section we first prove that the volume entropy of compact $\RCDst(-(N-1),N)$ spaces is bounded above by $N-1$. In the equality case, we construct a Busemann type function $u$ defined on the universal cover of the space.  Finally we  show the existence and main properties of the Regular Lagrangian Flow  of $\nabla u$. As our space is noncompact, we need to make use of good cut-off functions, and local uniqueness results for Regular Lagrangian Flows and the continuity equation. 

\subsection{Volume growth entropy estimate for $\RCDst$ spaces}

\begin{thm}\label{lemEntK*}
Let $(X,d,m)$  be an $\RCDst(K,N)$-space with $N\in (1,\infty)$ and $K<0$. Then 
\[
h(X)\leq \sqrt{-K (N - 1)}. 
\]
\end{thm}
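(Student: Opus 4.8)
The plan is to mimic the classical proof via Bishop--Gromov comparison, adapted to the $\RCDst$ setting using the sharp volume growth inequality from Theorem~\ref{thm-BishopGrom}. First I would pass to the universal cover $(\widetilde X, \tilde d, \tilde m)$, which by the Mondino--Wei theorem is again an $\RCDst(K,N)$ space with $K<0$; since the volume growth entropy does not depend on the measure in its conformal class and not on the basepoint, it suffices to estimate $\limsup_{R\to\infty}\frac1R\ln\tilde m(\overline{B_{\widetilde X}(\tilde x,R)})$ for a fixed $\tilde x$. (Passing to metric closures of balls is harmless for the $\limsup$.)

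Second, I would apply the sharp generalized Bishop--Gromov inequality of Theorem~\ref{thm-BishopGrom} on $\widetilde X$: for every $0<r\le R$,
\[
\tilde m(\overline{B(\tilde x,R)}) \le \tilde m(\overline{B(\tilde x,r)})\cdot \frac{\int_0^R \sinh^{N-1}(\sqrt{-K/(N-1)}\,t)\,dt}{\int_0^r \sinh^{N-1}(\sqrt{-K/(N-1)}\,t)\,dt}.
\]
Fixing $r=1$, say, the first factor is a finite constant $c=\tilde m(\overline{B(\tilde x,1)})$ independent of $R$, and the second factor is controlled by the growth of $\int_0^R \sinh^{N-1}(\sqrt{-K/(N-1)}\,t)\,dt$. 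A direct estimate of this integral — using that $\sinh(s)\le \tfrac12 e^{s}$ and that the integral of $e^{(N-1)\sqrt{-K/(N-1)}\,t}=e^{\sqrt{-K(N-1)}\,t}$ over $[0,R]$ grows like a constant times $e^{\sqrt{-K(N-1)}\,R}$ — gives
\[
\tilde m(\overline{B(\tilde x,R)}) \le C\, e^{\sqrt{-K(N-1)}\,R}
\]
for a constant $C$ depending only on $\tilde x$, $K$, $N$ (and the fixed choice $r=1$).

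Third, I would take logarithms, divide by $R$, and let $R\to\infty$: $\frac1R\ln\tilde m(\overline{B(\tilde x,R)}) \le \frac{\ln C}{R} + \sqrt{-K(N-1)}$, so the $\limsup$ is at most $\sqrt{-K(N-1)}$, which is exactly $h(X)$. I do not expect any serious obstacle here; the only points requiring a little care are (i) invoking the sharp Bishop--Gromov statement for $\RCDst(K,N)$ (which is already quoted as Theorem~\ref{thm-BishopGrom}, justified by \cite{CavStu,Ohta}), (ii) checking that working with metric closures of balls rather than open balls does not change the exponential rate — which is immediate since $\overline{B(\tilde x,R)}\subset B(\tilde x,R+1)$ and the bound is monotone, and (iii) confirming the independence of $h$ from the basepoint and measure, for which one cites \cite{Rev,BCGS}. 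The main "step" is really just the elementary asymptotic $\int_0^R \sinh^{N-1}(a t)\,dt = \Theta(e^{(N-1)aR})$ with $a=\sqrt{-K/(N-1)}$, giving the exponent $(N-1)a=\sqrt{-K(N-1)}$.
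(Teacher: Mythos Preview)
Your proposal is correct and follows essentially the same approach as the paper: pass to the universal cover via Mondino--Wei, apply the sharp Bishop--Gromov inequality of Theorem~\ref{thm-BishopGrom} with a fixed inner radius, then take $\frac{1}{R}\ln$ and let $R\to\infty$. The only cosmetic difference is that the paper computes $\lim_{R\to\infty}\frac{1}{R}\ln\int_0^{R\sqrt{-K/(N-1)}}\sinh^{N-1}t\,dt$ via L'H\^{o}pital's rule, whereas you use the direct bound $\sinh(s)\le \tfrac12 e^{s}$; both are equally valid here.
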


\begin{proof}
By the work of Mondino-Wei \cite{MW} (see Theorem~\ref{thm-MW}), the universal cover space $\tilde{X}$ is also an $\RCDst(K, N)$ space. In particular, it is a $\mathsf{CD}^*(K, N)$ space. Let $R>0$ and let us fix $r_0$ such that $0<r_0<R$. By  Theorem \ref{thm-BishopGrom},
\[
\tilde{m} ( B_{\tilde{X}} (x,R)) \int_{0}^{r_{0}\sqrt{-K/ (N - 1)} }\sinh^{ N - 1} t \, \mathrm{d}t \leq  \tilde{m} ( B_{\tilde{X}} (x,r_0)) \int_{0}^{R\sqrt{-K/ (N - 1)}}\sinh^{ N - 1 } t \, \mathrm{d}t .
\]
Taking logarithms, dividing by $R$ and taking the limsup on both sides of the previous inequality  we get
\[
h({X}) \leq \lim_{R\to \infty} \frac{1}{R} \ln \left( \int_{0}^{R\sqrt{-K/  (N - 1 )}} \sinh^{ N - 1} t \, \mathrm{d}t \right).
\] 
To conclude,  we use L'H\^{o}pital's rule. 
\end{proof}

The next corollary follows directly by taking $K=-(N-1)$ in the previous theorem.

\begin{cor}\label{thmEntK*}
Let $(X,d,m)$  be an $\RCDst(-(N-1),N)$-space with $N\in (1,\infty)$. Then $h(X)\leq  N -1$.
\end{cor}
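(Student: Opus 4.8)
The statement to prove is Corollary \ref{thmEntK*}: for an $\RCDst(-(N-1),N)$-space $(X,d,m)$ with $N\in(1,\infty)$, one has $h(X)\leq N-1$.

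\medskip

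The plan is to deduce this immediately from Theorem \ref{lemEntK*}, which asserts that $h(X)\leq\sqrt{-K(N-1)}$ for any $\RCDst(K,N)$-space with $K<0$ and $N\in(1,\infty)$. Since $N>1$, the value $K=-(N-1)$ is strictly negative, so the hypothesis $K<0$ of Theorem \ref{lemEntK*} is satisfied by an $\RCDst(-(N-1),N)$-space. First I would substitute $K=-(N-1)$ into the bound from Theorem \ref{lemEntK*}. This gives
\[
h(X)\leq\sqrt{-K(N-1)}=\sqrt{(N-1)(N-1)}=\sqrt{(N-1)^2}=N-1,
\]
where the last equality uses $N-1>0$. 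That is the entire argument.

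\medskip

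There is no real obstacle here; the only thing to be careful about is making sure the specialization $K=-(N-1)$ is admissible, i.e. that $K<0$ genuinely holds, which it does because $N\in(1,\infty)$ forces $N-1>0$ and hence $K=-(N-1)<0$. One should also note that an $\RCDst(-(N-1),N)$-space is by definition an $\RCDst(K,N)$-space with this particular $K$, so Theorem \ref{lemEntK*} applies verbatim. No additional comparison geometry, no passage to the universal cover, and no limiting computation is needed beyond what is already contained in the proof of Theorem \ref{lemEntK*}.
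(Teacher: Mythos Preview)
Your proposal is correct and matches the paper's approach exactly: the paper simply states that the corollary follows directly by taking $K=-(N-1)$ in Theorem \ref{lemEntK*}.
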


We remark that the previous volume entropy growth estimate holds in the more general setting of spaces which satisfy the \textit{measure contraction property} introduced by Ohta \cite{Ohta} and Sturm \cite{Stu2}. Indeed, a Bishop-Gromov type inequality was obtained in \cite[Theorem 5.1]{Ohta} and the proofs of Theorem \ref{lemEntK*} and Corollary \ref{thmEntK*} can be carried out in this setting analogously.


\subsection{Construction of a Busemann function}
\label{sec-busemann-function}
In this section we will prove the following result on the existence of a Busemann-type function on the universal cover of a compact $\RCDst(K,N)$ space with maximal volume entropy. We will follow the strategy developed by Liu \cite{Liu2011}, with the necessary adaptations (cf. \cite[Theorem 1.7]{Jiang}). More precisely, we will prove: 

\begin{thm}\label{cor-u}
Let $(X,d,m)$ be a compact $\RCDst(K,N)$ space with $K<0$ and $N \in (1, \infty)$, and let $(\tilde X, \tilde d, \tilde m)$ be its universal cover. If $h(X)= \sqrt{-K (N-1)}$, then there exists a function $u: \tilde X \to \mathbb R$ with $u\in D_{loc}(\Delta)$ , that satisfies $| \nabla u| = 1$ $\tilde{m}$-a.e. and $\Delta u =  \sqrt{-K (N-1)}$ $\tilde{m}$-a.e. . 
\end{thm}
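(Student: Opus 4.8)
\textbf{Proof plan for Theorem \ref{cor-u}.}

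The plan is to adapt Liu's construction of a Busemann function from the smooth setting to $\RCDst$ spaces. First I would fix a base point $\tilde x_0 \in \tilde X$ and use the maximal volume entropy hypothesis $h(X)=\sqrt{-K(N-1)}$ together with the sharp Bishop-Gromov inequality (Theorem \ref{thm-BishopGrom}) to conclude that the volume-growth comparison is \emph{saturated}: the ratio $\widetilde m(B_{\tilde X}(\tilde x_0,R))/\int_0^R \sinh^{N-1}(\sqrt{-K/(N-1)}t)\,dt$ must be asymptotically constant. The key point is that equality in the volume entropy, combined with monotonicity of the Bishop-Gromov ratio, forces the ``annular'' comparison $s_m(\tilde x_0, r)/s_m(\tilde x_0, R) = \sinh^{N-1}(\sqrt{-K/(N-1)}r)/\sinh^{N-1}(\sqrt{-K/(N-1)}R)$ to hold in the limit, which in turn forces equality (in an integrated/averaged sense, as $R\to\infty$) in the Laplacian comparison \eqref{eq-lapComp}: $\mathbf{\Delta} r_p|_{\tilde X \setminus \{p\}} \to \sqrt{-K(N-1)}\,\widetilde m$ as the reference point $p$ recedes to infinity. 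Here $r_p(\cdot) = \tilde d(\cdot, p)$.

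Next I would construct $u$ as a limit of (normalized) distance functions. Take a sequence $p_j \to \infty$ along a ray emanating from $\tilde x_0$ (or rather along a sequence realizing the volume growth), and set $u_j(x) := r_{p_j}(x) - r_{p_j}(\tilde x_0) = \tilde d(x,p_j) - \tilde d(\tilde x_0,p_j)$. Each $u_j$ is $1$-Lipschitz, and by Arzel\`a-Ascoli a subsequence converges locally uniformly to a $1$-Lipschitz function $u$ — the Busemann-type function. The content is to show $u$ inherits the analytic properties: that $|\nabla u| = 1$ $\tilde m$-a.e.\ and $\Delta u = \sqrt{-K(N-1)}$ $\tilde m$-a.e., i.e.\ $u \in D_{loc}(\Delta)$. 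For the Laplacian, I would pass to the limit in the Laplacian comparison: each $u_j$ satisfies $\mathbf{\Delta} u_j \leq \sqrt{-K(N-1)}\coth(\sqrt{-K/(N-1)}\,r_{p_j})\,\widetilde m$ on $\tilde X \setminus \{p_j\}$, and since $r_{p_j}\to\infty$ locally, $\coth(\sqrt{-K/(N-1)}\,r_{p_j}) \to 1$ locally uniformly, giving $\limsup_j \mathbf{\Delta} u_j \leq \sqrt{-K(N-1)}\,\widetilde m$ on every compact set. The matching lower bound is where the saturation of Bishop-Gromov is used: the integrated excess in the Laplacian comparison over an annulus equals (up to the comparison defect) the excess in the surface-area growth, which vanishes in the limit by maximality of the entropy; hence the defect measures $\sqrt{-K(N-1)}\,\widetilde m - \mathbf{\Delta} u_j$ have total mass tending to $0$ on compact sets, so $\mathbf{\Delta} u_j \to \sqrt{-K(N-1)}\,\widetilde m$ weakly. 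Standard lower-semicontinuity/closure properties of the measure-valued Laplacian on $\RCDst$ spaces (together with the local uniform convergence $u_j \to u$ and $L^2_{loc}$ convergence of gradients, which follows from the uniform Lipschitz bound plus the $W^{1,2}_{loc}$ weak compactness) then yield $\mathbf{\Delta} u = \sqrt{-K(N-1)}\,\widetilde m$, i.e.\ $\Delta u = \sqrt{-K(N-1)}$ $\tilde m$-a.e., so in particular $u \in D_{loc}(\Delta)$.

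Finally, to get $|\nabla u| = 1$ $\tilde m$-a.e.: one inequality, $|\nabla u| \leq 1$, is immediate from $1$-Lipschitzness of $u$. For the reverse, I would use the Bochner inequality (Weak Bochner \eqref{eq-Bochner}, or the Bakry-\'Emery form \eqref{eq-bakry-emery-condition}) applied to $u$ — or rather to suitable cutoffs of $u$ — with $K$ replaced by $-(N-1)$ after the normalization $K=-(N-1)$ is in force: the constraint $\Delta u = N-1$ and $|\nabla u|\leq 1$ together with Bochner force $|\nabla u|$ to be constant equal to $1$ and in fact push all the ``defect'' into the Hessian, reproducing $\Hess u = (g - du\otimes du)$ morally. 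Concretely, integrating Bochner against a good test function $g$ and using $\Delta u = N-1$ gives an inequality that, combined with $|\nabla u|\leq 1$, saturates only when $|\nabla u|=1$ a.e. (this is the same mechanism as in the rigidity of the Laplacian comparison / the rigidity discussion preceding Theorem \ref{thm-main2}). I expect the main obstacle to be precisely this last step carried out rigorously: justifying the use of Bochner's inequality for the non-smooth, merely-$D_{loc}(\Delta)$ Busemann function $u$ (which is not a priori in $\mathrm{Test}(X)$ nor globally $W^{1,2}$), which requires a careful localization, cutoff, and approximation argument exploiting the density of $\mathrm{Test}(\tilde X)$ in $W^{1,2}(\tilde X)$ and the locality of all the objects involved, plus handling the non-compactness of $\tilde X$ throughout.
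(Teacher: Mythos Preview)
Your overall architecture — form Busemann-type functions as limits of shifted distance functions $u_j = r_{p_j} - r_{p_j}(\tilde x_0)$, get the Laplacian upper bound from Laplacian comparison, and use the entropy saturation for the lower bound — matches the paper's strategy (and Liu's). But there is a genuine gap at the step you flag with the word ``hence''.

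\textbf{The gap.} You correctly observe that maximality of the entropy forces the \emph{annular} averages of the Laplacian defect to vanish: with $Q=\sqrt{-K(N-1)}$, the integrals $\int_{A}\big(Q\coth - \mathbf{\Delta} r_{p_j}\big)$ over suitable annuli $A$ centered at $p_j$ tend to $0$. From this you jump to ``hence the defect measures have total mass tending to $0$ on compact sets''. This does not follow. The annulus at radius $\tilde d(\tilde x_0,p_j)$ around $p_j$ has enormous volume compared with the fixed ball $B_R(\tilde x_0)$ sitting inside it; smallness of the \emph{average} over the annulus says nothing about smallness on one particular ball. For an arbitrary sequence $p_j$ (e.g.\ along a ray), there is no reason the defect should concentrate away from $B_R(\tilde x_0)$.

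The paper resolves this precisely by exploiting what you never mention: the deck transformation group and the compactness of $X$. One fixes a reference point $o$, proves the annular average bound for $\mathbf{\Delta} r_o$ on annuli $A_i$ at radii $r_i\to\infty$ (Propositions~\ref{prop1}--\ref{prop3}), and then observes that $A_i$ contains many orbit points $y\in\pi^{-1}(\pi(y_0))$ with pairwise disjoint balls $B(y,R)$ covering a \emph{definite fraction} of $\tilde m(A_i)$ (this uses $\mathrm{diam}(X)<\infty$ and Bishop--Gromov). Pigeonhole then produces one orbit point $y_i$ with $\strokedint_{B(y_i,R)}\mathbf{\Delta} r_o \geq Q - \Psi(i)$ (Proposition~\ref{prop4}). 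Finally the deck transformation $\varphi_i$ taking $y_0$ to $y_i$ pulls $r_o$ back to a function $u_i$ on the \emph{fixed} ball $B_R(y_0)$, and it is these $u_i$ whose Laplacian defect goes to zero there. In other words, you do not get to choose $p_j$ in advance; the correct sequence of base points is produced by the pigeonhole argument, and it is the cocompactness of the $\bar\pi_1(X)$-action that lets you transport everything back to a single ball. Without this step your lower bound on $\mathbf{\Delta} u_j$ over $B_R(\tilde x_0)$ is unjustified.

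\textbf{A secondary point.} Your plan to deduce $|\nabla u|=1$ from Bochner is unnecessarily roundabout. The paper obtains $|\nabla u_R|=1$ $\tilde m$-a.e.\ directly from the fact that each $u_i$ is (a shift of) a distance function, hence $|\nabla u_i|=1$ a.e., together with the uniform $W^{1,2}$ bound and the convergence; Bochner plays no role at this stage (it is used only later, in Section~\ref{ssec-hessian-u}, to compute $\Hess[u]$). Your proposed Bochner argument would also run into exactly the regularity obstacle you anticipated, whereas the direct route avoids it entirely.
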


The theorem follows from the following technical lemma.  

\begin{lem}\label{lem-u}
Let $(X,d,m)$ be a compact $\RCDst(K,N)$ space with $K<0$, $N \in (1, \infty)$, and $(\tilde X, \tilde d, \tilde m)$ its universal cover.  If $h(X)= \sqrt{-K (N-1)}$, then for any $y_0 \in \tilde X$ and $R > 50 \, \mathrm{diam}(X)$  there exists $u_R: B(y_0, R) \to \mathbb R$ Lipschitz with  $| \nabla u_R| = 1$ $\tilde{m}$-a.e. and  $\Delta u_R =  \sqrt{-K (N-1)}$ $\tilde{m}$-a.e..
\end{lem}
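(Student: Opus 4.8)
The plan is to construct $u_R$ as a renormalized Busemann-type function built from distance functions to points far out in the universal cover, following Liu's argument adapted to the $\RCDst$ setting. First I would exploit the hypothesis $h(X) = \sqrt{-K(N-1)}$ together with the sharp Bishop--Gromov inequality (Theorem \ref{thm-BishopGrom}) to show that the volume growth of balls in $\tilde X$ is asymptotically \emph{maximal}, i.e. the ratio $\tilde m(\overline{B(y_0,r)})/\tilde m(\overline{B(y_0,R)})$ is not merely bounded below by the model ratio but, along a suitable sequence, the spherical measures $s_{\tilde m}(y_0,r)$ must grow like $\sinh^{N-1}(\sqrt{-K/(N-1)}\,r)$ with asymptotically sharp constant. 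Equality-almost-equality in Bishop--Gromov forces the Laplacian comparison inequality \eqref{eq-lapComp} for the distance function $r_q(\cdot) = \tilde d(\cdot,q)$ to be \emph{almost} saturated on large annular regions, in an integrated sense.

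Next I would fix $R > 50\,\diam(X)$ and pick (using the maximal volume growth) a sequence of points $q_j \in \tilde X$ with $\tilde d(y_0,q_j) \to \infty$ such that the normalized distance functions $b_j := \tilde d(\cdot,q_j) - \tilde d(y_0,q_j)$, restricted to $B_R(y_0)$, have Laplacians $\mathbf\Delta b_j|_{B_R(y_0)}$ that are close (as measures, after subtracting the model value) to $\sqrt{-K(N-1)}\,\tilde m$ — the point being that on $B_R(y_0)$ the argument of $\coth$ in \eqref{eq-lapComp} is $\gtrsim \tilde d(y_0,q_j) - R \to \infty$, so the comparison upper bound tends to $\sqrt{-K(N-1)}$, while almost-equality in Bishop--Gromov gives the matching lower bound in an $L^1$ (or weak) sense. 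Each $b_j$ is $1$-Lipschitz, hence $|\nabla b_j| \le 1$ a.e., and I would need a matching lower bound $|\nabla b_j| \to 1$ in $L^2_{loc}$, which again comes from the almost-rigidity in the volume comparison (a segment inequality / excess estimate argument: most points lie on near-geodesics to $q_j$, along which $b_j$ decreases at unit speed). Then I pass to a limit: the $b_j$ are uniformly Lipschitz on $B_R(y_0)$, so by Arzel\`a--Ascoli a subsequence converges uniformly to a $1$-Lipschitz limit $u_R$; lower semicontinuity of the Cheeger energy and of the weak Laplacian under this convergence (using that the $\mathbf\Delta b_j$ are uniformly bounded measures, so they converge weakly) yields $|\nabla u_R| = 1$ $\tilde m$-a.e. and $\Delta u_R = \sqrt{-K(N-1)}$ $\tilde m$-a.e., with $u_R \in D_{loc}(\Delta)$ on $B_R(y_0)$.

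The main obstacle I expect is making the "almost rigidity implies almost-sharp Laplacian and almost-unit gradient" step rigorous in the non-smooth $\RCDst$ category, where the usual Riemannian integration-by-parts and pointwise arguments (Abresch--Gromoll excess estimates, the precise packing/segment inequalities Liu uses) are not directly available. Concretely: one must control $\int_{B_R(y_0)} (1 - |\nabla b_j|^2)\,d\tilde m$ and the defect $\int_{B_R(y_0)} d(\mathbf\Delta b_j - \sqrt{-K(N-1)}\tilde m)$ simultaneously, using only the measure-valued Laplacian comparison, the Bishop--Gromov equality discussion, and the coarea/good-geodesic structure of $\RCDst$ spaces (e.g. the existence of test plans representing geodesics, and Cavalletti--Mondino's disintegration along the distance to $q_j$). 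I would handle this by combining the Laplacian comparison \eqref{eq-lapComp} in the distributional form $-\int \langle \nabla \varphi, \nabla b_j\rangle\,d\tilde m \le \int \varphi\,(\text{model bound})\,d\tilde m$ for nonnegative test $\varphi$, with a reverse estimate obtained by integrating \eqref{eq-lapComp} over large balls and using that the total mass matches the model only in the limit forced by $h(X) = \sqrt{-K(N-1)}$; the gap between the two gives the desired convergence. The remaining bookkeeping — checking $u_R$ is genuinely Lipschitz (not merely a.e. equal to one), that the limiting identities hold $\tilde m$-a.e. rather than merely in a weak sense, and that the choice $R > 50\,\diam(X)$ gives enough room for the large-annulus estimates — is routine once the quantitative almost-rigidity input is in place.
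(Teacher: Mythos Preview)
Your overall architecture is right---take distance functions to far-away points, show their Laplacians on $B_R(y_0)$ converge to the constant $Q=\sqrt{-K(N-1)}$, and extract a uniform limit---but there is a genuine gap in the localization step, and one unnecessary worry.

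The unnecessary worry: for a distance function $r_q=\tilde d(\cdot,q)$ on an $\RCDst$ space one has $|\nabla r_q|=1$ $\tilde m$-a.e.\ \emph{exactly} (this is part of the Laplacian comparison package, see the references around \eqref{eq-lapComp}). So $|\nabla b_j|=1$ a.e.\ for free; no segment inequality or excess estimate is needed, and your $L^2_{loc}$ convergence of gradients is trivial.

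The real gap is how you get the \emph{lower} bound $\strokedint_{B_R(y_0)}\mathbf\Delta b_j \ge Q - o(1)$. The volume entropy hypothesis, via Bishop--Gromov, tells you about the Laplacian of $\tilde d(\cdot,o)$ averaged over large \emph{annuli} centered at a fixed point $o$ (concretely, $\int_{B(o,t)}\mathbf\Delta r = s_{\tilde m}(o,t)$, so ratios of $s_{\tilde m}$ control annular averages). It does not directly say anything about $\mathbf\Delta\,\tilde d(\cdot,q_j)$ on the fixed ball $B_R(y_0)$ for an arbitrary far point $q_j$. You have the upper bound from $\coth\to 1$, but ``almost-equality in Bishop--Gromov'' lives on annuli around $q_j$ (or around $o$), not on your prescribed ball. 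To bridge this, the paper uses the cocompact action of the deck group: one first proves the annular average $\strokedint_{A_i}\mathbf\Delta r \ge Q - \Psi(i)$ for annuli $A_i$ around a fixed $o$, then runs a packing argument with the orbit $\pi^{-1}(\pi(y_0))$ (this is where $R>50\,\diam(X)$ is used) to find a single ball $B(y_i,R)\subset A_i$ with the same lower bound, and finally pulls back by the deck transformation sending $y_0$ to $y_i$. Without this deck-transformation/packing step you cannot pass from ``good on some large annulus'' to ``good on $B_R(y_0)$'', and your proposal does not supply a substitute.
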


To prove the previous lemma we need the following propositions.  Set $Q:= \sqrt{-K(N-1)}$. Let us recall the definition of the function $s_{\tilde{m}}$ appearing in Theorem \ref{thm-BishopGrom}:
\[
s_{\tilde{m}}(x,r) = \limsup_{\delta \to 0} \frac{1}{\delta} \tilde{m}\left( \overline{B(x,r+ \delta)}  \setminus B(x,r)\right).
\]

\begin{prop}\label{prop1}
 For any $o \in \tilde X$ we have
 \[
\limsup_{r \to \infty}\frac{s_{\tilde m} (o,r + 50R)}{s_{\tilde m} (o, r-50R)} = \exp({100 Q R}).
\]
In particular, there is a sequence of positive numbers  $r_i$ with $\lim_{i \to \infty}r_i=\infty$, such that $\frac{s_{\tilde m} (o,r_i + 50R)}{s_{\tilde m} (o, r_i-50R)} $  converges to $\exp({100 Q R})$.
\end{prop}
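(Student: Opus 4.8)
\textbf{Proof proposal for Proposition \ref{prop1}.}

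The plan is to exploit the sharp Bishop--Gromov inequality for the surface measure $s_{\tilde m}$ (the second displayed inequality in Theorem \ref{thm-BishopGrom}) together with the hypothesis that the volume entropy is exactly maximal, $h(X) = Q = \sqrt{-K(N-1)}$. Setting $a := \sqrt{-K/(N-1)}$, so that $Q = (N-1)a$, Theorem \ref{thm-BishopGrom} gives for $r \le R'$
\[
\frac{s_{\tilde m}(o,r)}{s_{\tilde m}(o,R')} \;\ge\; \frac{\sinh^{N-1}(ar)}{\sinh^{N-1}(aR')},
\]
equivalently the function $r \mapsto s_{\tilde m}(o,r)/\sinh^{N-1}(ar)$ is monotone non-increasing. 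Since $\sinh^{N-1}(at) \sim 2^{-(N-1)} e^{(N-1)at} = 2^{-(N-1)} e^{Qt}$ as $t \to \infty$, the ratio $s_{\tilde m}(o, r+50R)/s_{\tilde m}(o, r-50R)$ is therefore bounded above by a quantity tending to $e^{100QR}$; this yields the $\le$ direction of the $\limsup$ (up to taking the limit of the comparison quotient, which converges to $e^{100QR}$ by the asymptotics of $\sinh$).

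For the reverse inequality I would argue by contradiction using the definition of $h(X)$ as a $\limsup$ of $\frac1R\ln\tilde m(B_{\tilde X}(o,R))$ and the fact (quoted in the introduction, \cite{Rev, BCGS}) that this equals $Q$. First I would relate the ball measure to an integral of the surface measure: by the coarea-type structure underlying Theorem \ref{thm-BishopGrom} one has $\tilde m(\overline{B(o,R)}) = \int_0^R s_{\tilde m}(o,t)\,dt$ (or at least $\tilde m$ of the closed annulus between radii $r$ and $R$ equals $\int_r^R s_{\tilde m}(o,t)\,dt$), so exponential growth rate $Q$ of the balls forces $\limsup_{t\to\infty}\frac1t \ln s_{\tilde m}(o,t) = Q$ as well. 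If the $\limsup$ in the statement were strictly less than $e^{100QR}$, say $\le e^{(100Q-\epsilon)R}$ for some $\epsilon>0$ and all $r$ large, then iterating the comparison $s_{\tilde m}(o, r+100R) \le e^{(100Q-\epsilon)R} s_{\tilde m}(o, r)$ along an arithmetic progression of step $100R$ would give $s_{\tilde m}(o, r_0 + 100kR) \le C\, e^{(100Q-\epsilon)Rk}$, hence $\frac1t\ln s_{\tilde m}(o,t) \le Q - \epsilon/100 + o(1)$, contradicting the growth rate $Q$. This proves $\limsup_{r\to\infty} s_{\tilde m}(o,r+50R)/s_{\tilde m}(o,r-50R) = e^{100QR}$.

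For the ``in particular'' clause, set $\phi(r) := s_{\tilde m}(o, r+50R)/s_{\tilde m}(o, r-50R)$; we have just shown $\limsup_{r\to\infty}\phi(r) = L := e^{100QR}$, and from the monotonicity of $r\mapsto s_{\tilde m}(o,r)/\sinh^{N-1}(ar)$ one also gets a uniform upper bound $\phi(r) \le \sinh^{N-1}(a(r+50R))/\sinh^{N-1}(a(r-50R)) \to L$, i.e. $\limsup = L$ is actually attained as an honest $\sup$ of accumulation and $\phi(r)$ stays below a value tending to $L$. From these two facts I would extract, by a standard selection argument, a sequence $r_i \to \infty$ along which $\phi(r_i)$ is monotone increasing to $L$: choose $r_1$ with $\phi(r_1)$ within $1$ of $L$ from below; having chosen $r_i$, use $\limsup \phi = L$ to find $r' > r_i + 50R$ with $\phi(r') > \max(\phi(r_i), L - 2^{-i})$ and set $r_{i+1} := r'$. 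The main obstacle I anticipate is the first bullet of the reverse direction, namely pinning down rigorously the identity (or at least a two-sided comparison) between $\tilde m$ of balls/annuli and $\int s_{\tilde m}(o,t)\,dt$ in the non-smooth $\RCDst$ setting so that the growth rate $Q$ transfers from $\tilde m(B(o,R))$ to $s_{\tilde m}(o,\cdot)$; this is where one must be careful about the $\limsup$ versus $\lim$ and about null sets, but it should follow from the Bishop--Gromov machinery already cited.
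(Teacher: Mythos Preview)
Your proposal is correct and follows essentially the same route as the paper: the upper bound via the Bishop--Gromov surface comparison and the asymptotics of $\sinh^{N-1}$, the lower bound by contradiction through iterating the assumed gap along steps of length $100R$ and then integrating $s_{\tilde m}(o,\cdot)$ to contradict $h(X)=Q$. The concern you flag about relating $\tilde m$ of balls to $\int s_{\tilde m}(o,t)\,dt$ is exactly the step the paper also uses without further comment; it is later justified by Proposition~\ref{prop2} and the subsequent remark showing that $s_{\tilde m}(o,\cdot)$ is an honest limit, so your instinct to single this out is sound but not an obstruction.
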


\begin{proof}
Since $h(X)= Q > 0$,  $\tilde X$ has infinite diameter.  Recall that by Mondino-Wei \cite{MW}, $(\tilde X, \tilde d, \tilde m)$ is an $\RCDst(K,N)$ space. By Theorem  \ref{thm-BishopGrom},
\[
\frac{s_{\tilde m} (o,r + 50R)}{s_{\tilde m} (o, r-50R)}   \leq      \frac{\sinh^{N-1} (   Q  (r+ 50R)) }{\sinh^{N-1}( Q (r-  50R)) }.
\]
Notice that
\[
\lim_{r \to \infty} \frac{\sinh^{N-1} (Q  (r + 50R)) }{\sinh^{N-1}( Q (r -  50R)) } = \exp({100QR}). 
\]
We will show that 
\[
\limsup_{r \to \infty} \frac{s_{\tilde m} (o,r + 50R)}{s_{\tilde m} (o, r-50R)} = \exp({100 Q R}).
\]
By contradiction, suppose that there exist $r_0 > 100R$ and $\varepsilon > 0$ such that for any $r \geq r_0$,
\[
\frac{s_{\tilde m} (o,r + 50R)}{s_{\tilde m} (o, r-50R)}   \leq   (1- \varepsilon) \exp({100  Q R}).
\]
Therefore, for any $r > r_0$ big enough  we have that 
\[
s_{\tilde m} (o,r) \leq   (1- \varepsilon) \exp({100 Q R}) s_{\tilde m} (o, r-100R).
\]
Iterating this inequality $\lfloor \tfrac{r-r_0}{100R}\rfloor$ times, where $\lfloor \tfrac{r-r_0}{100R} \rfloor$ is the largest integer smaller than or equal to $\tfrac{r-r_0}{100R}$,  we get
\begin{equation*}
s_{\tilde m} (o,r) \leq 
 \left(  (1-\varepsilon)  \exp({100   Q   R})\right)^{\lfloor \tfrac{r-r_0}{100R} \rfloor}  s_{\tilde m} (o, r-\lfloor \tfrac{r-r_0}{100R} \rfloor100R).
\end{equation*}

Now, $r-\lfloor \tfrac{r-r_0}{100R} \rfloor100R= r_0 + t$ for some $t\in [0,100R)$. 
Hence, by Theorem \ref{thm-BishopGrom} and as the hyperbolic sine is an increasing function:
\begin{eqnarray*}
s_{\tilde m} (o, r- \lfloor \tfrac{r-r_0}{100R} \rfloor 100R) &\leq & s_{\tilde m} (o, r_0)\frac{\sinh^{N-1} 
(Q(r- \lfloor \tfrac{r-r_0}{100R} \rfloor 100R)) }{\sinh^{N-1}(Q r_0)}\\
 &\leq & s_{\tilde m} (o, r_0)\frac{\sinh^{N-1} ( Q  (r_0 + 100R))}{\sinh^{N-1}(Q r_0)} .
\end{eqnarray*}
Thus, for $r \geq r_0$
\[
s_{\tilde m} (o,r) \leq c(N,K,r_0,R) \left(  (1-\varepsilon)  \exp({100  Q R})\right)^{\frac{r-r_0}{100R}},
\]
where we used that $\lfloor \frac{r-r_0}{100R} \rfloor \leq \frac{r-r_0}{100R}$.
Integrating $s_{\tilde m}(o, \cdot)$ from $r_0$ to $r$  and using the previous inequality, we get an upper bound of $\tilde m (B(o,r) \setminus B(o,r_0))$. Using this bound, we obtain 
\[
h(X)=\limsup_{r \to \infty} \frac{1}{r} \ln \tilde m (B(o,r)) < Q.
\]
This contradicts $h(X)= Q$, and concludes the proof.
\end{proof}

For the following proposition let us recall that any distance function $r(x):= \tilde d(o,x)$ on $\tilde{X}$ has a well-defined measure valued Laplacian on $\tilde{X}\setminus \{o\}$. Moreover it is a signed Radon measure  and an exact formula is presented in \cite[Corollary 4.19, Theorem 1.1]{CavMon}.  Denote $A(o,r_1,r_2):=\left\{ x\in \tilde{X} \mid r_1 \le  \tilde d(o,x) < r_2 \right\}$.  
Then, we have the following divergence formula. See \cite[Lemma 2.11]{Jiang} for Alexandrov space case. 
\begin{prop}\label{prop2}For a.e. $o \in \tilde{X}$ and for all but countably many  $t \in (0, \infty)$, 
\[
\int_{B(o,t)}  \mathbf{\Delta}r = s_{\tilde m} (o, t).
\]
In particular, for all but countably many $t_2 \ge t_1 > 0$, and a.e. $o \in \tilde{X}$,
\begin{equation}
\int_{A(o,t_1,t_2)}  \mathbf{\Delta}r = s_{\tilde m} (o, t_2) -s_{\tilde m} (o, t_1).  \label{eq-lap-annulus}
\end{equation}
\end{prop}

\begin{proof}
Fix  $0<\epsilon_0 <\frac t2$, define $\psi_{\epsilon_0}$  as
	\[   \psi_{\epsilon_0}(x)=\left\{
	\begin{array}{ll}
	0 & \text{if} \ x\in B(o,\epsilon_0)\\
	\frac{r(x)}{\epsilon_0} -1 & \text{if} \ x\in A(o,\epsilon_0, 2\epsilon_0)\\
	1 &  \text{otherwise}.
	\end{array} 
	\right. \]
Let $\{\delta_i\}_{i\in\mathbb{N}}$ be a decreasing sequence such that $\delta_i \to 0$. For each $\delta_i$ define a function $f_{\delta_i}:\tilde{X}\to\mathbb{R}$ by
\[   f_{\delta_i}(x):=\left\{
\begin{array}{ll}
       \psi_{\epsilon_0} (x)  & \text{if} \ x\in B(o,t)\\
     1- \frac{1}{\delta_i} (r(x)-t) & \text{if} \ x\in A(o,t,t+\delta_i)\\
      0 &  \text{otherwise}.
\end{array} 
\right. \]
 We observe that $f_{\delta_i}\in W^{1,2}(\tilde{X},\tilde{d},\tilde{m})$ for all $i\in\mathbb{N}$. Then, 
\[
\int_{B(o,t+ \delta_1) } f_{\delta_i}  \mathbf{\Delta} r =  \int_{B(o,t) }   \psi_{\epsilon_0}\,  \mathbf{\Delta}r + \int_{A(o,t, t+\delta_1)}  f_{\delta_i} \mathbf{\Delta} r.
\]
By the definition of $\mathbf{\Delta} r$ and $f_{\delta_i}$ we now have that,
\begin{eqnarray*}
\int_{B(o,t+ \delta_1)} f_{\delta_i}  \mathbf{\Delta} r &=& -\int_{B(o,t+ \delta_1)} \left\langle \nabla f_{\delta_i}, \nabla r\right\rangle\, \mathrm{d} \tilde m \\ 
&=&  \frac{1}{\delta_i}  \int_{A(o,t, t+ \delta_i)} \left\langle \nabla r, \nabla r\right\rangle\, \mathrm{d} \tilde m - \frac{1}{\epsilon_0} \int_{A(o,\epsilon_0, 2\epsilon_0)} \left\langle \nabla r, \nabla r\right\rangle\, \mathrm{d} \tilde m \\
&=&  \frac{1}{\delta_i}  \tilde m(A(o,t, t+ \delta_i)) - \frac{1}{\epsilon_0}\tilde m(A(o,\epsilon_0, 2\epsilon_0)).
\end{eqnarray*}
Hence, 
\[
\int_{B(o,t)} \psi_{\epsilon_0}  \mathbf{\Delta} r + \int_{A(o,t, t+\delta_1)}  f_{\delta_i} \mathbf{\Delta} r =  \frac{1}{\delta_i} \tilde m(A(o,t, t+ \delta_i)) -  \frac{1}{\epsilon_0}\tilde m(A(o,\epsilon_0, 2\epsilon_0)).
\]
Now choose $\delta_i$ to be a specific sequence achieving the $\limsup$ in the definition of $s_{\tilde{m}}$. Taking the limit when $i \to \infty$, we get: 
\[
\int_{B(o,t)}  \psi_{\epsilon_0} \, \mathbf{\Delta} r + \lim_{i \to \infty}  \int_{A(o,t, t+\delta_1)} f_{\delta_i} \mathbf{\Delta} r = s_{\tilde m} (o, t) - \frac{1}{\epsilon_0}\tilde m(A(o,\epsilon_0, 2\epsilon_0)). 
\]
Notice that $0 \leq  f_{\delta_i} \leq 1$ and 
\[ | \int_{A(o,t, t+\delta_1)} f_{\delta_i}\,  \mathbf{\Delta} r | \le \int_{A(o,t, t+\delta_i)} |\mathbf{\Delta} r |.  \]
Since  $|\mathbf{\Delta} r |$ is a Radon measure, we have $\lim_{i \to \infty} \int_{A(o,t, t+\delta_i)} |\mathbf{\Delta} r | = 0$ for all but countably many $t \in (0, \infty)$. 
Therefore
\[
\int_{B(o,t)} \psi_{\epsilon_0} \, \mathbf{\Delta}r = s_{\tilde m} (o, t) - \frac{1}{\epsilon_0}\tilde m(A(o,\epsilon_0, 2\epsilon_0)). 
\]
This is true for all $o \in \tilde{X}$. 
Now for $\RCDst (K,N)$ space with $1<N<\infty$, for a.e. $o\in \tilde{X}$, $\limsup_{\epsilon_0 \to 0} \frac{1}{\epsilon_0}\tilde m(A(o,\epsilon_0, 2\epsilon_0) =0$ (see \cite[Remark 5.4]{CavMon}). Letting $\epsilon_0 \to 0$ for those $o$ above gives the result. 
\end{proof}

\begin{rmk}
The final part of the above proof shows that a.e. $t$ $s_{\tilde{m}}(x,t)$ is actually a limit,
\[
s_{\tilde{m}}(x,r) = \lim_{\delta \to 0} \frac{1}{\delta} \tilde{m}\left( \overline{B(x,r+ \delta)}  \setminus B(x,r)\right).
\]
\end{rmk}

Let $A\subset \tilde{X}$. In the following proposition, we will use the notation $\strokedint_{A} \mathbf{\Delta}r := \frac{\int_{A}\mathbf{\Delta}r}{\tilde{m}(A)}$.

\begin{prop}\label{prop3}
Set $A_i= \{ y \in \tilde X \, | \,  r_i - 50R \leq \tilde  d(o,y) \leq  r_i + 50 R \}$. Then,
 $$\strokedint_{A_i} \mathbf{\Delta} r \geq  Q - \Psi(i)$$ where $\lim_{i \to \infty} \Psi (i)=0$. 
\end{prop}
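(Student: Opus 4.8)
The plan is to combine the two preceding propositions with the monotonicity afforded by Proposition \ref{prop1}. By Proposition \ref{prop2}, for any $t>0$ we have $\int_{B(o,t)\setminus\{o\}}\mathbf{\Delta}r = s_{\tilde m}(o,t)$, so writing $A_i=\overline{B(o,r_i+50R)}\setminus B(o,r_i-50R)$ (up to boundary sets, which are $\tilde m$-negligible by the Bishop--Gromov inequality) we get
\[
\int_{A_i}\mathbf{\Delta}r = s_{\tilde m}(o,r_i+50R)-s_{\tilde m}(o,r_i-50R).
\]
On the other hand $\tilde m(A_i)=\int_{r_i-50R}^{r_i+50R}s_{\tilde m}(o,t)\,dt$ by the Remark following Proposition \ref{prop2} (i.e. $s_{\tilde m}(o,\cdot)$ is the a.e.-derivative of $t\mapsto\tilde m(B(o,t))$, and monotonicity of $s_{\tilde m}$ from Theorem \ref{thm-BishopGrom} gives that this derivative is genuine). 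Hence
\[
\strokedint_{A_i}\mathbf{\Delta}r = \frac{s_{\tilde m}(o,r_i+50R)-s_{\tilde m}(o,r_i-50R)}{\int_{r_i-50R}^{r_i+50R}s_{\tilde m}(o,t)\,dt}.
\]

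Next I would estimate numerator and denominator separately. For the denominator, Theorem \ref{thm-BishopGrom} gives $s_{\tilde m}(o,t)\le s_{\tilde m}(o,r_i-50R)\cdot\frac{\sinh^{N-1}(Qt/\sqrt{N-1}\cdot\sqrt{N-1})}{\sinh^{N-1}(Q(r_i-50R))}$ — more cleanly, using the notation $Q=\sqrt{-K(N-1)}$ and the ratio bound $\frac{s_{\tilde m}(o,t)}{s_{\tilde m}(o,s)}\le\frac{\sinh^{N-1}(\sqrt{-K/(N-1)}\,t)}{\sinh^{N-1}(\sqrt{-K/(N-1)}\,s)}$ for $t\ge s$ — so $\int_{r_i-50R}^{r_i+50R}s_{\tilde m}(o,t)\,dt \le s_{\tilde m}(o,r_i-50R)\cdot C_1(K,N,R)$ for a constant depending only on the indicated quantities (the integral of a bounded ratio of hyperbolic sines over an interval of fixed length $100R$, which stays bounded as $r_i\to\infty$). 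For the numerator, I use Proposition \ref{prop1}: along the chosen sequence $r_i$ the ratio $\rho_i:=\frac{s_{\tilde m}(o,r_i+50R)}{s_{\tilde m}(o,r_i-50R)}$ increases to $e^{100QR}$, so $s_{\tilde m}(o,r_i+50R)-s_{\tilde m}(o,r_i-50R) = (\rho_i-1)\,s_{\tilde m}(o,r_i-50R)$ with $\rho_i-1\to e^{100QR}-1$. Dividing, the factors $s_{\tilde m}(o,r_i-50R)$ cancel and we obtain $\strokedint_{A_i}\mathbf{\Delta}r \ge \frac{\rho_i-1}{C_1(K,N,R)}$, a quantity converging to a positive constant $\frac{e^{100QR}-1}{C_1(K,N,R)}$.

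Finally, I must show this limiting constant is at least $Q$ up to the desired error $\Psi$; equivalently, that the denominator bound $C_1$ is itself close to $\frac{e^{100QR}-1}{Q}$, i.e. the integral $\int_{r_i-50R}^{r_i+50R}s_{\tilde m}(o,t)\,dt$ is, to leading order, $s_{\tilde m}(o,r_i-50R)\cdot\frac{e^{100QR}-1}{Q}$. This is where the sharpness is forced: because $\rho_i\to e^{100QR}$ and the Bishop--Gromov ratio is exactly the hyperbolic-sine ratio, along the sequence $r_i$ the comparison inequalities become asymptotically equalities, so $s_{\tilde m}(o,t)$ behaves like $s_{\tilde m}(o,r_i-50R)\,e^{Q(t-(r_i-50R))}$ on the whole interval $[r_i-50R,r_i+50R]$ up to a factor $1+o(1)$ as $i\to\infty$; integrating gives the claim. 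The main obstacle is precisely this last step: turning the convergence of the single ratio $\rho_i$ into uniform-on-the-interval asymptotics for $s_{\tilde m}(o,\cdot)$. I expect to handle it by a squeezing argument — bounding $s_{\tilde m}(o,t)$ above and below on $[r_i-50R,r_i+50R]$ using monotonicity of the Bishop--Gromov ratio anchored at the two endpoints $r_i\pm 50R$, and noting that the upper and lower envelopes agree in the limit because their endpoint values have ratio $\rho_i\to e^{100QR}$, which equals the hyperbolic-sine ratio over that interval in the limit. Collecting all error terms into a single function $\Psi(i\,|\,K,N,R)\to 0$ then yields $\strokedint_{A_i}\mathbf{\Delta}r\ge Q-\Psi(i\,|\,K,N,R)$.
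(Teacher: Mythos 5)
Your outline reproduces the paper's argument faithfully up to the last step: write $\int_{A_i}\mathbf{\Delta}r = s_{\tilde m}(o,r_i+50R)-s_{\tilde m}(o,r_i-50R)$ via Proposition \ref{prop2}, invoke Proposition \ref{prop1} for the ratio $\rho_i\to e^{100QR}$, and bound $\tilde m(A_i)$ above by $s_{\tilde m}(o,r_i-50R)$ times the integrated Bishop--Gromov ratio. Where you deviate is in treating the convergence of that integrated ratio to $(e^{100QR}-1)/Q$ as ``the main obstacle,'' to be handled by a squeezing argument that exploits asymptotic sharpness of Bishop--Gromov along the special sequence $r_i$. This is a misconception, and the paper does not do this. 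The quantity you call $C_1$ is
\[
C_1(i) \;=\; \int_{r_i-50R}^{r_i+50R}\frac{\sinh^{N-1}\bigl(\sqrt{-K/(N-1)}\,t\bigr)}{\sinh^{N-1}\bigl(\sqrt{-K/(N-1)}\,(r_i-50R)\bigr)}\,dt,
\]
an explicit function of $r_i$ depending only on the comparison model, and it converges to $(e^{100QR}-1)/Q$ as $r_i\to\infty$ for \emph{any} sequence $r_i\to\infty$, simply because $\sinh^{N-1}(as)/\sinh^{N-1}(as')\to e^{(N-1)a(s-s')}$ as $s,s'\to\infty$ with $s-s'$ fixed; the paper makes this precise with a one-line L'H\^opital computation. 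No information about $s_{\tilde m}$ beyond the one-sided Bishop--Gromov inequality is needed, and the near-optimality of Bishop--Gromov along the sequence chosen in Proposition \ref{prop1} plays no role whatsoever here. Once $C_1(i)\to(e^{100QR}-1)/Q$ is in hand, your own inequality $\strokedint_{A_i}\mathbf{\Delta}r\ge(\rho_i-1)/C_1(i)$ gives the result directly.

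A secondary point: even if one did want to squeeze, the argument as you describe it faces a directional issue. You need an \emph{upper} bound on $\tilde m(A_i)$, and Bishop--Gromov anchored at the inner radius $r_i-50R$ already gives an upper envelope for $s_{\tilde m}(o,\cdot)$ on the interval; the lower envelope (anchored at $r_i+50R$) bounds $\tilde m(A_i)$ from \emph{below}, which is the wrong side for the estimate. So the ``squeezing'' would reduce to the single upper-envelope bound, which is exactly the paper's computation. In short, the approach is sound, but you have manufactured an obstacle that is not there; recognizing that $C_1(i)$ is a deterministic quantity with an explicit limit closes the proof.
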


\begin{proof}
We now prove that $\strokedint_{A_i} \mathbf{\Delta} r \geq  Q - \Psi(i)$. By \eqref{eq-lap-annulus} and the definition of $A_i$, 
\[
\int_{A_i} \mathbf{\Delta}r = s_{\tilde m} (o, r_i + 50 R) - s_{\tilde m} (o, r_i - 50 R).
\]
By Proposition~\ref{prop1}, as $i$ goes to infinity,
\[
 \frac{s_{\tilde m} (o,r_i + 50R)}{s_{\tilde m} (o, r_i-50R)}\ \longrightarrow \ \exp({100 Q R}),
\] 
and therefore, there exist $\Psi(i) > 0$ such that $\lim_{i \to \infty} \Psi(i)=0$ and 
 \[
 \frac{s_{\tilde m} (o,r_i + 50R)}{s_{\tilde m} (o, r_i-50R)} +   \Psi(i) \geq \exp({100 Q  R}).
\]
Thus, 
\begin{align*}
\strokedint_{A_i} \mathbf{\Delta} r &= \frac {s_{\tilde m} (o, r_i + 50 R)}{\tilde m (A_i)}-\frac {s_{\tilde m} (o, r_i - 50 R)}{\tilde m (A_i)}\\
&\geq \frac {s_{\tilde m} (o, r_i - 50 R)}{\tilde m (A_i)} ( \exp({100 Q R}) - 1)-\frac {s_{\tilde m} (o, r_i - 50 R))} {\tilde m (A_i)}\Psi(i).
\end{align*}
Hence we only need to show that 
\[
\lim_{i \to \infty} \frac {s_{\tilde m} (o, r_i - 50 R)}{\tilde m (A_i)} = \frac{ Q }{\exp({100 Q R}) - 1  }.
\]
This would imply the existence of  $\Psi(i) > 0$ that satisfies the claim. 

\medskip

By Theorem \ref{thm-BishopGrom} we have that for $t\in [r_i-50R,r_i+50R]$,
\begin{eqnarray*}
\frac{\tilde{m}(A_i)}{s_{\tilde{m}}(o,r_i-50R)}&=&\int_{r_i-50R}^{r_i+50R}\frac{s_{\tilde{m}}(o,t)}{s_{\tilde{m}}(o,r_i-50R)}dt \\
&\leq & \int_{r_i-50R}^{r_i+50R} \frac{\sinh^{N-1}\left( Q  t \right)}{\sinh^{N-1}\left(Q (r_i-50R) \right)}dt\\
&=& \frac{\int_{r_i-50R}^{r_i+50R}\sinh^{N-1}\left(Q t \right)dt}{\sinh^{N-1}\left(Q (r_i-50R) \right)}.
\end{eqnarray*}
Using L'H\^{o}pital's rule we conclude,
\begin{eqnarray*}
\lim_{i\to \infty}\frac{\tilde{m}(A_i)}{s_{\tilde{m}}(o,r_i-50R)} &\leq & \lim_{i\to \infty} \frac{\int_{r_i-50R}^{r_i+50R}\sinh^{N-1}\left(    Q t \right)dt}{\sinh^{N-1}\left( Q (r_i-50R) \right)}\\
&=& \lim_{i\to \infty} \frac{-\sinh^{N-1}(Q(r_i-50R))+\sinh^{N-1}(Q(r_i+50R))}{(N-1)Q\sinh^{N-2}( Q (r_i-50R))\cosh( Q (r_i-50R))}\\
&=&  \frac{-1 + \exp({100 Q R})}{Q }. 
\end{eqnarray*}
\end{proof}

Recall that $A_i= \{ y \in \tilde X \, | \,  r_i - 50R \leq  \tilde d(o,y) \leq  r_i + 50 R \}$. Fix a $y_0 \in A_i$.  Let $\pi: \tilde X \to X$  be 
the universal covering map, and set 
\[
A_i(y_0)= \{ y \in \tilde X\, |\,  \pi(y)=\pi(y_0), \,B(y,R) \subset A_i\}.
\]

\begin{prop}\label{prop4} 
For every $i\in\mathbb{N}$, there exists $y_i \in A_i(y_0)$ such that 
\[
\strokedint_{B(y_i,R)} \mathbf{\Delta} r \geq   Q - \Psi(i).
\]
\end{prop}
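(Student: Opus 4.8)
The plan is to use an averaging argument over the fiber $\pi^{-1}(\pi(y_0)) \cap A_i(y_0)$ combined with the lower bound on $\strokedint_{A_i}\mathbf{\Delta}r$ from Proposition~\ref{prop3}. First, I would observe that $A_i(y_0)$ is a finite set (since the $\Gamma$-action is properly discontinuous and $A_i$ is bounded) and that the balls $\{B(y,R) : y \in A_i(y_0)\}$ have controlled overlap: each point of $\tilde X$ lies in at most $C=C(N,K,R,\mathrm{diam}(X))$ of them, because the centers are $\Gamma$-translates of $y_0$ and hence pairwise separated by at least the systole of $X$, while by Bishop--Gromov the number of such separated points in any ball of radius $2R$ is bounded. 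Moreover, by the definition of $A_i(y_0)$ we have $\bigcup_{y\in A_i(y_0)} B(y,R) \subset A_i$, and a slightly enlarged collar argument shows that this union covers a definite proportion of $A_i$ — more precisely, the set $A_i \setminus \bigcup_{y\in A_i(y_0)}B(y,R)$ is contained in a neighborhood of the boundary spheres and of the ``bad'' fiber points, whose $\tilde m$-measure is at most $\Psi(i\,|\,K,N,R)\,\tilde m(A_i)$ by the Bishop--Gromov volume growth and the compactness of $X$ (the fiber in $A_i$ is $\Gamma$-equivariantly ``most of'' $A_i$ up to the collar of width $R$, which is negligible compared to $A_i$ as $i\to\infty$ since $r_i\to\infty$).

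Next, I would sum the measure-valued Laplacian $\mathbf{\Delta}r$ over the collection. Writing
\[
\sum_{y \in A_i(y_0)} \int_{B(y,R)} \mathbf{\Delta}r = \int_{\tilde X} \Big(\sum_{y\in A_i(y_0)} \mathbf 1_{B(y,R)}\Big)\, \mathbf{\Delta}r,
\]
and using $\mathbf{\Delta}r \ge 0$ is \emph{not} available, so instead I would control the contribution of the overlap region and of $A_i\setminus\bigcup B(y,R)$ using the Laplacian comparison $\mathbf{\Delta}r|_{\tilde X\setminus\{o\}} \le Q\coth(\sqrt{-K/(N-1)}\,r)\,\tilde m$ (which for $r \ge r_i - 50R$ large is at most $(Q+\Psi(i))\tilde m$), giving a two-sided bound on $\mathbf{\Delta}r$ on $A_i$. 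This lets me compare $\sum_{y}\int_{B(y,R)}\mathbf{\Delta}r$ with $\int_{A_i}\mathbf{\Delta}r$ up to multiplicative and additive errors that vanish as $i\to\infty$. Combining with Proposition~\ref{prop3} and $\tilde m(B(y,R)) = \tilde m(B(y_0,R))$ being the same for all fiber points $y$ (the measure is $\Gamma$-invariant, so all the balls $B(y,R)$ have equal measure), I conclude that the \emph{average} over $y\in A_i(y_0)$ of $\strokedint_{B(y,R)}\mathbf{\Delta}r$ is at least $Q - \Psi(i\,|\,K,N,R)$. Hence some individual $y_i\in A_i(y_0)$ achieves $\strokedint_{B(y_i,R)}\mathbf{\Delta}r \ge Q - \Psi(i\,|\,K,N,R)$, possibly after renaming the error function.

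The main obstacle I anticipate is making the covering/overlap bookkeeping precise: one needs that $\bigcup_{y\in A_i(y_0)}B(y,R)$ fills up $A_i$ up to a set of relatively negligible measure, and that the overlap multiplicity is bounded independently of $i$. Both rely on the interplay between the fixed scale $R > 50\,\mathrm{diam}(X)$, the $\Gamma$-invariance of $\tilde m$, and the Bishop--Gromov estimates that make boundary collars of fixed width $R$ negligible compared to $A_i$ as $r_i\to\infty$ (so that the ratio $\tilde m(A_i\setminus\bigcup B(y,R))/\tilde m(A_i)$ can be absorbed into $\Psi$). Once this is in place, the averaging step and the application of the Laplacian comparison to bound the error terms are routine, and the pigeonhole conclusion is immediate.
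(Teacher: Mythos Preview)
Your approach shares the paper's core idea: since $(Q+\delta_i)\tilde m - \mathbf{\Delta} r \ge 0$ on $A_i$ by Laplacian comparison and Proposition~\ref{prop3} forces its total mass on $A_i$ to be $o(\tilde m(A_i))$, the average of $\strokedint_{B(y,R)}\mathbf{\Delta}r$ over a suitable family of balls in $A_i$ must be close to $Q$, and pigeonhole finishes. The difference is in which family. The paper does \emph{not} use all of $A_i(y_0)$; it takes a \emph{maximal disjoint} subfamily $E_i\subset A_i(y_0)$, so the balls $\{B(y,R):y\in E_i\}$ are pairwise disjoint and no overlap bookkeeping (and no systole) is needed. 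It then shows $\tilde m(F_i)/\tilde m(A_i)\ge c(K,N,R)>0$ via a standard $R$--$5R$ covering: the $5R$-balls around $E_i$ cover the thinner sub-annulus $A(o,r_i-10R,r_i+10R)$ (this is where $R>50\,\mathrm{diam}(X)$ is used), and Bishop--Gromov for annuli turns this into a volume ratio. From there, monotonicity of the non-negative measure $(Q+\delta_i)\tilde m-\mathbf{\Delta}r$ with respect to the inclusion $F_i\subset A_i$ gives the result immediately.

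Your version contains one genuine error: the claim that $\tilde m\big(A_i\setminus\bigcup_{y\in A_i(y_0)} B(y,R)\big)\le \Psi(i)\,\tilde m(A_i)$ is false. The boundary collar has width of order $R$, comparable to the width $100R$ of $A_i$, so its relative measure is a fixed constant in $(0,1)$ depending only on $K,N,R$ --- it does \emph{not} tend to zero as $i\to\infty$. Fortunately the argument only needs the weaker statement $\tilde m(\bigcup B(y,R))/\tilde m(A_i)\ge c>0$ (together with your multiplicity bound), since then the averaged deficit is $\le (C/c)\,\strokedint_{A_i}[(Q+\delta_i)\tilde m-\mathbf{\Delta}r]=\Psi(i)$. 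Note also that there is no ``two-sided bound on $\mathbf{\Delta}r$'': only the upper bound from comparison is available, but that suffices --- it makes $(Q+\delta_i)\tilde m-\mathbf{\Delta}r$ non-negative, and monotonicity in the domain does the rest. With these corrections your argument goes through; the paper's disjoint-family device simply avoids the extra bookkeeping.
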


\begin{proof}
Let $E_i$ be the maximal set of $A_i(y_0)$ such that $B(y_1, R) \cap B(y_2, R) = \emptyset$ for distinct points $y_1, y_2$ in $E_i$. Set $F_i= \bigcup_{y \in E_i } B(y,R)$. Using Proposition \ref{prop3} we will show that 
\[
\strokedint_{F_i} \mathbf{\Delta} r \geq   Q  - \Psi(i).
\]
As $F_i= \bigcup_{y \in E_i } B(y,R)$ is the union of mutually disjoint balls it will follow then that there is a point $y_i \in E_i$ such that 
\[
\strokedint_{B(y_i,R)} \mathbf{\Delta} r \geq   Q  - \Psi(i).
\]

To achieve this goal, first we estimate a lower bound for $\frac{\tilde m (F_i)}{\tilde m (A_i)}$. Let  $G_i= \bigcup_{y \in E_i} B(y, 5R)$. The cardinality of $E_i$ is finite, all of its elements are preimages of the same point under the covering map $\pi$, and $\tilde m$ is locally equal to $m$, from which we obtain $\tilde m (F_i)= \sum_{y \in E_i } \tilde m (B(y,R)) = \mathrm{card}(E_i)  \tilde m (B(y', R)))$ and 
\[
\tilde m (G_i) \leq \sum_{y \in E_i} \tilde m (B(y, 5R))) = \mathrm{card}(E_i) \tilde m (B(y', 5R)))
\]
for $y' \in E_i$.  Thus, 
\[
\frac{\tilde m (F_i)}{\tilde m (G_i)}   \geq \frac{ \mathrm{card}(E_i) \tilde m (B(y',R))}{\mathrm{card}(E_i)\tilde m (B(y',5R))}  \geq   \frac{ v_{K, N}(R) }{v_{K,N}(5R) }, 
\] 
by applying Theorem \ref{thm-BishopGrom} with $v_{K, N}(r)= \int_{0}^{r}\sinh^{N-1} (Q t) \, \mathrm{d}t$.

Now we will find a bound for $\tilde m (A_i)$. We will prove that 
\[
A(o, r_i -10R, r_i + 10R)=\{ y \in \tilde X\, | \, r_i -10R <  \tilde d(o,y) < r_i + 10R\} \subset G_i.
\]
  Let $z \in A(o, r_i -10R, r_i + 10R)$, we will show $z \in G_i$.  As $z \in \tilde X$ there exists a point $y \in \pi^{-1}(\pi(y_0))$ such that $ \tilde d(z,y) \leq \mathrm{diam}(X)$. Then, by the triangle inequality,
\[
r_i - 10R - \mathrm{diam}(X) \leq \tilde d(o, y) \leq r_i + 10R + \mathrm{diam}(X).
\]
The previous inequality implies $y \in A_i(y_0)$. From the definition of $E_i$ there exists a point $y' \in E_i$ such that
$\tilde d(y, y') \leq R$. By the triangle inequality, $\tilde d(z,y') \leq \mathrm{diam}(X) + R$. Recalling that $R > 50 \, \mathrm{diam} (\tilde X)$ we deduce that $\tilde d(z,y') \leq 5R$. Hence, $z \in G_i$.  This proves  $A(o, r_i -10R, r_i + 10R) \subset G_i$. 

From the previous paragraph, $\tilde m (G_i) \geq m (A(o, r_i -10R, r_i + 10R))$. Recall that 
$A_i=A(o, r_i -50R, r_i + 50R)$. Hence, by the generalized Bishop-Gromov volume comparison for annular regions we obtain:
\[
\frac{\tilde m (G_i)}{\tilde m (A_i)} \geq \frac{\tilde m (A(o, r_i -10R, r_i + 10R))}{\tilde m (A_i)} \geq  \frac{\int_{r_i-10R}^{r_i + 10R}\sinh^{N-1} (Q t) \, \mathrm{d}t}{\int_{r_i-50R}^{r_i + 50R}\sinh^{N-1}(Q t) \, \mathrm{d}t} 
\]
As 
\[
\lim_{i \to \infty}  \frac{\int_{r_i-10R}^{r_i + 10R}\sinh^{N-1} ( Q t) \, \mathrm{d}t}{\int_{r_i-50R}^{r_i + 50R}\sinh^{N-1}( Q t) \, \mathrm{d}t}  \geq \frac{\exp({-60 Q R})}{5},
\]
we can write 
\[
\frac{\tilde m (G_i)}{\tilde m (A_i)} \geq c(K, N, R).
\]
Therefore, 
\[
\frac{\tilde m (F_i)}{\tilde m (A_i)} =  \frac{\tilde m (F_i)}{\tilde m (G_i)}  \frac{\tilde m (G_i)}{\tilde m (A_i)} \geq \frac{v_{K,N}(R)}{v_{K,N}(5R)} c(K,N,R).
\]
The Laplacian comparison theorem for $\RCDst(K,N)$-spaces \eqref{eq-lapComp} then yields 
\[
\mathbf{\Delta} r|_{\tilde{X}\setminus \{o\}} \leq Q \coth(Qr) \tilde m .
\]
Observe that $\mathbf{\Delta} r \leq \left( Q  + \delta(i,K,N)\right)\tilde{m} $ on $A_i$, because $\lim_{r \to \infty}\coth(r)=1$ and $\coth(r)\geq 1$, here $\lim_{i \to \infty} \delta(i,K, N)=0$. Therefore $\left(Q + \delta(i,K,N)\right)\tilde{m} - \mathbf{\Delta} r$ is a non-negative measure. As $F_i \subset A_i$ we compute
\[
0 \leq \int_{F_i}\left[  \left( Q + \delta(i,K,N)\right)\tilde{m} - \mathbf{\Delta} r \right] \leq \int_{A_i} \left[ \left( Q + \delta(i,K,N)\right)\tilde{m} - \mathbf{\Delta} r\right].
\]
Changing sign in the above equation and taking the average integral we find, 
\begin{eqnarray*}
\strokedint_{F_i} \left[ \mathbf{\Delta} r - \left( Q + \delta(i,K,N)\right)\tilde{m} \right]  & \geq &
 \frac{\tilde m(A_i)}{\tilde m (F_i)} \strokedint_{A_i}\left[ \mathbf{\Delta} r - \left( Q + \delta(i,K,N)\right)\tilde{m} \right] \\
& \geq & \frac{\tilde m(A_i)}{\tilde m (F_i)} ( Q - \varepsilon (i,K,N,R) - Q - \delta(i,K,N))\\
&\geq & - \frac{\varepsilon(i,K,N,R)+ \delta(i,K,N)}{C(K,N,R)}.
\end{eqnarray*}
From the first to the second line above we used $ \strokedint_{A_i} \mathbf{\Delta} r \geq Q  - \varepsilon (i,K,N,R)$, and from the second to the third, $\frac{\tilde m (F_i)}{\tilde m (A_i)}  \geq  C(K,N,R)$. Thus, 
\[
\strokedint_{F_i} \mathbf{\Delta} r \geq   Q  + \delta(i,K,N) - \frac{\varepsilon(i,K,N,R)+ \delta(i,K,N)}{C(K,N,R)}.
\]
\end{proof}

We are now ready to prove Lemma \ref{lem-u}, in essentially the same way as the corresponding part of \cite[Theorem 1.7]{Jiang}.

\begin{proof}[Proof of Lemma \ref{lem-u}]
Let $y_i \in \tilde X$ be as in Proposition \ref{prop4}. Then there exists a deck transformation (measure-preserving metric isometry) $\varphi_i : \tilde X \to \tilde X$ such that $\varphi_i(y_0)=y_i$.  Define $u_i : B(y_0,R) \to \mathbb R$ by $u_i(y)= r( \varphi_i(y))- \tilde d(o,y_i)$. As $B(y_0,R)$ is precompact and the $u_i$ are $1$-Lipschitz, by the Arzel\`a-Ascoli Theorem there is a subsequence of $u_i$  that converges to a  $1$-Lipschitz function $u_R$. 
To show that in fact $|\nabla u_R|=1$ $\tilde{m}$-a.e., note that the set 
$$\{ x \in B(y_0,R) \ |\   | \nabla u_R|(x) \, \ \text{and}\, \  |\nabla u_i|(x) \ \forall i \in \mathbb N \ \text{are well defined} \}$$
has full $\tilde m$ measure.  For any $x$ in this set and $i \in \mathbb N$, let $\gamma_i$ be a geodesic from $\varphi_i(x)$ to $o$. 
Then, $$u_i( (\varphi_i^{-1} \circ \gamma_i)_t) - u_i( (\varphi_i^{-1} \circ \gamma_i)_0)= r((\gamma_i)_t) - r((\gamma_i)_0)=\tilde d(o,(\gamma_i)_t) - \tilde d(o,(\gamma_i)_0)=-t .$$  Now, $\varphi_i^{-1} \circ \gamma_i$ subconverges to a geodesic $\alpha$. Thus, in the limit, we get  the previous inequality for $u_R$,  $u_R( \alpha_t) - u_R( \alpha_0)=-t $. From this we conclude that $ |\nabla u_R|(x)=1$.  Thus, $|\nabla u_R|=1$ $\tilde{m}$-a.e.

Moreover, the sequence $u_i$ is uniformly bounded in $W^{1,2}(B(y_0,R))$, so $u_R\in W^{1,2}(B(y_0,R))$
and
\[
\int_{\tilde{X}} \psi \mathbf{\Delta}u_R  = \lim_{i\to\infty} \int_{\tilde{X}} \psi \circ \varphi_i \mathbf{\Delta} u_i.
\]
Here $\psi$ is a compactly supported Lipschitz function on $B(y_0,R)$.

The Laplacian comparison \eqref{eq-lapComp} implies 
\begin{equation} 
\mathbf{\Delta} u_i(y)= \mathbf{\Delta} r(\varphi_i(y)) \leq Q + \Psi(i), \ y\in B(y_0,R).
\end{equation}
On the other hand,  Proposition \ref{prop4} gives, 
\[
\strokedint_{B(y_0,R)} \mathbf{\Delta} u_i = \strokedint_{B(y_i,R)} \mathbf{\Delta} r \geq Q  -\Psi(i).
 \]
 It follows then that 
 \[  \strokedint_{B(y_0,R)} \left| \mathbf{\Delta}  u_i - Q {\rm d}\tilde{m} \right| \leq  \Psi(i).  \]
  From these observations we obtain:
\begin{eqnarray*}
\int_{\tilde{X}} \psi \mathbf{\Delta}u_R  & = & \lim_{i\to \infty} \int_{B(y_0,R)}\psi\circ \varphi_i \mathbf{\Delta} u_i\\
& = & \int_{B(y_0,R)} \psi\circ \varphi\, Q\,\mathrm{d}\tilde{m}\\
& = & \int_{B(y_0,R)} \psi\, Q\,\mathrm{d}\tilde{m}.
\end{eqnarray*}
Whence, $u_R\in D(\mathbf{\Delta}, B(y_0,R))$ and $\mathbf{\Delta}u_R = Q\, \tilde{m}$. 
\end{proof}

Now Theorem~\ref{cor-u} is proved as follows. 
\begin{proof}[Proof of Theorem~\ref{cor-u}]
Take a sequence of radii $R_i\uparrow \infty$ and the corresponding sequence of functions $u_{R_i}$. Then, up to passing to a subsequence, the $u_{R_i}$ converge to a $1$-Lipschitz function $u:\tilde{X}\to \mathbb{R}$. Lemma~\ref{lem-u} immediately gives that $u\in D_{loc}(\mathbf{\Delta})$ and that $\mathbf{\Delta} u= Q\, \tilde{m}$, i.e. $\Delta u = Q$ $\tilde{m}$-a.e.  Finally, as we have seen in the proof of Lemma \ref{lem-u}, $|\nabla u_{R_i}|=1$ $\tilde{m}$-a.e.\ in the following stronger sense: For $\tilde{m}$-a.e.\ $x$, there is a geodesic $\gamma_i(t)$ through $x$ such that $u_{R_i}(\gamma_i(t))=u_{R_i}(\gamma_i(0)) -t$. Repeating the argument above in the proof of Lemma \ref{lem-u} gives $| \nabla u| = 1$ $\tilde{m}$-a.e.
\end{proof}


\subsection{The Hessian of $u$}
\label{ssec-hessian-u}
Throughout this section we maintain the assumption that $(X,d,m)$ is an $\RCDst(K,N)$ space with $K<0$ and $N\in (1,\infty)$. Let us recall that we denote the universal cover of $X$ by $(\tilde{X}, \tilde{d}, \tilde{m})$ and that $\tilde{X}$ is an $\RCDst(K,N)$-space \cite{MW}. In this section we will compute the Hessian of the Busemann-type function $u:\tilde{X}\to\mathbb{R}$ constructed in Section \ref{sec-busemann-function}. Throughout this section we reserve the notation $u$ for this function. The strategy and computations follow along the lines of \cite[Theorem 3.7]{Ket}, which in turn draws from \cite{Stu}, originally formulated in the language of $\Gamma$-Calculus.

Fix a point $x\in \tilde{X}$, and let $t\in \RR$. Let $v\in \mathrm{Test}_{loc}(\tilde{X})$,  $f,g \in \mathrm{Test}_{bs}(\tilde X)$, and consider the function
\[
\Psi(v,f,g)=\frac{1}{2}v^2+(1-v(x))v+ t\left( fg-f(x)g-g(x)f \right).
\]
Observe that $\Psi$ is a smooth function with $\Psi(0,0,0)=0$. The partial derivatives of $\Psi$ at $x$ are given by


\begin{table}[h]
\centering
\begin{tabular}{lll}
$\Psi_1|_x  = (v + (1-v(x)))|_x = 1$ & $\Psi_{11}|_x  =  1$               & $\Psi_{22}|_x  =  0$              \\
$\Psi_2|_x =  t(g-g(x))|_x=0$        & $\Psi_{12}|_x = \Psi_{21}|_x =  0$ & $\Psi_{23}|_x  =  \Psi_{32}|_x=t$ \\
$\Psi_3|_x  =  t(f-f(x))|_x=0$       & $\Psi_{13}|_x = \Psi_{31}|_x =  0$ & $\Psi_{33}|_x  =  0.$             
\end{tabular}
\end{table}

\vspace*{.05in}

We now turn our attention to the measure valued functional $\Gamma_2$ (see Section \ref{ss-bochner} for the definition) and we let $\gamma_2 \tilde{m}$ be its absolutely continuous part. Proposition \ref{prop:change_of_variables} guarantees that $\Psi(u,f,g)\in \mathrm{Test}_{loc}(\tilde{X})$ for every $f,g\in \mathrm{Test}_{bs}(\tilde{X})$. Therefore, following the same strategy as in \cite[Theorem 3.7]{Ket}, by (\ref{eq-bakry-emery-condition}) and Proposition \ref{prop:change_of_variables} we have that for every $x\in \tilde{X}$,

\begin{align*}
0  & \leq \gamma_2(\Psi(u,f,g)) -  K\left|\nabla \Psi(u,f,g) \right|^2 + \frac{1}{N}(\Delta \Psi(u,f,g))^2\\
 & =  \gamma_2(u) + 4t\mathrm{Hess}[u](f,g) + |\nabla u|^4 + 4t\left\langle \nabla u, \nabla f\right\rangle \left\langle \nabla u, \nabla g\right\rangle + 2t^2 |\nabla f|^2 |\nabla g|^2\\
& + 2t^2(\left\langle \nabla f, \nabla g\right\rangle )^2 - K|\nabla u|^2 - \frac{(\Delta u)^2}{N} - \frac{|\nabla u|^4}{N} -\frac{4t^2}{N}(\left\langle \nabla f, \nabla g\right\rangle)^2\\
&  - \frac{4t\Delta u}{N}\left\langle \nabla f, \nabla g\right\rangle -	 \frac{2\Delta u}{N}|\nabla u|^2 -\frac{4t}{N}|\nabla u|^2\left\langle \nabla f,\nabla g\right\rangle. 
\end{align*}
Grouping terms we obtain, 
\begin{align}
\label{eq-poli}
0 & \leq \gamma_2(u) - K|\nabla u|^2 - \frac{(\Delta u)^2}{N} + 			   \frac{N-1}{N}|\nabla u|^2 -\frac{2\Delta u}{N}|\nabla u|^2\\
& + 4t\left( \mathrm{Hess}[u](f,g) + \left\langle \nabla u, \nabla f\right\rangle \left\langle \nabla u, \nabla g\right\rangle - \left(\frac{\Delta u + |\nabla u|^2}{N}\right)\left\langle \nabla f,\nabla g\right\rangle  \right) \nonumber \\   
& + 2t^2\left( |\nabla f|^2|\nabla g|^2 + \frac{N-2}{N}					   (\left\langle \nabla f, \nabla g\right\rangle)^2 \right). \nonumber
\end{align}
The last term of  the previous inequality (\ref{eq-poli}), namely $|\nabla f|^2 |\nabla g|^2 + \frac{N-2}{N} (\left\langle \nabla f, \nabla g\right\rangle)^2$, is non-negative.  Hence, the discriminant of the right hand side of (\ref{eq-poli}) as a polynomial in $t$ is $\leq 0$.  That is, 
\begin{align*}
2\Bigg( \mathrm{Hess}[u](f,g) &  \left. + \left\langle \nabla u, \nabla f\right\rangle\left\langle \nabla u,\nabla g\right\rangle -  \left(\frac{\Delta u + |\nabla u|^2}{N}\right)\left\langle \nabla f,\nabla g\right\rangle\right)^2\\
 &\leq  \left(\gamma_2(u) - K |\nabla u|^2- \frac{(\Delta u)^2}{N}   + \frac{N-1}{N} |\nabla u|^2 -\frac{2\Delta u}{N}|\nabla u|^2\right)\left({|\nabla f|^2 |\nabla g|^2 + \frac{N-2}{N}(\left\langle \nabla f, \nabla g\right\rangle)^2}\right).
\end{align*}

Once this analysis has been performed we can explicitly compute the Hessian of $u$ as follows.
 
\begin{cor}
\label{cor-hessian-identity}
 Let $u\in \mathrm{Test}_{loc}(\tilde{X})$ with $|\nabla u|^2=1$ $\tilde m$-a.e. and $\Delta u = N-1$ $\tilde{m}$-a.e. Then for all functions $f,g\in \mathrm{Test}_{loc}(\tilde{X})$,
\begin{equation}\label{eq-Hessu}
\mathrm{Hess}[u](f,g)=\left\langle \nabla f, \nabla g\right\rangle-\left\langle \nabla u, \nabla f\right\rangle \left\langle \nabla u, \nabla g\right\rangle.
\end{equation}
\end{cor}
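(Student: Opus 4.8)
The plan is to specialize the discriminant inequality obtained just above (the consequence of Bochner applied to $\tilde u = \Psi(u,f,g)$) to the case $|\nabla u|^2 = 1$ and $\Delta u = N-1$, and to observe that under these hypotheses the right-hand side of that inequality vanishes. Indeed, substituting $|\nabla u|^2 = 1$ and $\Delta u = N-1$ into
\[
\gamma_2(u) - K|\nabla u|^2 - \tfrac{(\Delta u)^2}{N} + \tfrac{N-1}{N}|\nabla u|^2 - \tfrac{2\Delta u}{N}|\nabla u|^2,
\]
the last three terms collapse to $\tfrac{N-1}{N} - \tfrac{(N-1)^2}{N} - \tfrac{2(N-1)}{N} = -(N-1)$, so the quantity becomes $\gamma_2(u) - K - (N-1)$. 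On the other hand, the Bakry-\'Emery inequality \eqref{eq-bakry-emery-condition} applied directly to $u$ itself gives $\gamma_2(u) \geq K|\nabla u|^2 + \tfrac1N(\Delta u)^2 = K + \tfrac{(N-1)^2}{N}$ as absolutely continuous measures; but one needs the \emph{reverse} inequality $\gamma_2(u) \le K + (N-1)$ to conclude the RHS is $\le 0$. The key point is that equality must hold in Bochner for this particular $u$: the constant $N-1$ is exactly the extremal value forced by the maximal-entropy construction, so $\gamma_2(u) = K + \tfrac{(N-1)^2}{N}$ pointwise $\tilde m$-a.e., and since $K = -(N-1)$ in our normalization (or more precisely since the identity $\tfrac{(N-1)^2}{N} = (N-1) + K$ holds when $K=-(N-1)$), the right-hand side of the discriminant inequality is indeed $\leq 0$.

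First I would make the rigidity-in-Bochner step precise. The function $u$ satisfies $\Delta u = N-1$ which is constant, hence $\nabla(\Delta u) = 0$ and $\langle \nabla u, \nabla \Delta u\rangle = 0$; moreover $|\nabla u|^2 = 1$ is constant so $\mathbf{\Delta}\langle \nabla u, \nabla u\rangle = \mathbf{\Delta}(1) = 0$. Therefore $\Gamma_2(u) = \tfrac12 \mathbf{\Delta}|\nabla u|^2 - \langle \nabla u, \nabla \Delta u\rangle\, \tilde m = 0$ as a measure, so in particular $\gamma_2(u) = 0$ $\tilde m$-a.e. (One should be slightly careful that $u$ is only in $D_{loc}(\Delta)$ and not globally in $\mathrm{Test}(\tilde X)$; this is handled by working locally with cutoffs, or by appealing to the fact that the Bochner/Bakry-\'Emery machinery localizes, exactly as in \cite{Ket}.) Plugging $\gamma_2(u) = 0$, $|\nabla u|^2 = 1$, $\Delta u = N-1$ and $K = -(N-1)$ into the discriminant inequality, the right-hand side becomes $0 - (-(N-1)) - (N-1) = 0$. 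Hence the numerator on the left, being a nonnegative quantity (a square) divided by the positive quantity $|\nabla f|^2|\nabla g|^2 + \tfrac{N-2}{N}\langle \nabla f,\nabla g\rangle^2$, must itself vanish:
\[
\mathrm{Hess}[u](f,g) + \langle \nabla u, \nabla f\rangle\langle \nabla u, \nabla g\rangle - \Big(\tfrac{\Delta u + |\nabla u|^2}{N}\Big)\langle \nabla f, \nabla g\rangle = 0
\]
$\tilde m$-a.e. on $\tilde X$.

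Finally I would simplify the coefficient: with $\Delta u + |\nabla u|^2 = (N-1) + 1 = N$, the factor $\tfrac{\Delta u + |\nabla u|^2}{N} = 1$, and rearranging yields exactly
\[
\mathrm{Hess}[u](f,g) = \langle \nabla f, \nabla g\rangle - \langle \nabla u, \nabla f\rangle\langle \nabla u, \nabla g\rangle,
\]
which is \eqref{eq-Hessu}. I expect the main obstacle to be the rigorous justification of the Bochner-equality step for a function that is only in $D_{loc}(\Delta)$ rather than $\mathrm{Test}(\tilde X)$ — one must verify that the identity $\Gamma_2(u) = 0$ and the change-of-variables formulas of Proposition \ref{prop:change_of_variables} are still legitimate, which requires either a localization argument with test functions supported in large balls (using that $\mathbf{\Delta} u$ is an honest function, not just a measure, so $u$ behaves like a test function locally) or invoking the locality of all the operators involved; the rest is the bookkeeping of plugging constants into the inequality and checking that $\tfrac{(N-1)^2}{N} - \tfrac{N-1}{N} + \tfrac{2(N-1)}{N}$ combines with $K = -(N-1)$ to make the bound tight.
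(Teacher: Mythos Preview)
Your proposal is correct and follows essentially the same route as the paper: compute $\Gamma_2(u)=0$ directly from the fact that both $|\nabla u|^2$ and $\Delta u$ are constant, plug into the discriminant inequality to see the right-hand side vanishes, and read off the Hessian identity after simplifying $(\Delta u + |\nabla u|^2)/N = 1$. The paper handles the $D_{loc}(\Delta)$ issue exactly as you anticipate, by restricting to a countable disjoint family of bounded sets $D_i$ and arguing on each piece.

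One remark on exposition: your first paragraph takes an unnecessary detour through ``rigidity in Bochner'' and the need for a reverse inequality $\gamma_2(u)\le K+(N-1)$. This is a red herring --- as you yourself realize in the second paragraph, $\Gamma_2(u)=0$ follows \emph{immediately} from the definition once $|\nabla u|^2$ and $\Delta u$ are constants, with no appeal to any extremality or rigidity. The paper simply asserts $\Gamma_2(u)=0$ without the preamble; you would streamline your write-up by deleting the first paragraph and starting from the direct computation.
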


\begin{proof} We first consider $f,g\in \mathrm{Test}_{bs}(\tilde{X})$. 
Set $\Phi=|\nabla f|^2 |\nabla g|^2 + \frac{N-2}{N}(\left\langle \nabla f, \nabla g\right\rangle)^2$, and note that $\Phi$ has bounded support. Note also that $\Gamma_2(u)= 0$ and therefore $\gamma_2(u)=0$. Plugging this in our previous analysis and using that  $|\nabla u|^2=1$,  $\Delta u = N-1$ and $K=-(N-1)$ we have that 
\[
\dint_{\tilde{X}} {2\left( \mathrm{Hess}[u](f,g) + \left\langle \nabla u,\nabla f\right\rangle \left\langle \nabla u, \nabla g\right\rangle - 					   \left(\frac{\Delta u + |\nabla u|^2}{N}\right)\left\langle \nabla f, \nabla g\right\rangle \right)^2}\, \mathrm{d} \tilde{m}
\]
is less than or equal to 
\[
\dint_{\tilde{X}} \left(-K |\nabla u|^2 - \frac{(\Delta u)^2}{N} + \frac{N-1}{N} - \frac{2\Delta u}{N}\right)\Phi \, \mathrm{d}\tilde{m} =\dint_{\tilde{X}} \left( N-1 - \frac{(N-1)^2}{N} + \frac{N-1}{N}-\frac{2(N-1)}{N} \right)\Phi \, \mathrm{d}\tilde{m} = 0.
\]
Therefore,
\[
\mathrm{Hess}[u](f,g) + \left\langle \nabla u,\nabla f\right\rangle \left\langle \nabla u, \nabla g\right\rangle -  \left(\frac{\Delta u + |\nabla u|^2}{N}\right)\left\langle \nabla f, \nabla g\right\rangle =0
\]
and substituting the values of $|\nabla u|^2$ and $\Delta u$ in the previous equation we obtain that $\frac{\Delta u + |\nabla u|^2}{N}=1$, so that $\mathrm{Hess}[u](f,g)=\left\langle \nabla f, \nabla g\right\rangle-\left\langle \nabla u, \nabla f\right\rangle \left\langle \nabla u, \nabla g\right\rangle$.

For general $f,g\in \mathrm{Test}_{bs}(\tilde{X})$, we note that the result is local and the general case follows from a truncation argument using Lemma \ref{lem:test:loc}.
\end{proof}

\begin{rmk}
Note that the right hand side of \eqref{eq-Hessu} depends only on the $W^{1,2}$-norm of $f, g$. Thus we can use \eqref{eq-Hessu} to extend the definition of $\mathrm{Hess}\, u$ to $W^{1,2}(\tilde{X})$, which is what we adopt from now on.	
\end{rmk}


\subsection{Regular Lagrangian Flow of $\nabla u$}

In this section we will show the existence of a \textit{Regular Lagrangian Flow} of the gradient of the Busemann-type function $u:\tilde{X}\to\mathbb{R}$ constructed in the previous section, via the work developed by Ambrosio-Trevisan \cite{AT}. To do so, we make use of the reformulation of the results of Ambrosio-Trevisan obtained by Gigli-Rigoni \cite{GigRig}, which utilizes the language of Differential Calculus developed by Gigli, \cite{Gig2}. Let us recall the definition of a Regular Lagrangian Flow, following \cite{GigRig}.

\begin{defn}
\label{def-regular-lagrangian-flow}
Let $(X,d,m)$ be an $\mathsf{RCD}(K,N)$ space and $(V_t)\in L^2\left([0,1],L^2_{loc}(TX)\right)$. We say that 
\[
F^{(V_t)}: [0, 1] \times X \rightarrow X
\]
is a \textit{Regular Lagrangian Flow} for $(V_t)$ provided that:
\begin{itemize}
\item[i)] There exists $C > 0$ such that
\begin{equation}
(F^{(V_t)}_s)_{\sharp} m \leq C m, \ \ \forall s\in[0, 1]. 
\end{equation}
\item[ii)] For $m$-a.e. $x\in X$ the curve $[0, 1]\ni s\mapsto F^{(V_t)}_s(x)\in X$ is continuous and such that \[ F^{(V_t)}_0(x) = x.\]
\item[iii)] For every $f\in W^{1,2}(X)$ we have that for $m$-a.e. $x\in X$ the function $s\mapsto f(F^{(V_t)}_s(x))$ belongs to $W^{1,1}(0, 1)$ and satisfies
\begin{equation}
\label{eq-item-iii-rlf}
\frac{d}{ds} f(F^{(V_t)}_s (x)) = \mathrm{d}f(V_s)(F^{(V_t)}_s (x)), \ \  m \times \mathcal{L}^1|_{[0,1]}-a.e.(x, s).
\end{equation}
\end{itemize}
\end{defn}

With this definition in hand, we now recall the main existence and uniqueness result for Regular Lagrangian Flows in \cite{AT}, as expressed in \cite[Theorem 2.8]{GigRig}. Recall that the space of Sobolev vector fields $W^{1,2}_{C,loc}(TX)$ is the space of $V\in L^2_{loc}(TX)$ for which there is $T$ in the tensor product of $L^2(TX)$ with itself such that 
\[\int hT(\nabla g, \nabla \tilde{g})\, \mathrm{d}m = \int  \left\langle V,\nabla\tilde{g}\right\rangle \mathrm{div}(h \nabla g) + h \mathrm{Hess}(\tilde{g})(V,\nabla g)\, \mathrm{d}m
\]
for every $h, g, \tilde{g} \in \mathrm{Test}(X)$ with bounded support. In this case $T$ is the covariant derivative of $V$ and we denote it by $\nabla V$.

\begin{thm}[\cite{AT,GigRig}]
\label{thm-existence-and-uniqueness-rlf}
Let $(V_t)\in L^2\left([0,1], W^{1,2}_{C,loc}(TX)\right)\cap L^{\infty}\left([0,1],L^{\infty}(TX)\right)$ be such that $V_t\in D_{loc}(\mathrm{div})$ for a.e. $t\in [0,1]$, with 
\begin{equation}
\label{eq-rlf-condition}
\int_0^1||\  |\nabla V_t|\  ||_{L^2(X)} + ||\mathrm{div}(V_t)||_{L^2(X)} + ||(\mathrm{div}(V_t))^{-}||_{L^{\infty}(X)} \, \mathrm{d} t <\infty.
\end{equation} 
Then a Regular Lagrangian Flow $F^{(V_t)}$ for $(V_t)$ exists and is unique, in the sense that if $\tilde{F}^{(V_t)}$ is another flow, then $F^{(V_t)}_s(x)=\tilde{F}^{(V_t)}_s(x)$ for $m$-a.e. $x\in X$ and every $s\in [0,1]$.
\end{thm}

In the previous definition, a Regular Lagrangian Flow is associated to a time-dependent family of vector fields $(V_t)$. When dealing with a single vector field $V$ the hypotheses of the previous theorem simplify. To apply Theorem \ref{thm-existence-and-uniqueness-rlf}, it is enough that $V\in W^{1,2}_{C,loc}(T \tilde X)\cap L^{\infty}(T \tilde X)$ and that 
\begin{equation}
\label{eq-rlf-condition-no-time}
||\  |\nabla V|\  ||_{L^2(\tilde{X})} + ||\mathrm{div}(V)||_{L^2(\tilde{X})} + ||(\mathrm{div}(V))^{-}||_{L^{\infty}(\tilde{X})} <\infty.
\end{equation}

%

To proceed, let $B:= B(x_0,r)\subset \tilde{X}$ be a ball of some finite fixed radius $r>0$ centered at a fixed point $x_0$. Let $\rho=\rho^r$ be the good cut-off function such that $\rho =1$ on $B$ and vanishes outside the ball of radius $2r$, as in Lemma \ref{lem:good-cut-off}. We now consider the vector field $\rho \nabla u$ and show that it admits a Regular Lagrangian Flow by Theorem \ref{thm-existence-and-uniqueness-rlf}.

Let us first note that, indeed $\rho\nabla u\in W^{1,2}_{C,loc}(T \tilde X)\cap L^{\infty}(T \tilde X)$: From the proof of Lemma \ref{lem-u}, we have that $u\in \mathrm{Test}_{loc}(\tilde{X})$ and therefore, $\nabla u \in W^{1,2}_{C,loc}(T \tilde X)\cap L^{\infty}(T \tilde X)$. Moreover, since $\rho$ is Lipschitz and bounded, the Leibniz rule in \cite[Proposition 2.18]{Gig0} (note that, while the proposition requires the vector field to be in $W_C^{1,2}(\tilde{X})$, its proof makes sense almost verbatim for vector fields in $W_{C,loc}^{1,2}(\tilde{X})$) can be applied, yielding the claim. 

We now show that $\rho\nabla u$ satisfies \eqref{eq-rlf-condition-no-time}. Firstly, \cite[Proposition 2.18]{Gig0} can be applied to obtain that
\[ 
\nabla (\rho \nabla u) = \nabla \rho \otimes \nabla u + \rho\, \mathrm{Hess}(u), \ \ \  \ \ \   \mathrm{div} (\rho \nabla u) = \rho \Delta u + \langle \nabla \rho, \nabla u \rangle. 
\]
We use the first of these formulae in the following way: A bound for the Hilbert-Schmidt norm of $\mathrm{Hess}(u)$ is readily obtained from Bochner's inequality, Lemma \ref{lem:good-cut-off} guarantees that $|\nabla \rho|\leq C(K,N,R)$ for some fixed $R>r$ and  we showed before that $|\nabla u|= 1$. Then,  
\[
|\nabla (\rho \nabla u)| \leq |\nabla \rho \otimes \nabla u| + |\rho\, \mathrm{Hess}(u)| \leq C(K,N,R)+N-1<\infty.
\]
It follows that $\nabla (\rho \nabla u) \in L^2(\tilde{X})$. 

A similar reasoning coupled with the formula for $\mathrm{div} (\rho \nabla u)$ gives us that 
\[
|\mathrm{div} (\rho \nabla u)| \leq |\rho \Delta u| + |\langle \nabla \rho, \nabla u \rangle| \leq (N-1)+ C(K,N,R) <\infty.  
\]
Therefore, $\mathrm{div} (\rho \nabla u) \in L^2(\tilde{X})$. From this it is also immediate that $\mathrm{div} (\rho \nabla u)^{-} \in L^{\infty}(\tilde{X})$.

Hence Theorem \ref{thm-existence-and-uniqueness-rlf} applies and a Regular Lagrangian Flow $F^r:[0,1]\times \tilde{X}\rightarrow \tilde{X}$ for $\rho \nabla u$ exists and is unique in the sense of Definition \ref{def-regular-lagrangian-flow}.

The uniqueness of a Regular Lagrangian Flow is tied to the uniqueness of solutions of the \textit{continuity equation} (see for example \cite[Definition 2.9]{GigRig}). Given a metric measure space $(X,d,m)$, recall that two Borel maps $t\mapsto \mu_t\in \mathcal{P}(X)$ and $t\mapsto V_t\in L^2(TX)$ are said to solve the continuity equation 
\begin{equation}
\label{eq-continuity-equation}
\frac{d}{dt}\mu_t + \mathrm{div}(V_t\mu_t)=0
\end{equation}
provided that the following conditions are satisfied:
\begin{itemize}
\item[(i)] There exists $C>0$ such that $\mu_t\leq Cm$ for every $t\in [0,1]$,
\item[(ii)] $\int_0^1\int |V_t|^2\,\mathrm{d}\mu_t\,\mathrm{d}t<\infty$,
\item[(iii)] for any $f\in W^{1,2}(X)$ the map $t\mapsto \int f\, \mathrm{d}\mu_t$ is absolutely continuous and 
\[
\frac{d}{dt}\int f\,\mathrm{d}\mu_t = \int \mathrm{d}f(V_t)\, \mathrm{d}\mu_t \quad \text{a.e.}\  t.
\]
\end{itemize}
We will often refer to the continuity equation, \eqref{eq-continuity-equation}, as the continuity equation associated to $(V_t)$. 
The following result concerning the uniqueness of solutions of the continuity equation in connection with the uniqueness of Regular Lagrangian Flows was obtained in \cite{AT}. We recall the formulation of \cite[Theorem 2.10]{GigRig}. 

\begin{thm}
\label{thm-existence-and-uniqueness-cont-equ}
Let $(V_t)$ be as in Theorem \ref{thm-existence-and-uniqueness-rlf} and $\overline{\mu}\in \mathcal{P}(X)$ be such that  $\overline{\mu}\leq Cm$ for some $C>0$. Then there exists a unique $(\mu_t)$ such that the pair $(\mu_t, X_t)$ solves the continuity equation, \eqref{eq-continuity-equation}, and for which $\mu_0=\overline{\mu}$. Moreover, such $(\mu_t)$ is given by $\mu_s = (F_s^{(X_t)})_{\sharp}\overline{\mu}$ for all $s\in [0,1]$. 
\end{thm}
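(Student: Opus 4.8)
This statement is the metric-measure-space analogue of the classical equivalence --- going back to Ambrosio's extension of the DiPerna--Lions theory --- between well-posedness of the continuity equation and well-posedness of the associated ODE; in the generality we need it is precisely \cite[Theorem 2.10]{GigRig}, itself a reformulation of results of Ambrosio--Trevisan \cite{AT}. The plan is therefore to recall the architecture of the argument rather than to reproduce its technical heart.

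\emph{Existence.} Given $\overline\mu$, put $\mu_s:=(F_s^{(X_t)})_\#\overline\mu$, with $F$ the Regular Lagrangian Flow furnished by Theorem \ref{thm-existence-and-uniqueness-rlf}. Condition (i) is the compression bound $(F_s)_\#m\le Cm$ of that theorem combined with $\overline\mu\le C'm$; condition (ii) follows because $X_t$ is bounded and, by (i), $|X_t|$ is controlled $\mu_t$-a.e.\ while $\mu_t$ is a probability measure; condition (iii) results from integrating the flow identity \eqref{eq-item-iii-rlf} against $\overline\mu$ and using Fubini, which is legitimate since $\overline\mu\le C'm$ forces $s\mapsto f(F_s(x))$ to lie in $W^{1,1}(0,1)$ and \eqref{eq-item-iii-rlf} to hold for $\overline\mu\times\mathcal{L}^1$-a.e.\ $(x,s)$.

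\emph{Uniqueness.} Let $(\mu_t,X_t)$ be any solution of \eqref{eq-continuity-equation} with $\mu_0=\overline\mu$. The decisive tool is the superposition principle of \cite{AT}: under the integrability hypotheses on $X_t$ one obtains $\eta\in\mathcal{P}(C([0,1];X))$ concentrated on absolutely continuous curves solving $\dot\gamma_t=X_t(\gamma_t)$ for a.e.\ $t$ and satisfying $(e_t)_\#\eta=\mu_t$ for every $t$. One then invokes uniqueness, at the level of measures, for this ODE: the hypothesis $X_t\in L^2([0,1],W^{1,2}_{C,loc}(TX))$ together with the two-sided control of $\mathrm{div}(X_t)$ --- in particular the $L^\infty$ bound on $(\mathrm{div}(X_t))^-$, which prevents trajectories from collapsing and lets a Gronwall-type estimate run --- forces every such $\eta$ with initial marginal $\overline\mu$ to be the push-forward of $\overline\mu$ under $x\mapsto F_\cdot^{(X_t)}(x)$. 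Applying $(e_s)_\#$ yields both the claimed formula $\mu_s=(F_s^{(X_t)})_\#\overline\mu$ and uniqueness.

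The genuine obstacle is the second ingredient --- uniqueness for the ODE, equivalently for the continuity equation, under merely Sobolev regularity of the driving field. In the Lipschitz case this is an immediate Gronwall argument; here it rests on the non-smooth commutator (regularization) estimates of \cite{AT}, which exploit the calculus of tangent and cotangent modules of \cite{Gig2} and, through the Bakry--\'Emery/Bochner machinery of $\RCDst$ spaces, the control of $\mathrm{Hess}$ and hence of $\nabla X_t$. As \cite[Theorem 2.10]{GigRig} provides exactly this in the form required, we do not reprove it; the only point needing verification for our application is that $\nabla u$ satisfies the hypotheses of Theorem \ref{thm-existence-and-uniqueness-rlf}, which was checked in the discussion preceding the statement.
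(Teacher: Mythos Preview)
The paper does not prove this theorem at all: it is stated as a citation, introduced by ``The following result concerning the uniqueness of solutions of the continuity equation in connection with the uniqueness of Regular Lagrangian Flows was obtained in \cite{AT}. We recall the formulation of \cite[Theorem 2.10]{GigRig}.'' Your proposal correctly identifies this attribution and goes further by sketching the standard architecture (existence by direct verification of the three conditions, uniqueness via the superposition principle of \cite{AT} combined with the commutator/regularization estimates under Sobolev regularity of the vector field); this sketch is accurate and in fact provides more detail than the paper does.
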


The following lemma and its proof regarding the local uniqueness of solutions of the continuity equation and that of Regular Lagrangian Flows were indicated to us by Ambrosio and Gigli. 

%

\begin{lem} \label{lem-local:unique}
 Let $(X,d,m)$ be an $\mathsf{RCD}(K,N)$ space and $(V_t), (W_t)\in L^2\left([0,1], L^2_{loc}(TX)\right)$ be such that $V_t=W_t$, $\tilde{m}$-a.e. on some open set $\Omega \subset X$ for all $t\in [0,1]$. Let $\overline{\mu}\in\mathcal{P}(X)$ be concentrated on some Borel set in $\Omega$ and assume that solutions $\mu_t$ and $\nu_t$ for the continuity equations associated to $(V_t)$ and $(W_t)$, respectively, with initial data $\overline{\mu}$ exist, are unique and are concentrated on Borel  subsets contained in $\Omega$ for all $t\in [0,1]$. Then  $\mu_t=\nu_t$ for all $t\in [0,1]$. Moreover, if $(V_t)\in L^{\infty}\left([0,1];L^{\infty}(TX) \right)$ and $(V_t)$ and $(W_t)$ admit Regular Lagrangian Flows $F^{(V_t)}$ and $F^{(W_t)}$ respectively, then for a.e. $x\in \Omega$ there exists $s_x\in (0,1]$ such that $F_s^{(V_t)}(x)=F_s^{(W_t)}(x)$ for all $s\leq s_x$.  
\end{lem}

\begin{proof}
Let $f\in W^{1,2}(X,d,m)$. Then, by the definition of solution of the continuity equation, the function $t\mapsto \int f\, \mathrm{d}\mu_t$ is absolutely continuous and satisfies 
\[
\frac{d}{dt}\int f\,\mathrm{d}\mu_t = \int \mathrm{d}f(V_t)\, \mathrm{d}\mu_t \quad \text{a.e.}\  t.
\]
Since $\mu_t$ is concentrated in $\Omega$ for all $t$, we have that 
\[
\frac{d}{dt}\int f\,\mathrm{d}\mu_t = \int \mathrm{d}f(W_t)\, \mathrm{d}\mu_t \quad \text{a.e.}\  t.
\]
Hence $\mu_t$ is a solution to the continuity equation associated to $(W_t)$. It then follows by uniqueness that $\mu_t=\nu_t$ for a.e.  $t\in [0,1]$ and by continuity that $\mu_t=\nu_t$ for all $t\in[0,1]$. 

To address the part about Regular Lagrangian Flows, let us recall that by \cite[Theorem 7.6]{AT} there exists $\eta\in \mathcal{P}(C([0,1];X)$ satisfying the following two conditions:
\begin{itemize}
\item[(i)] $\eta$ is concentrated on curves $\gamma\in C([0,1];X)$ satisfying that for all $f\in W^{1,2}(X)$, the function $t\mapsto f\circ \gamma(t)$ belongs to $W^{1,1}(0,1)$ and
\[
\frac{d}{dt} f\circ\gamma(t) = \mathrm{d}f(V_t)(\gamma(t)), \ \  \text{a.e.}\ t,
\]
\item[(ii)] $\mu_t= (e_t)_{\sharp}\eta$ for all $t\in [0,1]$. 
\end{itemize}
Note that the hypotheses of \cite[Theorem 7.6]{AT} are satisfied by the fact that $X$ is an $\mathsf{RCD}(K,N)$ space coupled with \cite[Lemma 9.2]{AT}. 

We now note that there exists a Borel map $T:X\to C([0,1];X)$ such that $\eta= T_{\sharp}\overline{\mu}$. This follows from \cite[Theorem 8.4]{AT} and the arguments in \cite[Theorem 18]{AC}, the idea being that, if such a map does not exist, then one can build two different solutions to the continuity equation associated to $(V_t)$, contradicting the uniqueness assumption. 
Once we have that $\eta= T_{\sharp}\overline{\mu}$, we obtain that $\mu_t= (e_t\circ T)_{\sharp}\overline{\mu}$. In particular, items (i) and (ii) stated above give us that $e_t\circ T$ is a Regular Lagrangian Flow for $(V_t)$. It follows that $F_s^{(V_t)}(x)= e_s\circ T(x)$ for $\overline{\mu}$-a.e. $x$ and all $s\in [0,1]$ as otherwise $(F_s^{(V_t)})_{\sharp}\overline{\mu}$ and $(e_s\circ T)_{\sharp}\overline{\mu}$ would be different solutions to the continuity equation associated to $(V_t)$. Therefore, $F^{(V_t)}$ is the unique Regular Lagrangian Flow associated to $(V_t)$. Similarly we conclude that $F_s^{(W_t)}$ is the unique Regular Lagrangian Flow associated to $(W_t)$.

We will now show that $F_s^{(V_t)}(x)= F_s^{(W_t)}(x)$ for $\overline{\mu}$-a.e. $x\in \Omega$ for all times $s\leq s_x$ for some $s_x\in (0,1]$. Recall that, Theorem 7.4 and Lemma 9.2 in \cite{AT} imply that, in fact, the curves $t\mapsto F^{(V_t)}_t(x)$ for each $x\in X$ are not only continuous but absolutely continuous and that 
\begin{equation}
\label{eq-metric-speed-norm-of-vector-field-rlf-cut-off-vectors}
| \dot{F}_s^{(V_t)}(x)| = |V_t|(F_s^{(V_t)}(x)) \quad \text{for a.e.}\ s\in[0,1].
\end{equation} 

Let $S$ be such that $|V_t|\leq S$ for all $t\in [0,1]$. By integrating the previous equation and using the definition of absolute continuity we obtain that 
\[
d\left(F^{(V_t)}_s(x), F^{(V_t)}_t(x)\right)\leq S |s-t| 
\]
for all $t<s$, with $t,s\in [0,1]$.

Using the previous inequality we claim the following: For each $x\in \Omega$ there exists a ball $B(x,r)$ centered at $x$ and some $\tilde{r}:=\tilde{r}(x,r)\in(0,1]$ such that $F^{(V_t)}_s(B(x,r))\subset \Omega$ for all $s\leq \tilde{r}$. We call such a ball well contained in $\Omega$ (with respect to $(V_t)$). Indeed, since $\Omega$ is open, there exists a ball $B(x,r_0)\subseteq \Omega$. Now for each $r<r_0$ and $y\in B(x,r)$ we have that 
\[
d(x,F^{(V_t)}_s(y))\leq d(x,y) + d(y,F^{(V_t)}_s(y)) \leq r + Ss.  
\]
Therefore, for every $\tilde{r}\in (0,1]$ with $0<\tilde{r}<\frac{r_0-r}{S}$, we have that 
\[
r+Ss \leq r+S\tilde{r} \leq r+(r_0-r)= r_0
\] 
so that $F^{(V_t)}_s(B(x,r))\subseteq B(x,r_0)\subseteq \Omega$ for all $s\in [0,1]$ with $s\leq \tilde{r}$. It is clear that $\Omega$ can be covered with well-contained balls. 


We will now define a map $F:[0,1]\times X\to X$. Let $B$ be any well-contained ball in $\Omega$ and for each $x\in B$ denote $t_x\in (0,1]$ the time such that $F^{(V_t)}_{t_x}(x)\notin B$ but $F^{(V_t)}_t(x)\in B$ for all $t<t_x$. Then we define $F$ as
\[
F(s,x):=
\begin{cases}
F^{(V_t)}_s(x) & \textrm{if}\ \text{$x\in B$ and $s<s_x$}  \\
F^{(W_t)}_s(x) & \textrm{if}\ \text{$x\in B$ but $s\geq s_x$ or $x\notin B$.} 
\end{cases}
\]
Let us observe that $F$ is clearly Borel measurable. Moreover, it is immediate to check that $F$ satisfies the definition of a Regular Lagrangian Flow for $(W_t)$. Therefore, it follows by uniqueness of the Regular Lagrangian Flows for $(W_t)$ that $F_t(x) = F^{(W_t)}_t(x)$ for $\overline{\mu}$-a.e. $x$ and by the definition of $F$ that $F^{(V_t)}_t(x) = F^{(W_t)}_t(x)$ for $\overline{\mu}$-a.e. $x\in B$ and $t<t_x$. As $\Omega$ can be covered with well-contained balls this immediately implies that  $F^{(V_t)}_t(x) = F^{(W_t)}_t(x)$ for $\overline{\mu}$-a.e. $x\in \Omega$ and all times $t\leq t_x$ for some $t_x\in (0,1]$. 
\end{proof}

\begin{rmk}
In typical applications of Lemma \ref{lem-local:unique}, \eqref{eq-rlf-condition} is satisfied so the existence and global uniqueness are guaranteed. If further $\Omega=B(x, r)$ is a ball, then the above argument shows that the local uniqueness of the continuity equation and the Regular Lagrangian Flow hold for a smaller ball $B(x, \tilde{r})$ for time $t \leq t_0$ provided $\tilde{r} + St_0 \leq r$. In our application, we have $S=1$ and we can work with $r>1$. Hence the local uniqueness holds for $B(x, r-1)$ and all $t \in [0, 1]$.
\end{rmk}

We now apply this lemma to our situation. Let $x_0\in \tilde{X}$ be the fixed point we chose after Theorem \ref{thm-existence-and-uniqueness-rlf}, consider an increasing sequence of radii $r_i\uparrow \infty$ with $r_1>1$ and $B_i:=B(x_0,r_i)$. It is clear that $\tilde{X}=\bigcup_{i=1}^{\infty} B_i$. We have already showed that a unique Regular Lagrangian Flow $F^{r_i}:[0,1]\times \tilde{X}\to \tilde{X}$ exists for each vector field of the form $\rho_{B_i}\nabla u$, where $\rho_{B_i}$ is the good cut-off function which is $\rho_{B_i}=1$ on $B_i$ and zero outside $B(x_0,2r_i)$. 

Let us fix an increasing sequence $\tilde{r}_i\uparrow \infty$ with $\tilde{r}_i<r_i-1$ for all $i$. Observe that for each $i$, $|\rho_{B_i}\nabla u|\leq 1$ and, moreover, following the terminology used in the proof of the previous lemma,  that the ball $B(x_0,\tilde{r}_i)$ is well contained in $B_i$ with respect to $\rho_{B_i}\nabla u$ for all times $s\in [0,1]$. Therefore, the previous lemma implies that for every $i<j$, both flows $F^{r_i}$ and $F^{r_j}$ coincide for $\tilde{m}$-a.e. $x\in B(x_0,\tilde{r}_i)$ for all times $s\in [0,1]$.

 Observe now that for each ball $B=B(x_0,r_i)$ considered above, and $t \leq \tilde{r}_i<r_i-1$,  $\mu_t=e^{-(N-1)t}\tilde{m}$ is a solution to the continuity equation for $V_t= \rho_{B_i} \nabla u $ on $B(x_0,\tilde{r}_i)$ with initial data $\overline{\mu}=\tilde{m}$. Hence, it follows from the previous theorem that on $B(x,r_i)$ and for  $t \leq \tilde{r}_i$, 
\begin{equation}
\label{eq-change-measure-flow}
(F_t)_{\sharp}\tilde{m}=e^{-(N-1)t}\tilde{m}.
\end{equation} 
Here we have used the fact that one can drop the requirement that the measures are probability measures in the previous theorem as the continuity equation (iii) implies that the total measure is preserved. 

We can then define a map $F:[0,1]\times \tilde{X}\to \tilde{X}$ (well defined up to a $\tilde{m}$-zero measure set) as 
\[
F_t(x):= F^{r_i}_t(x) \quad \text{if $x\in B(x_0,\tilde{r}_i)$}.
\]
We claim that $F$ is a Regular Lagrangian Flow for $\nabla u$. Item (i) of Definition \ref{def-regular-lagrangian-flow} follows from having that $(F_t)_{\sharp}\tilde{m} = e^{-(N-1)t}\tilde{m}$ and item (ii) of the same definition is immediately verified. For item (iii) of Definition \ref{def-regular-lagrangian-flow}, we know that 
\[
\frac{d}{ds} f(F_s (x)) = \rho_{B_i}\mathrm{d}f(\nabla u)(F_s (x)), \ \  m \times \mathcal{L}^1|_{[0,1]}-a.e.(x, s).
\]
So it suffices to take the limit when $i\to \infty$. 

As we are dealing with a single vector field $\nabla u$ (that is, in the notation of Definition \ref{def-regular-lagrangian-flow}, $V_t$ is independent of the time variable $t$), $F$ can be extended uniquely to a Regular Lagrangian Flow $F:[0,\infty)\times \tilde{X}\rightarrow \tilde{X}$. Furthermore, observe that the proof of \cite[Lemma 3.18]{GigRig} can be applied verbatim to our case and therefore, $F$ can be extended uniquely (preserving the rate of change in measure expressed in \eqref{eq-change-measure-flow} to a Regular Lagrangian Flow $F:(-\infty,\infty)\times\tilde{X}\to\tilde{X}$. 

Notice that the uniqueness statement in \cite[Theorem 2.8]{GigRig} implies that for $(V_t)$ independent of $t$, $F$ satisfies the semigroup property $F_t\circ F_s = F_{t+s}$, $\tilde{m}$-a.e. and for all $t,s\in \mathbb{R}$ (cf. \cite[Equation 2.3.10]{GigRig}).


We summarize the previous discussion in the following proposition. 

\begin{prop}\label{prop-F_tPushMeas}
Let $u:\tilde{X}\to \mathbb{R}$ be the function constructed in Theorem \ref{cor-u}. Then, there exists a unique Regular Lagrangian Flow (in the sense of Definition \ref{def-regular-lagrangian-flow}) $F:\mathbb{R}\times \tilde{X}\to \tilde{X}$ for $\nabla u$. Moreover, $F$ satisfies the semigroup property $F_t\circ F_s = F_{t+s}$, $\tilde{m}$-a.e. for all $t,s \in \mathbb{R}$, and the following change of measure formula holds,
\[ (F_t)_{\sharp}\tilde{m}=e^{-(N-1)t}\tilde{m}. \] 
\end{prop}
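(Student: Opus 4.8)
The plan is to realize the statement as an instance of the Ambrosio--Trevisan existence and uniqueness theorem for Regular Lagrangian flows (in the formulation of Theorem~\ref{thm-existence-and-uniqueness-rlf}) applied to the \emph{time-independent} vector field $\nabla u$, and then to bootstrap the one-sided Jacobian estimate that theorem provides into an exact change-of-measure formula by uniqueness of solutions to the continuity equation. First I would collect the regularity facts about $u$ established in the previous subsections: $u \in D_{loc}(\Delta)$ with $|\nabla u| = 1$ $\tilde m$-a.e.\ and $\Delta u = N-1$, and, by the construction in the proof of Lemma~\ref{lem-u}, $u$ agrees on every bounded set with an element of $\mathrm{Test}(\tilde X)$. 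From these one gets $\nabla u \in L^\infty(T\tilde X)$ and $\nabla u \in D(\mathrm{div}_{loc})$ for free, while $\nabla u \in W^{1,2}_{C,loc}(T\tilde X)$ follows from \cite[Corollary 2.10]{Gig0} after identifying $\nabla\nabla u = \mathrm{Hess}[u]^\#$ and using $|\nabla u| = 1$. Because $\nabla u$ does not depend on time, the integrability condition \eqref{eq-rlf-condition} collapses to $\||\nabla\nabla u|\|_{L^2} + \|\mathrm{div}(\nabla u)\|_{L^2} + \|(\mathrm{div}(\nabla u))^-\|_{L^\infty} < \infty$, whose divergence terms are handled by $\mathrm{div}(\nabla u) = \Delta u = N-1$.

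Second, Theorem~\ref{thm-existence-and-uniqueness-rlf} then yields the $\tilde m$-a.e.\ unique Regular Lagrangian flow $F$ for $\nabla u$ on $[0,1]$ together with the bound $(F_t)_\#\tilde m \le \exp\bigl(\int_0^t \|(\mathrm{div}\,\nabla u)^-\|_{L^\infty}\,ds\bigr)\tilde m = e^{-(N-1)t}\tilde m$. Using time-independence one concatenates to obtain a flow on $[0,\infty)$, and, running the argument of \cite[Lemma 3.18]{GigRig} verbatim, one extends it to all of $\mathbb R$ while keeping the same bound; the uniqueness clause of Theorem~\ref{thm-existence-and-uniqueness-rlf} immediately forces the semigroup identity $F_t\circ F_s = F_{t+s}$ $\tilde m$-a.e.\ for all $t,s\in\mathbb R$, and \eqref{eq-metric-speed-norm-of-vector-field-rlf} gives $|\dot F_s(x)| = |\nabla u|(F_s(x)) = 1$ along the way.

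Third --- the one step that is not bookkeeping --- I would promote the inequality $(F_t)_\#\tilde m \le e^{-(N-1)t}\tilde m$ to an equality. The observation is that $\mu_t := e^{-(N-1)t}\tilde m$ and $X_t := \nabla u$ solve the continuity equation \eqref{eq-continuity-equation} with datum $\overline\mu = \tilde m$: conditions (i) and (ii) there are trivial, and condition (iii) reduces, for $f \in W^{1,2}(\tilde X)$, to $\frac{d}{dt}\int f\,d\mu_t = -(N-1)\int f\,d\mu_t = \int \langle \nabla f, \nabla u\rangle\,d\mu_t$, which is just the defining property of $\Delta u = N-1$ scaled by $e^{-(N-1)t}$. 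The uniqueness theorem for the continuity equation quoted above identifies this $(\mu_t)$ with $(F_t)_\#\tilde m$, yielding $(F_t)_\#\tilde m = e^{-(N-1)t}\tilde m$.

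I expect the only genuine subtlety to be this last identification via the continuity equation: the Regular Lagrangian flow package alone delivers only the upper Jacobian estimate, and without exhibiting $e^{-(N-1)t}\tilde m$ as the unique solution of the continuity equation one cannot compute the pushforward exactly --- everything else is a matter of assembling the regularity of $u$ recorded above with the cited theorems of Ambrosio--Trevisan and Gigli--Rigoni.
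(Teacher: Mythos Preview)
Your proposal is correct and follows essentially the same approach as the paper: the paper's proof of this proposition is the discussion immediately preceding it, which assembles exactly the same regularity facts about $u$, invokes Theorem~\ref{thm-existence-and-uniqueness-rlf} for the time-independent vector field $\nabla u$, extends to all of $\mathbb{R}$ via \cite[Lemma 3.18]{GigRig}, obtains the semigroup property from uniqueness, and upgrades the one-sided Jacobian bound to the exact identity $(F_t)_\#\tilde m = e^{-(N-1)t}\tilde m$ by exhibiting $\mu_t = e^{-(N-1)t}\tilde m$ as a solution of the continuity equation and invoking its uniqueness. You have also correctly identified the continuity-equation step as the only part that is not pure bookkeeping.
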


We end this section by pointing out that the following lemma holds in our setting (cf. \cite[Theorem 2.3 (iv)]{Gig}).

\begin{lem} 
\label{lem-distance-flow-representatives}
Let $u:\tilde{X}\to \mathbb{R}$ be the function constructed in Theorem \ref{cor-u} and $F:(-\infty, \infty)\times \tilde{X}\to \tilde{X}$ be the Regular Lagrangian Flow associated to $\nabla u$. Then, for every $t,s\in (-\infty,\infty)$ and $x\in \tilde{X}$, 
\[
\tilde d(F_{s}(x),F_{t}(x))=|s-t|=|u(F_s(x))-u(F_t(x))|.
\]
In particular, $u(F_{-u(x)}(x))=0$ for all $x \in \tilde X$ and the trajectories of $F_t$ are geodesics.
\end{lem}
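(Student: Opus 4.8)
The plan is to prove the two identities by squeezing the quantity $\tilde d(F_s(x),F_t(x))$ between an upper bound coming from the metric speed of the flow and a lower bound coming from the fact that $u$ grows at unit rate along the trajectories of $F$.

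First I would record the upper bound. By \eqref{eq-metric-speed-norm-of-vector-field-rlf} we have $|\dot F_s(x)|=|\nabla u|(F_s(x))=1$ for a.e.\ $s$ and $\tilde m$-a.e.\ $x$, so for such $x$ the curve $s\mapsto F_s(x)$ is absolutely continuous with unit metric speed, whence $\tilde d(F_s(x),F_t(x))\le |s-t|$ for all $s,t$ (using continuity in time, item (ii) of Definition \ref{def-regular-lagrangian-flow}). On the other hand $u$ is $1$-Lipschitz, so $|u(F_s(x))-u(F_t(x))|\le \tilde d(F_s(x),F_t(x))$. It therefore remains to prove the reverse chain, i.e.\ $|u(F_s(x))-u(F_t(x))|\ge |s-t|$.

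For the lower bound I would differentiate $u$ along a trajectory via item (iii) of Definition \ref{def-regular-lagrangian-flow}. Since $u\in D_{loc}(\Delta)$ is only locally in $W^{1,2}$, item (iii) cannot be applied to $u$ directly; instead, for each $R>0$ I pick a Lipschitz cutoff $\chi_R$ with $\chi_R\equiv 1$ on $B_R(y_0)$ and bounded support, so that $\chi_R u\in W^{1,2}(\tilde X)$, and apply item (iii) to $\chi_R u$. On $\{\chi_R=1\}$ one has $d(\chi_R u)(\nabla u)=du(\nabla u)=\langle\nabla u,\nabla u\rangle=|\nabla u|^2=1$ $\tilde m$-a.e. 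Because the trajectories are $1$-Lipschitz in time, for $\tilde m$-a.e.\ $x$ the trajectory $s\mapsto F_s(x)$ stays in $B_R(y_0)$ on any prescribed compact interval once $R$ is large; letting $R\to\infty$, for $\tilde m$-a.e.\ $x$ the map $s\mapsto u(F_s(x))$ lies in $W^{1,1}_{loc}$ with $\frac{d}{ds}u(F_s(x))=1$ for a.e.\ $s$. Integrating and using $F_0(x)=x$ (item (ii)) gives $u(F_s(x))=u(x)+s$ for all $s$ and $\tilde m$-a.e.\ $x$, hence $|u(F_s(x))-u(F_t(x))|=|s-t|$.

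Combining the two bounds forces all the inequalities to be equalities, which proves $\tilde d(F_s(x),F_t(x))=|s-t|=|u(F_s(x))-u(F_t(x))|$ for $\tilde m$-a.e.\ $x$ and all $s,t$; after passing to the good (continuous-in-time) representative of $F$, as in \cite[Theorem 2.3]{Gig}, this holds for every $x$. The geodesic property of the trajectories is then immediate, since for $s<r<t$ one has $\tilde d(F_s(x),F_t(x))=t-s=(r-s)+(t-r)=\tilde d(F_s(x),F_r(x))+\tilde d(F_r(x),F_t(x))$, and plugging $t=-u(x)$ into $u(F_t(x))=u(x)+t$ yields $u(F_{-u(x)}(x))=0$. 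The only genuine subtlety is the lower bound, specifically invoking the defining identity of the Regular Lagrangian flow for the merely \emph{local} Sobolev function $u$, which is why the cutoff-and-exhaustion argument (together with the a priori $1$-Lipschitz control on trajectories) is required; the rest is bookkeeping.
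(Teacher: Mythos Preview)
Your proposal is correct and follows essentially the same route as the paper: both arguments sandwich $\tilde d(F_s(x),F_t(x))$ between the upper bound from the unit metric speed of the flow (via \eqref{eq-metric-speed-norm-of-vector-field-rlf}) and the lower bound obtained by differentiating $u$ along the trajectory using item (iii) of Definition~\ref{def-regular-lagrangian-flow}, then invoking the $1$-Lipschitz property of $u$. Your treatment is slightly more explicit than the paper's in handling the fact that $u$ is only in $D_{loc}(\Delta)$ via the cutoff-and-exhaustion step; the paper simply refers to \cite[Proposition 2.7]{GigRig} for this.
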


\begin{proof}
Following the approach of the proof of $(a)\Rightarrow (b)$ in \cite[Proposition 2.7]{GigRig}, from \eqref{eq-item-iii-rlf} we obtain that, for all $t<s$,
\[
u\circ F_s - u\circ F_t = \int_t^s du(\nabla u)\circ F_r \, \mathrm{d}r.
\] 
Inverting the roles of $t$ and $s$, and using that $|\nabla u|=1$ $\tilde{m}$-a.e., 
\[
\left|u\circ F_s - u\circ F_t \right| = \left| s-t\right|. 
\]
Furthermore, since $|\dot{F}_s^{(X_t)}(x)| = |X_s|(F_s^{(X_t)}(x))$ for a.e. $s\in[0,1]$, we have that $\tilde{d}(F_{s}(x),F_{t}(x))\leq |s-t|$ for all $t<s$. Moreover, 
\[\left|u\circ F_s(x) - u\circ F_t(x) \right|\leq  \tilde d(F_{s}(x),F_{t}(x))\]
 because $u$ is $1$-Lipschitz. Therefore $ \tilde d(F_{s}(x),F_{t}(x))= |s-t|$.
\end{proof}


\section{Cheeger energy along the flow}\label{sec-CheegerE}

Consider the map $f_t=f\circ F_t$, where $F:(-\infty,\infty)\times\tilde{X}\to \tilde{X}$ is the Regular Lagrangian Flow of the Busemann-type function $u:\tilde{X}\to\mathbb{R}$ obtained in the previous section and $f\in W^{1,2}(\tilde{X})$. In this section we focus on computing the $W^{1,2}(\tilde X)$ norm of $f_t$. Here we will first resolve the regularity and show that if $f\in W^{1,2}(\tilde{X})$, then $f_t\in W^{1,2}(\tilde{X})$ as well. For this purpose, we need to use the heat flow to regularize $f_t$ first and adopt the techniques developed by \cite{DePG} to our setting. 
In the process we obtain the derivative of the Cheeger energy along $F_t$. Finally we integrate and localize the result.

\subsection{Derivative of the Cheeger energy along the flow}

Let us consider the map $f_t=f\circ F_t$ where $F_t$ is the  Regular Lagrangian Flow of the Busemann-type function $u$ of Theorem \ref{cor-u} and $f\in W^{1,2}(\tilde{X})$. In this section we study the $W^{1,2}$ norm of $f_t$. For that reason we begin by proving a version of \cite[Equation 3.39]{Gig} in our setting. 

\begin{lem}
\label{lem-test-plans}
For any $f\in S^2(\tilde{X}, \tilde d,\tilde{m})$ and $t\geq 0$, 
\[
\left| f(F_t(x))-f(x)\right| \leq \dint_0^t|\nabla f|(F_s(x))\, \mathrm{d}s
\]
for $\tilde{m}$-a.e.\ $x\in \tilde{X}$. Furthermore, the result also holds for $t\leq 0$ by taking the integral from $t$ to $0$.
\end{lem}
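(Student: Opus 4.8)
The plan is to realize the trajectories of the flow as a test plan and then read off the inequality from the very definition of the Sobolev class $S^2$. Fix $t>0$ (the case $t<0$ is analogous, running the flow backwards, and $t=0$ is trivial) and fix a bounded Borel set $A\subset\tilde X$ with $0<\tilde m(A)<\infty$; since $\tilde X$ is covered by countably many such sets, it suffices to prove the asserted inequality for $\tilde m$-a.e. $x\in A$. Consider the map $\Phi\colon A\to C([0,1];\tilde X)$, $\Phi(x):=(F_{tr}(x))_{r\in[0,1]}$, which by part ii) of Definition \ref{def-regular-lagrangian-flow} is well defined (valued in continuous curves, with $\Phi(x)(0)=x$) for $\tilde m$-a.e. $x\in A$, and Borel; set $\pi:=\Phi_{\#}\big(\tfrac{1}{\tilde m(A)}\tilde m|_A\big)\in\mathcal P(C([0,1];\tilde X))$.

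Next I would verify that $\pi$ is a test plan. For the compression bound, using $\tilde m|_A\le\tilde m$ together with the change of measure formula of Proposition \ref{prop-F_tPushMeas}, for every $r\in[0,1]$ one has
\[
(e_r)_{\#}\pi=\tfrac{1}{\tilde m(A)}(F_{tr})_{\#}(\tilde m|_A)\le\tfrac{1}{\tilde m(A)}(F_{tr})_{\#}\tilde m=\tfrac{e^{-(N-1)tr}}{\tilde m(A)}\tilde m\le\tfrac{1}{\tilde m(A)}\tilde m,
\]
so $C=\tilde m(A)^{-1}$ works. For the kinetic energy bound, by Lemma \ref{lem-distance-flow-representatives} the curve $r\mapsto F_{tr}(x)$ has constant metric speed $t$, hence $\int\int_0^1|\dot\gamma_r|^2\,\mathrm dr\,\mathrm d\pi(\gamma)=t^2<\infty$.

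With $\pi$ a test plan, Proposition \ref{prop-weakUg} applied to $f\in S^2(\tilde X)$ with weak upper gradient $|\nabla f|$ gives: for $\pi$-a.e. $\gamma$ the function $r\mapsto f(\gamma_r)$ coincides at $r=0$, at $r=1$, and a.e. on $[0,1]$ with an absolutely continuous function whose a.e. derivative is bounded by $|\nabla f|(\gamma_r)|\dot\gamma_r|$, so that $|f(\gamma_1)-f(\gamma_0)|\le\int_0^1|\nabla f|(\gamma_r)|\dot\gamma_r|\,\mathrm dr$. Unwinding the definition of $\pi$ through $\Phi$, for $\tilde m$-a.e. $x\in A$ we have $\gamma=\Phi(x)$ with $\gamma_0=x$, $\gamma_1=F_t(x)$ and $|\dot\gamma_r|=t$, and the substitution $s=tr$ turns the right-hand side into $\int_0^t|\nabla f|(F_s(x))\,\mathrm ds$, which is the claim. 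Letting $A$ range over a countable cover of $\tilde X$ by bounded sets gives the inequality $\tilde m$-a.e. on $\tilde X$. For $t<0$ the identical argument applied to $r\mapsto F_{tr}(x)$ works: the change of measure formula now yields $(F_{tr})_{\#}\tilde m=e^{-(N-1)tr}\tilde m\le e^{(N-1)|t|}\tilde m$, still a uniform constant bound, and after the substitution $s=tr$ the resulting estimate reads $|f(F_t(x))-f(x)|\le\int_t^0|\nabla f|(F_s(x))\,\mathrm ds$.

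The main point is the verification that $\pi$ is a genuine test plan, which is precisely where the change of measure formula $(F_t)_{\#}\tilde m=e^{-(N-1)t}\tilde m$ of Proposition \ref{prop-F_tPushMeas} and the unit-speed geodesic property of the trajectories from Lemma \ref{lem-distance-flow-representatives} enter; the Borel measurability of $\Phi$ is routine but must be checked. An alternative, valid for $f\in W^{1,2}(\tilde X)$, is to invoke part iii) of Definition \ref{def-regular-lagrangian-flow} directly together with the Cauchy--Schwarz bound $|\mathrm df(\nabla u)|\le|\nabla f|\,|\nabla u|=|\nabla f|$ and integrate in $s$; passing from $W^{1,2}$ to $S^2$ would then need a localization argument, so the test-plan route is cleaner.
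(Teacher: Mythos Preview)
Your argument is correct and follows the same core idea as the paper: push the reference measure forward along the flow trajectories to obtain a test plan, then read off the desired inequality from Proposition~\ref{prop-weakUg}. The difference is in how general $t$ is handled. The paper first proves the inequality for $t\in[0,1]$ using the unreparametrized curves $r\mapsto F_r(x)$, and then iterates to reach arbitrary $t$; that iteration, as written, actually only establishes the \emph{integrated} inequality $\int_{\tilde X}|f(F_t(x))-f(x)|\,\mathrm d\tilde m\le\int_{\tilde X}\int_0^t|\nabla f|(F_s(x))\,\mathrm ds\,\mathrm d\tilde m$ for $t>1$, which is all that is needed downstream. By contrast, your reparametrization $r\mapsto F_{tr}(x)$ packages the whole interval $[0,t]$ into a single test plan (with compression constant depending on $t$, which is harmless), and so delivers the pointwise $\tilde m$-a.e.\ inequality for every fixed $t$ in one stroke. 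Your localization via bounded sets $A$ plays the role of the paper's choice of a probability measure $\bar m\le\tilde m$ with $\tilde m\ll\bar m$; both serve the same purpose of producing a genuine probability measure with bounded compression.
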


\begin{proof}
Let us consider a probability measure $\bar{m}$ on $\tilde{X}$ satisfying $\bar{m}\leq \tilde{m}$ and $\tilde{m}<\!< \bar{m}$. We define the measure $\pi:= T_{\sharp}\bar{m}\in \mathcal{P}(C([0,1];\tilde{X}))$ where $T: \tilde{X}\to C([0,1],\tilde{X})$ is given by $T(x)_t= F_t(x)$.
Recall that $e_t:C([0,1];\tilde{X})\to \tilde{X}$ denotes the evaluation map at $t$.  Notice that for all $t\geq 0$,
\[
(e_t)_{\sharp}\pi = (F_t)_{\sharp}\bar{m}\leq (F_t)_{\sharp}\tilde{m} = e^{-(N-1)t}\tilde{m} \leq \tilde{m}. 
\]
So $\pi$ is a test plan (with compression constant $\leq 1$). Denote the set of trajectories of $F$ by $\Gamma_F$. Observe that for any set of curves $\Gamma\subset AC([0,1];\tilde{X})$, the point $x$ lies in $T^{-1}(\Gamma)$ if and only if there exists $\gamma\in \Gamma$ such that $\gamma(t)=F_t(x)$ for any $t\in [0,1]$. Hence, such a $\gamma$ is an element of $\Gamma_F$. It follows that $T^{-1}(\Gamma)=T^{-1}(\Gamma\cap \Gamma_F)$ and we find that $\pi$ concentrates on trajectories of $F$. By Lemma \ref{lem-distance-flow-representatives} the elements of $\Gamma_F$ are constant speed geodesics satisfying that $d(\gamma(1),\gamma(0))=1$. In particular $\pi$ is concentrated on $1$-Lipschitz curves. 

For any $\Gamma\subset C([0,1];\tilde{X})$
\[
(e_t)_{\sharp}\pi(\Gamma) = \bar{m}(T^{-1}(e_t^{-1}(\Gamma))=(F_t)_{\sharp}\bar{m}(\Gamma) .
\]  
By \cite[(3.7)]{Gig}, for $0\leq t\leq 1$ and $f\in S^2( \tilde X)$,  for $\pi$-a.e. $\gamma$,
\[
|f(\gamma(t))-f(\gamma(0))| \leq \dint_0^t|\nabla f|(\gamma(s))|\gamma'(s)|ds = \dint_0^t|\nabla f|(\gamma(s))ds.
\]

Therefore, using that for $\tilde{m}$-a.e.\ $x\in \tilde{X}$ the flow $F$ is defined, and therefore for almost every $x$ there is a trajectory of $F$ passing through it, for every $0\leq t \leq 1$,
\begin{equation}
\label{eq-t-less-1}
|f(F_t(x))-f(x)| \leq \dint_0^t|\nabla f|(F_s(x))\, \mathrm{d}s.
\end{equation}
 An iteration of this argument will yield the result for any $t\in \mathbb{R}$. Let  $1\leq t \leq 2$, then by \eqref{eq-t-less-1}, 
\[
\dint_{\tilde{X}}|f(F_{t-1}(x))-f(x)|\ \mathrm{d}\tilde{m} \leq \dint_{\tilde{X}}\!\!\!\dint_0^{t-1}|\nabla f|(F_s(x))\ \mathrm{d}s\ \!\mathrm{d}\tilde{m}.
\]
A direct computation yields that the left-hand side of the previous inequality equals 
\[
\dint_{\tilde{X}}|f(F_{t-1}(x))-f(x)|\ \mathrm{d}\tilde{m} = \dint_{\tilde{X}}|f(F_{t}(x))-f(F_1(x))|\ \mathrm{d}(F_{-1})_{\sharp}\tilde{m} = e^{(N-1)}\dint_{\tilde{X}}|f(F_{t}(x))-f(F_1(x))|\ \mathrm{d}\tilde{m}  
\]
Moreover, by \eqref{eq-t-less-1} the right hand side becomes
\[
\dint_{\tilde{X}}\!\!\!\dint_0^{t}|\nabla f|(F_s(x))\ \mathrm{d}s\ \!\mathrm{d}\tilde{m} - \dint_{\tilde{X}}\!\!\!\dint_0^{1}|\nabla f|(F_s(x))\ \mathrm{d}s\ \!\mathrm{d}\tilde{m} \leq \dint_{\tilde{X}}\!\!\!\dint_0^{t}|\nabla f|(F_s(x))\ \mathrm{d}s\ \!\mathrm{d}\tilde{m} - \dint_{\tilde{X}} |f(F_1(x) - f(x)|\ \mathrm{d}\tilde{m}.
\]
Combining the previous equations, using that $e^{-(N-1)}\leq 1$, and the triangle inequality we obtain:
\begin{eqnarray*}
\dint_{\tilde{X}}|f(F_{t}(x)) - f(x)| &\leq & \dint_{\tilde{X}}|f(F_{t}(x))-f(F_1(x))|\ \mathrm{d}\tilde{m} + \dint_{\tilde{X}} |f(F_1(x) - f(x)|\ \mathrm{d}\tilde{m}  \\ 
 & \leq & e^{-(N-1)}\dint_{\tilde{X}}|f(F_{t-1}(x))-f(x)|\ \mathrm{d}\tilde{m} + \dint_{\tilde{X}} |f(F_1(x) - f(x)|\ \mathrm{d}\tilde{m}\\
  & \leq & \dint_{\tilde{X}}\!\!\!\dint_0^{t-1}|\nabla f|(F_s(x))\ \mathrm{d}s\ \!\mathrm{d}\tilde{m} + \dint_{\tilde{X}} |f(F_1(x) - f(x)|\ \mathrm{d}\tilde{m}\\
  &\leq  & \dint_{\tilde{X}}\!\!\!\dint_0^{t}|\nabla f|(F_s(x))\ \mathrm{d}s\ \!\mathrm{d}\tilde{m}
\end{eqnarray*}
This is precisely the result we claim in the case that  $1\leq t\leq 2$. Iterating this process the inequality follows for any $t\geq 0$, and similarly for any $t\leq 0$. 
\end{proof}

This implies a version of \cite[(3.40)]{Gig} with appropriate modifications, as will be shown in the next lemma.

\begin{lem}
For any $f\in S^2(\tilde{X}, \tilde d,\tilde{m})$ and $t\in \mathbb{R}$,
\[
\dint_{ \tilde X}  |f(F_t(x))-f(x)|^2 \ \mathrm{d}\tilde{m}(x) \leq t\left(\frac{1-e^{-(N-1)t}}{N-1} \right)\dint_{ \tilde X}  |\nabla f|^2(x)\ \mathrm{d}\tilde{m}(x).
\]
\end{lem}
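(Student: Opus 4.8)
The plan is to obtain this estimate as a short consequence of the pointwise bound in Lemma~\ref{lem-test-plans}, combined with the Cauchy--Schwarz inequality in the time variable, Fubini's theorem, and the change-of-measure formula of Proposition~\ref{prop-F_tPushMeas}. Throughout one uses that $|\nabla f|^2\in L^1(\tilde X,\tilde m)$, since $f\in S^2(\tilde X,\tilde d,\tilde m)$, so all the integrals below are finite.

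First I would treat the case $t\geq 0$. By Lemma~\ref{lem-test-plans}, for $\tilde m$-a.e.\ $x\in\tilde X$ we have $|f(F_t(x))-f(x)|\leq \int_0^t|\nabla f|(F_s(x))\,\mathrm{d}s$; squaring and applying Cauchy--Schwarz on the interval $[0,t]$ yields, for $\tilde m$-a.e.\ $x$,
\[
|f(F_t(x))-f(x)|^2\leq t\int_0^t|\nabla f|^2(F_s(x))\,\mathrm{d}s .
\]
Integrating in $x$ and invoking Tonelli's theorem (the integrand $(s,x)\mapsto|\nabla f|^2(F_s(x))$ is non-negative and jointly measurable, since $(s,x)\mapsto F_s(x)$ is Borel and $|\nabla f|$ is Borel) I would rewrite the right-hand side as $t\int_0^t\big(\int_{\tilde X}|\nabla f|^2\,\mathrm{d}((F_s)_\#\tilde m)\big)\,\mathrm{d}s$. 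Substituting $(F_s)_\#\tilde m=e^{-(N-1)s}\tilde m$ from Proposition~\ref{prop-F_tPushMeas} turns the inner integral into $e^{-(N-1)s}\int_{\tilde X}|\nabla f|^2\,\mathrm{d}\tilde m$, and the elementary identity $\int_0^t e^{-(N-1)s}\,\mathrm{d}s=\tfrac{1-e^{-(N-1)t}}{N-1}$ produces exactly the claimed inequality.

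For $t\leq 0$ I would run the same argument with the version of Lemma~\ref{lem-test-plans} in which the integral is taken from $t$ to $0$: Cauchy--Schwarz on $[t,0]$ (of length $-t$) together with the same Tonelli/change-of-measure step gives
\[
\int_{\tilde X}|f(F_t(x))-f(x)|^2\,\mathrm{d}\tilde m\leq (-t)\Big(\int_t^0 e^{-(N-1)s}\,\mathrm{d}s\Big)\int_{\tilde X}|\nabla f|^2\,\mathrm{d}\tilde m,
\]
and since $\int_t^0 e^{-(N-1)s}\,\mathrm{d}s=\tfrac{e^{-(N-1)t}-1}{N-1}$ and $(-t)(e^{-(N-1)t}-1)=t(1-e^{-(N-1)t})$, this is again the asserted bound (both sides being non-negative in this range). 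I do not expect any real obstacle here: the only point meriting a word of justification is the joint measurability needed for Fubini--Tonelli, which follows at once from the Borel regularity of the Regular Lagrangian Flow, and everything else reduces to the two elementary integrals above.
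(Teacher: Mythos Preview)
Your argument is correct and follows essentially the same route as the paper: square the pointwise bound of Lemma~\ref{lem-test-plans}, apply Cauchy--Schwarz (the paper phrases it as H\"older) in the time variable, swap the order of integration, and use $(F_s)_\#\tilde m=e^{-(N-1)s}\tilde m$ from Proposition~\ref{prop-F_tPushMeas} together with the elementary evaluation of $\int_0^t e^{-(N-1)s}\,\mathrm{d}s$. Your explicit treatment of the case $t\leq 0$ and the remark on joint measurability for Tonelli are minor additions, not a different method.
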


\begin{proof}
Taking squares, integrating the inequality of Lemma \ref{lem-test-plans}, and using H\"older's inequality we obtain: 
\begin{eqnarray*}
\dint_{ \tilde X}  |f(F_t(x))-f(x)|^2\ \mathrm{d}\tilde{m}(x) & \leq & \dint_{ \tilde X}  \left(\dint_0^t|\nabla f|(F_s(x))ds \right)^2 \ \mathrm{d}\tilde{m}(x)  \leq  t\dint_{ \tilde X} \!\!\!\dint_0^t |\nabla f|^2(F_s(x))\ \mathrm{d}s\ \mathrm{d}\tilde{m}(x)\\
& \leq & t\dint_0^t\!\!\!\dint_{ \tilde X}  |\nabla f|^2(F_s(x))\ \mathrm{d}\tilde{m}(x)\ \mathrm{d}s  = t\dint_0^t\!\!\!\dint_{\tilde X}  |\nabla f|^2(x)\ \mathrm{d}(F_s)_{\sharp}\tilde{m}(x)\ \mathrm{d}s \\
& =& t\dint_0^t \dint_{\tilde X}  e^{-(N-1)s} |\nabla f|^2(x)\ \mathrm{d}\tilde{m}(x)\ \mathrm{d}s\\ & = & t\left(\dint_0^te^{-(N-1)s}\ \mathrm{d}s \right)\left(\dint_{\tilde X}  |\nabla f|^2(x)\ \mathrm{d}\tilde{m}(x) \right)\\
& = &  t\left(\frac{1-e^{-(N-1)t}}{N-1} \right)\dint_{\tilde X}  |\nabla f|^2(x)\ \mathrm{d}\tilde{m}(x) 
\end{eqnarray*}
\end{proof}

In the following lemma we compute the $L^2$ norm of $f\circ F_t$ and investigate its regularity as a function of $t$.
\begin{lem}
\label{lem-f-composed-with-flow}
Let $f\in W^{1,2}(\tilde{X})$. Then $f\circ F_t\in L^2(\tilde{X}, \tilde{m})$ for every $t\in \mathbb{R}$. Moreover, the map $t\mapsto f\circ F_t$ is locally Lipschitz. 
\end{lem}

\begin{proof}
First we compute the $L^2$ norm of $f\circ F_t$:
\begin{equation}
\label{eq-L2-norm-ft}
\| f\circ F_t \|_{L^2}^2= \dint_{\tilde X} (f\circ F_t)^2\ \mathrm{d} \tilde m = \dint_{\tilde X}  f^2 e^{-(N-1)t} \ \mathrm{d} \tilde m= e^{-(N-1)t} \| f\|_{L^2}^2 
\end{equation}
Therefore, as $f\in W^{1,2}(\tilde X)$ and in particular $f\in L^2(\tilde X)$ it follows that $f\circ F_t\in L^2(\tilde X)$. Now we proceed with the second part of the lemma. Let $t<s\in \mathbb{R}$, by the previous lemma,
\begin{align*}
\dint_{\tilde X}  |f\circ F_s - f\circ F_t|^2\ \mathrm{d}\tilde{m} & =  \dint_{\tilde X}  e^{-(N-1)t} |f\circ F_{s-t} - f|^2\ \mathrm{d}\tilde{m}\\ 
& \leq   e^{-(N-1)t}(s-t)\left(\frac{1-e^{-(N-1)(s-t)}}{N-1} \right) \dint_{\tilde X}  |\nabla f|^2\ \mathrm{d}\tilde{m} \\
& =  (s-t)\left(\frac{e^{-(N-1)t}-e^{-(N-1)s}}{N-1} \right)\dint_{\tilde X}  |\nabla f|^2\ \mathrm{d}\tilde{m}\\
& \leq (s-t)^2 e^{-(N-1)t} \| \nabla f \|_{L^2}^2.
\end{align*}
Hence, $t\mapsto f\circ F_t$ is locally Lipschitz with Lipschitz constant dominated by $e^{-(N-1)t} \| \nabla f\|_{L^2}$ (which is well defined because $f\in W^{1,2}(X)$).
\end{proof}

Before we embark on the main estimate of this section, we state the following lemma, a version of \cite[(4.34)]{Gig}, (see also \cite[Lemma 3.11]{DePG}) which holds (with the same proof) in our setting. The proof requires (local) Lipschitz regularity of the function $t\mapsto f\circ F_t$ (for $f\in L^2(\tilde{X})$) and a bound on the change of measure along the flow $(F_t)_{\#}\tilde{m}\leq C(t)\tilde{m}$ which we have by the previous Lemma and Proposition \ref{prop-F_tPushMeas}. Once these results are at our disposal, the result is obtained essentially by an application of Proposition \ref{prop-weakUg} and the first differentiation formula \eqref{eq-first-differentiation-formula}. It improves our previous lemmas on the $t$-regularity and provides a derivative formula which we will need.
Let us point out however, the change of sign in \eqref{eq-derivative-along-flow} with respect to \cite[(4.34)]{Gig} and \cite[Lemma 3.11]{DePG}, due to the fact that our flow goes \textit{in the direction of $\nabla u$}, while the corresponding flow in the aforementioned references goes in the opposite direction.

\begin{lem}
\label{lem-derivative-along-flow}
Let $f\in W^{1,2}(\tilde{X})$. Then the map $t\mapsto f\circ F_t\in L^2(\tilde{X})$ is of class $C^1$ and its derivative is given by 
\begin{equation}
\label{eq-derivative-along-flow}
\frac{\mathrm{d}}{\mathrm{d}t}f\circ F_t = \left\langle \nabla f,\nabla u\right\rangle\circ F_t.
\end{equation}
\end{lem}

In the remaining part of this subsection we will provide an estimate on the energy of $f_t$, which will allow us to conclude that for every $f\in W^{1,2}(\tilde{X})$, $f_t\in W^{1,2}(\tilde{X})$ as well, for $t\leq 0$.
To regularize we make use of the \textit{heat flow} $h_t:L^2(\tilde{X})\to L^2(\tilde{X})$. Recall that $h_t$ is the unique family of maps such that for any $f\in L^2(\tilde{X})$ the curve $[0,\infty)\ni t\mapsto  h_t(f)\in L^2(\tilde{X})$ is continuous, locally absolutely continuous on $(0,\infty)$, satisfies that $h_0(f)=f$, $h_t(f)\in D(\Delta)$ for $t>0$ and solves
\[
\frac{\mathrm{d}}{\mathrm{d}t} h_t(f) =\Delta h_t(f), \quad \mathcal{L}^1-\text{a.e.}\ t > 0. 
\]
We refer the reader to \cite[Section 4.1.2]{Gig} for a thorough exposition of the main properties of the heat flow on infinitesimally Hilbertian metric measure spaces. 

\begin{lem}
\label{lem-energy-of-heat-composed-ft-is-lipschitz}
 For each $t\geq 0$, let $h_t:L^2(\tilde{X})\to L^2(\tilde{X})$ be the heat flow on $\tilde{X}$ and $\varepsilon>0$ be fixed. Then the map $t\mapsto h_{\varepsilon}(f\circ F_t) \in W^{1,2}(\tilde{X})$ is Lipschitz and, in particular, the map 
 \[
 t\mapsto \frac{1}{2}\dint_{\tilde X}  | \nabla h_{\varepsilon}(f\circ F_t)|^2 \,\mathrm{d}\tilde{m}
 \] is Lipschitz. 	 
\end{lem}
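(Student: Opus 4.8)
The plan is to separate the statement into the (immediate) $L^2$--Lipschitz assertion for $t\mapsto h_\varepsilon(f\circ F_t)$ and the assertion for the energy $t\mapsto\tfrac12\int_{\tilde X}|\nabla h_\varepsilon(f\circ F_t)|^2\,\mathrm d\tilde m$, and in both cases to use two elementary facts about the heat flow $h_\varepsilon$ on the infinitesimally Hilbertian space $\tilde X$: that it is a linear contraction on $L^2(\tilde X)$, and that it is quantitatively regularizing, in the sense that $h_\varepsilon$ maps $L^2(\tilde X)$ into $D(\Delta)\subset W^{1,2}(\tilde X)$ with $\|\Delta h_\varepsilon g\|_{L^2}\le\tfrac1\varepsilon\|g\|_{L^2}$ and $\tfrac12\int|\nabla h_\varepsilon g|^2\,\mathrm d\tilde m\le\tfrac1{4\varepsilon}\|g\|_{L^2}^2$ (these follow from the energy dissipation identity $\tfrac{\mathrm d}{\mathrm dt}\tfrac12\|h_t g\|_{L^2}^2=-\int|\nabla h_t g|^2\,\mathrm d\tilde m$ together with the monotonicity in $t$ of $t\mapsto\int|\nabla h_t g|^2\,\mathrm d\tilde m$ and of $t\mapsto\|\Delta h_t g\|_{L^2}$; see \cite[Section 4.1.2]{Gig}). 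The first assertion is then essentially free: by the preceding lemma the curve $t\mapsto f\circ F_t$ is Lipschitz into $L^2(\tilde X)$, say with constant $L$, so by linearity and the $L^2$--contraction property $\|h_\varepsilon(f\circ F_s)-h_\varepsilon(f\circ F_t)\|_{L^2}=\|h_\varepsilon(f\circ F_s-f\circ F_t)\|_{L^2}\le\|f\circ F_s-f\circ F_t\|_{L^2}\le L|s-t|$.

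For the energy, write $g_t:=f\circ F_t$, $a:=h_\varepsilon g_s$, $b:=h_\varepsilon g_t$; then $a,b\in D(\Delta)\subset W^{1,2}(\tilde X)$ and $a\pm b\in D(\Delta)$. Since $\tilde X$ is infinitesimally Hilbertian, $\nabla$ is linear and $L^2(T\tilde X)$ is a Hilbert module, so
\[
\tfrac12\!\int_{\tilde X}\!|\nabla a|^2\,\mathrm d\tilde m-\tfrac12\!\int_{\tilde X}\!|\nabla b|^2\,\mathrm d\tilde m=\tfrac12\langle\nabla(a-b),\nabla(a+b)\rangle_{L^2(T\tilde X)}=-\tfrac12\!\int_{\tilde X}\!(a-b)\,\Delta(a+b)\,\mathrm d\tilde m,
\]
the last equality being the integration by parts that is legitimate because $a+b=h_\varepsilon(g_s+g_t)\in D(\Delta)$ and $a-b=h_\varepsilon(g_s-g_t)\in W^{1,2}(\tilde X)$. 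Applying Cauchy--Schwarz in $L^2(\tilde X)$, the contraction property $\|h_\varepsilon(g_s-g_t)\|_{L^2}\le\|g_s-g_t\|_{L^2}$, and the smoothing bound $\|\Delta h_\varepsilon(g_s+g_t)\|_{L^2}\le\tfrac1\varepsilon\|g_s+g_t\|_{L^2}$, we obtain
\[
\Big|\tfrac12\!\int_{\tilde X}\!|\nabla h_\varepsilon g_s|^2\,\mathrm d\tilde m-\tfrac12\!\int_{\tilde X}\!|\nabla h_\varepsilon g_t|^2\,\mathrm d\tilde m\Big|\le\tfrac1{2\varepsilon}\,\|g_s-g_t\|_{L^2}\,\big(\|g_s\|_{L^2}+\|g_t\|_{L^2}\big).
\]
By the earlier computation of the $L^2$ norm along the flow, $\|g_t\|_{L^2}^2=e^{-(N-1)t}\|f\|_{L^2}^2$, which is bounded on every bounded interval of the flow parameter (and is $\le\|f\|_{L^2}^2$ for $t\ge 0$), while $\|g_s-g_t\|_{L^2}\le L|s-t|$; hence the right-hand side is $\le C|s-t|$ with $C$ depending only on $\varepsilon$, $\|f\|_{W^{1,2}}$ and the interval, which is the second assertion (globally Lipschitz on $[0,\infty)$, and locally Lipschitz on $\mathbb R$).

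I do not anticipate a real obstacle here: the conceptual point is simply that one application of $h_\varepsilon$ turns the $L^2$--Lipschitz curve $t\mapsto f\circ F_t$ into a curve along which the Dirichlet energy—which on $L^2(\tilde X)$ is only lower semicontinuous—varies Lipschitz-continuously, the mechanism being the quantitative smoothing of $h_\varepsilon$. The only points requiring care are (i) invoking the correct heat-flow regularization estimates in the metric measure setting, for which \cite[Section 4.1.2]{Gig} (together with infinitesimal Hilbertianity of the $\RCDst$ space $\tilde X$, which makes $\Delta$ self-adjoint and $\nabla$ linear) is enough, and (ii) bookkeeping of constants when the flow parameter is negative, where $\|f\circ F_t\|_{L^2}$ grows like $e^{-(N-1)t/2}$, so that "Lipschitz" is to be read as "Lipschitz on bounded intervals", equivalently "globally Lipschitz on $[0,\infty)$".
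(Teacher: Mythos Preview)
Your argument is correct, but the route differs from the paper's in one respect worth noting. The paper proves the first assertion as Lipschitz continuity of $t\mapsto h_\varepsilon(f\circ F_t)$ into $W^{1,2}(\tilde X)$, not merely into $L^2(\tilde X)$: it invokes the $L^2$--$\Gamma$ inequality of Ambrosio--Trevisan (i.e.\ $\||\nabla h_\varepsilon g|\|_{L^2}\le C(\varepsilon)\|g\|_{L^2}$) together with the $L^2$-contraction of $h_\varepsilon$, and it is this $W^{1,2}$-Lipschitz bound (labelled \eqref{eq-LipHeatF_t}) that is quoted in the proof of the next theorem. Your reading of the first assertion as $L^2$-Lipschitz is weaker and does not by itself justify the phrase ``in particular''. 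That said, your ingredients immediately yield the stronger statement too: from $\int|\nabla h_\varepsilon g|^2=-\int h_\varepsilon g\,\Delta h_\varepsilon g$ and your bound $\|\Delta h_\varepsilon g\|_{L^2}\le \varepsilon^{-1}\|g\|_{L^2}$ one gets $\||\nabla h_\varepsilon(g_s-g_t)|\|_{L^2}\le \varepsilon^{-1/2}\|g_s-g_t\|_{L^2}$, which is exactly the $L^2$--$\Gamma$ bound. So the two approaches are really the same regularization mechanism expressed differently; yours is a touch more self-contained (it avoids citing \cite{AT} for the $L^2$--$\Gamma$ inequality), while the paper's packaging delivers the $W^{1,2}$-Lipschitz statement directly in the form it is used downstream. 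Your remark about local versus global Lipschitz for negative $t$ is a fair point the paper does not dwell on.
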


\begin{proof}
Using the equivalence of (i) and (v) in \cite[Theorem 7]{EKS} and the fact that $\mathsf{BL}(K,N)$ implies $\mathsf{BL}(K,\infty)$, \cite[Corollary 6.3]{AT} implies that the $L^2\!-\!\Gamma$ inequality holds true. Therefore, 
\[
\| | \nabla\left( h_{\varepsilon}(f\circ F_s) - h_{\varepsilon}(f\circ F_t)\right) | \|_{L^2}\leq C(\varepsilon) \|f\circ F_s - f\circ F_t \|_{L^2}.
\]
(See \cite[Definition 5.1]{AT} for the precise value of $C(\varepsilon)$). Moreover, by \cite[(3.1.2)]{Gig2}
\[
\| h_{\varepsilon}(f\circ F_s- f\circ F_t) \|_{L^2} \leq \|f\circ F_s- f\circ F_t \|_{L^2}.
\]
Combining the previous inequalities, we find: 
\begin{equation}\label{eq-LipHeatF_t}
\| h_{\varepsilon}(f\circ F_s) - h_{\varepsilon}(f\circ F_t) \|_{W^{1,2}}\leq C(\varepsilon) \|f\circ F_s - f\circ F_t \|_{L^{2}}. \qedhere
\end{equation}
\end{proof}


The following Euler-type equation for $u$ is also needed in our estimate.  For $f\in \mathrm{Test}(\tilde{X})$, it allows us to compute the difference between terms of the form $\left\langle \nabla u, \nabla \Delta f\right\rangle$ and $\Delta\left\langle \nabla u, \nabla f\right\rangle$.

\begin{prop}[Euler equation for $u$]
\label{prop-euler-equation-u}
Let $u:\tilde{X}\to \mathbb{R}$ be the Busemann-type function of Theorem \ref{cor-u} and $f\in \mathrm{Test}(\tilde{X})$. Then the following identity holds true $\tilde{m}$-a.e.
\[
\Delta \left\langle \nabla u, \nabla f\right\rangle =  \left\langle \nabla u, \nabla \Delta f\right\rangle + 2\Delta f -2(N-1)\left\langle \nabla u, \nabla f\right\rangle - 2\left\langle \nabla u,\nabla \left\langle \nabla f, \nabla u\right\rangle\right\rangle.
\]
\end{prop}

\begin{proof}
The proof uses the same strategy as \cite[Proposition 3.12]{DePG}. Let us consider the modified function $e^u=\exp\circ u:\tilde{X}\to \mathbb{R}$. The chain rule (see for example \cite[Theorem 1.12]{Gig0}) implies that $e^u\in W^{1,2}_{loc}(\tilde{X})$ and that $\nabla e^u = e^u\nabla u$. Moreover, we claim that $e^u\in D_{loc}(\Delta)$. Indeed, given $f\in \mathrm{Test}_{bs}(\tilde{X})$, we have that
\[
\dint_{\tilde{X}}\left\langle \nabla f, \nabla e^u\right\rangle \,\mathrm{d}\tilde{m} = \dint_{\tilde{X}} e^{u}\left\langle \nabla f,\nabla u\right\rangle \,\mathrm{d}\tilde{m} = \dint_{\tilde{X}} \left\langle \nabla (e^{u}f) - e^{u}f\nabla u, \nabla u \right\rangle \, \mathrm{d}\tilde{m} =-\dint_{\tilde{X}}Ne^u f\,\mathrm{d}\tilde{m}.
\]
where we used the Leibniz rule (see \cite[Equation 4.16]{Gig15}) and the fact that $\Delta u =N-1$. The previous identities also show that $\Delta e^u = Ne^{u}$. 

We now let $\varepsilon>0$ and apply Bochner's inequality (Theorem \ref{thm-weak-bochner-inequality}) to $e^{u} + \varepsilon f$ where $f\in \mathrm{Test}(\tilde{X})$ against non-negative test functions $g\in \mathrm{Test}_{bs}(\tilde{X})$ with $g\in L^{\infty}(\tilde{X},\tilde{m})$ and $\Delta g\in L^{\infty}(\tilde{X},\tilde{m})$ obtaining the following inequality, valid  $\tilde{m}$-a.e. 
\begin{align*}
\varepsilon \Delta \left\langle \nabla e^u, \nabla f \right\rangle + \varepsilon^2 \Delta |\nabla f|^2 \geq & \varepsilon\left( \left\langle \nabla e^u, \nabla \Delta f\right\rangle + \left\langle \nabla f, \nabla \Delta e^u \right\rangle +2\frac{\Delta e^u\Delta f}{N} -2(N-1)\left\langle \nabla e^u,\nabla f\right\rangle \right) \\ +& \varepsilon^2\left( \left\langle \nabla f, \nabla \Delta f\right\rangle + \frac{(\Delta f)^2}{N} -(N-1)|\nabla f|^2  \right).
\end{align*}

We now divide by $\varepsilon$ and take the limit when $\varepsilon\to 0$ to obtain 
\[
\Delta\left\langle \nabla e^u, \nabla f\right\rangle \geq \left\langle \nabla e^u, \nabla \Delta f\right\rangle + \left\langle \nabla f, \nabla \Delta e^u \right\rangle +2\frac{\Delta e^u\Delta f}{N} -2(N-1)\left\langle \nabla e^u,\nabla f\right\rangle
\]

By substituting the values of $\nabla e^u$ and $\Delta e^u$ the previous inequality becomes 
\[
\Delta \left\langle \nabla u, \nabla f\right\rangle  + 2\left\langle \nabla u, \nabla \left\langle \nabla u, \nabla f\right\rangle \right\rangle \geq \left\langle \nabla u, \nabla \Delta f\right\rangle + 2\Delta f - 2(N-1)\left\langle \nabla u, \nabla f\right\rangle 
\]

An analogous argument by using $\varepsilon<0$ yields the other inequality, valid $\tilde{m}$-a.e.,
\[
\Delta \left\langle \nabla u, \nabla f\right\rangle  + 2\left\langle \nabla u, \nabla \left\langle \nabla u, \nabla f\right\rangle \right\rangle \leq \left\langle \nabla u, \nabla \Delta f\right\rangle + 2\Delta f - 2(N-1)\left\langle \nabla u, \nabla f\right\rangle. \qedhere
\]
\end{proof}	

Before proceeding, let us remark that the term $\left\langle \nabla u,\nabla \left\langle \nabla f, \nabla u\right\rangle\right\rangle$ in the previous inequality makes sense since $f\in \mathrm{Test}(\tilde{X})$ and $u\in \mathrm{Test}_{loc}(\tilde{X})$, so that $\left\langle \nabla f, \nabla u\right\rangle\in W^{1,2}_{loc}(\tilde{X})$.

In what follows we adopt the convention that for $f \in L^2(\tilde{X})$, $\dint_{\tilde{X}}|\nabla f|^2\,\mathrm{d}\tilde{m}=\infty$ if $f\not\in W^{1,2}(\tilde{X})$. 

\begin{thm}
\label{thm-f_t-is-Sobolev}
 Let $f\in  W^{1,2}(\tilde{X})$ and let $\mathcal{E}(t):=\frac{1}{2}\dint_{\tilde{X}}|\nabla f_t|^2\,\mathrm{d}\tilde{m}$. Then, for all $t\leq 0$, 
\[
\mathcal{E}(f_t) \leq e^{-(N-1)t}\mathcal{E}(f).
\]
In particular, $f_t\in W^{1,2}(\tilde{X})$ for all $t\leq 0$.
Moreover,  we have 
\begin{align} \label{eq-derivative-of-cheeger-energy}
\frac{\mathrm{d}}{\mathrm{d}t} \mathcal E (t) = -(N-3)\mathcal E (t)   - \dint_{\tilde X}   \left\langle\nabla f_{t}, \nabla u\right\rangle^2\, \mathrm{d}\tilde{m}.
\end{align}
\end{thm}

The proof depends on several lemmas. Let us consider the function 
\[
G(t,s):= \dint_{\tilde{X}}\frac{|f_t|^2-|h_sf_t|^2}{4s}\,\mathrm{d}\tilde{m}.
\]
Note that $G(t,s)\uparrow \frac{1}{2}\dint_{\tilde{X}}|\nabla f_t|^2\,\mathrm{d}\tilde{m}$ as $s\downarrow 0$. Thus we are interested in an uniform bound on $G(t,s)$. 
Note also that by Lemmas \ref{lem-f-composed-with-flow} and \ref{lem-energy-of-heat-composed-ft-is-lipschitz}, for each $s>0$, the function $t\mapsto G(t,s)$ is locally Lipschitz. 

\begin{lem}
	\label{lem-derivative-of-G}
We have
\begin{align}
\label{eq-derivative-of-G-final}
\frac{\mathrm{d}}{\mathrm{d}t} G(t,s) = & -(N-1)G(t,s) - \frac{1}{s} \dint_0^s\! \dint_{\tilde{X}} h_{s-\tau} f_t \Delta h_{s+\tau} f_t \,\mathrm{d}\tilde{m}\,\mathrm{d}\tau \nonumber \\
& -\frac{1}{s} \dint_0^s\! \dint_{\tilde{X}} \left\langle \nabla h_{s+\tau}f_t,\nabla u\right\rangle \left\langle \nabla h_{s-\tau}f_t,\nabla u\right\rangle \, \mathrm{d}\tilde{m} \, \mathrm{d}\tau. 
\end{align}
\end{lem}

\begin{proof}
By \eqref{eq-L2-norm-ft},
\begin{equation}
\label{eq-first-part-G}
\frac{\mathrm{d}}{\mathrm{d}t}\frac{1}{4s} \dint_{\tilde{X}}|f_t|^2\,\mathrm{d}\tilde{m} = -\frac{(N-1)}{4s}\dint_{\tilde{X}}|f_t|^2\,\mathrm{d}\tilde{m}. 
\end{equation}
On the other hand, 
\begin{align}
\label{eq-second-part-G}
\frac{\mathrm{d}}{\mathrm{d}t}\frac{1}{4s} \dint_{\tilde{X}}|h_sf_t|^2\,\mathrm{d}\tilde{m} = & \lim_{\tau\to 0} \frac{1}{2s}\dint_{\tilde{X}} h_sf_t\left( \frac{h_sf_{t+\tau}-h_sf_t}{\tau} \right)\,\mathrm{d}\tilde{m}  \\
= &  \lim_{\tau\to 0} \frac{1}{2s}\dint_{\tilde{X}} h_{2s}f_t\left( \frac{f_{t+\tau}-f_t}{\tau}\right) \,\mathrm{d}\tilde{m} \nonumber \\
= & \lim_{\tau\to 0} \frac{1}{2s} \dint_{\tilde{X}} \frac{e^{-(N-1)\tau}h_{2s}f_t\circ F_{-\tau}-h_{2s}f_t}{\tau}f_t\,\mathrm{d}\tilde{m}\nonumber \\
= & \frac{-(N-1)}{2s} \dint_{\tilde{X}} |h_sf_t|^2\, \mathrm{d}\tilde{m} - \frac{1}{2s} \dint_{\tilde{X}} \left\langle \nabla h_{2s}f_t,\nabla u\right\rangle f_t \, \mathrm{d}\tilde{m} \nonumber  
\end{align}
where we used Lemma \ref{lem-derivative-along-flow}.

Therefore, putting together identities \eqref{eq-first-part-G} and \eqref{eq-second-part-G} we obtain that 
\begin{equation}
\label{eq-derivative-of-G}
\frac{\mathrm{d}}{\mathrm{d}t} G(t,s) = -(N-1)G(t,s) +\frac{(N-1)}{4s}\dint_{\tilde{X}}|h_sf_t|^2\, \mathrm{d}\tilde{m} + \frac{1}{2s}\dint_{\tilde{X}} \left\langle  \nabla h_{2s}f_t,\nabla u\right\rangle f_t \, \mathrm{d}\tilde{m}
\end{equation}

Now, let us note that the function $\tau \mapsto \dint_{\tilde{X}} \left\langle \nabla h_{s+\tau}f_t,\nabla u\right\rangle h_{s-\tau}f_t$ is of class $C^1$ on $[0,s]$ as a consequence of $\tau\mapsto h_{s\pm\tau}f_t$ being $C^1$. Then, the integral in the last summand of the previous identity can be written as   
\begin{equation}
\label{eq-derivative-of-G-2}
\dint_{\tilde{X}} \left\langle \nabla h_{2s}f_t,\nabla u\right\rangle f_t\,\mathrm{d}\tilde{m} = \dint_{\tilde{X}} \left\langle \nabla h_sf_t,\nabla u\right\rangle h_sf_t\,\mathrm{d}\tilde{m} + \dint_0^s\frac{\mathrm{d}}{\mathrm{d}\tau} \dint_{\tilde{X}} \left\langle \nabla h_{s+\tau}f_t,\nabla u\right\rangle h_{s-\tau}f_t\,\mathrm{d}\tilde{m}\,\mathrm{d}\tau. 
\end{equation}
In turn, the first summand of the right hand side of the previous expression can be computed as follows 
\begin{equation}
\label{eq-derivative-of-G-3}
\dint_{\tilde{X}} \left\langle \nabla h_sf_t,\nabla u\right\rangle h_sf_t\,\mathrm{d}\tilde{m} = \dint_{\tilde{X}} \left\langle \nabla \frac{|h_sf_t|^2}{2},\nabla u\right\rangle \,\mathrm{d}\tilde{m} = -\frac{(N-1)}{2}\dint_{\tilde{X}} |h_sf_t|^2\,\mathrm{d}\tilde{m}
\end{equation}
where we used that $\Delta u =N-1$. It follows from plugging the two previous computations \eqref{eq-derivative-of-G-2} and \eqref{eq-derivative-of-G-3} in formula \eqref{eq-derivative-of-G} that 
\begin{equation}
\label{eq-derivative-of-G-4}
\frac{\mathrm{d}}{\mathrm{d}t} G(t,s) = -(N-1)G(t,s) + \frac{1}{2s}\dint_0^s\frac{\mathrm{d}}{\mathrm{d}\tau} \dint_{\tilde{X}} \left\langle \nabla h_{s+\tau}f_t,\nabla u\right\rangle h_{s-\tau}f_t\,\mathrm{d}\tilde{m}\,\mathrm{d}\tau. 
\end{equation}


 Observe that 
\[
\frac{\mathrm{d}}{\mathrm{d}\tau} \dint_{\tilde{X}} \left\langle \nabla h_{s+\tau}f_t,\nabla u\right\rangle h_{s-\tau}f_t\,\mathrm{d}\tilde{m} = \dint_{\tilde{X}} \left\langle \nabla \Delta h_{s+\tau}f_t,\nabla u\right\rangle h_{s-\tau}f_t - \left\langle \nabla h_{s+\tau}f_t,\nabla u\right\rangle \Delta h_{s-\tau}f_t\,\mathrm{d}\tilde{m}.
\]
In the following computation we will assume that $h_{s+\tau}f_t\in \mathrm{Test}(\tilde{X})$. We can do so without loss of generality up to an easy approximation argument using the density of test functions in $W^{1,2}(\tilde{X})$ and the fact that $h_{s+\tau}f_t\in W^{1,2}(\tilde{X})$. Then, using Proposition \ref{prop-euler-equation-u}, we get that 
\begin{align*}
\frac{\mathrm{d}}{\mathrm{d}\tau} \dint_{\tilde{X}} \left\langle \nabla h_{s+\tau}f_t,\nabla u\right\rangle h_{s-\tau}f_t\,\mathrm{d}\tilde{m} = & \dint_{\tilde{X}} -2h_{s-\tau}f_t\Delta h_{s+\tau}f_t \, \mathrm{d}\tilde{m}\\
+ & \dint_{\tilde{X}} 2(N-1)\left\langle \nabla u, \nabla h_{s+\tau}f_t\right\rangle h_{s-\tau}f_t \, \mathrm{d}\tilde{m}\\
+ & \dint_{\tilde{X}} 2\left\langle \nabla u, \nabla \left\langle \nabla h_{s+\tau}f_t, \nabla u\right\rangle \right\rangle h_{s-\tau}f_t \, \mathrm{d}\tilde{m}.
\end{align*}

The last term can be expressed by an integration by parts, and using that $\Delta u = N-1$, as
\begin{align*}
\dint_{\tilde{X}} 2\left\langle \nabla u, \nabla \left\langle \nabla h_{s+\tau}f_t, \nabla u\right\rangle \right\rangle h_{s-\tau}f_t \, \mathrm{d}\tilde{m} = & -2(N-1) \dint_{\tilde{X}} h_{s-\tau}f_t\left\langle \nabla h_{s+\tau}f_t,\nabla u\right\rangle \, \mathrm{d}\tilde{m}\\
 & -2 \dint_{\tilde{X}}\left\langle \nabla h_{s+\tau}f_t,\nabla u\right\rangle \left\langle h_{s-\tau}f_t,\nabla u\right\rangle \, \mathrm{d}\tilde{m}.
\end{align*}
Hence we have that 
\begin{align}
\label{eq-reminder-term-derivative-of-G}
\frac{\mathrm{d}}{\mathrm{d}\tau} \dint_{\tilde{X}} \left\langle \nabla h_{s+\tau}f_t,\nabla u\right\rangle h_{s-\tau}f_t\,\mathrm{d}\tilde{m} = & -2 \dint_{\tilde{X}} h_{s-\tau}f_t \Delta h_{s+\tau}f_t \, \mathrm{d}\tilde{m}\\
& -2\dint_{\tilde{X}}\left\langle \nabla h_{s+\tau}f_t,\nabla u\right\rangle \left\langle \nabla h_{s-\tau}f_t,\nabla u\right\rangle \, \mathrm{d}\tilde{m}. \nonumber
\end{align}
Combining with \eqref{eq-derivative-of-G-4} yields
\begin{align*}
\frac{\mathrm{d}}{\mathrm{d}t} G(t,s) = & -(N-1)G(t,s) - \frac{1}{s} \dint_0^s\! \dint_{\tilde{X}} h_{s-\tau} f_t \Delta h_{s+\tau} f_t \,\mathrm{d}\tilde{m}\,\mathrm{d}\tau \nonumber \\
& -\frac{1}{s} \dint_0^s\! \dint_{\tilde{X}} \left\langle \nabla h_{s+\tau}f_t,\nabla u\right\rangle \left\langle \nabla h_{s-\tau}f_t,\nabla u\right\rangle \, \mathrm{d}\tilde{m} \, \mathrm{d}\tau. \nonumber
\end{align*}
\end{proof}

Our next lemma deals with the last summand of the previous identity. 

\begin{lem}
	\label{lem-last-summand-of-G}
	We have
	\begin{align}
	\label{eq-derivative-of-G-5}
\dint_{\tilde{X}}\left\langle \nabla h_{s+\tau}f_t,\nabla u\right\rangle \left\langle \nabla h_{s-\tau}f_t,\nabla u\right\rangle \, \mathrm{d}\tilde{m} = e^{-(N-1)t} \dint_{\tilde{X}}\left\langle \nabla h_{s+\tau}f,\nabla u\right\rangle \left\langle \nabla h_{s-\tau}f,\nabla u\right\rangle \, \mathrm{d}\tilde{m}.
	\end{align}
\end{lem}

\begin{proof}
To that end, let $\varepsilon>0$ and observe that 
\begin{align}
\label{eq-limS1S2together}
\frac{\left\langle \nabla h_{s+\tau}f_{t+\varepsilon},\nabla u\right\rangle\left\langle \nabla h_{s-\tau}f_{t+\varepsilon},\nabla u\right\rangle - \left\langle \nabla h_{s+\tau}f_{t},\nabla u\right\rangle\left\langle \nabla h_{s-\tau}f_{t},\nabla u\right\rangle}{\varepsilon} =S_1 + S_2,
\end{align}
where 
\[
S_1:= \left\langle \nabla h_{s-\tau}f_{t+\varepsilon},\nabla u\right\rangle \left\langle \frac{\nabla\left(h_{s+\tau}f_{t+\varepsilon}-h_{s+\tau}f_{t} \right)}{\varepsilon}, \nabla u\right\rangle
\]
and 
\[  
S_2:= \left\langle \nabla h_{s+\tau}f_{t},\nabla u\right\rangle \left\langle \frac{\nabla\left(h_{s-\tau}f_{t+\varepsilon}-h_{s+\tau}f_{t} \right)}{\varepsilon}, \nabla u\right\rangle. 
\]
Now we compute $\lim_{\varepsilon\to 0}S_1$ as follows. First observe that by \eqref{eq-LipHeatF_t}, 
\[
\lim_{\varepsilon\to 0}\left\langle \nabla h_{s-\tau}f_{t+\varepsilon},\nabla u\right\rangle = \left\langle \nabla h_{s-\tau}f_{t},\nabla u\right\rangle
\]
where the limit is intended in $L^2$. Therefore, 
\begin{equation}
\label{eq-limS1_1}
\lim_{\varepsilon\to 0}S_1= \lim_{\varepsilon\to 0}\dint_{\tilde{X}} \left\langle \nabla h_{s-\tau}f_t,\nabla u\right\rangle \left\langle \frac{\nabla (h_{s+\tau}f_{t+\varepsilon}-h_{s+\tau}f_t)}{\varepsilon},\nabla u\right\rangle\,\mathrm{d}\tilde{m}.
\end{equation}

Now, as before, we assume $h_{s_\tau}f_t\in \mathrm{Test}(\tilde{X})$. The following estimate holds true in the general case by an approximation argument. Observe that for every $f,g\in W^{1,2}(\tilde X)$ the following holds, by using $\Delta u=N-1$,  (cf. \cite[(4.35)]{Gig})
\begin{equation}\label{eq-intbyparts}
\dint_{\tilde{X}} f \left\langle \nabla g, \nabla u\right\rangle\, \mathrm{d}\tilde{m} = -(N-1) \dint_{\tilde{X}} fg\,\mathrm{d}\tilde m -\dint_{\tilde{X}} g \left\langle \nabla f, \nabla u\right\rangle\, \mathrm{d}\tilde{m}. 
\end{equation}
Therefore, we obtain from \eqref{eq-limS1_1} that, 
\begin{align*}
\lim_{\varepsilon\to 0}S_1 = & \lim_{\varepsilon\to 0} -(N-1)\dint_{\tilde{X}}\left\langle \nabla h_{s-\tau}f_t,\nabla u\right\rangle\left( \frac{h_{s+\tau}f_{t+\varepsilon}-h_{s+\tau}f_t}{\varepsilon}\right)\,\mathrm{d}\tilde{m}\\
& - \dint_{\tilde{X}} \left\langle \nabla \left\langle \nabla h_{s-\tau}f_t,\nabla u\right\rangle, \nabla u\right\rangle\left( \frac{h_{s+\tau}f_{t+\varepsilon}-h_{s+\tau}f_t}{\varepsilon}\right)   \, \mathrm{d}\tilde{m}.
\end{align*}
We claim the previous expression is equal to
\[
\dint_{\tilde{X}} -(N-1)\left\langle \nabla h_{s-\tau}f_t,\nabla u\right\rangle\left\langle \nabla h_{s+\tau}f_t,\nabla u\right\rangle - \left\langle \nabla \left\langle \nabla h_{s-\tau}f_t,\nabla u\right\rangle, \nabla u\right\rangle\left\langle \nabla h_{s+\tau}f_t,\nabla u\right\rangle  \, \mathrm{d}\tilde{m}
\]
Indeed, it follows from H\"older inequality and the fact that $(f_{t+\varepsilon}-f_t)/\varepsilon$ is $L^2$-weakly convergent as $\varepsilon\to 0$ that 
\[
\lim_{\varepsilon\to 0}  \left| \dint_{\tilde{X}}\left\langle \nabla h_{s-\tau}f_t,\nabla u\right\rangle\left( \frac{h_{s+\tau}f_{t+\varepsilon}-h_{s+\tau}f_t}{\varepsilon}\right)\,\mathrm{d}\tilde{m} - \dint_{\tilde{X}} \left\langle \nabla h_{s-\tau}f_t,\nabla u\right\rangle\left\langle \nabla h_{s+\tau}f_t,\nabla u\right\rangle\, \mathrm{d}\tilde{m} \right| = 0
\]
and by a similar reason it is also true that 
\[
  \left| \dint_{\tilde{X}} \left\langle \nabla \left\langle \nabla h_{s-\tau}f_t,\nabla u\right\rangle, \nabla u\right\rangle\left( \frac{h_{s+\tau}f_{t+\varepsilon}-h_{s+\tau}f_t}{\varepsilon}\right)   \, \mathrm{d}\tilde{m} - \dint_{\tilde{X}} \left\langle \nabla \left\langle \nabla h_{s-\tau}f_t,\nabla u\right\rangle, \nabla u\right\rangle\left\langle \nabla h_{s+\tau}f_t,\nabla u\right\rangle  \, \mathrm{d}\tilde{m} \right|
\]
goes to $0$ as $\varepsilon\to 0$. Whence, we have obtained that 
\begin{equation}
\label{eq-limS1}
\lim_{\varepsilon\to 0} S_1 = \dint_{\tilde{X}} -(N-1)\left\langle \nabla h_{s-\tau}f_t,\nabla u\right\rangle\left\langle \nabla h_{s+\tau}f_t,\nabla u\right\rangle - \left\langle \nabla \left\langle \nabla h_{s-\tau}f_t,\nabla u\right\rangle, \nabla u\right\rangle\left\langle \nabla h_{s+\tau}f_t,\nabla u\right\rangle  \, \mathrm{d}\tilde{m}.
\end{equation}
A completely analogous procedure yields that 
\begin{equation}
\label{eq-limS2}
\lim_{\varepsilon\to 0} S_2 = \dint_{\tilde{X}} -(N-1)\left\langle \nabla h_{s-\tau}f_t,\nabla u\right\rangle\left\langle \nabla h_{s+\tau}f_t,\nabla u\right\rangle - \left\langle \nabla \left\langle \nabla h_{s+\tau}f_t,\nabla u\right\rangle, \nabla u\right\rangle\left\langle \nabla h_{s-\tau}f_t,\nabla u\right\rangle  \, \mathrm{d}\tilde{m}.
\end{equation}

It follows by using \eqref{eq-intbyparts} again and using \eqref{eq-limS1} and \eqref{eq-limS2} when taking the limit when $\varepsilon\to 0$ in \eqref{eq-limS1S2together} that 
\[
\frac{\mathrm{d}}{\mathrm{d}t}\dint_{\tilde{X}}\left\langle \nabla h_{s+\tau}f_t,\nabla u\right\rangle \left\langle \nabla h_{s-\tau}f_t,\nabla u\right\rangle \, \mathrm{d}\tilde{m} = -(N-1) \dint_{\tilde{X}}\left\langle \nabla h_{s+\tau}f_t,\nabla u\right\rangle \left\langle \nabla h_{s-\tau}f_t,\nabla u\right\rangle \, \mathrm{d}\tilde{m},
\]
and therefore, 
\[
\dint_{\tilde{X}}\left\langle \nabla h_{s+\tau}f_t,\nabla u\right\rangle \left\langle \nabla h_{s-\tau}f_t,\nabla u\right\rangle \, \mathrm{d}\tilde{m} = e^{-(N-1)t} \dint_{\tilde{X}}\left\langle \nabla h_{s+\tau}f,\nabla u\right\rangle \left\langle \nabla h_{s-\tau}f,\nabla u\right\rangle \, \mathrm{d}\tilde{m}.
\]
This finishes the proof of the lemma.
\end{proof}

We are now ready to give the proof of Theorem \ref{thm-f_t-is-Sobolev}.

\begin{proof}
Lemma \ref{lem-last-summand-of-G} and the Bakry-\'Emery estimate (see \cite[Theorem 4]{EKS} ) allow us to estimate the last summand on the right hand side of \eqref{eq-derivative-of-G-5} as
\begin{align*}
\left| -\frac{1}{s}\dint_{\tilde{X}}\left\langle \nabla h_{s+\tau}f,\nabla u\right\rangle \left\langle \nabla h_{s-\tau}f,\nabla u\right\rangle \, \mathrm{d}\tilde{m} \right|  \leq & \frac{1}{s} \dint_0^s e^{-(N-1)t} \left\| \nabla h_{s-\tau}f\right\|_{L^2(\tilde{X})} \left\| \nabla h_{s+\tau}f\right\|_{L^2(\tilde{X})} \,\mathrm{d}\tau \\
 \leq & \frac{1}{s} \dint_0^s e^{-(N-1)t} e^{2(N-1)(s-\tau)}e^{2(N-1)(s+\tau)}\| \nabla f \|^2_{L^2(\tilde{X})} \,\mathrm{d}\tau \\
= & e^{(N-1)(-t+2s)} \left\| \nabla f\right\|^2_{L^2(\tilde{X})}.
\end{align*}

%

Therefore, using this estimate and \eqref{eq-derivative-of-G-4}, we have the differential inequality 
\begin{equation}
\label{eq-differential-inequality-G}
\frac{\mathrm{d}}{\mathrm{d}t} G(t,s) \geq -(N-1)G(t,s) - e^{(N-1)(-t+2s)}\|\nabla f\|^2_{L^2(\tilde{X})}
\end{equation}
where we have discarded the second term of the right hand side of \eqref{eq-derivative-of-G-4} since 
\[
- \frac{1}{s} \dint_0^s\! \dint_{\tilde{X}} h_{s-\tau} f_t \Delta h_{s+\tau} f_t \,\mathrm{d}\tilde{m}\,\mathrm{d}\tau = \frac{1}{s}\dint_0^s\dint_{\tilde{X}} |\nabla h_{s}f_t|^2 \,\mathrm{d}\tilde{m}\,\mathrm{d}\tau \geq 0.
\]
Hence, from \eqref{eq-differential-inequality-G} we have that
\[
\frac{\mathrm{d}}{\mathrm{d}t} e^{(N-1)t}G(t,s) \geq - e^{2(N-1)s}\|\nabla f\|^2_{L^2(\tilde{X})}, 
\]
and it follows from Gronwall's lemma that 
\[
e^{(N-1)t}G(t,s)\leq G(0,s)+ te^{2(N-1)s}\|\nabla f\|^2_{L^2(\tilde{X})}.
\]
Therefore, for every $t\leq 0$, we have obtained that 
\begin{equation}
\label{eq-G-uniform-bound}
G(t,s)\leq e^{-(N-1)t}G(0,s).
\end{equation}

Let us note that, in fact, $G(0,s)\leq \frac{1}{2}\dint_{\tilde{X}}|\nabla f|^2\,\mathrm{d}\tilde{m}$ for $s\leq 1$. Indeed, first note that
\[
G(0,s) = \frac{1}{4s} \dint_{\tilde{X}} f\left(f-h_sf \right) + h_sf\left( f-h_sf \right)\, \mathrm{d}\tilde{m}.
\]
Now, on one hand, 
\[
f-h_sf = -\dint_0^s \frac{\mathrm{d}}{\mathrm{d}\tau} \left(h_{\tau}f \right)\, \mathrm{d}\tau = \dint_0^s \Delta h_{\tau}f\, \mathrm{d}\tau
\]
from which we can conclude that 
\[
\frac{1}{4s} \dint_{\tilde{X}} f\left( f-h_sf\right) = \frac{1}{4s} \dint_0^s\dint_{\tilde{X}} f\Delta h_{\tau}f \, \mathrm{d}\tilde{m}\,\mathrm{d}\tau = -\frac{1}{4s} \dint_0^s \dint_{\tilde{X}} \left\langle \nabla f, \nabla h_{\tau}f\right\rangle  \, \mathrm{d}\tilde{m}\,\mathrm{d}\tau.
\]
Therefore, by the H\"older inequality and the Bakry-Emery estimate, we obtain that 
\[
\frac{1}{4s} \dint_{\tilde{X}} f\left( f-h_sf\right) \leq \frac{1}{4} e^{(N-1)}\dint_{\tilde{X}}|\nabla f|^2\,\mathrm{d}\tilde{m}.
\]
On the other hand, an analogous analysis yields the same bound for $\frac{1}{4s} \dint_{\tilde{X}}h_sf\left( f-h_sf \right)\, \mathrm{d}\tilde{m}.$

To conclude the proof we note that by the uniform bound \eqref{eq-G-uniform-bound}, and since $G(t,s)\uparrow \frac{1}{2}\dint_{\tilde{X}}|\nabla f_t|^2\,\mathrm{d}\tilde{m}$ as $s\downarrow 0$, then the energies of $f_t$ are uniformly bounded for $t\leq 0$. Then, passing to the limit as $s\downarrow 0$ in \eqref{eq-G-uniform-bound}, we have that 
\[
\mathcal{E}(f_t) \leq e^{-(N-1)t}\mathcal{E}(f)
\]
for all $t\leq 0$. In particular, $f_t\in W^{1,2}(\tilde{X})$ for all $t\leq 0$.

We can now pass to the limit $s\downarrow 0$ in \eqref{eq-derivative-of-G-final} to obtain
\begin{align*}
\frac{\mathrm{d}}{\mathrm{d}t} \mathcal E (t) = -(N-1)\mathcal E (t) +2 \mathcal E (t)   - \dint_{\tilde X}   \left\langle\nabla f_{t}, \nabla u\right\rangle^2\, \mathrm{d}\tilde{m}.
\end{align*}
This finishes our proof.
\qedhere
\end{proof}

In the following theorem we see how the Cheeger energy of $f_t$ behaves along each of the summands of $\left\langle\nabla f_t, \nabla f_t\right\rangle= \mathrm{Hess}(u)( \nabla f_t, \nabla f_t) + \left\langle \nabla f_t, \nabla u\right\rangle^2$.

\begin{thm}\label{thm:Energy}
Let $u:\tilde{X}\to \mathbb{R}$ be the function built in Section \ref{sec-busemann-function}. The following identities hold for any $f \in W^{1,2}(\tilde X)$:
\begin{align*}
\dint_{\tilde X} \mathrm{Hess}(u) (\nabla f_t, \nabla f_t )\, \mathrm{d}\tilde m &= e^{-(N+1)t} \dint_{\tilde X} \mathrm{Hess}(u) (\nabla f, \nabla f )\, \mathrm{d} \tilde m \\
\dint_{\tilde X} \left\langle \nabla  f_t, \nabla u\right\rangle^2\, \mathrm{d}\tilde m &= e^{-(N-1)t} \dint_{\tilde X} \left\langle \nabla f, \nabla u\right\rangle^2\, \mathrm{d} \tilde m.
\end{align*}
\end{thm}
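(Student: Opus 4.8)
The statement splits the Cheeger energy density along the flow as $\langle\nabla f_t,\nabla f_t\rangle=\mathrm{Hess}[u](\nabla f_t,\nabla f_t)+\langle\nabla f_t,\nabla u\rangle^2$, which is \eqref{eq-Hessu} (extended from $\mathrm{Test}(\tilde X)$ to $W^{1,2}(\tilde X)$) applied to $f_t$. The plan is to treat the two summands by different means: the second by a ``chain rule along the flow'', and the first by turning the formula for $\tfrac{\mathrm{d}}{\mathrm{d}t}\mathcal E(t)$ proved in the preceding theorem into a first-order linear ODE.

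I would first prove that $\langle\nabla f_t,\nabla u\rangle=\langle\nabla f,\nabla u\rangle\circ F_t$ $\tilde m$-a.e. By property \eqref{eq-item-iii-rlf} of the Regular Lagrangian flow, for $\tilde m$-a.e.\ $x$ the maps $s\mapsto f(F_{t+s}(x))$ and $s\mapsto f_t(F_s(x))$ are absolutely continuous with derivatives $\langle\nabla f,\nabla u\rangle(F_{t+s}(x))$ and $\langle\nabla f_t,\nabla u\rangle(F_s(x))$ at a.e.\ $s$; by the semigroup property of Proposition~\ref{prop-F_tPushMeas} these two functions of $s$ coincide, hence so do their derivatives, so that $\langle\nabla f_t,\nabla u\rangle(F_s(x))=\langle\nabla f,\nabla u\rangle(F_{t+s}(x))$ for $\tilde m\times\mathcal L^1$-a.e.\ $(x,s)$. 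Integrating in $(x,s)$, using Fubini together with the distortion bound $(F_s)_\#\tilde m=e^{-(N-1)s}\tilde m$ to pass from this ``a.e.\ along trajectories'' statement to a genuine $\tilde m$-a.e.\ identity, gives the claim. The change-of-measure formula of Proposition~\ref{prop-F_tPushMeas} then yields at once
\[
\int_{\tilde X}\langle\nabla f_t,\nabla u\rangle^2\,\mathrm{d}\tilde m=\int_{\tilde X}\big(\langle\nabla f,\nabla u\rangle\circ F_t\big)^2\,\mathrm{d}\tilde m=\int_{\tilde X}\langle\nabla f,\nabla u\rangle^2\,\mathrm{d}(F_t)_\#\tilde m=e^{-(N-1)t}\int_{\tilde X}\langle\nabla f,\nabla u\rangle^2\,\mathrm{d}\tilde m,
\]
which is the second identity.

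For the first identity, put $A(t):=\int_{\tilde X}\mathrm{Hess}[u](\nabla f_t,\nabla f_t)\,\mathrm{d}\tilde m$ and $B(t):=\int_{\tilde X}\langle\nabla f_t,\nabla u\rangle^2\,\mathrm{d}\tilde m$; by the decomposition above and Corollary~\ref{cor-hessian-identity} one has $A(t)+B(t)=\int_{\tilde X}|\nabla f_t|^2\,\mathrm{d}\tilde m=2\mathcal E(t)$. The preceding theorem then gives $\mathcal E'(t)=A(t)-(N-1)\mathcal E(t)$ for a.e.\ $t$. Substituting $A(t)=2\mathcal E(t)-B(t)$ and inserting $B(t)=e^{-(N-1)t}B(0)$ from the previous step turns this into an inhomogeneous first-order linear ODE for the absolutely continuous function $t\mapsto\mathcal E(t)$. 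Solving it with the initial value $\mathcal E(0)=\tfrac12(A(0)+B(0))$, and then reading off $A(t)=2\mathcal E(t)-B(t)$, produces the exponential factor asserted in the theorem. Along the way one should check that $f_t\in W^{1,2}(\tilde X)$ for every $t$, so that $\nabla f_t$, its pairing with $\nabla u$, and $\mathrm{Hess}[u](\nabla f_t,\nabla f_t)$ are all meaningful; this is part of the regularity of $F$ established in Section~\ref{sec-Busemann} and is already presupposed by the formula for $\mathcal E'$.

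The delicate point --- and the only real obstacle --- is the identification $\langle\nabla f_t,\nabla u\rangle=\langle\nabla f,\nabla u\rangle\circ F_t$: property \eqref{eq-item-iii-rlf} only controls Sobolev functions \emph{along} trajectories and only for $\tilde m\times\mathcal L^1$-a.e.\ pair $(x,s)$, and the semigroup relation $F_t\circ F_s=F_{t+s}$ of Proposition~\ref{prop-F_tPushMeas} likewise holds only $\tilde m$-a.e.\ for each fixed pair, so some bookkeeping with Fubini and the measure-distortion estimate is needed to legitimately conclude equality of the two functions as elements of $L^0(\tilde m)$ rather than merely along a.e.\ trajectory. Everything else --- the algebra of the ODE and the changes of variables under $F_t$ --- is routine given Proposition~\ref{prop-F_tPushMeas} and the material recalled in the Preliminaries.
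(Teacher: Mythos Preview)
Your argument is correct. For the first identity it is essentially the paper's own approach: both combine the formula for $\mathcal E'(t)$ from the preceding theorem with the decomposition $\langle\nabla f_t,\nabla f_t\rangle=\mathrm{Hess}[u](\nabla f_t,\nabla f_t)+\langle\nabla f_t,\nabla u\rangle^2$ and the already-established second identity to produce a first-order linear ODE and integrate. The paper writes the ODE directly for $A(t)=\int\mathrm{Hess}[u](\nabla f_t,\nabla f_t)\,\mathrm d\tilde m$, obtaining $A'=-(N-3)A$ and hence $A(t)=e^{-(N-3)t}A(0)$; you instead write the ODE for $\mathcal E$ and recover $A$ afterwards, but this is a cosmetic difference. (Carrying out your algebra gives the exponent $-(N-3)$, in agreement with the paper's proof and with Remark~\ref{rmk:Energy}; the $-(N+1)$ in the displayed statement is a typo.)

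Where you genuinely diverge is in the second identity. The paper computes $\frac{\mathrm d}{\mathrm dt}\int\langle\nabla f_t,\nabla u\rangle^2\,\mathrm d\tilde m$ by an incremental-quotient calculation parallel to that of the preceding theorem---expanding the difference quotient, integrating by parts via~\eqref{eq-intbyparts}, and invoking the weak convergence of $(f_{t+h}-f_t)/h$ together with \cite[4.34]{Gig}---to show that the derivative equals $-(N-1)\int\langle\nabla f_t,\nabla u\rangle^2\,\mathrm d\tilde m$. Your route is shorter and more elementary: you establish the pointwise identity $\langle\nabla f_t,\nabla u\rangle=\langle\nabla f,\nabla u\rangle\circ F_t$ directly from Definition~\ref{def-regular-lagrangian-flow}(iii) and the semigroup property, then apply the change-of-measure formula. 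This avoids the analytic derivative computation entirely and in fact yields a strictly stronger (pointwise rather than integrated) statement, at the cost of the Fubini/null-set bookkeeping you correctly flag; that step is legitimate because $(F_s)_\#\tilde m$ is equivalent to $\tilde m$, so equality for $\tilde m\times\mathcal L^1$-a.e.\ $(x,s)$ transfers, after composing with $F_{s_0}$ for a single well-chosen $s_0$, to a genuine $\tilde m$-a.e.\ identity.
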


\medskip

\begin{proof}
 We will first prove the second equality by studying its $t$-derivative as in the proof of Lemma \ref{lem-last-summand-of-G}.  We compute 
\begin{align*}
\frac{\langle\nabla f_{t+h}, \nabla u\rangle ^2 - \langle\nabla  f_{t}, \nabla u\rangle^2 }{h}  & =  
\frac{\langle\nabla f_{t+h}, \nabla u\rangle - \langle\nabla f_{t}, \nabla u\rangle} {h} (\langle\nabla  f_{t+h}, \nabla u\rangle + \langle\nabla f_{t}, \nabla u\rangle) \\
&= ( \langle\nabla  \frac{ ( f_{t+h} - f_{t}) } {h} , \nabla u  \rangle)(\langle\nabla f_{t+h}, \nabla u\rangle + \langle\nabla  f_{t}, \nabla u\rangle).
\end{align*}
Observe that 
\begin{eqnarray*}
\lim_{h\to 0}\dint_{\tilde X}\langle\nabla \frac{(f_{t+h}-f_t)}{h}, \nabla u \rangle\langle \nabla f_{t+h},\nabla u\rangle\, \mathrm{d}\tilde{m} &=& \lim_{h\to 0} -(N-1)\dint_{\tilde X}\left(\frac{f_{t+h}-f_t}{h} \right)\langle \nabla f_{t+h},\nabla u\rangle\, \mathrm{d}\tilde{m}\\
& & - \dint_{\tilde X} \left(\frac{f_{t+h}-f_t}{h} \right) \langle \nabla \langle \nabla f_{t+h},\nabla u\rangle, \nabla u\rangle\, \mathrm{d}\tilde{m}.
\end{eqnarray*}

Let us denote the first and second summands of the left hand side of the previous equation by $A_1$ and $A_2$ respectively. We claim that 
\[
A_1= -(N-1)\dint_{\tilde X}\langle \nabla f_t, \nabla u\rangle^2 \mathrm{d}\tilde{m}.
\]
 To prove this claim, notice that 

\begin{align*}
\dint_{\tilde X}\left( \frac{f_{t+h}-f_t}{h}\right)\langle \nabla f_{t+h},\nabla u\rangle - \langle \nabla f_t, \nabla u \rangle ^2\mathrm{d}\tilde{m} &= \dint_{\tilde X} \left( \frac{f_{t+h}-f_t}{h}\right)\left(\langle \nabla f_{t+h},\nabla u\rangle - \langle \nabla f_t, \nabla u \rangle\right)\mathrm{d}\tilde{m}\\
& + \dint_{\tilde X} \langle \nabla f_t, \nabla u\rangle\left( \left(\frac{f_{t+h}-f_t}{h}\right)-\langle \nabla f_t,\nabla u\rangle\right)\mathrm{d}\tilde{m}.
\end{align*}

H\"older's inequality implies
\[
\left| \dint_{\tilde X} \left(\frac{f_{t+h}-f_t}{h} \right)\left(\langle \nabla f_{t+h},\nabla u\rangle- \langle \nabla f_t,\nabla u\rangle  \right)\mathrm{d}\tilde{m} \right| \leq \left\| \frac{f_{t+h}-f_t}{h} \right\|_{L^2} \|\langle \nabla f_{t+h},\nabla u\rangle-\langle \nabla f_t,\nabla u\rangle  \|_{L^2}.
\] 
This last expression converges to $0$ as $h\to 0$, since $\| \frac{f_{t+h}-f_t}{h} \|_{L^2}$ is bounded because $\frac{f_{t+h}-f_t}{h}$ is weakly convergent in $L^2$ and 
\[
\|\langle \nabla f_{t+h},\nabla u\rangle-\langle \nabla f_t,\nabla u\rangle  \|_{L^2}\to 0.
\]
 Moreover, by \cite[(4.34)]{Gig}, 
 \[
 \dint_{\tilde X} \langle \nabla f_t, \nabla u\rangle\left( \left(\frac{f_{t+h}-f_t}{h}\right)-\langle \nabla f_t,\nabla u\rangle\right)\mathrm{d}\tilde{m}\to 0,
 \] as $h\to 0$, and therefore the claim is proved. 

A similar procedure to the computation of $A_1$ yields 
\[
 A_2= -\dint_{\tilde X} \langle \nabla f_t, \nabla u\rangle \langle \nabla \langle \nabla f_t,\nabla u\rangle, \nabla u\rangle\, \mathrm{d}\tilde{m}.
 \] 
Therefore
\begin{equation*}
\lim_{h\to 0} \dint_{\tilde X} \langle \nabla \left(\frac{f_{t+h}-f_t}{h} \right), \nabla u\rangle\langle \nabla f_{t+h},\nabla u\rangle\, \mathrm{d}\tilde{m} = -(N-1)\dint_{\tilde X} \langle \nabla f_t,\nabla u\rangle^2\mathrm{d}\tilde{m} - \dint_{\tilde X} \langle \nabla f_t ,\nabla u\rangle \langle \nabla \langle \nabla f_t, \nabla u\rangle, \nabla u\rangle\, \mathrm{d}\tilde{m}. 
\end{equation*}
Thus, combining our observations, and using \eqref{eq-intbyparts}, we obtain 
\begin{align*}
\lim_{h \to 0} \dint_{\tilde X}\frac{\langle\nabla f_{t+h}, \nabla u\rangle ^2 - \langle\nabla f_{t}, \nabla u\rangle^2} {h}\,\mathrm{d}\tilde{m} = &  \dint_{\tilde X} (\langle\nabla \langle \nabla f_{t}, \nabla u\rangle, \nabla u\rangle) 2 (\langle\nabla  f_{t}, \nabla u\rangle)\, \mathrm{d}\tilde{m}\\
& -2(N-1)\dint_{\tilde X} \langle\nabla f_{t}, \nabla u\rangle^2 \mathrm{d}\tilde{m}\\
= & -(N-1)\dint_{\tilde X} \langle\nabla f_{t}, \nabla u\rangle^2 \mathrm{d}\tilde{m}.
\end{align*}

%

In conclusion, we have found that 
\begin{align} \label{eq-derivative-of-u-comp}
\frac{d}{dt} \dint_{\tilde X} \langle\nabla f_{t}, \nabla u\rangle^2\, \mathrm{d}\tilde{m} = -(N-1)\dint_{\tilde X}   \langle\nabla f_{t}, \nabla u\rangle^2\, \mathrm{d}\tilde{m} .
\end{align} Hence, 
$$
\dint_{\tilde X} \langle\nabla f_{t}, \nabla u\rangle^2\, \mathrm{d}\tilde{m} =e^{ -(N-1)t } \dint_{\tilde X} \langle\nabla f , \nabla u\rangle^2\, \mathrm{d}\tilde{m}.
$$

Now we will obtain the first equality. Observe that $\left\langle\nabla f, \nabla f\right\rangle= \mathrm{Hess}[u]( \nabla f, \nabla f) + \left\langle \nabla f, \nabla u\right\rangle^2$ implies that
\[
\frac{\mathrm{d}}{\mathrm{d}t} \mathcal E (t) = \frac{\mathrm{d}}{\mathrm{d}t} \frac{1}{2}\int_{\tilde X} \mathrm{Hess}(u)( \nabla f_t, \nabla f_t)\, \mathrm{d}\tilde{m} + \frac{\mathrm{d}}{\mathrm{d}t} \frac{1}{2}\int_{\tilde X} \left\langle \nabla f_t, \nabla u\right\rangle^2\, \mathrm{d}\tilde{m}.
\] 
By \eqref{eq-derivative-of-u-comp}, this becomes
$$
\frac{\mathrm{d}}{\mathrm{d}t} \mathcal E (t) = \frac{\mathrm{d}}{\mathrm{d}t} \frac{1}{2}\dint_{\tilde X}  \mathrm{Hess}(u)( \nabla f_t, \nabla f_t)\, \mathrm{d}\tilde{m}  -\frac{(N-1)}{2}\dint_{\tilde X}   \left\langle\nabla f_{t}, \nabla u\right\rangle^2\, \mathrm{d}\tilde{m}.
$$
From Theorem \ref{thm-f_t-is-Sobolev}, for $t\leq 0$, 
$$
\frac{\mathrm{d}}{\mathrm{d}t} \mathcal E (t)=  \dint_{\tilde X} \! \mathrm{Hess}(u)(\nabla f_t, \nabla f_t)\, \mathrm{d}\tilde{m} -  \frac{(N-1)}{2}\dint_{\tilde X}   \left\langle\nabla f_t, \nabla f_t\right\rangle\, \mathrm{d}\tilde{m}.
$$
Using both expressions for $\frac{\mathrm{d}}{\mathrm{d}t} \mathcal E (t)$ and solving for $\frac{\mathrm{d}}{\mathrm{d}t} \dint_{\tilde X}  \mathrm{Hess}(u)( \nabla f_t, \nabla f_t)\, \mathrm{d}\tilde{m}$, we get
$$ 
\frac{\mathrm{d}}{\mathrm{d}t} \int_{\tilde X} \mathrm{Hess}(u)( \nabla f_t, \nabla f_t)\, \mathrm{d}\tilde{m}= -(N-3)\int_{\tilde X} \mathrm{Hess}(u)( \nabla f_t, \nabla f_t)\, \mathrm{d}\tilde{m}.
$$
We conclude that for $t\leq 0$, 
$$ \dint_{\tilde X} \! \mathrm{Hess}(u)( \nabla f_t, \nabla f_t)\, \mathrm{d}\tilde{m}= e^{-(N-3)t} \dint_{\tilde X} \! \mathrm{Hess}(u)( \nabla f, \nabla f)\, \mathrm{d}\tilde{m}.$$
Now we reverse the flow, i.e. use the equation $f_t\circ F_{-t}=f$,  to see that the above equation holds for all $t$.
\end{proof}

\begin{rmk}\label{rmk:Energy}
As ${F_t }_\sharp \tilde m = e^{-(N-1)t} \tilde m$, we can rewrite the equalities in the previous theorem in the following way:
\[
\dint_{\tilde X} \! \mathrm{Hess}(u) (\nabla (f\circ F_t), \nabla (f \circ F_t) )\, \mathrm{d} \tilde m= e^{2t} \dint_{\tilde X} \! \mathrm{Hess}(u) (\nabla f, \nabla f )\, \mathrm{d} {F_t }_\sharp \tilde m
\]
and 
\[
\dint_{\tilde X}  \left\langle \nabla ( f\circ F_t), \nabla u\right\rangle^2\, \mathrm{d} \tilde m = \dint_{\tilde X}  \left\langle\nabla f, \nabla u\right\rangle^2\, \mathrm{d} {F_t }_\sharp \tilde m.
\]
\end{rmk}


\subsection{Localization of the Cheeger energy along the flow}\label{sec-localization}

Theorem \ref{thm:Energy} provides the behavior of $\left\langle\nabla (f \circ F_t), \nabla (f \circ F_t)\right\rangle $ in an integral form, i.e., at the level of the Cheeger energy. In this subsection we \textit{localize} that result, that is, we obtain a pointwise expression for $\left\langle\nabla (f \circ F_t), \nabla (f \circ F_t)\right\rangle $.

\begin{thm}\label{thm-g}
Let $u:\tilde{X}\to\mathbb{R}$ be the function constructed in Section \ref{sec-busemann-function}, $F:(-\infty,\infty)\times\tilde{X}\to\tilde{X}$ our Regular Lagrangian Flow. Then for every $f \in W^{1,2}(\tilde X)$
 the following identity holds
\[
\left\langle\nabla (f \circ F_t), \nabla (f \circ F_t)\right\rangle =  e^{2t} \mathrm{Hess}(u)(\nabla f, \nabla f) \circ F_t +  \left\langle\nabla f, \nabla u\right\rangle^2 \circ F_t.
\]
\end{thm}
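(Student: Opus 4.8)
The plan is to upgrade the integral identities of Theorem \ref{thm:Energy} (equivalently, the two formulas in Remark \ref{rmk:Energy}) to pointwise identities by exploiting the localization in space provided by the flow's behaviour on sub-domains. First I would observe that the proof of Theorem \ref{thm:Energy} never used the global structure of $\tilde X$ in an essential way: it only used $\Delta u = N-1$, $|\nabla u|=1$, the change-of-measure formula $(F_t)_\#\tilde m = e^{-(N-1)t}\tilde m$, the Hessian identity \eqref{eq-Hessu}, and integration by parts against $\nabla u$. All of these hold equally well if we replace $f$ by $\chi f$ for a cutoff function $\chi$, or more precisely if we run the same computation restricted to a flow-invariant family of open sets. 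So the first step is to verify that for any Borel set $A$ which is a union of whole trajectories of $F$ (i.e. $F_s(A)=$ the corresponding translated set for all $s$, and $A$ determined by its intersection with the slice $u^{-1}(0)$), the identities
\[
\int_{F_t(A)} \mathrm{Hess}[u](\nabla f_t,\nabla f_t)\,\mathrm d\tilde m = e^{2t}\int_A \mathrm{Hess}[u](\nabla f,\nabla f)\,\mathrm d(F_t)_\#\tilde m
\]
and the analogous one for $\langle \nabla f,\nabla u\rangle^2$ hold. This follows by repeating the Cheeger-energy derivative computation with all integrals over the flow-saturated region, since the integration-by-parts identities \eqref{eq-intbyparts} and \cite[4.34, 4.35]{Gig} localize to such regions (the boundary terms along $\partial A$ vanish because $\nabla u$ is tangent to the boundary, $A$ being flow-saturated), and the flow maps $F_t(A)$ in the prescribed way.

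Next I would use the semigroup/change-of-variables structure to convert set-wise equality into pointwise equality. Concretely: for a Borel set $B\subset\tilde X$, apply the set-wise identity with $A$ replaced by the flow-saturation of $F_{-t}(B)$; after changing variables via $(F_t)_\#\tilde m = e^{-(N-1)t}\tilde m$, the left-hand side becomes $\int_B \langle\nabla(f\circ F_t),\nabla(f\circ F_t)\rangle$-type integrand and the right-hand side becomes $e^{2t}\int_B \mathrm{Hess}[u](\nabla f,\nabla f)\circ F_t\,\mathrm d\tilde m$ (respectively the $\langle\nabla f,\nabla u\rangle^2\circ F_t$ term). Since $B$ is an arbitrary Borel set, the integrands must agree $\tilde m$-a.e., giving
\[
\mathrm{Hess}[u](\nabla f_t,\nabla f_t) = e^{2t}\,\mathrm{Hess}[u](\nabla f,\nabla f)\circ F_t,\qquad
\langle\nabla f_t,\nabla u\rangle^2 = \langle\nabla f,\nabla u\rangle^2\circ F_t.
\]
Adding these two and using the orthogonal decomposition $\langle\nabla g,\nabla g\rangle = \mathrm{Hess}[u](\nabla g,\nabla g)+\langle\nabla g,\nabla u\rangle^2$ (valid by \eqref{eq-Hessu} applied with $f=g$, and extended from $\mathrm{Test}(\tilde X)$ to $W^{1,2}(\tilde X)$ by density) yields exactly the claimed identity for $\langle\nabla f_t,\nabla f_t\rangle$.

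The main obstacle I anticipate is making the ``localization to flow-saturated sets'' rigorous: one must show that the derivative-of-Cheeger-energy argument really does localize, i.e. that there is no leakage through the boundary of the region over which one integrates, and that the heat-flow regularization $h_\varepsilon$ used in Theorem \ref{thm:Energy} (which is a genuinely nonlocal operator) does not spoil the localization. The cleanest way around the $h_\varepsilon$ issue is to note that Theorem \ref{thm:Energy}'s final identities are already stated without $h_\varepsilon$ (the regularization having been removed by letting $\varepsilon\to 0$), so one can work directly with the limiting identity $\frac{d}{dt}\int_\Omega\langle\nabla f_t,\nabla u\rangle^2 = -(N-1)\int_\Omega\langle\nabla f_t,\nabla u\rangle^2$ on flow-saturated $\Omega$, integrate the ODE slice-by-slice, and then differentiate in the set variable. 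A technically lighter alternative, which I would actually prefer, is to avoid flow-saturated sets entirely: test the integral identities of Remark \ref{rmk:Energy} against $\varphi\circ u$ for arbitrary bounded Borel $\varphi:\mathbb R\to\mathbb R$ — since $\varphi\circ u$ is constant along each trajectory and $F_t$ shifts $u$ by $t$ (Lemma \ref{lem-distance-flow-representatives}), inserting $\varphi\circ u$ as a weight commutes with all the integration-by-parts steps, and letting $\varphi$ range over an approximation to the identity on the real line localizes in the $u$-direction; combined with the fact that $F_t$ is (after the regularity improvement of the next section) bi-Lipschitz hence measure-theoretically invertible on slices, one recovers the full pointwise statement.
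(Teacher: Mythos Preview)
Your localization strategy has a genuine gap. Flow-saturated sets $A$ (unions of whole trajectories) satisfy $F_t(A)=A$ for all $t$, so the ``flow-saturation of $F_{-t}(B)$'' equals the flow-saturation of $B$ itself, not $F_{-t}(B)$. Hence the change of variables you describe does \emph{not} reduce the integral to $\int_B$; it only ever gives you $\int_{\mathrm{sat}(B)}$. Knowing $\int_A \psi\,\mathrm d\tilde m=0$ for all flow-saturated $A$ says only that the conditional expectation of $\psi$ with respect to the $\sigma$-algebra generated by $\pi$ vanishes, which is strictly weaker than $\psi=0$ $\tilde m$-a.e. Your alternative of weighting by $\varphi\circ u$ has a parallel defect: $\varphi\circ u$ is \emph{not} constant along trajectories (you note yourself that $u\circ F_t=u+t$), and even after fixing this it would only localize in the $u$-direction, which is the orthogonal direction. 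Neither device alone reaches an arbitrary test weight, so neither yields the pointwise conclusion.

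The paper's route is different and avoids this problem entirely. It first polarizes Theorem \ref{thm:Energy} to obtain, for all $f,g\in W^{1,2}$,
\[
\int_{\tilde X}\langle\nabla(f\circ F_t),\nabla(g\circ F_t)\rangle\,\mathrm d\tilde m
= e^{2t}\int_{\tilde X}\langle\nabla f,\nabla g\rangle\,\mathrm d(F_t)_\#\tilde m
+(1-e^{2t})\int_{\tilde X}\langle\nabla f,\nabla u\rangle\langle\nabla g,\nabla u\rangle\,\mathrm d(F_t)_\#\tilde m,
\]
and then, for bounded non-negative $f$ and an arbitrary bounded Lipschitz $g\ge 0$ with $\tilde m(\supp g)<\infty$, uses the Leibniz-type identity
\[
\int g|\nabla f_\varepsilon|^2
=\int\langle\nabla(f_\varepsilon g),\nabla f_\varepsilon\rangle-\langle\nabla g,\nabla(\tfrac{f_\varepsilon^2}{2})\rangle
\]
(with $f_\varepsilon=f+\varepsilon g$). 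Applying the polarized identity to each term on the right and simplifying via the product rule for $\langle\nabla\cdot,\nabla u\rangle$, the cross terms cancel and one is left with
\[
\int_{\tilde X} g|\nabla f|^2\,\mathrm d(F_t)_\#\tilde m
= e^{-2t}\int_{\tilde X}(g\circ F_t)|\nabla(f\circ F_t)|^2\,\mathrm d\tilde m
-(e^{-2t}-1)\int_{\tilde X} g\langle\nabla f,\nabla u\rangle^2\,\mathrm d(F_t)_\#\tilde m.
\]
Now $g$ is a genuinely arbitrary non-negative test weight, so this \emph{does} force the pointwise identity. The key idea you are missing is this polarization/Leibniz trick to manufacture an arbitrary weight out of the bilinear integral identity; restricting the domain of integration cannot substitute for it.
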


The proof of this theorem requires the following lemma.

\begin{lem}\label{lem-quad}
Let  $f,g \in W^{1,2}(\tilde X,\tilde{d}, {F_t }_\sharp \tilde m)$ 
then 
\begin{equation}
\label{eq:polar}
\begin{aligned}
\int_{\tilde X} \left\langle \nabla (f\circ F_t), \nabla (g\circ F_t) \right\rangle d \tilde m   =  e^{2t}  \int_{\tilde X} \left\langle \nabla f, \nabla g \right\rangle \,d {F_t }_\sharp \tilde m 
 + (1- e^{2t}) \int_{\tilde{X}} \left\langle\nabla f, \nabla u\right\rangle \left\langle \nabla g, \nabla u\right\rangle   d {F_t }_\sharp \tilde m
\end{aligned}
\end{equation}
\end{lem}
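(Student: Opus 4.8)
The plan is to obtain Lemma~\ref{lem-quad} from the diagonal case $f=g$, which is essentially Theorem~\ref{thm:Energy}, by a polarization argument; this is cleaner than redoing the $t$-differentiation of the proof of Theorem~\ref{thm:Energy} in the polarized setting.

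First I would set up the framework. Since $(F_t)_\sharp\tilde m=e^{-(N-1)t}\tilde m$ is a constant multiple of $\tilde m$ (Proposition~\ref{prop-F_tPushMeas}), the space $W^{1,2}(\tilde X,\tilde d,(F_t)_\sharp\tilde m)$ coincides with $W^{1,2}(\tilde X,\tilde d,\tilde m)=W^{1,2}(\tilde X)$ (same minimal weak upper gradients, equivalent norm), so the hypothesis reduces to $f,g\in W^{1,2}(\tilde X)$, for which the results of Section~\ref{sec-CheegerE} give $f\circ F_t,\,g\circ F_t\in W^{1,2}(\tilde X)$. Because $\tilde X$ is infinitesimally Hilbertian, the pairing $(\varphi,\psi)\mapsto\langle\nabla\varphi,\nabla\psi\rangle$ is bilinear and symmetric $\tilde m$-a.e., and $h\mapsto h\circ F_t$ is linear; hence each of
\[
B_1(f,g)=\int_{\tilde X}\langle\nabla(f\circ F_t),\nabla(g\circ F_t)\rangle\,d\tilde m,\qquad B_2(f,g)=\int_{\tilde X}\langle\nabla f,\nabla g\rangle\,d(F_t)_\sharp\tilde m,
\]
\[
B_3(f,g)=\int_{\tilde X}\langle\nabla f,\nabla u\rangle\langle\nabla g,\nabla u\rangle\,d(F_t)_\sharp\tilde m
\]
is a finite, symmetric, bilinear form on $W^{1,2}(\tilde X)$ (finiteness by Cauchy--Schwarz and $|\nabla u|=1$).

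Next I would verify the diagonal identity $B_1(f,f)=e^{2t}B_2(f,f)+(1-e^{2t})B_3(f,f)$. Extending the identity of Corollary~\ref{cor-hessian-identity} from $\mathrm{Test}(\tilde X)$ to all of $W^{1,2}(\tilde X)$ by density, using that $\mathrm{Hess}(u)\in L^2(T^*\tilde X)\otimes L^2(T^*\tilde X)$, one obtains $\mathrm{Hess}[u](\nabla f,\nabla f)=|\nabla f|^2-\langle\nabla f,\nabla u\rangle^2$ $\tilde m$-a.e.; substituting this into Theorem~\ref{thm:Energy} (equivalently, Remark~\ref{rmk:Energy}) gives
\[
\int_{\tilde X}|\nabla(f\circ F_t)|^2\,d\tilde m=e^{2t}\int_{\tilde X}\bigl(|\nabla f|^2-\langle\nabla f,\nabla u\rangle^2\bigr)\,d(F_t)_\sharp\tilde m+\int_{\tilde X}\langle\nabla f,\nabla u\rangle^2\,d(F_t)_\sharp\tilde m,
\]
which rearranges precisely to the claimed diagonal identity.

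Finally, two symmetric bilinear forms on $W^{1,2}(\tilde X)$ that agree on the diagonal agree identically, via $B(f,g)=\tfrac14\bigl(B(f+g,f+g)-B(f-g,f-g)\bigr)$; applying this polarization formula to $B_1$ and to $e^{2t}B_2+(1-e^{2t})B_3$ yields \eqref{eq:polar}. The only points needing care are the two standing facts invoked in the setup — that composition with $F_t$ preserves membership in $W^{1,2}(\tilde X)$, and that the Dirichlet-type forms are genuinely bilinear (i.e.\ infinitesimal Hilbertianity of $\tilde X$) — both of which are already available; with those in hand the lemma is a formal consequence of Theorem~\ref{thm:Energy}.
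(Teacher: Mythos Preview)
Your proposal is correct and follows essentially the same approach as the paper: establish the diagonal case from Remark~\ref{rmk:Energy} together with the Hessian identity \eqref{eq-Hessu}, and then pass to the bilinear identity by polarization using the infinitesimal Hilbertianity of $\tilde X$. The only cosmetic difference is that the paper carries out the polarization via the $\varepsilon$-limit definition of $\langle\nabla\cdot,\nabla\cdot\rangle$, whereas you use the standard $\tfrac14\bigl(B(f+g,f+g)-B(f-g,f-g)\bigr)$ formula; both are equivalent here.
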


\begin{proof}
By  Corollary (\ref{cor-hessian-identity}) and Remark \ref{rmk:Energy} we may write 
\begin{equation*}\label{eq-chE}
\dint_{\tilde X}   \left\langle\nabla (f\circ F_t), \nabla (f \circ F_t)\right\rangle \, \mathrm{d} \tilde m = e^{2t} \dint_{\tilde X} \left\langle \nabla f, \nabla f\right\rangle  \mathrm{d} {F_t }_\sharp \tilde{m}  +  
\left(1- e^{2t}\right) \dint_{\tilde X} \left\langle\nabla f, \nabla u\right\rangle^2 \mathrm{d} {F_t }_\sharp \tilde{m}.
\end{equation*}
Now, by the definition of $\left\langle \nabla \cdot, \nabla \cdot\right\rangle$, 
\begin{equation*}
\left\langle \nabla(f\circ F_t), \nabla (g\circ F_t)\right\rangle   =  \lim_{\varepsilon > 0} \frac{ |\nabla( g\circ F_t + \varepsilon f \circ F_t  )|^2 - |\nabla (g\circ F_t)|^2 }{2\varepsilon}.
\end{equation*}
Putting together both equations we find,
\begin{equation*}
\begin{aligned}
 \dint_{\tilde X} \left\langle \nabla(f\circ F_t),\nabla (g\circ F_t)\right\rangle \mathrm{d} \tilde{m} & =  \lim_{\varepsilon > 0} \frac 1{2\varepsilon} \left[ e^{2t}    \dint_{\tilde X} \left\langle \nabla  (g + \varepsilon f )    , \nabla  (g + \varepsilon f )  \right\rangle   - \left\langle \nabla g, \nabla g\right\rangle\, \mathrm{d} {F_t}_\sharp \tilde m  \right. \\
& \left. + (1- e^{2t}   ) \dint_{\tilde X} \left\langle\nabla (g + \varepsilon f ), \nabla u\right\rangle^2 - \left\langle\nabla g, \nabla u\right\rangle^2 \mathrm{d} {F_t }_\sharp \tilde m \right].
 \end{aligned}
\end{equation*}
The result follows. 
\end{proof}

We can now provide the proof of Theorem \ref{thm-g}.

\begin{proof}[Proof of Theorem \ref{thm-g}] By a simple approximation argument using the density of $\mathrm{Test}_{bs}(\tilde{X})$ functions in $W^{1,2}(\tilde{X})$, it suffices to show for
$f, g\in \mathrm{Test}_{bs}(\tilde{X})$. 
Since $\mathrm{Test}_{bs}(\tilde{X})$ is an algebra, $f^2, fg\in \mathrm{Test}_{bs}(\tilde{X})$. Thus,
\begin{equation}
\label{eq28}
\int_{\tilde{X}} g |\nabla f|^2 \, \mathrm{d} {F_t}_\sharp \tilde{m} = \int_{\tilde{X}} \left\langle \nabla (fg), \nabla f \right\rangle 
 - \left\langle \nabla g, \nabla (\tfrac{f^2}{2})\right\rangle \mathrm{d} {F_t }_\sharp \tilde m 
\end{equation}
Now, applying equation \eqref{eq:polar} from the previous lemma to each of the summands on the right hand side of the previous identity we get that 
\begin{equation*}
\begin{aligned}
\int_{\tilde{X}} g |\nabla f|^2 \, \mathrm{d} {F_t}_\sharp \tilde{m} = & e^{-2t}\left( \int_{\tilde X} \left\langle \nabla  ((f\circ F_t)(g\circ F_t) ), \nabla f\circ F_t \right\rangle \mathrm{d} \tilde m - \int_{\tilde X} \left\langle  \nabla  g\circ F_t, \nabla \tfrac{(f\circ F_t)^2}{2}\right\rangle \mathrm{d} \tilde m\right)\\
&  - (e^{-2t}-1)\dint_{\tilde X} \left\langle  \nabla fg, \nabla u\right\rangle  \left\langle \nabla f, \nabla u\right\rangle - \left\langle\nabla g, \nabla u\right\rangle\left\langle \nabla (\tfrac{ f^2}{2}), \nabla u\right\rangle  \mathrm{d} {F_t}_\sharp \tilde m.
\end{aligned}
\end{equation*}

We now use again the Leibniz rule for $\nabla(fg)$ and $\nabla f^2$ in the following computation. We also use the following: Since $f\in W^{1,2}(\tilde{X})$, then by Theorem \ref{thm-f_t-is-Sobolev} $f\circ F_t\in W^{1,2}(\tilde{X})$.  Now equation \eqref{eq:polar} can be applied for $f\circ F_t$ and $g\circ F_t$. Therefore, the previous identity is written as
\begin{equation*}
\int_{\tilde{X}} g |\nabla f|^2 \, \mathrm{d} {F_t}_\sharp \tilde{m} =  e^{-2t}\int_{\tilde X} (g\circ F_t)| \nabla(f\circ F_t)|^2\, \mathrm{d} \tilde m - (e^{-2t}-1)\dint_{\tilde X} g\left\langle \nabla f,\nabla u\right\rangle\left\langle f, \nabla u\right\rangle \mathrm{d} {F_t}_\sharp \tilde m.
\end{equation*}

Rearranging terms and using Corollary \ref{cor-hessian-identity}, we obtain 
\begin{align*}
\int_{\tilde X} (g\circ F_t)| \nabla(f\circ F_t)|^2\, \mathrm{d} \tilde m & = e^{2t} \int_{\tilde X}   g\, \mathrm{Hess}(u)(\nabla f, \nabla f ) \, \mathrm{d} {F_t }_\sharp \tilde m +  \int_{\tilde{X}}  g \left\langle \nabla  f, \nabla u\right\rangle^2 \, \mathrm{d} {F_t }_\sharp \tilde m \\
& = e^{2t} \int_{\tilde X}  (g\circ F_t)\, \mathrm{Hess}(u)(\nabla f, \nabla f ) \circ F_t\, \mathrm{d} \tilde m +  \int_{\tilde{X}} (g\circ F_t) \left\langle \nabla  f, \nabla u\right\rangle^2 \circ F_t \, \mathrm{d}  \tilde m.
\end{align*}
Finally, as $g$ is arbitrary we conclude the validity of the formula. 
\end{proof}

\section{The quotient metric measure space $(X',d',m')$}

\subsection{Continuous representative of $F$}

Using our knowledge of the value of $|\nabla f_t|$ we can now improve the regularity of the flow and show that for fixed $t$, the function $F_t$ is Lipschitz.

\begin{thm}\label{thm:rapprcont}
The map $F: (-\infty,\infty) \times  \tilde X \to \tilde X$ admits a continuous representative with respect to the measure $\mathcal L^1\times \tilde m$. Still denoting such representative  by $F$, we have:
\begin{itemize}
\item[i)] The semigroup property holds, i.e., for every $t,s\in \mathbb R$ and $x \in \tilde X$ we have $F_t(F_s(x)) = F_{t+s}(x)$.
Moreover,
\begin{equation*}
\tilde d(F_t(x),F_{t+s}(x)) =  |s|.
\end{equation*}
\item[ii)] For every $t\in\mathbb R$, $F_t$ is a bi-Lipschitz map with $\Lip(F_t) \leq \max \{e^{t},1\}$.
\item[iii)] Given a curve $\gamma:[0,1] \to \tilde X$ let $\bar \gamma:=F_t\circ\gamma$. Then one of the curves is absolutely continuous if and only if the other is and their metric speeds are related by the following inequality
\begin{equation}
\label{eq:locspeeds}
\min\{1,e^{t}\} |\dot\gamma_s|   \leq  |\dot{\bar\gamma}_s| \leq \max\{1,e^{t}\} |\dot\gamma_s|\qquad\text{ for a.e.\ }s\in[0,1].
\end{equation}
\end{itemize}
\end{thm}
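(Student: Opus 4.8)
The plan is to derive the Lipschitz bounds on $F_t$ from the Cheeger-energy localization identity in Theorem~\ref{thm-g}, then upgrade to a genuinely continuous representative, and finally read off the semigroup property and the metric-speed inequality from Lemma~\ref{lem-distance-flow-representatives} and Definition~\ref{def-regular-lagrangian-flow}(iii). The key observation is that Theorem~\ref{thm-g} gives, for every $f\in W^{1,2}(\tilde X)$,
\[
|\nabla(f\circ F_t)|^2 = e^{2t}\,\mathrm{Hess}[u](\nabla f,\nabla f)\circ F_t + \langle\nabla f,\nabla u\rangle^2\circ F_t \qquad \tilde m\text{-a.e.},
\]
and since $\mathrm{Hess}[u](\nabla f,\nabla f)\ge 0$ (it is $|\nabla f|^2-\langle\nabla f,\nabla u\rangle^2$ by Corollary~\ref{cor-hessian-identity}, hence nonnegative by Cauchy–Schwarz), both summands are controlled by $|\nabla f|^2\circ F_t$: one gets $|\nabla(f\circ F_t)|^2\le \max\{e^{2t},1\}\,|\nabla f|^2\circ F_t$. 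Combined with the change-of-measure formula $(F_t)_\sharp\tilde m = e^{-(N-1)t}\tilde m$ from Proposition~\ref{prop-F_tPushMeas}, this says precisely that right composition with $F_t$ maps $W^{1,2}(\tilde X)$ into $W^{1,2}(\tilde X)$ with the bound $|\nabla(f\circ F_t)|\le \max\{e^t,1\}\,|\nabla f|\circ F_t$ $\tilde m$-a.e.

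\textbf{Continuous representative and bi-Lipschitz bound.} With this gradient estimate in hand, Gigli's Contraction by Local Duality (Lemma~\ref{le:contrdual}), applied with $T=F_t$, $m_1=m_2=\tilde m$ (rescaled measures being mutually absolutely continuous, so the Sobolev-to-Lipschitz property is unaffected), yields that $F_t$ is $\tilde m$-a.e.\ equal to a $\max\{e^t,1\}$-Lipschitz map from $(\supp\tilde m,\tilde d)=(\tilde X,\tilde d)$ to itself. Doing this for $t$ and for $-t$ and invoking the semigroup identity $F_{-t}\circ F_t = \mathrm{id}$ $\tilde m$-a.e.\ (Proposition~\ref{prop-F_tPushMeas}) shows the Lipschitz representative of $F_t$ has a Lipschitz inverse, hence is bi-Lipschitz with $\Lip(F_t)\le\max\{e^t,1\}$; this proves (ii). To get a joint continuous representative on $\mathbb R\times\tilde X$, I would fix a countable dense set of times $t_k\in\mathbb Q$, take the Lipschitz representatives $F_{t_k}$, and use Lemma~\ref{lem-distance-flow-representatives}, which gives $\tilde d(F_s(x),F_t(x))=|s-t|$ for the a.e.-defined flow, to extend in $t$ uniformly continuously (indeed $1$-Lipschitz in $t$) to all real times, checking that the extension agrees $\mathcal L^1\times\tilde m$-a.e.\ with $F$; this also immediately gives $\tilde d(F_t(x),F_{t+s}(x))=|s|$ for the continuous representative. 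The full semigroup property $F_t(F_s(x))=F_{t+s}(x)$ for \emph{all} $x$ and $t,s$ then follows because both sides are continuous in $(t,s,x)$ and agree on a set of full measure, using that the reference measure has full support.

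\textbf{Metric speeds.} For (iii), let $\gamma\in AC([0,1];\tilde X)$ and $\bar\gamma=F_t\circ\gamma$. Since $F_t$ is $\max\{e^t,1\}$-Lipschitz, $\bar\gamma$ is absolutely continuous and $|\dot{\bar\gamma}_s|\le\max\{e^t,1\}\,|\dot\gamma_s|$ for a.e.\ $s$; applying the same to $F_{-t}=F_t^{-1}$ gives the reverse bound $|\dot\gamma_s|=|\dot{(F_{-t}\circ\bar\gamma)}_s|\le\max\{e^{-t},1\}\,|\dot{\bar\gamma}_s|$, i.e.\ $\min\{1,e^t\}\,|\dot\gamma_s|\le|\dot{\bar\gamma}_s|$, which is \eqref{eq:locspeeds}; the equivalence of absolute continuity of $\gamma$ and $\bar\gamma$ is built in. The main obstacle I anticipate is the bookkeeping in the first paragraph: one must be careful that Theorem~\ref{thm-g} is stated for $f\in W^{1,2}(\tilde X)$ while Lemma~\ref{le:contrdual} wants the inequality $|\nabla(f\circ F_t)|\le|\nabla f|\circ F_t$ only up to the multiplicative constant $\max\{e^t,1\}$ — this is handled by rescaling the metric by $\max\{e^t,1\}$ on the target, or equivalently by applying the lemma to the metric measure space $(\tilde X,\max\{e^t,1\}^{-1}\tilde d,\tilde m)$ — and that the a.e.-defined semigroup and distance identities from the previous section are strong enough to pin down a genuinely continuous (not merely a.e.-defined) representative on all of $\mathbb R\times\tilde X$.
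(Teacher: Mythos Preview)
Your proposal is correct and follows essentially the same route as the paper. The paper also derives the pointwise bound $|\nabla(f\circ F_t)|\le \max\{e^t,1\}\,|\nabla f|\circ F_t$ from Theorem~\ref{thm-g} and then uses Sobolev-to-Lipschitz to conclude $F_t$ is $\max\{e^t,1\}$-Lipschitz; the only cosmetic difference is that the paper spells out the argument behind Lemma~\ref{le:contrdual} (countable dense family of $1$-Lipschitz test functions plus Sobolev-to-Lipschitz) rather than invoking the lemma as a black box with your metric-rescaling trick. For the joint continuous representative the paper is slightly more direct than your countable-dense-times extension: it simply combines the spatial Lipschitz bound with the time bound from Lemma~\ref{lem-distance-flow-representatives} into the single estimate $\tilde d(F_t(x),F_s(y))\le \max\{1,e^t\}\tilde d(x,y)+|s-t|$, which immediately gives a locally Lipschitz, hence continuous, representative on $\mathbb R\times\tilde X$.
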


\begin{proof} 
Statements in $i)$ follows from Proposition \ref{prop-F_tPushMeas} and Lemma \ref{lem-distance-flow-representatives}. 
Now for each $t \in \mathbb R$ we will first obtain a $\max\{1,e^{t}\}$-Lipschitz representative of $F_t$. 
By Theorem (\ref{thm-g}) we know that for $f \in  W^{1,2}(\tilde X, \tilde d, \tilde m)$,
\[
\left\langle \nabla (f \circ F_t), \nabla (f \circ F_t)\right\rangle =  e^{2t} \mathrm{Hess}(u)(\nabla f, \nabla f) \circ F_t +  \left\langle \nabla f, \nabla u\right\rangle^2 \circ F_t.
\]
Therefore, 
\begin{align*}
\left\langle \nabla (f \circ F_t), \nabla (f \circ F_t)\right\rangle  & \leq \max\{e^{2t},1\} \left( \mathrm{Hess}(u)(\nabla f, \nabla f) \circ F_t +  \left\langle \nabla f, \nabla u\right\rangle^2 \circ F_t\right)\\
& =  \max\{e^{2t},1\} \left\langle \nabla f , \nabla f \right\rangle \circ F_t .
\end{align*}
Thus,  $|\nabla (f \circ F_t)|  \leq \max\{1,e^{t}\}$. Because $\tilde{X}$ has the Sobolev to Lipschitz property, $f \circ F_t$ has a $\max\{1,e^{t}\}$-Lipschitz representative.

 As in \cite[Lemma 4.19]{Gig},  the functions $f_{n,k}=\max\{0,\min\{ \tilde d(\cdot,x_n),k-\tilde d(\cdot,x_n)\}\}$,  with $\{ x_n\}$ dense in $\tilde X$ are $1$-Lipschitz  with bounded support and thus belong to $W^{1,2}(\tilde X, \tilde d, \tilde m)$ with $|\nabla f_{k,n}|\leq 1$ $\tilde m$-a.e. Let $\mathcal D=\{ f_{n, k} \} \subset W^{1,2}(\tilde X, \tilde d, \tilde m)$. Then $\mathcal D$ is a countable set of $1$-Lipschitz functions with compact support such that $\mathcal D$ is dense in the space of $1$-Lipschitz functions with compact support with respect to uniform convergence.
Thus, for all $y_0,y_1 \in \tilde X$,
\begin{equation}\label{eq-Dequality}
\tilde d(y_0, y_1)= \sup_{f \in \mathcal D} |f(y_0) - f(y_1)|.
\end{equation}

Since $\mathcal D$ is countable, then there is an $\tilde m$-negligible Borel set $\mathcal N'$ such that for every $f \in \mathcal D$, the restrictions $f \circ F_t: \tilde X \setminus \mathcal N' \to \mathbb R$ are $\max\{1,e^{t}\}$-Lipschitz. 
Therefore, by (\ref{eq-Dequality}) for $x_0,x_1 \in F_t^{-1}  ( \tilde X \setminus \mathcal N')$ we have
\[
\tilde d(F_t (x_0),F_t (x_1)) =\sup_{f \in \mathcal D} |f(F_t (x_0))- f(F_t (x_1))| \leq  \max\{1,e^{t}\} \tilde d(x_0,x_1).
\]
Hence, for each $(t, x), (s,y) \in \mathbb R \times \tilde X$ we obtain 
\begin{equation}
\label{eq-F_t-locally-Lipschitz}
\tilde d(F_t (x),F_s (y)) \leq \tilde d(F_t (x),F_t (y)) + \tilde d(F_t (y),F_s (y)) \leq  \max\{1,e^{t}\} \tilde d(x,y) + |s-t|.
\end{equation}
This proves that $F$ admits a continuous representative. 
Then the statements in $ii)$ follows. 

For $iii)$, let us assume that $\gamma$ is absolutely continuous. Then 
\[
\tilde d(\bar{\gamma}_{h}, \bar\gamma_{s}) = \tilde d(F_t(\gamma_{h}), F_t(\gamma_{s}) )\leq \max\{1, e^{t} \} \tilde d(\gamma_{h},\gamma_{s}) \leq \max\{1, e^{t} \}\int_h^s |\dot{\gamma}_r|\, \mathrm{d}r.
\]
Therefore, $|\dot{\bar\gamma}_s| \leq \max\{1,e^{t}\} |\dot\gamma_s| $ for a.e.-$s\in[0,1].$ The other inequality is proven in a similar way. 
\end{proof}

We continue this section by defining a quotient metric measure space $(X',d',m')$ induced by the flow $F$. We will show  that it is an infinitesimally Hilbertian space, and that it satisfies the Sobolev to Lipschitz property.  We now provide the definition of $X'$.

\begin{defn}\label{def-X'}
Let $X'= u^{-1}(0)$ and  define $d':X'\times X'\to \mathbb{R}$ by 
\[
d'(z,y)= \inf\{  L(\gamma) | \gamma \in AC([0,1], \tilde X),\, u \circ \gamma= 0,\, \gamma_0=z,\,\gamma_1=y \}.
\]
Here $L(\gamma)= \int_0^1 |\dot\gamma_r|dr$.
\end{defn}

\begin{lem}\label{lem-d'}
Let $X'$  be as in Definition \ref{def-X'}, then $d'$ is a well defined function and $(X', d')$ is a metric space. The inclusion map $\iota: (X',d') \to (\tilde{X}, \tilde{d})$ is 1-Lipschitz.
\end{lem}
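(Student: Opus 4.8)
The plan is to verify the three assertions of Lemma \ref{lem-d'} in turn: that $d'$ takes finite values, that it satisfies the metric axioms, and that $\iota$ is $1$-Lipschitz. First I would check that the infimum defining $d'(z,y)$ is over a non-empty set and is finite for any $z,y \in X' = u^{-1}(0)$. The natural candidate for a competitor curve is built from the flow $F$: given $z,y \in X'$, take a curve $\eta$ in $\tilde X$ joining $z$ to $y$ (which exists since $\tilde X$ is a length space, indeed geodesic), and push it onto the level set $X'$ via the retraction $x \mapsto F_{-u(x)}(x)$; by Lemma \ref{lem-distance-flow-representatives} this map indeed lands in $u^{-1}(0)$. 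One must check this reparametrized curve is absolutely continuous, which follows from Theorem \ref{thm:rapprcont}(iii) applied along the trajectories together with the $1$-Lipschitz control on $u$; its length gives a finite upper bound for $d'(z,y)$.

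Next I would establish the metric axioms. Symmetry is immediate by reversing curves. The triangle inequality follows from concatenation of admissible curves (a concatenation of two curves with $u\circ\gamma = 0$ still satisfies $u\circ\gamma = 0$, and length is additive under concatenation, with the standard reparametrization argument to stay within $AC([0,1];\tilde X)$). Non-negativity is clear. The only delicate axiom is positivity: $d'(z,y) = 0 \Rightarrow z = y$. For this I would use that $\iota$ is $1$-Lipschitz — which I prove simultaneously — namely $\tilde d(z,y) \le L(\gamma)$ for every admissible $\gamma$ since $\gamma$ is in particular a curve in $\tilde X$ from $z$ to $y$, hence $\tilde d(z,y) \le d'(z,y)$; so $d'(z,y) = 0$ forces $\tilde d(z,y) = 0$ and $z = y$. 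This same inequality $\tilde d(z,y) \le d'(z,y)$ is exactly the $1$-Lipschitz statement for $\iota$.

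The main obstacle I expect is the finiteness/well-definedness step: showing that the projection of an arbitrary absolutely continuous curve in $\tilde X$ onto the slice $u^{-1}(0)$ via $x \mapsto F_{-u(x)}(x)$ is itself absolutely continuous with controlled length. This requires combining three facts: that $F$ has a continuous (indeed locally bi-Lipschitz in the space variable) representative with $\Lip(F_t) \le \max\{e^t,1\}$ (Theorem \ref{thm:rapprcont}(ii)), that $u$ is $1$-Lipschitz so $t \mapsto u(\gamma_t)$ is absolutely continuous, and that the trajectories $s \mapsto F_s(x)$ have unit speed (Lemma \ref{lem-distance-flow-representatives}). A careful estimate along a partition of $[0,1]$ bounds $\tilde d\big(F_{-u(\gamma_{t})}(\gamma_{t}),\, F_{-u(\gamma_{s})}(\gamma_{s})\big)$ by $\max\{1, e^{\sup|u\circ\gamma|}\}\,\tilde d(\gamma_t,\gamma_s) + |u(\gamma_t) - u(\gamma_s)|$, which is absolutely continuous in the parameter; on a compact curve $\sup|u\circ\gamma|$ is finite, giving a finite length bound. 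Once this is in place, the remaining verifications are routine, and I would keep them brief.
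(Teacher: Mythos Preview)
Your proposal is correct and follows essentially the same approach as the paper: project a curve in $\tilde X$ onto $u^{-1}(0)$ via $x\mapsto F_{-u(x)}(x)$, control its absolute continuity through the triangle inequality using $\Lip(F_t)\le\max\{e^t,1\}$ and the $1$-Lipschitz bound on $u$, and deduce both positive definiteness of $d'$ and the $1$-Lipschitz property of $\iota$ from the trivial inequality $\tilde d(z,y)\le L(\gamma)$ for admissible $\gamma$. The only cosmetic point is that your first reference to Theorem~\ref{thm:rapprcont}(iii) should really be to part~(ii), as you correctly cite later; the paper likewise relies on the Lipschitz bound (ii) rather than the metric-speed comparison (iii).
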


\begin{proof}
First we will show that the set 
\[\{ \gamma \in AC([0,1], \tilde X),\, u \circ \gamma= 0,\, \gamma_0=z,\,\gamma_1=y \} \]
 is nonempty for any $z,y \in X'$.  As  $\tilde X$ is a geodesic space there exists an absolutely continuous $\gamma:[0,1] \to \tilde X$  such that $\gamma_0=z$ and $\gamma_1=y$. By  Theorem \ref{thm:rapprcont},  the curve $t \mapsto F_{-u(\gamma_t)} (\gamma_t)$ is contained in $u^{-1}(0)$. We only have to prove that it is absolutely continuous. To that end, let $M=\max\{\Lip (F_{-u(\gamma_s)})\, | \, 0 \leq s \leq 1\}$. This maximum $M$ is achieved because $u$, $F$, and $\gamma$ are continuous.  Using the triangle inequality, together with the fact that $F_{-u(\gamma_s)}$ is a Lipschitz function for all $s$ and that $u$  is a $1$-Lipschitz function, gives, for all $0\leq s \leq t \leq 1$, 
\begin{align}
\tilde d(F_{-u(\gamma_s)} (\gamma_s),F_{-u(\gamma_t)} (\gamma_t)) \leq & \,
\tilde d(F_{-u(\gamma_s)} (\gamma_s),F_{-u(\gamma_s)} (\gamma_t))  + \tilde d(F_{-u(\gamma_s)} (\gamma_t),F_{-u(\gamma_t)} (\gamma_t)) \\
\leq   & \, \Lip (F_{-u(\gamma_s)}) \tilde d(\gamma_s,\gamma_t) + |u(\gamma_t)-u(\gamma_s)| \nonumber \\
\leq   & \, (\Lip (F_{-u(\gamma_s)}) + 1) \tilde d(\gamma_s,\gamma_t) \nonumber \\
\leq   & \,(M+1) \int_s^t |\dot{\gamma}_r|\, \mathrm{d}r . \nonumber
\end{align}
Hence, $F_{-u(\gamma_t)} (\gamma_t)$ is absolutely continuous in $(\tilde X, \tilde d)$ and $d'$ is well defined.  

If $z,y \in u^{-1}(0)$ then, 
\begin{align}
\tilde d(z,y) \leq  &  \inf\{  L(\gamma) | \gamma \in AC([0,1], \tilde X),\, u \circ \gamma= 0,\, \gamma_0=z,\,\gamma_1=y \} \\
  = & d'(z,y). \nonumber
\end{align}
This shows that $\iota$ is a 1-Lipschitz map and that $d'$ is positive definite. Symmetry and the triangle inequality follow from the definition of $d'$. 
\end{proof}


\subsection{Metric speed of curves in the quotient space}

Let $\pi:\tilde{X}\to X'$ be given by $\pi(x)= F_{-u(x)}(x)$. By Lemma \ref{lem-distance-flow-representatives}, $\pi$ is well defined and from now on we call it the projection map. The aim of this subsection is to study $\pi$ and its effect  on the metric speed of curves in $\tilde{X}$.  The main results of this subsection are collected in the following proposition, which will be used in the next subsection  to relate a subspace of $W^{1,2}(\tilde X, \tilde d, \tilde m)$  with $W^{1,2}(X',d',m')$. 

\begin{prop}\label{prop-mst0}
Let $\boldsymbol{\uppi}$ be a test plan on $\tilde X$. Then, for  $\boldsymbol{\uppi}$-a.e. $\gamma$, the curve $\tilde \gamma = \pi \circ \gamma$ in $(X',d')$ is absolutely continuous and for a.e. $t\in [0,1]$,
\begin{enumerate}
\item $|\dot{\tilde\gamma}_t| \leq e^{-u(\gamma_t)}\ |\dot\gamma_t|$. 
\item The projection map $\tilde \pi: \tilde X \to X'$ is locally Lipschitz, i.e. for all $x_0 \in \tilde X$ and all $x,y \in B(x_0,r)$, 
\begin{equation*}
d'(\pi(x),\pi(y)) \leq e^{-u(x_0)+3r} \tilde d(x,y).
\end{equation*} 
\end{enumerate}
\end{prop}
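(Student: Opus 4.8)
The plan is to prove the speed bound (1) by combining the tangent-module description of the velocity of a test-plan curve with the pointwise identity of Theorem~\ref{thm-g}, and then to deduce the local Lipschitz estimate (2) from (1) by a test-plan argument connecting small balls. First a remark on absolute continuity: for absolutely continuous $\gamma$ in $\tilde X$ the curve $s\mapsto F_{-u(\gamma_s)}(\gamma_s)$ is $\tilde d$-absolutely continuous (by the argument already used in Lemma~\ref{lem-d'}: $F_{-u(\gamma_s)}$ is Lipschitz and $u$ is $1$-Lipschitz), and since $\tilde\gamma=\pi\circ\gamma$ lies in $X'$, projecting $\gamma|_{[s,s']}$ produces an admissible competitor in Definition~\ref{def-X'}, so $d'(\tilde\gamma_s,\tilde\gamma_{s'})\le\int_s^{s'}|\dot{\tilde\gamma}_r|_{\tilde d}\,dr$; hence $\tilde\gamma$ is $d'$-absolutely continuous. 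Running the same estimate between two points of a $\tilde d$-geodesic also shows that $\pi\colon\tilde X\to(X',d')$ is locally Lipschitz, in particular continuous; this (crude) fact will be used in the last step.

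For the sharp bound $|\dot{\tilde\gamma}_t|_{d'}\le e^{-u(\gamma_t)}|\dot\gamma_t|$, fix a test plan $\boldsymbol{\uppi}$. For $\boldsymbol{\uppi}$-a.e.\ $\gamma$ and a.e.\ $t$ there is a velocity $\pi'_t\in L^2(T\tilde X,e_t,\boldsymbol{\uppi})$ with $|\pi'_t|(\gamma)=|\dot\gamma_t|$ and $\frac{d}{dt}(f\circ\gamma_t)=df(\pi'_t)(\gamma_t)$ for $f\in W^{1,2}$; in particular (localizing, where $u$ coincides with a test function) $\frac{d}{dt}u(\gamma_t)=\langle\nabla u,\pi'_t\rangle(\gamma_t)$. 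Since $|\nabla u|=1$, decompose $\pi'_t=\langle\nabla u,\pi'_t\rangle\nabla u+v_t$ with $\langle\nabla u,v_t\rangle=0$, so $|v_t|^2=|\dot\gamma_t|^2-\bigl(\tfrac{d}{dt}u(\gamma_t)\bigr)^2\le|\dot\gamma_t|^2$. The $\nabla u$-component of the velocity only slides $\gamma_t$ along its own flow trajectory, which $\pi$ collapses, so it does not contribute to $\dot{\tilde\gamma}_t$: for $s$ near $t$ set $\hat\gamma_s:=F_{u(\gamma_t)-u(\gamma_s)}(\gamma_s)$; then $\hat\gamma_s$ has the same $\pi$-image as $\gamma_s$, lies in the single level set $u^{-1}(u(\gamma_t))$, has velocity $\pi'_t-\langle\nabla u,\pi'_t\rangle\nabla u=v_t$ at $t$, and $\tilde\gamma_s=F_{-u(\gamma_t)}(\hat\gamma_s)$. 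Since $\hat\gamma$ runs inside a level set of $u$ its velocity is $\nabla u$-orthogonal, and Theorem~\ref{thm-g} together with Corollary~\ref{cor-hessian-identity}---which jointly show that the flow $F_r$ rescales the metric in the directions orthogonal to $\nabla u$ by the factor $e^{r}$ while leaving the $\nabla u$-direction unchanged---yields that $F_{-u(\gamma_t)}$ scales the $\tilde d$-metric speed of $\hat\gamma$ by exactly $e^{-u(\gamma_t)}$. Finally, $d'$ being the length metric induced by $\tilde d$ on $X'$, the $\tilde d$- and $d'$-metric speeds of a curve lying in $X'$ agree, so $|\dot{\tilde\gamma}_t|_{d'}=e^{-u(\gamma_t)}|\dot{\hat\gamma}_t|=e^{-u(\gamma_t)}|v_t|\le e^{-u(\gamma_t)}|\dot\gamma_t|$, which is (1).

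To get (2), fix $x_0$, $r$ and $x,y\in B_r(x_0)$. For small $\varepsilon>0$ take a test plan $\boldsymbol{\uppi}_{\varepsilon}$ on the $\RCDst$ space $\tilde X$ concentrated on $\tilde d$-geodesics, with $(e_0)_\sharp\boldsymbol{\uppi}_{\varepsilon}$ and $(e_1)_\sharp\boldsymbol{\uppi}_{\varepsilon}$ the normalized restrictions of $\tilde m$ to $B_\varepsilon(x)$ and $B_\varepsilon(y)$ and $\int\int_0^1|\dot\gamma_t|^2\,dt\,d\boldsymbol{\uppi}_{\varepsilon}\le(\tilde d(x,y)+2\varepsilon)^2$ (such plans exist, e.g.\ from a $W_2$-geodesic, which has bounded compression by the $\mathsf{CD}^*$ property). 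Each curve in its support is a geodesic between $B_\varepsilon(x)$ and $B_\varepsilon(y)$, hence stays in $B_{3r+3\varepsilon}(x_0)$, so $u\ge u(x_0)-3r-3\varepsilon$ along it. Applying (1), for $\boldsymbol{\uppi}_{\varepsilon}$-a.e.\ $\gamma$,
\[
d'(\pi(\gamma_0),\pi(\gamma_1))\le\int_0^1|\dot{\tilde\gamma}_t|_{d'}\,dt\le\int_0^1 e^{-u(\gamma_t)}|\dot\gamma_t|\,dt\le e^{-u(x_0)+3r+3\varepsilon}\bigl(\tilde d(x,y)+2\varepsilon\bigr).
\]
Picking such a $\gamma$ with $\gamma_0\in B_\varepsilon(x)$, $\gamma_1\in B_\varepsilon(y)$ and using the $d'$-continuity of $\pi$ from the first step, letting $\varepsilon\downarrow0$ gives $d'(\pi(x),\pi(y))\le e^{-u(x_0)+3r}\,\tilde d(x,y)$.

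The main obstacle, I expect, is the rigorous justification of the ``dehorizontalization'' step and of the claim that $F_{-u(\gamma_t)}$ scales the metric speed of the level-set curve $\hat\gamma$ by exactly $e^{-u(\gamma_t)}$: one must pass from the pointwise statement of Theorem~\ref{thm-g} (about minimal weak upper gradients of $f\circ F_t$) to a statement about the metric speed of the specific curve $\hat\gamma$, and control the time-dependent reparametrization $s\mapsto F_{u(\gamma_t)-u(\gamma_s)}(\gamma_s)$ precisely enough to exhibit the exact cancellation of the $\nabla u$-part of the velocity. This is exactly the manipulation for which the test-plan formalism, rather than a single curve, is needed, which is why the statement is phrased for test plans.
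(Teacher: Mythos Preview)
Your geometric picture is right, and your argument for (2) essentially matches the paper's: the paper also derives (2) from (1) by running a $\tilde d$-geodesic between $x$ and $y$, bounding $u(\gamma_t)\ge u(x_0)-3r$, and integrating the speed bound. Your test-plan embedding of the single geodesic is a reasonable way to make that step rigorous.

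The genuine gap is in (1), and you correctly locate it yourself. Two steps are not justified:
\begin{itemize}
\item The claim that the $\tilde d$-metric speed of $\hat\gamma$ at $t$ equals $|v_t|$. The curve $\hat\gamma_s=F_{u(\gamma_t)-u(\gamma_s)}(\gamma_s)$ arises from a \emph{time-dependent} composition with the flow; computing its metric speed in a nonsmooth space is not a pointwise calculus computation but requires the chain rule for speeds at the level of pullback tangent modules (as in \cite[Proposition~3.28]{DePG}), set up for an appropriate test plan.
\item The claim that $F_{-u(\gamma_t)}$ scales the metric speed of $\hat\gamma$ by \emph{exactly} $e^{-u(\gamma_t)}$. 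What is available is Theorem~\ref{thm:rapprcont}(iii), which only gives the two-sided bounds $\min\{1,e^{-u(\gamma_t)}\}$ and $\max\{1,e^{-u(\gamma_t)}\}$. To upgrade this to the exact factor on $\nabla u$-orthogonal directions you must pass from the Sobolev/gradient identity of Theorem~\ref{thm-g} to a statement about metric speeds of curves. That dualization is precisely the nontrivial content, and in this setting it is not automatic.
\end{itemize}

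The paper closes this gap by a different mechanism: it introduces the truncated, reparametrized flow $\hat F_s(x)=F_{\mathrm{rep}_s(u(x))}(x)$ (which converges to $\pi$ as $s\to\infty$ on slabs), proves a monotonicity formula $\bigl(e^{-2u}|v_s|^2\bigr)\circ\hat F_s$ is nonincreasing in $s$ by differentiating and using the Hessian identity (Corollary~\ref{cor-hessian-identity}), lifts this to $(\boldsymbol{\uppi}_s)'_t$ via the differential of $\hat F_s$ on pullback modules, and then passes to the limit $s\to\infty$ using lower semicontinuity of the kinetic energy. This avoids having to speak about the velocity of a single ``dehorizontalized'' curve and supplies exactly the Sobolev-to-metric passage that your sketch is missing. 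If you want to push your approach through, you would essentially need to reconstruct this machinery (the differential $d\hat F_s$ on $L^2(T\tilde X,e_t,\boldsymbol{\uppi})$ and the monotonicity), at which point it becomes the paper's proof.
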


To prove $(1)$ we will follow the strategy developed by De Philippis-Gigli  (Section 3.6.2 \cite{DePG}) and define a ``truncated'' and reparametrized flow $\hat{F}$ with the property that for large $s$  the maps $\hat{F}_s$ approximate the projection map $\pi:u^{-1}([-R,R])\to u^{-1}(0)$, for $0<R<1$.  

Let $0<\overline{R}<R<1$ and $\psi\in C^{\infty}(\mathbb{R})$ with support in $(-R, R)$ such that $\psi(z)=-\frac{1}{2}z^2$ for all $z\in [-\overline{R},\overline{R}]$. Define the function $\hat{u}=\psi\circ u:\tilde{X}\to \mathbb{R}$ and consider a reparametrization function $\mathrm{rep}_s(r)$ defined by the property that $\partial_s \mathrm{rep}_s(r)=\psi' (\mathrm{rep}_s(r)+r)$ and $\mathrm{rep}_0(r)=0$. We now define the  flow $\hat{F}:\mathbb{R}\times \tilde{X}\to \tilde{X}$ by $\hat{F}_s(x):= F_{\mathrm{rep}_s(u(x))}(x)$ and note that $\hat{F}_s(x)=F_{(e^{-s}-1)u(x)}(x)$ on $u^{-1}([-\overline{R},\overline{R}])$. It follows from these definitions that $\hat{F}$ is the Regular Lagrangian Flow associated to $\hat{u}$. Moreover, the following formulae hold for all $x\in u^{-1}([-\overline{R}, \overline{R}])$,
\begin{eqnarray}
\hat{u}& = & -\frac{1}{2}u^2,\\
\nabla \hat{u}& =& -u\nabla u,\\
\Delta \hat{u} &=& -u(N-1)-1,\\
\mathrm{Hess}(\hat{u})&=& -u\mathrm{Id} + (u-1)(\nabla u\otimes \nabla u).  
\end{eqnarray}

The previous formulae, imply $\hat{u}\in \mathrm{Test}(\tilde{X})$, in particular it has bounded gradient, Laplacian and Hessian. When $s\to\infty$ then $\mathrm{rep}_s(u(x))\to -u(x)$ for every $x\in u^{-1}([-\overline{R},\overline{R}])$, that is $\hat{F}_s$ converges uniformly to $\pi:=F_{-u(\cdot)}(\cdot)$, the projection map. We observe that $\hat{F}_s$ is the identity on $\tilde{X}\setminus u^{-1}([-R,R])$ and it sends $u^{-1}([-\overline{R},\overline{R}])$ to itself. 

In the following, for each $s\in \mathbb{R}$, we only concern ourselves with $\hat{F}_s|_{u^{-1}([-\overline{R},\overline{R}])}$, because this will be sufficient for our purposes. Observe that \cite[Lemma 3.30]{DePG}, \cite[Proposition 3.31]{DePG} hold in this setting because, as we will now see, $\hat{F}_s$ is of bounded deformation (i.e. Lipschitz with bounded compression) for any $s\in \mathbb{R}$.  We begin by showing that $\hat{F}_s$ is Lipschitz on $u^{-1}([-\overline{R},\overline{R}])$:
\begin{eqnarray*}
\tilde d(\hat{F}_s(x), \hat{F}_s(y))& = & \tilde d(F_{(e^{-s}-1)u(x)}(x), F_{(e^{-s}-1)u(y)}(y))\\
& \leq & \tilde d(F_{(e^{-s}-1)u(x)}(x), F_{(e^{-s}-1)u(x)}(y)) + \tilde d(F_{(e^{-s}-1)u(x)}(y), F_{(e^{-s}-1)u(y)}(y))\\
&\leq & \max\{1, e^{(e^{-s}-1)(u(x))} \}\tilde d(x,y) + |(e^{-s}-1)| \, |u(x) - u(y)|\\
&\leq & \max\{1, e^{(e^{-s}-1)(u(x))} \}\tilde d(x,y) + |(e^{-s}-1)| \tilde d(x,y)\\
&\leq & \left( \max\{1, e^{|(e^{-s}-1)|\overline{R})} \} + |(e^{-s}-1)|\right) \tilde d(x,y).
\end{eqnarray*}

This proves $\hat{F}_s|_{u^{-1}([-\overline{R},\overline{R}])}$ is Lipschitz with Lipschitz constant 
\[
\max\{1, e^{|(e^{-s}-1)|\overline{R})} \} + |(e^{-s}-1)|
\]
for any $s\in \mathbb{R}$. 

Let us proceed by showing that $\hat{F}_s|_{u^{-1}([-\overline{R},\overline{R}])}$ is of bounded compression, as: 
\[
(\hat{F}_s|_{u^{-1}(-\overline{R},\overline{R}])})_{\sharp}\tilde{m}= (F_{(e^{-s}-1)u(\cdot)})_{\sharp}\tilde{m} = e^{-(N-1)(e^{-s}-1)u(\cdot)}\tilde{m} \leq e^{(N-1)|(e^{-s}-1)|\overline{R}}\tilde{m}.
\]

We now recall  \cite[Lemma 3.30]{DePG} and \cite[Proposition 3.31]{DePG}.  

\begin{lem}[De Philippis-Gigli, Lemma 3.30]
\label{lem:Dephillipis_Gigli_3.30}
Let $\varphi\in W^{1,2}(\tilde{X})$. Then the map $s\mapsto \varphi\circ \hat{F}_s\in L^2(\tilde X)$ is $C^1$ and its derivative is given by
\begin{equation}
\frac{d}{ds} \varphi\circ\hat{F}_s =\left\langle\nabla \varphi, \nabla\hat{u}\right\rangle\circ \hat{F}_s.
\end{equation}
If $\varphi$ is further assumed to be in $\mathrm{Test}(\tilde{X})$, then the map $s\mapsto  \mathrm{d}(\varphi \circ\hat{F}_s)\in L^2(T\tilde{X})$ is also $C^1$ and its derivative is given by
\begin{equation}
\frac{d}{ds}\left(\mathrm{d}(\varphi\circ\hat{F}_s) \right)=\mathrm{d}(\left\langle\nabla\varphi, \nabla\hat{u}\right\rangle\circ\hat{F}_s).
\end{equation}
\end{lem}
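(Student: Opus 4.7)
The plan is to follow the strategy of the proof of \cite[Lemma 3.30]{DePG}, which applies in our setting because, as noted immediately before the statement, $\hat{u}\in\mathrm{Test}(\tilde{X})$ has bounded gradient, Laplacian, and Hessian on $u^{-1}([-\overline{R},\overline{R}])$, and the flow $\hat F$ is of bounded compression.

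For the first identity, I would start from property (iii) of Definition~\ref{def-regular-lagrangian-flow} applied to $\hat u$, which gives for $\tilde m$-a.e.\ $x\in\tilde X$
\[
\varphi(\hat{F}_s(x))-\varphi(\hat{F}_r(x))=\int_r^s\langle\nabla\varphi,\nabla\hat{u}\rangle(\hat{F}_t(x))\,dt.
\]
Taking the $L^2(\tilde m)$-norm, using Cauchy--Schwarz, the $L^\infty$-bound on $|\nabla\hat u|$, and the uniform bounded compression $(\hat F_t)_\sharp\tilde m\leq C\tilde m$ on compact $t$-intervals, I would obtain
\[
\|\varphi\circ\hat{F}_s-\varphi\circ\hat{F}_r\|_{L^2}\leq C'|s-r|\,\|\nabla\varphi\|_{L^2}\,\|\,|\nabla\hat{u}|\,\|_{L^\infty}.
\]
To upgrade this Lipschitz regularity to $C^1$, I would show that the candidate derivative $s\mapsto\langle\nabla\varphi,\nabla\hat u\rangle\circ\hat F_s$ is continuous in $L^2$: this holds for continuous compactly supported integrands by continuity of the trajectories of $\hat F$ together with dominated convergence, and then extends to general $L^2$ integrands (in particular $\langle\nabla\varphi,\nabla\hat u\rangle$) by density and the uniform bounded-compression bound.

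For the second identity, the goal is to upgrade $C^1$ regularity from the $L^2$-level to the level of the cotangent module $L^2(T^*\tilde X)$. The plan is to apply the first identity to the scalar $\langle\nabla\varphi,\nabla g\rangle$ for arbitrary $g\in\mathrm{Test}(\tilde X)$, and to test the claimed derivative identity in duality against $\nabla g$. To then upgrade this weak convergence in the cotangent module to norm convergence, I would compute $\|d(\varphi\circ\hat F_s)\|^2_{L^2(T^*\tilde X)}$ via a chain rule analogous to the one obtained in Theorem~\ref{thm-g}, combined with the explicit formulas for $\nabla\hat u$, $\Delta\hat u$, and $\mathrm{Hess}(\hat u)$ recorded above, together with the $L^\infty$ bounds on $|\nabla\varphi|$ and on $\mathrm{Hess}[\varphi]$ provided by the $\mathrm{Test}(\tilde X)$ hypothesis on $\varphi$.

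The main obstacle is the second identity. It is not merely a scalar statement: we require $C^1$ regularity in the $L^2$-normed module of cotangent vector fields, and the naive approximation arguments used for the first part lose control of differentials. The resolution is precisely the $\mathrm{Test}(\tilde X)$ hypothesis, which allows $d(\langle\nabla\varphi,\nabla\hat u\rangle\circ\hat F_s)$ to be expanded by the Leibniz rule into terms involving $\mathrm{Hess}[\varphi]$, $\mathrm{Hess}(\hat u)$, and first-order quantities, each belonging to $L^2(T^*\tilde X)$ with norm bounded uniformly on compact $s$-intervals. This uniform integrability is exactly what is needed to interchange the integration $\int_r^s(\cdot)\,dt$ with the differential $d$, and to close the argument by continuity of the resulting $L^2(T^*\tilde X)$-valued integrand in $s$.
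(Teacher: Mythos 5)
The paper does not actually reprove this lemma: its entire argument consists of verifying that $\hat F_s$ restricted to $u^{-1}([-\overline R,\overline R])$ is Lipschitz and of bounded compression (i.e.\ of bounded deformation), and then invoking \cite[Lemma 3.30]{DePG} verbatim. Your proposal instead reconstructs the proof. For the first identity your sketch is essentially the De Philippis--Gigli argument and is sound: the integral identity coming from item (iii) of Definition \ref{def-regular-lagrangian-flow}, the $L^2$-Lipschitz estimate via bounded compression and $\|\,|\nabla\hat u|\,\|_{L^\infty}$, and the $L^2$-continuity of $s\mapsto g\circ\hat F_s$ (first for bounded continuous $g$ of bounded support, then by density using bounded compression) are exactly the needed ingredients, and the fundamental theorem of calculus in $L^2$ closes the argument.

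The second identity is where your argument has a genuine soft spot. To upgrade the duality (weak) convergence against $\nabla g$ to norm convergence in $L^2(T^*\tilde X)$ you propose to compute $\|d(\varphi\circ\hat F_s)\|_{L^2(T^*\tilde X)}^2$ by ``a chain rule analogous to the one obtained in Theorem \ref{thm-g}''. But Theorem \ref{thm-g} is proved only for the flow $F_t$ of $\nabla u$, and its proof uses $|\nabla u|=1$ and $\Delta u=N-1$, neither of which holds for $\hat u$ (there $|\nabla\hat u|=|u|$ and $\Delta\hat u=-u(N-1)-1$ on the relevant region). The corresponding pointwise statement for $\hat F_s$ is precisely Proposition \ref{DePhilippis_Gigli_Prop_3.31}, whose proof in \cite{DePG} relies on the present lemma, so invoking it here is circular; redoing the Cheeger-energy computation of Section \ref{sec-CheegerE} for $\hat u$ from scratch would be possible but is a substantial piece of work you cannot simply wave at. The non-circular route is the one in \cite{DePG}: note that $\langle\nabla\varphi,\nabla\hat u\rangle\in W^{1,2}(\tilde X)$ (this is where $\varphi\in\mathrm{Test}(\tilde X)$ and the boundedness of $\mathrm{Hess}(\hat u)$ enter), use the uniform Lipschitz bound on $\hat F_t$ together with bounded compression to get $|\nabla(\psi\circ\hat F_t)|\le L\,|\nabla\psi|\circ\hat F_t$ and hence a $W^{1,2}$-bound on the integrand uniform on compact $s$-intervals, and then commute the bounded linear operator $d:W^{1,2}(\tilde X)\to L^2(T^*\tilde X)$ with the Bochner integral in the integral identity; the continuity in $s$ of the resulting $L^2(T^*\tilde X)$-valued integrand, which is what finally yields $C^1$ rather than mere a.e.\ differentiability, still has to be checked and is the content of the corresponding step in \cite{DePG}.
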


\begin{prop}[De Philippis-Gigli, Proposition 3.31]
\label{DePhilippis_Gigli_Prop_3.31}
Let $v\in L^2(T\tilde{X})$ and set $v_s:=\mathrm{d}\hat{F}_s(v)$. Then the map $s\mapsto \frac{1}{2}|v_s|^{2}\circ\hat{F}\in L^1(\tilde{X})$ is $C^1$ on $\mathbb{R}$ and its derivative is given by the formula
\begin{equation}
\label{derivative_norm_squared}
\frac{d}{ds}\frac{1}{2}|v_s|^{2}\circ\hat{F}=\mathrm{Hess}[\hat{u}](v_s,v_s)\circ\hat{F}_s,
\end{equation}
the incremental ratios being convergent both in $L^1(\tilde{X})$ and $\tilde{m}$-a.e. If $v$ is also bounded, then the curve $s\mapsto \frac{1}{2}|v_s|^{2}\circ\hat{F}$ is $C^1$ also when seen with values in $L^2(\tilde{X})$, and in this case the incremental ratios in \eqref{derivative_norm_squared} also converge in $L^2(\tilde{X})$ to the right hand side.
\end{prop}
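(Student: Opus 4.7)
The plan is to combine Lemma \ref{lem:Dephillipis_Gigli_3.30} with polarization and a density argument, paralleling the strategy of De Philippis-Gigli in the cone setting \cite{DePG}. By the polarization identity for the pointwise scalar product on $L^2(T\tilde X)$, the asserted formula for $\tfrac12|v_s|^2\circ\hat F_s$ is equivalent to its bilinear analog
\[
\frac{d}{ds}\,\langle v_s, w_s\rangle\circ\hat F_s \;=\; 2\,\mathrm{Hess}(\hat u)(v_s,w_s)\circ\hat F_s, \qquad v,w\in L^2(T\tilde X),
\]
with incremental ratios converging in $L^1(\tilde m)$. The operator $\mathrm d\hat F_s$ is $L^\infty$-linear and bounded uniformly for $s$ in any compact interval, since we already recorded that $\hat F_s$ is of bounded deformation on $u^{-1}([-\overline R,\overline R])$ and is the identity outside $u^{-1}([-R,R])$. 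Combined with the fact that $\{\nabla g : g\in\mathrm{Test}(\tilde X)\}$ generates $L^2(T\tilde X)$ as an $L^\infty(\tilde m)$-module, this reduces the problem to the case $v=\nabla g$, $w=\nabla h$ with $g,h\in\mathrm{Test}(\tilde X)$ of bounded support.

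For such $v,w$, set $g_s:=g\circ\hat F_s$, $h_s:=h\circ\hat F_s$, $G_s:=\langle\nabla g,\nabla\hat u\rangle\circ\hat F_s$, and $H_s:=\langle\nabla h,\nabla\hat u\rangle\circ\hat F_s$. Lemma \ref{lem:Dephillipis_Gigli_3.30} asserts that $s\mapsto\mathrm dg_s,\mathrm dh_s\in L^2(T^*\tilde X)$ are of class $C^1$ with derivatives $\mathrm dG_s,\mathrm dH_s$. Via the chain rule $\mathrm dg_s=\hat F_s^*\mathrm dg$ in the pullback cotangent module, and via the Riesz identification of $L^2(T^*\tilde X)$ with $L^2(T\tilde X)$ furnished by infinitesimal Hilbertianity, this translates into the statement that $\langle v_s,w_s\rangle\circ\hat F_s$ is differentiable in $L^1(\tilde m)$ with derivative expressible in terms of pairings involving $\mathrm dG_s$ and $\mathrm dH_s$. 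Expanding the definition \eqref{Hu} of $\mathrm{Hess}[\hat u]$ applied to $g_s,h_s$ and invoking \eqref{Hu-Hess} to pass to $\mathrm{Hess}(\hat u)$ as a tensor on $L^2(T\tilde X)\otimes L^2(T\tilde X)$, one identifies this derivative with $2\,\mathrm{Hess}(\hat u)(v_s,w_s)\circ\hat F_s = 2\,\mathrm{Hess}[\hat u](v_s,w_s)\circ\hat F_s$, completing the proof of the formula. In doing this it is important that $\hat u\in\mathrm{Test}(\tilde X)$ (recorded before the statement), so that all bracketed Hessian expressions are a priori meaningful.

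The $L^2(\tilde m)$-refinement under the extra hypothesis that $v$ is bounded follows from the second, strengthened, conclusion of Lemma \ref{lem:Dephillipis_Gigli_3.30}, which upgrades the incremental ratios to $L^2$-convergence whenever the relevant gradients are bounded; the $L^\infty$-bounds on $\nabla\hat u$, $\Delta\hat u$ and $\mathrm{Hess}(\hat u)$ on $u^{-1}([-\overline R,\overline R])$ already exhibited in this subsection supply the needed control, and the boundedness of $v$ transfers to uniform boundedness of $v_s$ on compact $s$-intervals via the bounded-deformation property of $\hat F_s$. I expect the principal technical obstacle to be the careful bookkeeping within Gigli's pullback-module framework: because the tangent and cotangent $L^2$-modules are formally distinct objects whose duality interacts subtly with precomposition by maps of bounded deformation, translating the $\mathrm dg_s$-level statement of Lemma \ref{lem:Dephillipis_Gigli_3.30} into the desired assertion about the pairing $\langle v_s,w_s\rangle\circ\hat F_s$ must be carried out with close attention to how the Riesz isomorphism, the pullback operator $\hat F_s^*$, and the pushforward $\mathrm d\hat F_s$ interrelate.
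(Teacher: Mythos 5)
The first thing to say is that the paper does not actually prove this proposition: it is De Philippis--Gigli's Proposition 3.31, recalled verbatim (up to a sign convention), and the paper's entire justification is the verification, carried out in the paragraphs immediately preceding the statement, that $\hat F_s$ is of bounded deformation on $u^{-1}([-\overline R,\overline R])$ (Lipschitz with explicit constant, and of bounded compression) and that $\hat u\in\mathrm{Test}(\tilde X)$ with bounded gradient, Laplacian and Hessian --- these being exactly the hypotheses under which the result of \cite{DePG} applies. Your proposal instead attempts to reconstruct the proof, and the blueprint you describe (polarization, reduction to $v=\nabla g$ with $g\in\mathrm{Test}(\tilde X)$ via the generating property and $L^\infty$-linearity, then Lemma \ref{lem:Dephillipis_Gigli_3.30} plus the Hessian identities \eqref{Hu}--\eqref{Hu-Hess}) is indeed the shape of the original De Philippis--Gigli argument, so as a strategy it is sound and, if anything, more informative than what the paper offers.

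That said, the decisive step of your sketch is asserted rather than proved, and the one identity you do write down there is not correct as stated: $\mathrm d g_s=\mathrm d(g\circ\hat F_s)$ is an element of $L^2(T^*\tilde X)$, whereas $\hat F_s^*\mathrm dg$ lives in the pullback module $\hat F_s^*L^2(T^*\tilde X)$; the true chain rule is $\mathrm d(g\circ\hat F_s)(w)=(\hat F_s^*\mathrm dg)\bigl(\mathrm d\hat F_s(w)\bigr)$. This matters because Lemma \ref{lem:Dephillipis_Gigli_3.30} controls the $s$-dependence of the \emph{pullback} object $\mathrm d(g\circ\hat F_s)$, while the proposition concerns the \emph{pushforward} $v_s=\mathrm d\hat F_s(v)$ and its pointwise norm; the two are related through the adjoint of $\mathrm d\hat F_s$, which is an isometry only when $\hat F_s$ is measure- and metric-rigid, which it is not here. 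Converting the conclusion of Lemma \ref{lem:Dephillipis_Gigli_3.30} into the differentiability of $\langle v_s,w_s\rangle\circ\hat F_s$ and into the appearance of $\mathrm{Hess}[\hat u](v_s,v_s)$ (rather than of Hessian terms evaluated on $\nabla g_s$) is precisely the content of the proof, not bookkeeping to be deferred. The $L^2$-refinement is also misattributed: it does not come from the second conclusion of Lemma \ref{lem:Dephillipis_Gigli_3.30}, but from the fact that boundedness of $v$ plus the bounded-deformation estimates give a uniform $L^\infty$ bound on $|v_s|$ and on $\mathrm{Hess}(\hat u)$, so that the already-established $\tilde m$-a.e.\ convergence upgrades to $L^2$ by dominated convergence. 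In short: right blueprint, but the central analytic step is missing, and the paper itself sidesteps all of this by citation.
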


We will use the previous results to prove the following monotonicity formula. The proof is similar to that of \cite[Corollary 3.32]{DePG}.

\begin{cor}
\label{cor-metric-speed-1}
Let $v\in L^2(T\tilde{X})$ be concentrated on $B:=u^{-1}([-\overline{R}, \overline{R}])$ and set $v_s:=\mathrm{d}\hat{F}_s(v)$. Then for every $s_1, s_2\in \mathbb{R}$ such that $s_1\leq s_2$,
\begin{equation}
\label{eq:endecr}
\left(e^{-2u}|v_{s_2}|^2\right)\circ\hat{F}_{s_2}\leq \left(e^{-2u} |v_{s_1}|^2\right)\circ\hat{F}_{s_1},\qquad \tilde{m}\text{-a.e.}
\end{equation}
\end{cor}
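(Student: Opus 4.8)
The plan is to follow the template of \cite[Corollary 3.32]{DePG}: differentiate the quantity $G_s := \big(e^{-2u}\,|v_s|^{2}\big)\circ\hat F_s$ in $s$ and show its derivative is non-positive $\tilde m$-a.e. First I would reduce to the case $v\in L^\infty(T\tilde X)$ by truncating, $v^{(k)}:=\mathbf 1_{\{|v|\le k\}}v$ (still concentrated on $B$), running the argument for each $v^{(k)}$, and passing to the limit via the boundedness of $w\mapsto\mathrm{d}\hat F_s(w)$, which gives $v^{(k)}_s\to v_s$ in $L^2(T\tilde X)$ and, along a subsequence, $\tilde m$-a.e.; the inequality \eqref{eq:endecr} is stable under this passage. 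So assume $v$ bounded.

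Then I would write $G_s=\Phi_s\,\Theta_s$ with $\Phi_s:=e^{-2u}\circ\hat F_s$ and $\Theta_s:=|v_s|^{2}\circ\hat F_s$, viewed in $L^2(\tilde X)$ (the product lands in $L^2$ since $\Phi_s$ is bounded on the set carrying $v_s$). As $u$ grows at unit speed along the trajectories of $\nabla u$, i.e.\ $u\circ F_t=u+t$ (cf.\ Lemma \ref{lem-distance-flow-representatives}), on $B$ one has $u\circ\hat F_s=e^{-s}u$ and hence $\Phi_s=e^{-2e^{-s}u}$ there, a $C^1$ curve with $\tfrac{d}{ds}\Phi_s=\big(2u\,e^{-2u}\big)\circ\hat F_s$ (the same follows from Lemma \ref{lem:Dephillipis_Gigli_3.30} applied to a cutoff of $e^{-2u}$, legitimate because $\hat F_s$ is the identity outside $u^{-1}([-R,R])$). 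By Proposition \ref{DePhilippis_Gigli_Prop_3.31}, $\Theta_s$ is $C^1$ into $L^2(\tilde X)$ with $\tfrac{d}{ds}\Theta_s=2\,\mathrm{Hess}[\hat u](v_s,v_s)\circ\hat F_s$. The Leibniz rule for these $C^1$ curves gives, $\tilde m$-a.e.\ and for every $s$,
\[
\frac{d}{ds}G_s=\Big(2e^{-2u}\big(u\,|v_s|^{2}+\mathrm{Hess}[\hat u](v_s,v_s)\big)\Big)\circ\hat F_s.
\]
Inserting $\mathrm{Hess}[\hat u]=-u\,\mathrm{Id}+(u-1)\,\nabla u\otimes\nabla u$, valid on $B$, one gets $u\,|v_s|^{2}+\mathrm{Hess}[\hat u](v_s,v_s)=(u-1)\,\langle v_s,\nabla u\rangle^{2}$, so
\[
\frac{d}{ds}G_s=\Big(2e^{-2u}\,(u-1)\,\langle v_s,\nabla u\rangle^{2}\Big)\circ\hat F_s.
\]
Since $R<1$ we have $u\le\overline R<1$ on $B$, so $u-1<0$; together with $e^{-2u}>0$ and $\langle v_s,\nabla u\rangle^{2}\ge0$ this forces $\tfrac{d}{ds}G_s\le0$ $\tilde m$-a.e. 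The $C^1$-in-$L^2$ regularity then lets me write $G_{s_2}-G_{s_1}=\int_{s_1}^{s_2}\tfrac{d}{ds}G_s\,\mathrm{d}s$ as an $L^2(\tilde X)$-valued Bochner integral, and pairing with an arbitrary non-negative $\varphi\in L^2(\tilde X)$ and using $\int_{s_1}^{s_2}\!\int\varphi\,\tfrac{d}{ds}G_s\,\mathrm{d}s\le0$ gives $G_{s_2}\le G_{s_1}$ $\tilde m$-a.e., i.e.\ \eqref{eq:endecr}.

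\textbf{The step I expect to be the main obstacle} is the domain bookkeeping: one has to guarantee that $v_s=\mathrm{d}\hat F_s(v)$ remains concentrated on a region where the explicit identities for $\hat u$, $\nabla\hat u$, $\mathrm{Hess}[\hat u]$ and the relation $u\circ\hat F_s=e^{-s}u$ are literally true, namely inside $B=u^{-1}([-\overline R,\overline R])$. This is precisely where the choices $0<\overline R<R<1$, the fact that $\hat F_s$ is the identity off $u^{-1}([-R,R])$, and that $\hat F_s$ preserves $B$ in the relevant range of $s$ enter; it is also the reason $v$ is assumed concentrated on $B$ in the statement. The inequality $u-1<0$ — which is what turns $\tfrac{d}{ds}G_s$ non-positive and hence produces the monotonicity — holds thanks to $R<1$.
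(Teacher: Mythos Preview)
Your proof is correct and follows essentially the same approach as the paper: both reduce to bounded $v$ by truncation, compute the $s$-derivative of $\big(e^{-2u}|v_s|^2\big)\circ\hat F_s$ via the Leibniz rule using Lemma~\ref{lem:Dephillipis_Gigli_3.30} for the $e^{-2u}$ factor and Proposition~\ref{DePhilippis_Gigli_Prop_3.31} for the $|v_s|^2$ factor, insert the explicit formula for $\mathrm{Hess}[\hat u]$ on $B$, and conclude from $u\le\overline R<1$ that the derivative is $\le 0$. Your discussion of the domain bookkeeping (that $v$ and hence $v_s$ stay concentrated where the explicit formulae hold, using that $\hat F_s$ preserves $B$ and is the identity off $u^{-1}([-R,R])$) makes explicit exactly what the paper handles by inserting the cutoff $\chi_B$.
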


\begin{proof}
We may assume that $v$ is bounded up to replacing it with $v_n:=\chi_{\{|v|\leq n\}}v$, using the fact that $|\mathrm{d}\hat{F}_s(v_n)|\circ\hat{F}_s=|\mathrm{d}\hat{F}_s(v)|\circ\hat{F}_s$ on $\{|v|\leq n\}$ and letting $n\to\infty$.

Now we observe that on the complement of $B$ both sides of \eqref{eq:endecr} are zero $\tilde{m}$-a.e. (as a consequence that $v$ is concentrated on $B$). So that we only need to prove
\[
\left(e^{-2u}|v_{s}|^2\circ\hat{F}_{s}\right)\chi_B\leq \left(e^{-2u} |v|^2\right)\chi_B,\qquad\tilde{m}\text{-a.e.}
\]

Observe that by Lemma \ref{lem:Dephillipis_Gigli_3.30} the derivative of $s\mapsto u\circ\hat{F}_s$ is 
\begin{equation}
\label{u_decreases_exponentialy}
\frac{d}{ds} u\circ\hat{F}_s = \left\langle \nabla u, \nabla \hat{u}\right\rangle \circ\hat{F}_s = -u\circ\hat{F}_s.
\end{equation}
Therefore, integrating with respect to $s$ we obtain $u\circ\hat{F}_s = e^{-s}u$. As we are assuming $v$ is bounded, by Proposition \ref{DePhilippis_Gigli_Prop_3.31}, the map $s\mapsto \frac{|v_s|^2}
{2}\circ\hat{F}_s\chi_{B}\in L^1(\tilde{X})$ is $C^1$ and then
\begin{eqnarray*}
\label{eq-monotonicity-vector-fields}
\frac{d}{ds}\left(e^{-2u\circ\hat{F}_s}\frac{|v_s|^2}{2}\circ\hat{F}_s\chi_{B} \right) &=& \left( \frac{d}{ds}\left(e^{-2u\circ\hat{F}_s}\right)\frac{|v_s|^2}{2}\circ\hat{F}_s + \frac{d}{ds}\left(\frac{|v_s|^2}{2}\circ\hat{F}_s\right)e^{-2u\circ\hat{F}_s} \right)\chi_B \\
&=&\left( \frac{|v_s|^2}{2}\circ\hat{F}_s\left\langle\nabla e^{-2u}, \nabla \hat{u} \right\rangle\circ\hat{F}_s +e^{-2u\circ\hat{F}_s}\mathrm{Hess}[\hat{u}]\left(v_s,v_s \right)\circ\hat{F}_s\right)\chi_B\\
&=& \left((u\circ\hat{F}_s) e^{-2u\circ\hat{F}_s}|v_s|^2 \circ\hat{F}_s + e^{-2u\circ\hat{F}_s}\mathrm{Hess}[\hat{u}]\left(v_s,v_s \right)\circ\hat{F}_s   \right)\chi_B\\
&=& e^{-2u\circ\hat{F}_s}\left((u\circ\hat{F}_s-1)\left\langle \nabla u, v_s\right\rangle^2\circ\hat{F}_s\right)\chi_B\\
&\leq & 0.
\end{eqnarray*}

Recall that $\bar R < R \leq 1$, from which follows that $u(x) \leq 1$ for all $x \in B$. This concludes the proof. 
\end{proof}

\begin{prop}
\label{prop-ms.prj}
Let $\boldsymbol{\uppi}$ be a test plan and $\gamma:[0,1]\to u^{-1}([-\overline{R},\overline{R}])$. Then $\mathrm{ms}_t(\tilde{\gamma})\leq e^{-u(\gamma_t)}\mathrm{ms}_t(\gamma)$ for a.e. $t\in [0,1]$, $\boldsymbol{\uppi}$-a.e. $\gamma$, where $\tilde{\gamma}:= \pi \circ \gamma$.
\end{prop}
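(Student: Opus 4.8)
The plan is to deduce this pointwise statement about metric speeds from the integrated monotonicity of Corollary \ref{cor-metric-speed-1}, following the scheme of \cite[Section 3.6.2]{DePG}. The essential point is that the truncated flow $\hat{F}_s$ converges uniformly to the projection $\pi=F_{-u(\cdot)}(\cdot)$ on $B=u^{-1}([-\overline R,\overline R])$ as $s\to\infty$, so the metric speed of $\tilde\gamma=\pi\circ\gamma$ should be controlled by the limiting behaviour of the metric speed of $\hat F_s\circ\gamma$.

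First I would record that for a test plan $\boldsymbol{\uppi}$ concentrated on curves valued in $B$, the composition $\hat F_s\circ\gamma$ is again (after a harmless normalization) associated with a test plan, since $\hat F_s$ is of bounded compression on $B$ (as computed just before Lemma \ref{lem:Dephillipis_Gigli_3.30}); hence by the calculus of \S\ref{ssec-calculus} the velocity vector fields $\boldsymbol{\uppi}'_t$ make sense and $|\boldsymbol{\uppi}'_t|(\gamma)=|\dot\gamma_t|$ for a.e.\ $t$, $\boldsymbol{\uppi}$-a.e.\ $\gamma$. Then I would apply Corollary \ref{cor-metric-speed-1} with $v=\boldsymbol{\uppi}'_t$ (for fixed $t$, integrating in $t$ afterwards): setting $v_s=\mathrm{d}\hat F_s(v)$, we get that $(e^{-2u}|v_s|^2)\circ\hat F_s$ is non-increasing in $s$. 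At $s=0$ this quantity is $e^{-2u(\gamma_t)}|\dot\gamma_t|^2$; the term $|v_s|^2\circ\hat F_s$ is, for $\boldsymbol{\uppi}$-a.e.\ $\gamma$ and a.e.\ $t$, exactly the squared metric speed of $s\mapsto\hat F_s(\gamma_t)$ composed appropriately, i.e.\ controls $\mathrm{ms}_t(\hat F_s\circ\gamma)^2$. Using $u\circ\hat F_s=e^{-s}u$ from \eqref{u_decreases_exponentialy}, the inequality at parameter $s$ reads $|\mathrm{d}\hat F_s(v)|^2\circ\hat F_s\le e^{2e^{-s}u-2u}|v|^2\le e^{2(1-e^{-s})\overline R}|v|^2$ on $B$, but more importantly the monotone limit as $s\to\infty$ of $(e^{-2u}|v_s|^2)\circ\hat F_s$ exists and bounds $e^{-2u(\gamma_t)}\mathrm{ms}_t(\tilde\gamma)^2$ from below by lower semicontinuity of metric speed under the uniform convergence $\hat F_s\to\pi$.

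Concretely, I would argue: for $\boldsymbol{\uppi}$-a.e.\ $\gamma$, the curve $s\mapsto \hat F_s\circ\gamma$ interpolates between $\gamma$ ($s=0$) and $\tilde\gamma$ ($s=\infty$), and for each fixed $s$ the curve $\hat F_s\circ\gamma$ is absolutely continuous with $\mathrm{ms}_t(\hat F_s\circ\gamma)\le \mathrm{Lip}(\hat F_s|_B)\,\mathrm{ms}_t(\gamma)$; passing to the $s\to\infty$ limit and combining with Corollary \ref{cor-metric-speed-1} gives, for a.e.\ $t$,
\begin{equation*}
e^{-2u(\gamma_t)}\,\mathrm{ms}_t(\tilde\gamma)^2 \le \lim_{s\to\infty}\big(e^{-2u}|v_s|^2\big)\circ\hat F_s\big|_{\gamma_t}\le \big(e^{-2u}|v_0|^2\big)\big|_{\gamma_t}=e^{-2u(\gamma_t)}\,\mathrm{ms}_t(\gamma)^2,
\end{equation*}
whence $\mathrm{ms}_t(\tilde\gamma)\le e^{-u(\gamma_t)}\,\mathrm{ms}_t(\gamma)$. (Absolute continuity of $\tilde\gamma$ in $(X',d')$ itself follows from the local Lipschitz estimate for $\pi$ that will be proven next, or can be extracted directly from the uniform bound $\mathrm{Lip}(\hat F_s|_B)\le \max\{1,e^{|e^{-s}-1|\overline R}\}+|e^{-s}-1|$, which is bounded in $s$.)

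The main obstacle I anticipate is making rigorous the identification of $\lim_{s\to\infty}|v_s|^2\circ\hat F_s$ (a limit of $L^1$, hence $\tilde m$-a.e., quantities coming from the module-theoretic differential $\mathrm{d}\hat F_s$) with the genuine metric speed $\mathrm{ms}_t(\tilde\gamma)^2$ of the limit curve — i.e.\ exchanging the "pointwise in $X$" limit provided by Proposition \ref{DePhilippis_Gigli_Prop_3.31} and Corollary \ref{cor-metric-speed-1} with the "pointwise along the curve" interpretation needed here. This requires a Fubini-type argument disintegrating $\boldsymbol{\uppi}$ and using that $(e_t)_\#\boldsymbol{\uppi}\le C\tilde m$ so that $\tilde m$-a.e.\ statements transfer to $\boldsymbol{\uppi}\otimes\mathcal L^1$-a.e.\ $(\gamma,t)$, together with the lower semicontinuity of metric speed, exactly as in the corresponding passage of \cite{DePG}; the $C^1$-in-$L^1$ regularity from Proposition \ref{DePhilippis_Gigli_Prop_3.31} is what makes this limit exchange legitimate.
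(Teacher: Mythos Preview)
Your strategy is the paper's: exploit the monotonicity of Corollary~\ref{cor-metric-speed-1}, let $s\to\infty$ so that $\hat F_s\to\pi$, and compare with the metric speed of $\tilde\gamma$. Three points deserve correction or elaboration.

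\textbf{A sign/weight slip.} In your displayed chain the factor $e^{-2u}$ in $(e^{-2u}|v_s|^2)\circ\hat F_s$ is evaluated at $\hat F_s(\gamma_t)$, not at $\gamma_t$. Since $u\circ\hat F_s=e^{-s}u\to 0$ by \eqref{u_decreases_exponentialy}, the limiting weight is $e^0=1$, not $e^{-2u(\gamma_t)}$. The correct chain is
\[
\mathrm{ms}_t(\tilde\gamma)^2\ \le\ \lim_{s\to\infty}\big(e^{-2u}|v_s|^2\big)\circ\hat F_s\Big|_{\gamma_t}\ \le\ \big(e^{-2u}|v_0|^2\big)\Big|_{\gamma_t}=e^{-2u(\gamma_t)}\,\mathrm{ms}_t(\gamma)^2,
\]
which is what you want. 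With your left coefficient $e^{-2u(\gamma_t)}$, dividing through would only give $\mathrm{ms}_t(\tilde\gamma)\le\mathrm{ms}_t(\gamma)$, not the claim.

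\textbf{A type mismatch.} You write ``apply Corollary~\ref{cor-metric-speed-1} with $v=\boldsymbol{\uppi}'_t$'', but $\boldsymbol{\uppi}'_t$ lives in the pullback module $L^2(T\tilde X,e_t,\boldsymbol{\uppi})$, not in $L^2(T\tilde X)$ where the corollary is formulated. The paper handles this explicitly: it first proves the monotonicity \eqref{eq-decreasing} for $V=e_t^*v$ with $v\in L^2(T\tilde X)$ (a direct pullback of Corollary~\ref{cor-metric-speed-1}), then extends to arbitrary $V\in L^2(T\tilde X,e_t,\boldsymbol{\uppi})$ by locality of $d\hat F_s$ and density of simple combinations $\sum\chi_{A_i}e_t^*v_i$, and only then specializes to $V=\boldsymbol{\uppi}'_t$ via the chain rule for speeds \cite[Proposition~3.28]{DePG}.

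\textbf{From monotonicity to metric speed.} The paper does not attempt your pointwise-along-$\gamma$ limit. Instead it integrates the monotonicity in $t$ and over $\boldsymbol{\uppi}$, uses lower semicontinuity of the kinetic energy functional under the weak convergence of test plans $\boldsymbol{\uppi}_s=(\hat F_s)_*\boldsymbol{\uppi}$ to $\pi_*\boldsymbol{\uppi}$, and obtains the integral inequality
\[
\int\!\!\int_0^1 \mathrm{ms}_t^2(\pi\circ\gamma)\,\mathrm{d}t\,\mathrm{d}\boldsymbol{\uppi}\ \le\ \int\!\!\int_0^1 e^{-2u(\gamma_t)}\mathrm{ms}_t^2(\gamma)\,\mathrm{d}t\,\mathrm{d}\boldsymbol{\uppi}.
\]
The pointwise statement for a.e.\ $t$ and $\boldsymbol{\uppi}$-a.e.\ $\gamma$ is then extracted by the localization/restriction argument of \cite[Proposition~3.33]{DePG} (vary the test plan). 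This bypasses exactly the ``exchange of limits'' obstacle you flag; your last paragraph correctly identifies the difficulty but underestimates how much structure is needed to resolve it.
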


The proof of the  proposition resembles that of \cite[Proposition 3.33]{Gig}, as follows.

\begin{proof}
Abusing the notation we will still denote by $\hat{F}_s$ the map $C([0,1],\tilde{X})\to C([0,1],\tilde{X})$ taking $\gamma \mapsto \hat{F}_s\circ \gamma$. Recall that for every $t\in [0,1]$ the differential of $\hat{F}_s$ induces a map, still denoted by $d\hat{F}_s$, from $L^2(T\tilde{X}, e_t,\boldsymbol{\uppi})$ to $L^2(T\tilde{X}, e_t,\boldsymbol{\uppi}_s)$. We claim that for any $s_1\leq s_2$ and any $V \in L^2(T\tilde{X}, e_t,\boldsymbol{\uppi})$, 
\begin{equation}
\label{eq-decreasing}
\left(e^{-2u\circ e_t}|d\hat{F}_{s_2}(V)|^2\right)\circ\hat{F}_{s_2}\leq \left(e^{-2u\circ e_t}|d\hat{F}_{s_1}(V)|^2\right)\circ\hat{F}_{s_1} \quad \quad \boldsymbol{\uppi}-\text{a.e.}
\end{equation} 

To prove the claim we first consider $V$ to be of the form $e^*_tv$ for some $v\in L^2(T\tilde{X})$. By Proposition \ref{cor-metric-speed-1}, for $s_1\leq s_2$, $\boldsymbol{\uppi}$-a.e. ,  
\begin{eqnarray*}
\left(e^{-2u\circ e_t}|d\hat{F}_{s_2}(e^*_tv)|^2\right)\circ\hat{F}_{s_2} &=&\left(e^{-2u\circ e_t} |e^*_td\hat{F}_{s_2}(v)|^2\right)\circ\hat{F}_{s_2}\\
& =& \left(e^{2u}|d\hat{F}_{s_2}(v)|^2\right)\circ e_t\circ \hat{F}_{s_2}\\
& = &  \left(e^{2u}|d\hat{F}_{s_2}(v)|^2\right)\circ \hat{F}_{s_2}\circ e_t \\
&\leq &  \left(e^{2u}|d\hat{F}_{s_1}(v)|^2\right)\circ \hat{F}_{s_1}\circ e_t\\
& = & \left(e^{-2u\circ e_t}|d\hat{F}_{s_1}(e^*_tv)|^2\right)\circ\hat{F}_{s_1}.
\end{eqnarray*}

Let $(A_i)_{i\in\mathbb{N}}$ be a Borel partition  of $C([0,1],\tilde{X})$. The locality property of $d\hat{F}_s: L^2(T\tilde{X}, e_t,\boldsymbol{\uppi})\to L^2(T\tilde{X}, e_t,\boldsymbol{\uppi}_s)$ implies that any combination of the form $\sum \chi_{A_i}e^{*}_tv_i$, with $v_i\in L^2(T\tilde{X})$, satisfies
\[
\left(e^{-2u\circ e_t}|d\hat{F}_{s_2}(\sum \chi_{A_i}e^{*}_tv_i)|^2\right)\circ\hat{F}_{s_2}\leq \left(e^{-2u\circ e_t}|d\hat{F}_{s_1}(\sum \chi_{A_i}e^{*}_tv_i)|^2\right)\circ\hat{F}_{s_1} \quad \quad \boldsymbol{\uppi}-\text{a.e.}
\]
As the elements of the form $\sum \chi_{A_i}e^{*}_tv_i$ are dense in $L^2(T\tilde{X}, e_t,\boldsymbol{\uppi})$ and $d\hat{F}_s$ is continuous when considered as a map $L^2(T\tilde{X}, e_t,\boldsymbol{\uppi})\to L^2(T\tilde{X}, e_t,\boldsymbol{\uppi}_s)$ the claim follows.

Let $(\boldsymbol{\uppi}_s)_t'\in L^2(T\tilde{X},e_t,\boldsymbol{\uppi}_s)$ be the speed at time $t$ of the test plan $\boldsymbol{\uppi}_s$. Applying \eqref{eq-decreasing} to $\boldsymbol{\uppi}_t'$ and using the Chain rule for speeds \cite[Proposition 3.28]{DePG} we obtain that for $s_1\leq s_2$ and a.e.\ $t\in[0,1]$,
\[
\left(e^{-2u\circ e_t}|(\boldsymbol{\uppi}_{s_2})'_t|^2\right)\circ\hat{F}_{s_2}\leq \left(e^{-2u\circ e_t}|(\boldsymbol{\uppi}_{s_1})'_t|^2\right)\circ\hat{F}_{s_1},\qquad \boldsymbol{\uppi}-a.e..
\]
Now we integrate with respect to $t$ and recall the link between point-wise norm and metric speed given in \cite[(3.58)]{DePG} to obtain,
\begin{equation}
\label{eq-monotonicity2}
\dint\!\!\!\dint_0^1 e^{-2u(\gamma_t)}|\dot{\gamma}_t|^2 \mathrm{d}t\ \mathrm{d}\boldsymbol{\uppi}_{s_2}(\gamma) \leq \dint\!\!\!\dint_0^1 e^{-2u(\gamma_t)}|\dot{\gamma}_t|^2  \mathrm{d}t\ \mathrm{d}\boldsymbol{\uppi}_{s_1}(\gamma).
\end{equation}

The lower semicontinuity of the corresponding functional follows analogously as in \cite[Proposition 3.33]{DePG}. Now let us consider the functions $\hat{F}_s$ as functions from $\overline{B}$ to itself, and recall that they converge uniformly to the projection map $\pi:\tilde{X}\to u^{-1}(0)$ as $s\to\infty$. Then the test plans $\boldsymbol{\uppi}_s$ weakly converge to $\pi_*\boldsymbol{\uppi}$ as $s\to \infty$ and therefore,  
\[
\dint\! \dint_0^1 |\dot{\gamma}_t|^2  \mathrm{d} \pi_*\,\boldsymbol{\uppi} \leq \liminf_{s\to\infty} \dint\!\dint_0^1 e^{-2u(\gamma_t)}|\dot{\gamma}_t|^2  \mathrm{d} \boldsymbol{\uppi}_s .
\]
From the last expression it follows that 
\[
\dint\! \dint_0^1 \mathrm{ms}^2_t(\pi\circ\gamma)  \mathrm{d}t\ \mathrm{d} \boldsymbol{\uppi} \leq \dint\!\! \dint_0^1  e^{-2u(\gamma_t)}\mathrm{ms}^2_t(\gamma) \mathrm{d}t\ \mathrm{d} \boldsymbol{\uppi}.
\]
Now, the argument to conclude the proof from this integral formulation follows exactly as the corresponding part of \cite[Proposition 3.33]{DePG}. 
\end{proof}

\begin{proof}[Proof of Proposition \ref{prop-mst0}]
We start by proving (1). By Proposition \ref{prop-ms.prj}, (1) holds for $\gamma \in u^{-1}[-\overline R, \overline R]$. Proceeding as in the proof of Proposition \ref{prop-ms.prj} it is possible to show that if $\boldsymbol{\uppi}$ is a test plan and $\gamma:[0,1]\to u^{-1}([c-\overline{R},c+\overline{R}])$. Then $\mathrm{ms}_t(pr_c{\gamma})\leq e^{-u(\gamma_t)+c}\mathrm{ms}_t(\gamma)$ for a.e. $t\in [0,1]$, $\boldsymbol{\uppi}$-a.e. $\gamma$, where $pr_c{\gamma}:= F_{-u(\gamma) +c} \circ \gamma$. Let $c=\overline R$ and 
\[
\gamma:[0,1]\to u^{-1}([c-\overline{R},c+\overline{R}])=u^{-1}([0,2\overline{R}]).
\]
It follows by  iii) in Theorem \ref{thm:rapprcont} and Proposition \ref{prop-ms.prj}  that for almost every $t \in [0,1]$,
\begin{equation*}
e^{-\overline R}\mathrm{ms}_t(pr_{\overline R}(\gamma)) \leq \mathrm{ms}_t(pr_0(pr_{\overline R}{\gamma}))   \leq   e^{-\overline R}\mathrm{ms}_t(pr_{\overline R}(\gamma)).
\end{equation*}
Note that $pr_0({\gamma})=pr_0(pr_{\overline R}{\gamma})$. Thus, for almost every $t \in [0,1]$,

\begin{equation*}
\mathrm{ms}_t(pr_0{\gamma}) = e^{-\overline R}\mathrm{ms}_t(pr_{\overline R}(\gamma))  \leq   e^{-\overline R} e^{-u(\gamma_t)+ \overline R}\mathrm{ms}_t(\gamma).
\end{equation*}
This shows that (1) is satisfied for curves on $u^{-1}(([0,2\overline{R}])$. Proceeding in the same way, (1) follows. 

Now we prove part (2). Let $x,y \in B(x_0,r)$ and $\gamma:[0,1]\to \tilde{X}$ be a minimal geodesic joining them.  As $u$ is $1$-Lipschitz : 
\[u(\gamma_t) \geq \max\{u(\gamma_0), u(\gamma_1)\} - \tilde d(\gamma_0,\gamma_1),\]
\[  u(\gamma_0) \geq -r + u(x_0),\]
\[  u(\gamma_1) \geq -r + u(x_0).\]
Thus,  $u(\gamma_t) \geq  - r + u(x_0)  - 2r= u(x_0) -3r$. From the previous paragraph  $|\dot{\tilde\gamma}_t| \leq e^{-u(\gamma_t)}\ |\dot\gamma_t|$. Therefore, 
$d'(\pi(x),\pi(y)) \leq L(\tilde \gamma) \leq e^{-u(x_0)+3r} \tilde d(x,y)$. 
\end{proof}


\subsection{Properties of the quotient metric measure space}

Here we show that $(X',d')$ is a complete, separable, and geodesic metric space.  Then we define a measure  $m'$ on $X'$  and study the relationship between the spaces $W^{1,2}(X',d',m')$  and $W^{1,2}(\tilde X, d, \tilde m)$. At the end of the subsection we show that  $(X',d',m')$ is an infinitesimally Hilbertian space that satisfies the Sobolev to Lipschitz property.

\begin{thm}\label{thm-piCont}
With the same notation and assumptions of Definition \ref{def-X'},  $(X',d')$ is a complete, separable and geodesic metric space. 
\end{thm}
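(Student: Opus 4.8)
The plan is to verify the three properties—completeness, separability, and the geodesic property—using the structure already established, in particular the projection map $\pi:\tilde X\to X'$, $\pi(x)=F_{-u(x)}(x)$, the $1$-Lipschitz inclusion $\iota:(X',d')\to(\tilde X,\tilde d)$ from Lemma \ref{lem-d'}, and the local Lipschitz estimate for $\pi$ from Proposition \ref{prop-mst0}(2). The key tension throughout is that $d'$ is defined via an infimum over \emph{constrained} curves (those staying in $u^{-1}(0)$), so $d'\geq \tilde d$ on $X'$, and one must control the gap between the two.

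\textbf{Separability.} First I would show $X'$ is separable. Since $\tilde X$ is separable (it is an $\RCDst(K,N)$ space, hence proper and separable) and $\pi:\tilde X\to X'$ is surjective (indeed $\pi|_{X'}=\mathrm{id}$, and every point of $\tilde X$ maps into $X'$) and \emph{continuous} as a map into $(X',d')$—this is exactly the local Lipschitz bound $d'(\pi(x),\pi(y))\leq e^{-u(x_0)+3r}\tilde d(x,y)$ of Proposition \ref{prop-mst0}(2)—the image under $\pi$ of a countable dense subset of $\tilde X$ is countable and dense in $(X',d')$.

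\textbf{Geodesic property.} Next, for the geodesic property I would argue as follows. Given $z,y\in X'$, by definition of $d'$ there is a sequence of absolutely continuous curves $\gamma^{(n)}$ in $\tilde X$ with $u\circ\gamma^{(n)}=0$, endpoints $z,y$, and $L(\gamma^{(n)})\to d'(z,y)$. Reparametrize each to constant speed $\leq d'(z,y)+1$. These curves have uniformly bounded length and lie in a bounded (hence precompact, by properness of $\tilde X$) region; by Arzelà–Ascoli a subsequence converges uniformly to a curve $\gamma$ in $\tilde X$ with the same endpoints, with $L(\gamma)\leq d'(z,y)$ by lower semicontinuity of length, and with $u\circ\gamma=0$ since $u$ is continuous and $u\circ\gamma^{(n)}=0$. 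Thus $\gamma$ is an admissible competitor, so $L(\gamma)=d'(z,y)$; moreover since $d'$ dominates the intrinsic (length) metric built from admissible curves and $\gamma$ is admissible, one checks $\gamma$ is a geodesic of $(X',d')$: for $0\le s\le t\le 1$ the restriction $\gamma|_{[s,t]}$ is admissible for $d'(\gamma_s,\gamma_t)$, giving $d'(\gamma_s,\gamma_t)\leq \int_s^t|\dot\gamma_r|\,dr$, and summing the reverse triangle inequalities forces equality. (One should also remark that $d'$ equals the intrinsic length metric induced by admissible curves, so it is a length metric and the existence of minimizers just shown upgrades it to geodesic.)

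\textbf{Completeness.} The main obstacle is completeness. Let $(z_k)$ be $d'$-Cauchy in $X'$. Since $d'\geq\tilde d$ on $X'$, $(z_k)$ is $\tilde d$-Cauchy, hence converges in $(\tilde X,\tilde d)$ to some $z_\infty$; and since $u$ is $\tilde d$-continuous with $u(z_k)=0$, we get $u(z_\infty)=0$, so $z_\infty\in X'$. It remains to show $d'(z_k,z_\infty)\to 0$, i.e.\ that $\tilde d$-convergence to a point of $X'$ implies $d'$-convergence. This is where the local Lipschitz estimate for $\pi$ does the work: $\pi$ is continuous from $(\tilde X,\tilde d)$ to $(X',d')$ and fixes $X'$ pointwise, so $d'(z_k,z_\infty)=d'(\pi(z_k),\pi(z_\infty))\to 0$ by continuity of $\pi$ at $z_\infty$ (apply Proposition \ref{prop-mst0}(2) with $x_0=z_\infty$, $r$ fixed, for $k$ large). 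Hence $(X',d')$ is complete. I expect this completeness argument—specifically the step identifying $d'$-limits with $\tilde d$-limits via the continuity of $\pi$—to be the crux; everything else is a routine assembly of properness of $\tilde X$, Arzelà–Ascoli, and lower semicontinuity of length.
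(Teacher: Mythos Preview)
Your proof is correct. The separability and completeness arguments are essentially identical to the paper's: both use that $\iota$ is $1$-Lipschitz to transfer a $d'$-Cauchy sequence to $(\tilde X,\tilde d)$, obtain a limit there, and then pull it back to $(X',d')$ via the local Lipschitz continuity of $\pi$ from Proposition~\ref{prop-mst0}(2). The genuine difference is in the geodesic property. You construct minimizing curves directly: take an almost-minimizing sequence of admissible curves, reparametrize to constant speed, apply Arzel\`a--Ascoli in the proper space $(\tilde X,\tilde d)$, and use lower semicontinuity of length together with closedness of $u^{-1}(0)$. The paper instead first proves that $(X',d')$ is \emph{locally compact} (by the same $\iota$/$\pi$ trick used for completeness) and then invokes the Hopf--Rinow-type fact that a complete, locally compact length space is geodesic. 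Your route is slightly more self-contained and yields an explicit minimizer; the paper's route has the side benefit of recording local compactness of $(X',d')$, which is useful elsewhere (e.g.\ in the measured-length and doubling arguments of Proposition~\ref{prop-SobtoLip}).
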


\begin{proof}
By Proposition \ref{prop-mst0} the  map $\pi$ is continuous, we will show that $X'$ is separable. Since $\tilde X$ is separable there exists a countable dense subset $\{ x_j \} \subset \tilde X$. 
Consider an open set $U \subset X'$, then $\pi^{-1}(U)$ is open in $\tilde X$.
As $\tilde X$ is separable there exists $x_j \in \pi^{-1}(U)$, and then $\pi (x_j )\in U$. Thus, $\{ \pi (x_j) \}$ is a dense subset of $X'$. 

To prove that $(X',d')$ is complete let $\{ x_j \} \subset  X'$ be a Cauchy sequence. Then, because $\iota: X' \to \tilde X$ is $1$-Lipschitz, $\{ \iota(x_j) \}$ is a Cauchy sequence in $\tilde X$, and hence it has a convergent subsequence $\iota (x_{j_k}) \to x$. 
Given that $\pi$ is continuous, $x_{j_k}=\pi(\iota(x_{j_k}) )\to \pi (x)$. 

To prove that $(X',d')$ is a geodesic space recall that a complete, locally compact length space is geodesic. So it is enough to prove that $(X',d')$ is locally compact. This is very similar to the previous paragraph. 
Let $x \in X'$ and $r>0$. If  $\{ x_j \} \subset  \overline{B^{d'}(x,r)}$, then $\{ \iota(x_j) \} \subset  \overline {B^{\tilde d}( \iota(x), r)}$. 
Now, since $(\tilde X, \tilde d)$ is locally compact, there exists a convergent subsequence $\iota (x_{j_k}) \to y$. 
Because $\pi$ is continuous, $x_{j_k}=\pi(\iota(x_{j_k}) )\to \pi (y)$ and $d'(\pi(y),x)= \lim_{k \to \infty} d'(x_{j_k},x) \leq r$. This concludes the proof. 
\end{proof}

Given that $u: \tilde X \to \mathbb R$ and $\pi: \tilde X \to X'$ are continuous (see (2) in Proposition \ref{prop-mst0} where it is shown that $\pi$ is locally Lipschitz and recall that $u$ is Lipschitz),  we define a Borel measure on $X'$.

\begin{defn}\label{def:mmp} 
We define the measure $m'$ on $(X',d')$ by 
$$
m' (A) = ( \int_0^1 e^{(N-1)s} ds )^{-1} \tilde m ( \pi^{-1}(A) \cap u^{-1}[0,1])
$$
for any Borel set $A \subset X'$. 
\end{defn}

\begin{lem}\label{lem-Aab}
Given a Borel set $A \subset X'$, let $A_a^b=\{ x \in \tilde X | u(x) \in [a,b], \,  \pi(x) \in A \}$. Then, 
\begin{equation}\label{eq-Aab}
\tilde m (A_a^b) =   m'(A)  \int_{a}^{b} e^{(N-1)s} \, \mathrm{d}s.
\end{equation}
\end{lem}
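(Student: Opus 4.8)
The plan is to prove equation \eqref{eq-Aab} by first establishing it for intervals $[a,b]\subset[0,1]$ and then bootstrapping to general intervals via the flow. The key structural fact is that the flow $F$ moves $u$-level sets to $u$-level sets isometrically along trajectories: by Lemma \ref{lem-distance-flow-representatives}, $u(F_r(x))=u(x)+r$, so $F_r$ maps $A_a^b$ onto $A_{a+r}^{b+r}$ whenever $\pi$ is $F_r$-invariant in the appropriate sense. More precisely, since $\pi(x)=F_{-u(x)}(x)$ and the semigroup property $F_t\circ F_s=F_{t+s}$ holds (Theorem \ref{thm:rapprcont}), one checks that $\pi\circ F_r=\pi$, so $F_r(A_a^b)=A_{a+r}^{b+r}$ exactly. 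Combining this with the change-of-measure formula $(F_r)_\#\tilde m = e^{-(N-1)r}\tilde m$ from Proposition \ref{prop-F_tPushMeas} yields $\tilde m(A_{a+r}^{b+r}) = e^{(N-1)r}\tilde m(A_a^b)$, which is precisely the scaling behavior encoded in the right-hand side of \eqref{eq-Aab}.

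The first step is therefore: for $[a,b]\subset[0,1]$, show $\tilde m(A_a^b) = m'(A)\int_a^b e^{(N-1)s}\,ds$. Here I would disintegrate. Write $A_0^1 = \bigsqcup$ of its slices, and use that $F_r(A_0^{1}\cap\{u\le 1-r\}) $ behaves predictably; concretely, the map $\Phi:(s,y)\mapsto F_s(y)$ from $[0,1]\times(A\cap u^{-1}(0))$ should be used to pull back $\tilde m|_{A_0^1}$. Using $\pi\circ F_s = \pi$ and $u(F_s(y))=s$ for $y\in X'=u^{-1}(0)$, the fibers of $\pi$ over $A$ intersected with $u^{-1}([a,b])$ are exactly $\Phi([a,b]\times(A\cap X'))$. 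Then the push-forward relation $(F_s)_\#\tilde m = e^{-(N-1)s}\tilde m$ gives, by a Fubini-type argument on $[0,1]\times\tilde X$ together with the continuity equation interpretation, that $\tilde m(A_a^b) = \big(\int_a^b e^{(N-1)s}\,ds\big)\cdot c$ where $c = \big(\int_0^1 e^{(N-1)s}\,ds\big)^{-1}\tilde m(A_0^1) = m'(A)$ by Definition \ref{def:mmp}. The measurability of $A_a^b$ follows because $u$ and $\pi$ are continuous (Proposition \ref{prop-mst0}).

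For general $[a,b]$, I would reduce to the unit-interval case by translating: given arbitrary $a<b$, partition $[a,b]$ into finitely many (or countably many) subintervals each of length $\le 1$, say $[a_k,a_{k+1}]$; then $A_{a_k}^{a_{k+1}} = F_{a_k}(A_0^{a_{k+1}-a_k})$ up to sets of measure zero coming from overlapping endpoints (which are $\tilde m$-null, being contained in a single level set $u^{-1}(\text{pt})$, whose measure vanishes since $s_{\tilde m}$ is locally finite — or more simply by additivity one handles half-open intervals). Applying $(F_{a_k})_\#\tilde m = e^{-(N-1)a_k}\tilde m$ and the result for $[0,a_{k+1}-a_k]\subset[0,1]$ gives $\tilde m(A_{a_k}^{a_{k+1}}) = e^{(N-1)a_k} m'(A)\int_0^{a_{k+1}-a_k}e^{(N-1)s}\,ds = m'(A)\int_{a_k}^{a_{k+1}}e^{(N-1)s}\,ds$. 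Summing over $k$ and using additivity of $\tilde m$ over the essentially disjoint pieces yields \eqref{eq-Aab}.

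The main obstacle I anticipate is the rigorous justification of the disintegration/change-of-variables step that identifies $\tilde m|_{u^{-1}([0,1])}$ with the ``product'' of $m'$ on $X'$ and $e^{(N-1)s}\,ds$ on $[0,1]$ along the flow lines. The flow $F$ is only defined up to $\tilde m$-a.e.\ (though we have a continuous representative by Theorem \ref{thm:rapprcont}), and one must be careful that $\Phi:[0,1]\times X'\to u^{-1}([0,1])$ is essentially bijective and measure-compatible — that no positive-measure set is missed or double-counted. The cleanest route is probably to verify the identity \eqref{eq-Aab} first for $A$ ranging over a generating algebra (e.g.\ preimages under $\pi$ of balls, or $\pi$-images of nice sets), check both sides define Borel measures in $A$ that agree there, and invoke a monotone-class/Dynkin argument; the scaling relation in $[a,b]$ then comes for free from the flow covariance as above. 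I expect the flow-covariance identity $\pi\circ F_r=\pi$ and the pushforward formula to do essentially all the work, with the measure-theoretic bookkeeping being the only genuinely delicate part.
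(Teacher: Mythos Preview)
Your proposal is essentially correct and uses the same two key ingredients as the paper: the flow covariance $\pi\circ F_r=\pi$ (so $F_r(A_a^b)=A_{a+r}^{b+r}$) and the pushforward formula $(F_r)_\#\tilde m=e^{-(N-1)r}\tilde m$. The translation step you describe for general $[a,b]$ is exactly what the paper does.

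The genuine difference is in how you handle subintervals of $[0,1]$. You propose a disintegration/Fubini argument via $\Phi:(s,y)\mapsto F_s(y)$, which you correctly flag as the delicate part. The paper avoids this entirely by a \emph{dyadic bisection}: from the translation identity one has $\tilde m(A_0^1)=\tilde m(A_0^{1/2})+\tilde m(A_{1/2}^1)=(1+e^{(N-1)/2})\tilde m(A_0^{1/2})$, which solves for $\tilde m(A_0^{1/2})$; iterating gives the formula on all intervals $[a,a+k/2^n]$, and monotonicity in the endpoint finishes by approximation. This is more elementary than your route and sidesteps the measure-theoretic bookkeeping you anticipated. Equivalently, the translation and additivity relations force $\psi(\ell):=\tilde m(A_0^\ell)$ to satisfy $\psi(\ell_1+\ell_2)=\psi(\ell_1)+e^{(N-1)\ell_1}\psi(\ell_2)$, whose monotone solutions are exactly $c\int_0^\ell e^{(N-1)s}\,ds$. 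Your disintegration would also work, but it is heavier machinery than needed here.
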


\begin{proof}
The proof follows that of Proposition 5.28 \cite{Gig}. For completeness we give some details. Note that by the definition of $m'$, equation (\ref{eq-Aab}) holds for $a=0$ and $b=1$.  By Proposition \ref{prop-F_tPushMeas} and Theorem \ref{thm:rapprcont},  we know that ${F_a}_\sharp \tilde m= e^{-(N-1)a}\tilde m$ and $F^{-1}_a=F_{-a}$.  Thus, 
\begin{align*}
\tilde m (A_a^{a+1})  = & \, e^{(N-1)a} \tilde m (  {F_a}^{-1}(A_a^{a+1}))  = e^{(N-1)a}  \tilde m (A_0^{1})  \\
=    & \, m'(A)  \int_{0}^{1} e^{(N-1)a}  e^{(N-1)s} ds =       m'(A)  \int_{a}^{a+1}  e^{(N-1)s} \,\mathrm{d}s.
\end{align*}

To prove that equation (\ref{eq-Aab}) holds for $a=0$ and $b=1/2$, we use again Proposition \ref{prop-F_tPushMeas} and Theorem \ref{thm:rapprcont}. Thus, 
\[
\tilde m (A_0^{1})  =  \tilde m (A_0^{1/2})  + \tilde m (A_{1/2}^{1}) = (1 + e^{\tfrac 1 2(N-1)} )\tilde m (A_0^{1/2}).
\]
With some algebra we conclude 
\[
\tilde m (A_0^{1/2}) =  (1 + e^{\tfrac 1 2(N-1)} )^{-1} \tilde m (A_0^{1}) =         m'(A)  \int_{0}^{1/2}  e^{(N-1)s} \, \mathrm{d}s.
\]
Continuing in this way,  equation (\ref{eq-Aab}) holds for $a \in \mathbb R$ and $b=a+k/2^n$ with $k,n \in \mathbb N$. 
Then an approximation argument concludes the proof. 
\end{proof}

\begin{prop}\label{prop-DfDg1}
Let $h \in \Lip(\mathbb R)$ with compact support and identically $1$ on $[a,b]$. Let $f \in L^2(\tilde X)$ be of the form 
$f(x)= g(\pi(x))h(u(x))$ for some $g \in L^2(m')$. If $f \in W^{1,2}(\tilde X)$ then $g \in W^{1,2}(X')$
and for $\tilde m$-a.e. $x \in u^{-1}[a,b]$ we have
\begin{equation}
|\nabla g|_{X'} (\pi(x)) \leq e^{u(x)} |\nabla f|_{\tilde X} (x).
\end{equation}
\end{prop}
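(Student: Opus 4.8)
\textbf{Proof strategy for Proposition \ref{prop-DfDg1}.}

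The plan is to transfer the Sobolev regularity of $f$ on $\tilde X$ down to $g$ on $X'$ using the relationship between metric speeds of curves established in Proposition \ref{prop-mst0}, together with the characterization of the minimal weak upper gradient via test plans (Proposition \ref{prop-weakUg}). First I would observe that on $u^{-1}([a,b])$ one has $f(x) = g(\pi(x))$, so that $g \circ \pi = f$ there; the cut-off $h$ is only present to guarantee global integrability of $f$ and will be harmless since all the estimates are local in the slab $u^{-1}([a,b])$. The key point to exploit is that $\pi \colon \tilde X \to X'$ maps curves to curves with controlled metric speed: by part (1) of Proposition \ref{prop-mst0}, for a test plan $\boldsymbol{\uppi}$ on $\tilde X$ and $\boldsymbol{\uppi}$-a.e.\ $\gamma$, the curve $\tilde\gamma = \pi \circ \gamma$ is absolutely continuous in $(X',d')$ with $|\dot{\tilde\gamma}_t| \le e^{-u(\gamma_t)}|\dot\gamma_t|$.

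The main steps are as follows. First, I would show $g \in S^2_{loc}(X')$: given a test plan $\boldsymbol{\sigma}$ on $X'$ (concentrated, after a localization/truncation, on curves staying in $\pi(u^{-1}([a,b]))$), I would lift it to a test plan on $\tilde X$ concentrated on slices $u^{-1}(c)$ — this is where one uses that the trajectories of $F$ foliate $\tilde X$ and that $F_t$ is bi-Lipschitz with constant $\max\{1,e^t\}$ (Theorem \ref{thm:rapprcont}), so that pushing $\boldsymbol{\sigma}$ forward by the section map $\iota$ (or more precisely by $F_{c}\circ\iota$ for the appropriate $c$) and checking the compression bound via Lemma \ref{lem-Aab} produces a legitimate test plan $\boldsymbol{\uppi}$ on $\tilde X$ whose image under $\pi$ is again $\boldsymbol{\sigma}$. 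Since $f \in W^{1,2}(\tilde X)$, Proposition \ref{prop-weakUg}(ii) applies to $\boldsymbol{\uppi}$: for $\boldsymbol{\uppi}$-a.e.\ $\gamma$, $t\mapsto f(\gamma_t)$ is absolutely continuous with $|(f\circ\gamma)'|(t) \le |\nabla f|_{\tilde X}(\gamma_t)|\dot\gamma_t|$. But $f(\gamma_t) = g(\pi(\gamma_t)) = g(\tilde\gamma_t)$ on the slab, so $t \mapsto g(\tilde\gamma_t)$ is absolutely continuous and
\[
|(g\circ\tilde\gamma)'|(t) \le |\nabla f|_{\tilde X}(\gamma_t)\,|\dot\gamma_t| \le |\nabla f|_{\tilde X}(\gamma_t)\, e^{u(\gamma_t)}\,|\dot{\tilde\gamma}_t|,
\]
using that $|\dot{\tilde\gamma}_t| = e^{-u(\gamma_t)}|\dot\gamma_t|$ when $\gamma$ lies in a single slice $u^{-1}(c)$ (there the inequality in Proposition \ref{prop-mst0}(1) is in fact an equality, since the reparametrized flow $\hat F$ acts isometrically up to the factor $e^{\pm c}$ along a slice — this is exactly the two-sided bound obtained in the proof of Proposition \ref{prop-mst0}). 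Hence the function $G'(z) := e^{u(x)}|\nabla f|_{\tilde X}(x)$, where $x$ is the point of the slice $u^{-1}(c)$ with $\pi(x)=z$ — well-defined $m'$-a.e.\ up to the choice of $c$, and by Lemma \ref{lem-Aab} independent of $c$ $m'$-a.e.\ after integrating — is a weak upper gradient of $g$ on the relevant open subset of $X'$. By the minimality of $|\nabla g|_{X'}$ we conclude $|\nabla g|_{X'}(\pi(x)) \le e^{u(x)}|\nabla f|_{\tilde X}(x)$ for $\tilde m$-a.e.\ $x \in u^{-1}([a,b])$, and since $|\nabla f|_{\tilde X} \in L^2$ and the slab has the comparison of measures from Lemma \ref{lem-Aab}, $|\nabla g|_{X'} \in L^2(m')$, giving $g \in W^{1,2}(X')$.

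\textbf{Main obstacle.} The delicate point is the lifting of an arbitrary test plan on $X'$ to a test plan on $\tilde X$ whose curves remain in a single slice (so that the metric-speed relation becomes the clean equality $|\dot{\tilde\gamma}| = e^{-u}|\dot\gamma|$ rather than just the inequality), and verifying the compression bound for this lift. One has to be careful that the section $\iota$ of $\pi$ is only $1$-Lipschitz in one direction and locally Lipschitz in the other (Proposition \ref{prop-mst0}(2)), so the lifted curves are genuinely absolutely continuous, and that the pushforward $\iota_\#(e_t)_\#\boldsymbol{\sigma}$ is absolutely continuous with respect to $\tilde m$ with a uniform constant — this uses the definition of $m'$ and Lemma \ref{lem-Aab} crucially, and requires first reducing to test plans supported on bounded pieces of $X'$ via a standard exhaustion and cut-off argument based on the locality of minimal weak upper gradients. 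Once this lifting is in place the rest is a routine application of the test-plan characterization, so I would organize the write-up to isolate the lifting construction as the one substantive lemma.
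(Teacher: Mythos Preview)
Your overall plan --- lift a test plan on $X'$ to one on $\tilde X$, invoke the test-plan characterization of $|\nabla f|_{\tilde X}$, and read off a weak upper gradient for $g$ --- is exactly the paper's strategy, but your lifting to a \emph{single} slice $u^{-1}(c)$ cannot work. By Lemma~\ref{lem-Aab} the set $u^{-1}(\{c\})$ has $\tilde m$-measure zero, so any measure $(F_c\circ\iota)_\sharp(e_t)_\sharp\boldsymbol{\sigma}$ concentrated on that slice is singular with respect to $\tilde m$ and the bounded-compression requirement in the definition of a test plan is violated. Consequently you are not entitled to apply Proposition~\ref{prop-weakUg} to the lifted curves, and the chain of inequalities you wrote does not get off the ground. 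The same issue resurfaces when you try to define $G'(z)=e^{u(x)}|\nabla f|_{\tilde X}(x)$ by restricting $|\nabla f|_{\tilde X}$ (an $\tilde m$-a.e.\ object) to a measure-zero slice.

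The paper's fix is to \emph{thicken} the slice: one lifts a test plan $\boldsymbol{\uppi}'$ on $X'$ to $\boldsymbol{\uppi}=\hat T_\sharp\bigl(\boldsymbol{\uppi}'\times (b'-a')^{-1}\mathcal L^1|_{[a',b']}\bigr)$, where $\hat T(\gamma,s)_t=F_s(\iota(\gamma_t))$ and $[a',b']\subset[a,b]$. Now $(e_t)_\sharp\boldsymbol{\uppi}$ lives on the slab $u^{-1}([a',b'])$ of positive $\tilde m$-measure, and Lemma~\ref{lem-Aab} gives the compression bound. The speed of the lifted curve is controlled not by an ``equality case'' of Proposition~\ref{prop-mst0} (no such equality is proved there) but simply by $\Lip(F_s)\le\max\{e^s,1\}$ from Theorem~\ref{thm:rapprcont} together with $\iota$ being $1$-Lipschitz. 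This yields the averaged bound $|\nabla g|_{X'}(x')\le (b'-a')^{-1}\int_{a'}^{b'}\Lip(F_s)\,|\nabla f|_{\tilde X}(F_s(\iota(x')))\,ds$, which gives the desired pointwise inequality when $0\le a'<b'$. For slabs with $a<b\le 0$ the Lipschitz bound only gives $\Lip(F_s)\le 1$, which is too weak; the paper handles this case separately by shifting via $F_t$ with $t\ge -a$ and invoking the localized energy identity of Theorem~\ref{thm-g}.
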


\begin{proof}
Let $\boldsymbol{\uppi}'$ be a test plan on $X'$.  Define
\[
T: X' \times [a',b'] \to \tilde X,\quad  \hat T: C([0,1], X') \times [a',b'] \to C([0,1], \tilde X),
\]
 and $\boldsymbol{\uppi} \in \mathcal P (C([0,1], \tilde X))$ given by $T(x,s)= F_s(\iota (x))$,  $\hat T (\gamma, s)_t= T(\gamma_t,s)$ and 
\[
\boldsymbol{\uppi} = \hat T_\sharp (\boldsymbol{\uppi}' \times  (b'-a')^{-1} \mathcal L^1_{[a',b']}),
\] 
with $[a',b'] \subset [a,b]$.

We claim that $\boldsymbol{\uppi}$ is a test plan on $\tilde X$. That is, $\boldsymbol{\uppi}$ has finite kinetic energy and bounded compression.  Finite kinetic energy for $\boldsymbol{\uppi}$ follows from the fact that $\boldsymbol{\uppi}'$ is a test plan and so it has finite kinetic energy, and that $\mathrm{ms}_t(\hat{T}(\gamma,s)) \leq  \Lip (F_s) |\dot\gamma_t|$ (where $\mathrm{ms}_t(\hat{T}(\gamma,s))$ denotes the metric speed of $\hat{T}(\gamma,s)$), by Theorem \ref{thm:rapprcont} and Theorem \ref{lem-d'}. Set $M=\max\{ \Lip (F_s) \,|\, s\in [a',b']\}$, then,
\begin{align*}
\frac{1}{2} \int \int_0^1 |\dot\gamma_t|^2 \, \mathrm{d} t\, \mathrm{d}\boldsymbol{\uppi} (\gamma) = &
\frac{1}{2} \int \int_0^1 \int_{a'}^{b'} (b'-a')^{-1} \mathrm{ms}_t(\hat{T}(\gamma,s))^2 \, \mathrm{d}s \,\mathrm{d}t\, \mathrm{d}\boldsymbol{\uppi'} (\gamma) \\
\leq& \, M \frac{1}{2} \int \int_0^1  |\dot\gamma _t|^2 \, \mathrm{d}t \,\mathrm{d}\boldsymbol{\uppi'} (\gamma)  < \infty.
\end{align*}

To show that $\boldsymbol{\uppi}$ has bounded compression it is enough to consider sets of the form 
\[A_{c}^{d}=\{ x \in \tilde X \, | \, u(x) \in [c,d], \,  \pi(x) \in A \},\]
 for some Borel set $A\subset X'$. Thus, using that $\boldsymbol{\uppi}'$ has bounded compression, and equation (\ref{eq-Aab}),
\begin{align*}
{e_t}_\sharp \boldsymbol{\uppi} (A_{c}^{d} )=&\, \boldsymbol{\uppi}' \times   (b'-a')^{-1} \mathcal L^1_{[a',b']} ( ( e_t \circ \hat T )^{-1}(A_{c}^{d}) )\\
=& \,\boldsymbol{\uppi}' (e_t^{-1}( A ) )      (b'-a')^{-1} \mathcal L^1_{[a',b]} ([c,d]) \\ 
\leq&\, C m'(A). 
\end{align*}

The definitions of $\boldsymbol{\uppi}$ and $f$ yield, 
\begin{equation}\label{eq-Df=Dg}
\int | f(\gamma_1) - f(\gamma_0)|\, \mathrm{d}\boldsymbol{\uppi} (\gamma)=  \int | g(\gamma_1) - g(\gamma_0)|\, \mathrm{d}\boldsymbol{\uppi}' (\gamma).
\end{equation}
Now, the definitions of $|\nabla f|_{\tilde X}$, and $\boldsymbol{\uppi} $, imply the following estimates: 
\begin{align}\label{eq-DfDg}
\int | f(\gamma_1) - f(\gamma_0)| \, \mathrm{d}\boldsymbol{\uppi}  \leq & \int \int^1_0 |\nabla f|_{\tilde X} (\gamma_t) |\dot\gamma|\,  \mathrm{d}t \, \mathrm{d}\boldsymbol{\uppi}\\
\leq & \int \int^1_0 \int^{b'}_{a} (b'-a')^{-1}|\nabla f|_{\tilde X} (\hat T(\gamma,s)_t) \mathrm{ms}_t(\hat T (\gamma,s))\,  \mathrm{d}s \, \mathrm{d}t \, \mathrm{d}\boldsymbol{\uppi}'  \nonumber \\
\leq & \int \int^1_0 (b'-a')^{-1} \int^b_a  \Lip (F_s)   |\nabla f|_{\tilde X} (\hat T(\gamma,s)_t) | \dot\gamma_t|\, \mathrm{d}s\, \mathrm{d}t\, \mathrm{d}\boldsymbol{\uppi}' \nonumber.
\end{align} 
In the previous inequalities we used  Theorem \ref{thm:rapprcont} to bound $\mathrm{ms}_t(\hat T(\gamma,s))  \leq  \Lip (F_s) |\dot\gamma_t|$.

Combining equality (\ref{eq-Df=Dg}), inequality (\ref{eq-DfDg}), and that $\boldsymbol{\uppi}'$ was chosen arbitrarily, we conclude that  $g \in W^{1,2}(X')$, and the right hand side above is an upper gradient for $g$, i.e. that for $m'$-a.e. $x'$,
\begin{equation*}
|\nabla g|_{X'}(x') \leq (b'-a')^{-1} \int^{b'}_{a'}  \Lip (F_s) |\nabla f|_{\tilde X} ( T(x',s)) \, \mathrm{d}s. 
\end{equation*}
This proves $g \in W^{1,2}(X')$, and for $0 \leq a < b$, gives the right estimate for $|\nabla g|_{X'}$ when we let $a'$ and $b'$ converge to $u(x)$,  as for $s \in [a,b]$ the inequality $\Lip (F_s)  \leq \max\{e^{s}, 1\}=e^s$ holds by Theorem \ref{thm:rapprcont} part (ii).

If $a < 0$, write $f=\tilde f \circ F_t$, here $t \geq -a$.  Then $\tilde f (x)= g(\pi(x))$ for $x \in u^{-1}[a+t,b+t]$ and $0 \leq a + t \leq b + t$. 
We note that $\left\langle\nabla  f , \nabla  u \right\rangle=0$. Then by the definition of Regular Lagrangian Flow, Definition \ref{def-regular-lagrangian-flow} (iii),  and Corollary 
\ref{cor-hessian-identity}, the equality $|\nabla f|^2_{\tilde X}=  {\rm Hess}[u] (\nabla f, \nabla f)$ holds $\tilde m$ a.e. in $u^{-1}[a,b]$. In combination with Theorem \ref{thm-g} we have thus found,
\[
\left\langle\nabla  f , \nabla  f \right\rangle =  e^{2t} \mathrm{Hess}(u)(\nabla \tilde f, \nabla \tilde f) \circ F_t +  \left\langle\nabla \tilde f, \nabla u\right\rangle^2 \circ F_t=  e^{2t} \left\langle\nabla  \tilde f , \nabla \tilde  f \right\rangle  \circ F_t.
\]
The previous equality holds for $0 \leq a \leq b$, and so we conclude that $m'$-a.e. $x'$,
\[|\nabla f | (x) = e^t |\nabla \tilde f | (F_t(x)) \geq e^t e^{-(u(x)+t)} |\nabla  g |_{X'} (\pi(x)).\]
\end{proof}

\begin{thm}\label{thm-DfDg2}
Assume $ h \in \Lip(\mathbb R)$ has compact support and is identically  $1$ on $[a,b]$. Let  $f \in L^2(\tilde X)$ be of the form 
$f(x)= g(\pi(x))h(u(x))$, for some $g \in L^2(X',m')$.  Then $g \in W^{1,2}(X',d',m')$ if and only if $f \in W^{1,2}(\tilde X, \tilde d, \tilde m)$,
and for $\tilde m$-a.e.\ $x \in u^{-1}[a,b]$ we have 
\begin{equation}
|\nabla f|_{\tilde X} (x) = e^{-u(x)} |\nabla g|_{X'} (\pi(x)).
\end{equation}
\end{thm}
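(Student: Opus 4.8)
The plan is to combine the two halves of the picture: Proposition~\ref{prop-DfDg1}, which already gives one direction of the inequality (namely $|\nabla g|_{X'}(\pi(x)) \le e^{u(x)}|\nabla f|_{\tilde X}(x)$, equivalently $|\nabla f|_{\tilde X}(x) \ge e^{-u(x)}|\nabla g|_{X'}(\pi(x))$, together with the implication $f\in W^{1,2}(\tilde X)\Rightarrow g\in W^{1,2}(X')$), with the reverse estimate $|\nabla f|_{\tilde X}(x) \le e^{-u(x)}|\nabla g|_{X'}(\pi(x))$ and the implication $g\in W^{1,2}(X')\Rightarrow f\in W^{1,2}(\tilde X)$. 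So the real content to be supplied is this reverse direction: starting from $g\in W^{1,2}(X')$ and a cutoff $h$, show $f = (g\circ\pi)\cdot(h\circ u)$ lies in $W^{1,2}(\tilde X)$ with the stated pointwise control on its minimal weak upper gradient.

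First I would reduce to the ``core'' situation where $h\equiv 1$ on the relevant slab and, by the locality of minimal weak upper gradients (subsection~\ref{ssec-calculus}), it suffices to prove the gradient identity $\tilde m$-a.e. on $u^{-1}[a,b]$; the contribution coming from the region where $h$ varies is handled by a Leibniz/cutoff argument exactly as in the warped-product statements (Corollary~\ref{cor:sezioniprod}). Next, as in Proposition~\ref{prop-DfDg1}, I would use the flow to reduce to the case $0\le a<b$: write $f = \tilde f\circ F_t$ with $\tilde f(x)=g(\pi(x))$ on $u^{-1}[a+t,b+t]$ for $t$ large enough that $0\le a+t$, invoke $\langle\nabla f,\nabla u\rangle=0$ (which holds because $f$ is constant along flow lines, by Definition~\ref{def-regular-lagrangian-flow}(iii)), use Corollary~\ref{cor-hessian-identity} to get $|\nabla f|^2_{\tilde X} = \mathrm{Hess}[u](\nabla f,\nabla f)$ on the slab, and then apply Theorem~\ref{thm-g} to transfer the computation of $|\nabla f|_{\tilde X}$ to $|\nabla\tilde f|_{\tilde X}\circ F_t$ with the factor $e^{2t}$. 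This is the same bookkeeping already carried out at the end of the proof of Proposition~\ref{prop-DfDg1}, so with $0\le a<b$ in force the remaining task is: given $g\in W^{1,2}(X')$, build, for an arbitrary test plan $\boldsymbol{\uppi}$ on $\tilde X$ concentrated (after localization) on $u^{-1}[a,b]$, a corresponding test plan $\boldsymbol{\uppi}'$ on $X'$ via the projection $\pi$ — essentially the map $\gamma\mapsto \pi\circ\gamma$ — using Proposition~\ref{prop-mst0}(1), which bounds $|\dot{\widetilde\gamma}_t|\le e^{-u(\gamma_t)}|\dot\gamma_t|$, together with the change-of-measure Lemma~\ref{lem-Aab} to check that the pushforward has bounded compression and finite kinetic energy. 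Then the defining inequality for $|\nabla g|_{X'}$ along $\boldsymbol{\uppi}'$, pulled back through $\int |f(\gamma_1)-f(\gamma_0)|\,d\boldsymbol{\uppi} = \int|g(\widetilde\gamma_1)-g(\widetilde\gamma_0)|\,d\boldsymbol{\uppi}'$, gives $\int|f(\gamma_1)-f(\gamma_0)|\,d\boldsymbol{\uppi} \le \int\!\!\int_0^1 e^{-u(\gamma_t)}|\nabla g|_{X'}(\pi(\gamma_t))\,|\dot\gamma_t|\,dt\,d\boldsymbol{\uppi}$, which is precisely the statement that $e^{-u}\cdot(|\nabla g|_{X'}\circ\pi)$ is a weak upper gradient for $f$, hence $f\in W^{1,2}(\tilde X)$ with $|\nabla f|_{\tilde X}\le e^{-u}\,(|\nabla g|_{X'}\circ\pi)$ $\tilde m$-a.e.

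Combining this with the already-established reverse inequality of Proposition~\ref{prop-DfDg1} yields the equality $|\nabla f|_{\tilde X}(x) = e^{-u(x)}|\nabla g|_{X'}(\pi(x))$ for $\tilde m$-a.e.\ $x\in u^{-1}[a,b]$, and the two implications together give the ``if and only if''. I expect the main obstacle to be the construction and verification of the lifted test plan $\boldsymbol{\uppi}$ from a test plan $\boldsymbol{\uppi}'$ on $X'$ (rather than the projection direction, which is cleaner): one must produce, from an absolutely continuous curve in $X'$ together with a choice of ``height'', an absolutely continuous curve in $\tilde X$ with controlled metric speed, check measurability of the resulting plan, and use Lemma~\ref{lem-Aab} to see that the compression constant does not blow up — this is exactly the mechanism already deployed in the proof of Proposition~\ref{prop-DfDg1} with the map $T(x,s)=F_s(\iota(x))$ and $\boldsymbol{\uppi}=\hat T_\sharp(\boldsymbol{\uppi}'\times (b'-a')^{-1}\mathcal L^1_{[a',b']})$, so in practice the proof amounts to observing that that same construction, read in the opposite direction, furnishes the missing inequality, and then assembling the pieces together with the locality and cutoff reductions.
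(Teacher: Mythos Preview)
Your approach is correct and essentially matches the paper's: both reduce via Proposition~\ref{prop-DfDg1} to showing that $g\in W^{1,2}(X')$ implies $f\in W^{1,2}(\tilde X)$ with $|\nabla f|_{\tilde X}\le e^{-u}(|\nabla g|_{X'}\circ\pi)$, and both obtain this by pushing a localized test plan on $\tilde X$ down to $X'$ via $\gamma\mapsto\pi\circ\gamma$ and invoking the metric-speed bound of Proposition~\ref{prop-mst0}(1). The paper streamlines slightly by keeping the cutoff $h$ throughout---writing the candidate weak upper gradient as $G(x)=e^{-u(x)}|\nabla g|_{X'}(\pi(x))\,h(u(x))+g(\pi(x))\,|h'|(u(x))$ and verifying it via the pointwise criterion of Proposition~\ref{prop-weakUg}(ii) together with the product rule for $t\mapsto g(\pi(\gamma_t))\cdot h(u(\gamma_t))$---so your reductions to $h\equiv1$ and to $0\le a<b$ are unnecessary here (the latter especially, since Proposition~\ref{prop-mst0}(1) already holds on all of $\tilde X$), and note that your final paragraph's talk of ``lifting'' is a slip: it is projection that is needed for this direction, as you correctly stated earlier.
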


\begin{proof}
By Proposition \ref{prop-DfDg1} it is enough to prove that if $g \in W^{1,2}(X',d',m')$ then $f \in W^{1,2}(\tilde X, d, \tilde m)$
and $|\nabla f|_{\tilde X} (x) \leq e^{-u(x)} |\nabla g|_{X'} (\pi(x))$ holds for $\tilde m$-a.e.\ $x \in u^{-1}[a,b]$. Let $G: \tilde X \to \mathbb R$ be given by 
\begin{equation}
G(x)= e^{-u(x)} |Dg|_{X'} (\pi(x)) h(u(x)) + g(\pi(x)) |h'| (u(x)).
\end{equation}
We will show that $G$ is a weak upper gradient of $f$. Notice that $G$ is in $L^2(\tilde m)$ and that $G(x)= e^{-u(x)} |\nabla g|_{X'} (\pi(x))$ for $x \in u^{-1}[a,b]$. 

For $x \in \supp (f)$ following the same arguments of the proof of Theorem 4.19 in [AGS114] (this is the property of weak gradient being a local object) it is sufficient to check the definition of weak upper gradients for $f$ using test plans $\boldsymbol{\uppi}$ such that  for each $t \in[0,1]$, 
\[
\gamma_t \subset A(x,r)= \{ y \in \tilde X \, | \,   u(y) \in [u(x)-r,u(x)+ r], \,\, d'(\pi(x),\pi(y)) \leq r \}
\]
 and $\gamma \in  \supp(\boldsymbol{\uppi})$.
Fix such $\boldsymbol{\uppi}$.  By  (2) in Proposition \ref{prop-mst0}, the map $\hat \pi: C([0,1],  \overline{A(x,r)}) \to C([0,1],  X')$ given by $\hat \pi (\gamma)= \pi \circ \gamma$ is Lipschitz.  Arguing as in the proof of Proposition \ref{prop-DfDg1}, we conclude that $\boldsymbol{\uppi}'= \hat \pi_\sharp \boldsymbol{\uppi}$ is a test plan on $X'$.

Since $g \in W^{1,2}(X')$ and the way $\boldsymbol{\uppi}'$ was defined, by Proposition \ref{prop-weakUg}, for $\boldsymbol{\uppi}$-a.e. $\gamma$ the map $t \mapsto g( \hat \pi(\gamma)_t)$ is equal a.e. on $[0,1]$ and $\{0,1\}$ to an absolutely continuous map $g_{\hat \pi(\gamma)}$ such that for a.e. $t \in [0,1]$
\begin{equation}
|g'_{\hat \pi(\gamma)}|(t) \leq |\nabla g|_{X'} ( \hat \pi(\gamma)_t )| \hat \pi\dot(\gamma)_t | \leq  e^{-u(\gamma_t)}  |\nabla g|_{X'} (\pi(\gamma_t) |\dot\gamma_t|.
\end{equation}
In the last inequality we used  (1) from Proposition \ref{prop-mst0}.

For any absolutely continuous curve $\gamma$ in $\tilde X$,  $h\circ u \circ \gamma$ is absolutely continuous with derivative $|(h\circ u \circ \gamma)'| \leq |h'| (u \circ \gamma) |\dot\gamma|$. Hence, for $\boldsymbol{\uppi}$-a.e. $\gamma$ the map $t \mapsto f(\gamma_t)=g(\pi(\gamma_t))h(u(\gamma_t))$ is equal a.e. on $[0,1]$ and $\{0,1\}$ to the absolutely continuous map $f_\gamma(t)= g_{\hat \pi(\gamma)}(t) h(u ( \gamma_t))$ such that for a.e. $t \in [0,1]$ satisfies
\begin{equation}
|f'_\gamma|(t) \leq  \big(  e^{-u(\gamma_t)}  |\nabla g|_{X'} (\pi(\gamma_t)) h(u ( \gamma_t)) +  g(\pi(\gamma_t)) |h'| (u \circ \gamma_t) \big) |\dot\gamma_t|.
\end{equation}
This proves that for $\tilde m$-a.e. $x \in u^{-1}[a,b]$
\begin{equation*}
|\nabla f|_{\tilde X} (x) \leq e^{-u(x)} |\nabla g|_{X'} (\pi(x)).
\end{equation*}
\end{proof}

\begin{prop}\label{prop-SobtoLip} 
Under the assumptions of Definition \ref{def-X'}  and Definition \ref{def:mmp}, the space  $(X',d',m')$ 
is infinitesimally Hilbertian, almost everywhere locally doubling and a measured-length space. Hence, it satisfies the Sobolev to Lipschitz property.
\end{prop}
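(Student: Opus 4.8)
The statement splits into three assertions about $(X',d',m')$: infinitesimal Hilbertianity, being a.e.\ locally doubling, and being a measured-length space; the Sobolev-to-Lipschitz property then follows automatically once we invoke Theorem~\ref{thm:StoLipGH} applied to the warped product $\mathbb{R}\times_{e^t}X'$, or more directly from the characterization of the Sobolev-to-Lipschitz property for a.e.\ locally doubling measured-length spaces. The strategy is to transfer each property from $\tilde X$ (where it holds because $\tilde X$ is $\RCDst(K,N)$) to $X'$ via the projection map $\pi:\tilde X\to X'$ and the ``slicing'' identity $\tilde m(A_a^b)=m'(A)\int_a^b e^{(N-1)s}\,\mathrm{d}s$ of Lemma~\ref{lem-Aab}, together with the local bi-Lipschitz control of $\pi$ from Proposition~\ref{prop-mst0}(2) and of $F_t$ from Theorem~\ref{thm:rapprcont}.

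\emph{Infinitesimal Hilbertianity.} First I would show that the parallelogram identity for minimal weak upper gradients holds on $X'$. Given $g_1,g_2\in S^2_{\mathrm{loc}}(X')$, one pulls them back to $\tilde X$ by setting $f_i(x)=g_i(\pi(x))h(u(x))$ with a Lipschitz cutoff $h$ equal to $1$ on a chosen interval $[a,b]$, as in Theorem~\ref{thm-DfDg2}. That theorem gives, on $u^{-1}[a,b]$, the pointwise identity $|\nabla f_i|_{\tilde X}(x)=e^{-u(x)}|\nabla g_i|_{X'}(\pi(x))$, and the same for $f_1\pm f_2$ (since these are of the same product form with the same $h$). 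Since $(\tilde X,\tilde d,\tilde m)$ is infinitesimally Hilbertian, the parallelogram law holds for $f_1,f_2$ on $\tilde X$; restricting to $u^{-1}[a,b]$, cancelling the common factor $e^{-2u(x)}$ and pushing forward under $\pi$ using Lemma~\ref{lem-Aab} yields the parallelogram law for $g_1,g_2$ on the set $\{u\in[a,b]\}$ projected down, i.e.\ on all of $X'$ once $[a,b]$ is taken large. By the locality of minimal weak upper gradients and the arbitrariness of $[a,b]$, $(X',d',m')$ is infinitesimally Hilbertian.

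\emph{A.e.\ locally doubling and measured-length.} For the doubling property I would use that $\tilde X$, being $\RCDst(K,N)$, is locally doubling, and that $\pi$ is locally Lipschitz with a local Lipschitz inverse given by a branch of $F$ composed with the inclusion $\iota$ (which is $1$-Lipschitz by Lemma~\ref{lem-d'}); thus $\pi$ restricted to small balls is bi-Lipschitz, and by Lemma~\ref{lem-Aab} it distorts the measure $m'$ and $\tilde m$ only by the bounded factor $\int_a^b e^{(N-1)s}\,\mathrm{d}s$ on a slice of bounded width. Hence balls $B_r^{d'}(x')\subset X'$ correspond, via a bi-Lipschitz chart into a slice of $\tilde X$, to sets comparable to balls in $\tilde X$, and the doubling inequality $m'(B_{2r}(y'))\le Cm'(B_r(y'))$ for small $r$ follows from the one in $\tilde X$. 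For the measured-length property, $\RCDst(K,N)$ spaces are measured-length (this is part of Gigli--Han's framework, or follows from the fact that $\tilde X$ is essentially non-branching and a length space with the $\mathsf{CD}^*$ structure); one transports the family of plans $\pi^{\varepsilon_0,\varepsilon_1}$ on $\tilde X$ witnessing this property to $X'$ by composing with $\hat\pi=\pi\circ(\cdot)$ and using Proposition~\ref{prop-mst0}(1) to control metric speeds, exactly as in Proposition~\ref{prop-DfDg1}; the normalization of the endpoint marginals is handled by Lemma~\ref{lem-Aab}, and the asymptotic kinetic-energy bound $\varlimsup\int\!\int_0^1|\dot\gamma_t^\cdot|^2\,dt\,d\pi^{\varepsilon_0,\varepsilon_1}\le d'^2(x_0,x_1)$ follows from the corresponding bound on $\tilde X$ together with the speed comparison and the fact that $d'$ dominates $\tilde d$ on $X'$. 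The final clause, the Sobolev-to-Lipschitz property, is then immediate from Theorem~\ref{thm:StoLipGH} (or its proof: a.e.\ locally doubling plus measured-length implies Sobolev-to-Lipschitz).

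\emph{Main obstacle.} The delicate point is the measured-length property: one must verify that after projecting the witnessing plans from $\tilde X$ to $X'$ the three bullet conditions of Definition~\ref{def:measL} survive, in particular that the endpoint marginals are exactly the normalized restrictions of $m'$ to small $d'$-balls (not merely of comparable mass), which forces a careful bookkeeping of how $\pi$ maps $\tilde d$-balls to $d'$-balls and how Lemma~\ref{lem-Aab} interacts with the $\varepsilon_i\downarrow 0$ limit; the non-compactness of $X'$ (so $m'$ is only locally finite, and the slices $u^{-1}[a,b]$ must be chosen adaptively around the base points) is what makes this more involved than the compact case of \cite{DePG}, and is handled, as elsewhere in the paper, by working locally and then using locality of weak upper gradients and of the doubling/length conditions.
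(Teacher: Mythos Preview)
Your infinitesimal-Hilbertianity argument is the paper's (via Theorem~\ref{thm-DfDg2}), and your doubling sketch is close to it: the paper makes the comparison precise through the cylinder sets $A(x',r)=\{x\in\tilde X:\ |u(x)|\le r,\ d'(x',\pi(x))<r\}$, the sandwich $B_{r/L}(\iota(x'))\subset A(x',r)\subset B_{2r}(\iota(x'))$ in $(\tilde X,\tilde d)$, and Lemma~\ref{lem-Aab}.

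The measured-length part, however, has a real gap. If you push forward the witnessing plans of $\tilde X$ by $\hat\pi$, the endpoint marginals become $\pi_\sharp$ of normalized $\tilde m$ restricted to $\tilde d$-balls around $\iota(x_i)$; these are \emph{not} normalized indicators of $d'$-balls, because $\tilde d$-balls do not project to $d'$-balls and Lemma~\ref{lem-Aab} applies only to cylinder sets $A_a^b$, not to $\tilde d$-balls---no amount of bookkeeping repairs the exact-marginal requirement in Definition~\ref{def:measL}. The kinetic-energy bound also fails as stated: a $\tilde d$-geodesic between two points of $u^{-1}(0)$ typically dips to negative $u$ (as it does in $\mathbb H^N$), so the factor $e^{-u(\gamma_t)}$ in Proposition~\ref{prop-mst0}(1) is large and the projected energy is not controlled by $d'^{\,2}$; the inequality $\tilde d\le d'$ goes the wrong way here. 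The paper's construction is different and avoids both issues: it takes a $d'$-geodesic in $X'$, subdivides it at $n\approx 1/\sqrt{\varepsilon'}$ points $x_i$, puts at each the measure $\mu_i\propto\tilde m|_{A(x_i,\varepsilon_i)}$ on a \emph{cylinder} in $\tilde X$ (so that $\pi_\sharp\mu_i$ is \emph{exactly} $m'(B'_{\varepsilon_i}(x_i))^{-1}m'|_{B'_{\varepsilon_i}(x_i)}$ by Lemma~\ref{lem-Aab}), takes the optimal geodesic plans in $\tilde X$ between consecutive $\mu_i$, glues them, and only then projects by $\hat\pi$. The subdivision keeps each step short enough that the $\tilde X$-geodesics stay in a thin slab around $u^{-1}(0)$, so $e^{-u(\gamma_t)}\to 1$ as $\varepsilon'\downarrow 0$ and the limiting kinetic energy is $\le d'^{\,2}(x_0,x_1)$.
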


For the definition of locally doubling and measured-length space see Definition \ref{def:measL} and Definition \ref{def:aeLoc} in Subsection \ref{ssec-warp}. 

\begin{proof}
By Theorem \ref{thm-DfDg2} and the infinitesimally Hilbertianity of $(\tilde X, \tilde d, \tilde m)$ it is easy to see that  $(X',d',m')$ is infinitesimally Hilbertian.  We now prove that $(X',d',m')$ is everywhere locally doubling. 

\bigskip 
 
Given $x' \in X'$ and $R > 0$, for $r < R/2$ define
\begin{equation}\label{eq-A(x,r)}
A(x',r) = \{ x \in \tilde X \, | \, u(x) \in [-r,r], \, d'(x', \pi(x)) < r \} \subset B(\iota (x'),2r).
\end{equation}

By (2) in Proposition \ref{eq-prmu}, there exists a Lipschitz constant $L >1$ for $\pi : \overline{B(\iota(x'), R)} \to X'$. Notice that $B(\iota (x'), r/L)  \subset B(\iota (x'),2r)$ because $L > 1$. Since $u$ is 1-Lipschitz and by the triangle inequality,  if $y \in B(\iota(x'), 2r)$ then $|u(y)| \leq u(\iota(x')) + \tilde d(y, \iota(x')) \leq 2r$. Thus,  $B(\iota(x'),2r) \subset \overline{B(\iota(x'),R)}$.  This shows $d'(\pi(y), x') \leq r$ for any $y \in B(\iota (x'),r/L)$. Since $u$ is $1$-Lipschitz it follows that 
$u(y) \leq r/L < r$ for any $y \in B(\iota (x'), r/L)$. Thus, 
\begin{equation}\label{eq-m2}
B(\iota (x'), r/L) \subset A(x',r).
\end{equation}

Equation (\ref{eq-Aab}) gives 
\begin{equation}\label{eq-m1}
\tilde m (A(x',r)) = m'(B'(x',r)) \int_{-r}^r e^{(N-1)s}\, \mathrm{d}s. 
\end{equation}
Let  $c(r)= \int_{-r}^r e^{(N-1)s} ds$.  Starting with equation  (\ref{eq-m1}), then using equation (\ref{eq-m2}), that $(\tilde X, \tilde d, \tilde m)$ is locally doubling with constant $C_{\tilde X}$ \cite{Villani09}, equation (\ref{eq-A(x,r)}) and equation  (\ref{eq-m1}) once more, we estimate 
\begin{align*}
m'(B'(x', r))  = & \,c^{-1}(r) \tilde  m (A(x,r)) \geq  c^{-1}(r) \tilde m (B(\iota (x'), r/L)) \\
\geq &\,C_{\tilde X}  c^{-1}(r) \tilde  m (B(\iota (x'), r/2L))  \geq  C_{\tilde X} c^{-1}(r) \tilde  m (A(x,r/4L)) \\
= &\, C_{\tilde X} c^{-1}(r) c(r/4L)  m' (B'(x', r/4L)). 
\end{align*}
That is,  $m'(B'(x', r))   \geq  C m' (B'(x', r/4L))$, for $C=  C_{\tilde X} c^{-1}(r) c(r/4L)$. Therefore $(X',d',m')$ is  almost everywhere locally doubling.

\bigskip
Now we show that $(X',d',m')$ is a measured-length space. Let $x_0, x_1 \in X'$, define $\varepsilon =1$ and 
take $\varepsilon_0, \varepsilon_1 \in (0,\varepsilon]$. Let $\tilde \gamma$ be a geodesic in $X'$ from $x_0$ to $x_1$, and
$x_i= \tilde \gamma_{i/n}$ for $i=0,1,...,n$, $n= \lfloor1+ 1/\sqrt{\varepsilon'} \rfloor$  and $\varepsilon'=\max\{ \varepsilon_0, \varepsilon_1\}$. 

Let $\varepsilon_i= \varepsilon_0 + \frac{i}{n}(\varepsilon_1-\varepsilon_0)$, and define
$\mu_i^{\varepsilon_0,\varepsilon_1}= (\tilde m (A(x_i,\varepsilon_i))^{-1} \tilde m|_{A(x_i,\varepsilon_i)}$. Here $A(x_i,\varepsilon_i)$ is defined by equation (\ref{eq-A(x,r)}). From equation (\ref{eq-Aab}), 
\begin{equation}\label{eq-prmu}
\pi_\sharp \mu_i^{\varepsilon_0,\varepsilon_1}= (m' (B'(x_i, \varepsilon_i)))^{-1} m'|_{B'(x_i,\varepsilon_i)}.
\end{equation}
Let $\boldsymbol{\uppi}_i^{\varepsilon_0,\varepsilon_1}$ be the only optimal geodesic plan from  
$\mu_i^{\varepsilon_0,\varepsilon_1}$ to $\mu_{i+1}^{\varepsilon_0,\varepsilon_1}$ ([GRS16]). By the triangle inequality and our choices of $x_i$ and $\varepsilon_i$, for $y_i \in A(x_i,\varepsilon_i)$ we have 
\[
\tilde d(y_i,y_{i+1}) \leq 2\varepsilon_i + d'(x_i,x_{i+1}) + 2\varepsilon_{i+1} \leq 4\varepsilon' + \frac{1}{n}d'(x_0,x_{1}).
\]
  It follows that 
\begin{equation}\label{eq-W22}
\int \int_0^1 |\dot\gamma_t|^2 \, \mathrm{d}t\, \mathrm{d} \boldsymbol{\uppi}_i^{\varepsilon_0,\varepsilon_1} (\gamma) = W^2_2( \mu_i^{\varepsilon_0,\varepsilon_1},  \mu_{i+1}^{\varepsilon_0,\varepsilon_1}) \leq  (4\varepsilon' + \frac{1}{n}d'(x_0,x_{1}))^2.
\end{equation}
From the definition of $\varepsilon$ and $\varepsilon'$ for  $\boldsymbol{\uppi}_i^{\varepsilon_0,\varepsilon_1}$ a.e. $\gamma$, $u (\gamma_t ) \subset  [-\varepsilon',\varepsilon'] \subset [-1, 1]$.

\bigskip

Gluing the plans $\boldsymbol{\uppi}_i^{\varepsilon_0,\varepsilon_1}$ we construct a plan $\boldsymbol{\uppi}^{\varepsilon_0,\varepsilon_1}$ that satisfies
\begin{enumerate}[(i)]
\item
\[
(\op{Restr}^{\frac {i+1}{n}}_{\frac{i}{n}})_\sharp \boldsymbol{\uppi}^{\varepsilon_0,\varepsilon_1} = \boldsymbol{\uppi}_i^{\varepsilon_0,\varepsilon_1}, \quad  i=0,1,...,n,
\]
where $\op{Restr}^{b}_{a}$ is the restriction operator to $[a,b]$.
\item
\begin{align}\label{eq-integral}
\int \int_0^1 |\dot{\gamma}_t|^2\, \mathrm{d}t \, \mathrm{d} \boldsymbol{\uppi}^{\varepsilon_0,\varepsilon_1} (\gamma) = &\, n\sum_{i=0}^{n-1} \int \int_0^1 |\dot{\gamma}_t|^2 \, \mathrm{d}t \, \mathrm{d} \boldsymbol{\uppi}_i^{\varepsilon_0,\varepsilon_1} (\gamma)  \nonumber \\
\leq & n^2   (4\varepsilon' + \frac{1}{n}d'(x_0,x_{1}))^2 \leq (8 \sqrt{\varepsilon'}+ d'(x_0,x_{1}))^2.
\end{align}
Note that $n=\lfloor1+ 1/\sqrt{\varepsilon'} \rfloor$ and $\varepsilon' < 1$ implies $4n\varepsilon' \leq 8 \sqrt{\varepsilon'}$. Then using (\ref{eq-W22}) and taking into account the rescaling factor we get the previous inequality. 
\item for
$\boldsymbol{\uppi}^{\varepsilon_0,\varepsilon_1}$ a.e. $\gamma$,
\begin{equation}\label{eq-uEst2}
u (\gamma_t ) \subset [-\varepsilon' , \varepsilon'] \subset [-1, 1].
\end{equation}
\end{enumerate}

\bigskip

Define:
\begin{equation}\label{eq-planMS}
\overline{\boldsymbol{\uppi}}^{\varepsilon_0,\varepsilon_1} := \pi_\sharp \boldsymbol{\uppi}^{\varepsilon_0,\varepsilon_1}
\end{equation}

From (\ref{eq-prmu})  we get 
\[
 {e_i}_\sharp \bar{\boldsymbol{\uppi}}^{\varepsilon_0, \varepsilon_1} = \frac 1 {m'(B'(x_i,\varepsilon_i) )}  m'|_{B' (x_i, \varepsilon_i)}\,\,\,\,i=0,1. 
 \]

By (1) in Proposition (\ref{eq-prmu}) we know that 
\begin{equation*}
\int \int_0^1 |\dot{\gamma}_t|^2 \, \mathrm{d}t \,\mathrm{d} \bar{\boldsymbol{\uppi}}^{\varepsilon_0,\varepsilon_1} (\gamma) \leq 
\int \int_0^1  e^{u(\gamma_t)}   |\dot{\gamma}_t|^2\, \mathrm{d}t\, \mathrm{d} \boldsymbol{\uppi}^{\varepsilon_0,\varepsilon_1} (\gamma). 
\end{equation*}
From equations (\ref{eq-integral}), (\ref{eq-uEst2}), and $\varepsilon'=\max\{ \varepsilon_0, \varepsilon_1\}$ it follows that 
\begin{equation*}
\limsup_{\varepsilon_0, \varepsilon_1 \downarrow 0}   
 \int \int_0^1 |\dot{\gamma}_t|^2 \,\mathrm{d}t \,\mathrm{d} \bar{\boldsymbol{\uppi}}^{\varepsilon_0,\varepsilon_1} (\gamma) \leq 
\limsup_{\varepsilon_0, \varepsilon_1 \downarrow 0}     
e^{\varepsilon ' }(8 \sqrt{\varepsilon '} + d'(x_0,x_{1}))^2= d'(x_0,x_{1})^2.
\end{equation*}
\end{proof}


\section{$(\tilde X, \tilde d, \tilde m)$ is isomorphic to $(X'_{\omega},d'_{\omega}, m'_{\omega})$}\label{sec-Isom}

Let $X'_w$ denote the warped product of $(X',d',m')$ with warping functions $w_{d'}, w_{m'}:\mathbb R \to \mathbb R $ given by  $w_{m'}(t)=e^{(N-1)t}$ and $w_{d'}(t)=e^{t}$. In Subsection \ref{ssec-measPres} we prove that there is a locally bi-Lipschitz map from $(\tilde X, \tilde d, \tilde m)$ to $(X'_{\omega},d'_{\omega}, m'_{\omega})$ that preserves the measures.  Then we show that the spaces are isomorphic by showing that their $W^{1,2}$ spaces are isomorphic.

\subsection{$\tilde {X}$ is measure preserving homeomorphic to a warped product }\label{ssec-measPres}

Here we prove that there is a locally bi-Lipschitz map from $(\tilde X, \tilde d, \tilde m)$ to $(X'_{\omega},d'_{\omega}, m'_{\omega})$ that preserves the measures.

Proceeding as in Proposition \ref{prop-mst0}, or directly using the definition of $d'_w$,  we obtain the following. 

\begin{prop}\label{prop-warpmet}
For all  $(x'_0,t_0) \in X'_w$ and $r>0$,    
\begin{equation*}
d'(x',y')  \leq e^{-t_0+3r} d'_w((x',t),(y',s)),
\end{equation*} 
for all  $(x',t), (y',s) \in B((x_0,t_0),r)$.
\end{prop}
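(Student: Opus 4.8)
The claimed inequality is the ``reverse direction'' of part (2) of Proposition \ref{prop-mst0}: there we saw that the projection $\pi:\tilde X\to X'$ contracts distances (up to the exponential factor $e^{-u(x_0)+3r}$ on a ball), and here we want the analogous statement for the map $X'_w\to\tilde X$ that should realize the warped product structure. The natural candidate for this map is $\Phi:X'_w\to\tilde X$, $\Phi(x',t)=F_t(\iota(x'))$, where $\iota:X'\to\tilde X$ is the $1$-Lipschitz inclusion from Lemma \ref{lem-d'} and $F$ is the (continuous representative of the) Regular Lagrangian Flow. The statement to prove is then precisely an estimate comparing $d'$ on $X'$ with $d'_w$ on $X'_w$, restricted to a ball, and my plan is to mimic the curve-by-curve argument of Proposition \ref{prop-mst0}(2) combined with Proposition \ref{prop-ms.prj} (or rather its localized consequence).

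\textbf{First step: reduce to a metric-speed estimate along curves.} Given $(x',t),(y',s)\in B_r(x_0,t_0)\subset X'_w$, take an almost-minimizing absolutely continuous curve $\eta=(\eta^{X'},\eta^{\mathbb R})$ in $X'_w$ joining them, so that $l_w[\eta]$ is as close as we like to $d'_w((x',t),(y',s))$. By definition of the warped length, $l_w[\eta]=\int_0^1\sqrt{|\dot\eta^{X'}_r|^2+e^{2\eta^{\mathbb R}_r}|\dot\eta^{X'}_r|^2_{X'}}\,\mathrm dr$ — more precisely $\int_0^1\sqrt{|\dot\eta^{\mathbb R}_r|^2+e^{2\eta^{\mathbb R}_r}|\dot\eta^{X'}_r|^2_{X'}}\,\mathrm dr$ — so in particular $\int_0^1 e^{\eta^{\mathbb R}_r}|\dot\eta^{X'}_r|_{X'}\,\mathrm dr\le l_w[\eta]$. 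The point $\eta^{\mathbb R}_r$ stays in $[t_0-r,t_0+r]$ since, from the warped length, $|\eta^{\mathbb R}_r-t_0|\le d'_w$-distance to the center $\le r$; hence $e^{\eta^{\mathbb R}_r}\le e^{t_0+r}$. Therefore the projection of $\eta$ to $X'$ (the curve $\eta^{X'}$) has $L(\eta^{X'})=\int_0^1|\dot\eta^{X'}_r|_{X'}\,\mathrm dr\le e^{-t_0+r}\int_0^1 e^{\eta^{\mathbb R}_r}|\dot\eta^{X'}_r|_{X'}\,\mathrm dr\le e^{-t_0+r}\,l_w[\eta]$, which already gives $d'(x',y')\le e^{-t_0+r}\,d'_w((x',t),(y',s))$.

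\textbf{Where the $3r$ comes in and the main obstacle.} The subtlety — exactly parallel to Proposition \ref{prop-mst0}(2), which is why the proposition is stated ``proceeding as in'' that one — is that the above manipulation implicitly uses a relationship between the metric speed of $\eta^{X'}$ measured \emph{in $X'$} and quantities controlled by the ambient flow; the clean bound $e^{-t_0+r}$ may not be directly available because $X'$ is defined as an \emph{intrinsic} (induced-length) metric on the slice $u^{-1}(0)$, and one has to relate curves in $X'_w$ to curves in $\tilde X$ via $\Phi$ and then re-project, picking up the factor $e^{-u(x_0)+3r}$ as in Proposition \ref{prop-mst0}(2). Concretely: push $\eta$ forward by $\Phi$ to a curve $\Phi\circ\eta$ in $\tilde X$ lying in a ball (or in $u^{-1}([t_0-r,t_0+r])$ near $\iota(x_0)$), use Theorem \ref{thm:rapprcont}(iii) and the warped-length formula to control its metric speed by $\max\{1,e^{t}\}$ times the warped integrand, then apply the localized version of Proposition \ref{prop-ms.prj}/Proposition \ref{prop-mst0}(1) to project back down to $X'$, which contributes the $e^{-u(\cdot)+\text{const}\cdot r}$ factor. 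Tracking all three sources of $r$ (the oscillation of $\eta^{\mathbb R}$, the oscillation of $u$ along $\Phi\circ\eta$, and the radius needed for the local Lipschitz constant of $\pi$) yields the exponent $-t_0+3r$, noting $u(\iota(x'_0))=0$ so $t_0$ plays the role of $u(x_0)$ here. The main obstacle is thus purely bookkeeping: making sure the curve $\Phi\circ\eta$ genuinely stays inside the region where the local Lipschitz estimate for $\pi$ (with constant $e^{-t_0+3r}$) is valid, which forces a minor shrinking of $r$ or an $\varepsilon$-argument, exactly as in the proof of Proposition \ref{prop-mst0}. No new analytic input is needed beyond Theorems \ref{thm:rapprcont}, \ref{thm-g}, Proposition \ref{prop-ms.prj}, and Lemma \ref{lem-d'}.
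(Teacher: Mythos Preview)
Your first step contains the whole proof, but it also contains the error that makes you think a second step is needed. The claim ``$\eta^{\mathbb R}_r$ stays in $[t_0-r,t_0+r]$ since $|\eta^{\mathbb R}_r-t_0|\le d'_w$-distance to the center $\le r$'' is unjustified: only the \emph{endpoints} $(x',t),(y',s)$ lie in $B_r(x'_0,t_0)$; an almost-minimizing curve between them has no reason to stay inside that ball. What you do have is that the $\mathbb R$-projection is $1$-Lipschitz for $d'_w$ (immediate from the warped-length formula), so for a curve $\eta$ with $l_w[\eta]\le d'_w((x',t),(y',s))+\epsilon<2r+\epsilon$, every intermediate point satisfies
\[
|\eta^{\mathbb R}_r - t|\le l_w[\eta|_{[0,r]}]\le 2r+\epsilon,\qquad\text{hence}\qquad \eta^{\mathbb R}_r\ge t-(2r+\epsilon)\ge t_0-3r-\epsilon.
\]
Plugging this lower bound into $|\dot\eta^{X'}_r|\le e^{-\eta^{\mathbb R}_r}\sqrt{|\dot\eta^{\mathbb R}_r|^2+e^{2\eta^{\mathbb R}_r}|\dot\eta^{X'}_r|^2}$, integrating, and letting $\epsilon\to 0$ gives exactly $d'(x',y')\le e^{-t_0+3r}d'_w((x',t),(y',s))$. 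This is word-for-word the argument of Proposition~\ref{prop-mst0}(2), with $\eta^{\mathbb R}$ playing the role of $u\circ\gamma$ and the $1$-Lipschitz $\mathbb R$-projection replacing the $1$-Lipschitz $u$; that is all the paper means by ``proceeding as in Proposition~\ref{prop-mst0}''.

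Your second step---pushing $\eta$ to $\tilde X$ via $\Phi$, invoking Theorem~\ref{thm:rapprcont} and Proposition~\ref{prop-ms.prj}, and projecting back---is therefore unnecessary. The $3r$ does not come from three distinct analytic sources; it is pure triangle-inequality bookkeeping ($r$ to reach the center from an endpoint, plus $2r$ for the length of the curve), exactly as in Proposition~\ref{prop-mst0}(2). Note also that the inequality you actually used in the first step requires the \emph{lower} bound $\eta^{\mathbb R}_r\ge t_0-r$, not the upper bound $e^{\eta^{\mathbb R}_r}\le e^{t_0+r}$ you wrote.
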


We are now ready to construct the locally  bi-Lipschitz maps.

\begin{prop}\label{prop:homeo}
Let $T: X'_w \to \tilde X$ and  $S: \tilde X \to X'_w $ be defined by 
$$T(x',t)=F_t(\iota (x'))$$
and 
$$S(x)=(\pi (x), u(x)).$$
Then $T$ and $S$ are inverses of each other,  $S$ is $2$-Lipschitz and
$T$ is locally Lipschitz.
\end{prop}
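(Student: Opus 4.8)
The plan is to verify the four claims in turn: that $S\circ T=\mathrm{id}_{X'_w}$, that $T\circ S=\mathrm{id}_{\supp(\tilde m)}$, that $S$ is $2$-Lipschitz, and that $T$ is locally Lipschitz. The first two are essentially bookkeeping with the definitions and the properties of the flow already established. For $S\circ T$, note that $S(T(x',t))=S(F_t(\iota(x')))=\big(\pi(F_t(\iota(x'))),\,u(F_t(\iota(x')))\big)$. Since $\iota(x')\in u^{-1}(0)$ and the trajectories of $F$ are geodesics on which $u$ increases at unit speed (Lemma~\ref{lem-distance-flow-representatives}), we get $u(F_t(\iota(x')))=t$, hence $\pi(F_t(\iota(x')))=F_{-t}(F_t(\iota(x')))=\iota(x')$, which we identify with $x'\in X'$. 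For $T\circ S$, we compute $T(S(x))=T(\pi(x),u(x))=F_{u(x)}(\iota(\pi(x)))=F_{u(x)}(F_{-u(x)}(x))=x$ by the semigroup property of the (continuous representative of the) flow, Theorem~\ref{thm:rapprcont}~(i). So $T$ and $S$ are mutually inverse bijections between $X'_w$ and $\supp(\tilde m)$.

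Next I would prove that $S$ is $2$-Lipschitz. We need $d'_w(S(x),S(y))=d'_w\big((\pi(x),u(x)),(\pi(y),u(y))\big)\le 2\,\tilde d(x,y)$. The natural approach is to take an absolutely continuous (near-geodesic) curve $\gamma$ in $\tilde X$ from $x$ to $y$ and push it into $X'_w$ via $s\mapsto(\pi(\gamma_s),u(\gamma_s))$, then estimate its $l_w$-length. Writing $\gamma^{X'}_s=\pi(\gamma_s)$ and $\gamma^{\mathbb R}_s=u(\gamma_s)$, the warped length is $\int_0^1\sqrt{|\dot{\gamma}^{\mathbb R}_s|^2+e^{2u(\gamma_s)}|\dot{\gamma}^{X'}_s|^2}\,ds$. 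Since $u$ is $1$-Lipschitz, $|\dot{\gamma}^{\mathbb R}_s|\le|\dot\gamma_s|$; and by part~(1) of Proposition~\ref{prop-mst0}, $|\dot{\gamma}^{X'}_s|\le e^{-u(\gamma_s)}|\dot\gamma_s|$, so $e^{2u(\gamma_s)}|\dot{\gamma}^{X'}_s|^2\le|\dot\gamma_s|^2$. Hence the integrand is $\le\sqrt{2}\,|\dot\gamma_s|$, giving $d'_w(S(x),S(y))\le\sqrt2\,\tilde d(x,y)\le 2\,\tilde d(x,y)$; in fact the $\sqrt2$ constant suffices. One should be mildly careful that Proposition~\ref{prop-mst0}(1) is a statement about test plans/a.e.\ curves, so this should either be run through a test-plan argument as in the proof of that proposition, or one should invoke the pointwise locally Lipschitz bound in part~(2) of Proposition~\ref{prop-mst0} together with the $1$-Lipschitz control on $u$ to estimate $d'_w$ directly on small balls and then chain.

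For the local Lipschitz property of $T$, fix $(x'_0,t_0)\in X'_w$, choose $r>0$, and take $(x',t),(y',s)\in B_r(x'_0,t_0)$. We estimate
\[
\tilde d(T(x',t),T(y',s))=\tilde d(F_t(\iota(x')),F_s(\iota(y')))\le \tilde d(F_t(\iota(x')),F_t(\iota(y')))+\tilde d(F_t(\iota(y')),F_s(\iota(y'))).
\]
The second term equals $|t-s|$ by Lemma~\ref{lem-distance-flow-representatives} (or Theorem~\ref{thm:rapprcont}(i)), which is $\le d'_w((x',t),(y',s))$. For the first term, Theorem~\ref{thm:rapprcont}(ii) gives $\mathrm{Lip}(F_t)\le\max\{e^t,1\}$, and on $B_r(x'_0,t_0)$ we have $t\le t_0+r$, so this term is $\le e^{\max\{t_0+r,0\}}\,\tilde d(\iota(x'),\iota(y'))$. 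Finally $\iota$ is $1$-Lipschitz from $(X',d')$ into $(\tilde X,\tilde d)$ by Lemma~\ref{lem-d'}, and $d'(x',y')\le d'_w((x',t),(y',s))$ directly from the definition of the warped length (dropping the $\mathbb R$-component and using $w_d\ge$ a positive lower bound on the relevant compact range of $t$), or more cleanly via Proposition~\ref{prop-warpmet} which already packages exactly this bound on a ball. Combining, $\tilde d(T(x',t),T(y',s))\le\big(e^{\max\{t_0+r,0\}}C_r+1\big)\,d'_w((x',t),(y',s))$ for a constant $C_r$ coming from Proposition~\ref{prop-warpmet}, so $T$ is Lipschitz on $B_r(x'_0,t_0)$.

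The main obstacle is the careful treatment of Proposition~\ref{prop-mst0}(1) in the proof that $S$ is $2$-Lipschitz: that inequality on metric speeds is formulated $\boldsymbol{\uppi}$-a.e.\ along a test plan, so turning it into a genuine pointwise length estimate for the single pushed-forward curve $s\mapsto S(\gamma_s)$ requires either re-running the test-plan machinery (as in the proof of Proposition~\ref{prop-mst0}, using the plan associated to a minimizing-ish geodesic between $x$ and $y$ localized in a thin $u$-slab) or, more expediently, relying on the pointwise locally Lipschitz statement~(2) of Proposition~\ref{prop-mst0} together with the $1$-Lipschitzness of $u$ to bound $d'_w(S(x),S(y))$ on small balls $B_\rho(x_0)$ and then chaining along a near-geodesic. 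Everything else is a direct assembly of results already in hand.
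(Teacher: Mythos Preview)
Your proof is correct and follows essentially the same approach as the paper's own proof: the inverse identities are direct from the semigroup property and Lemma~\ref{lem-distance-flow-representatives}; for $T$ locally Lipschitz you split via the triangle inequality and combine Theorem~\ref{thm:rapprcont}(ii) with Proposition~\ref{prop-warpmet}, exactly as the paper does; for $S$ you push a geodesic through $S$ and bound the warped length using $|\dot u(\gamma_t)|\le|\dot\gamma_t|$ and Proposition~\ref{prop-mst0}(1), again as in the paper (you obtain $\sqrt{2}$ while the paper uses $a+b\ge\sqrt{a^2+b^2}$ to get $2$, but this is cosmetic). Your cautionary remark about Proposition~\ref{prop-mst0}(1) being a test-plan statement is well taken---the paper applies it to a single geodesic without comment---but this is a shared minor gap rather than a divergence in strategy.
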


\begin{proof} 
It is clear that $T \circ S={\rm Id}_{\tilde X}$ and $S\circ T={\rm Id}_{X'_{\omega}}$.  Let us prove that $T$ is locally Lipschitz. 
Let $(x'_0,t_0) \in X'_w$ and $r>0$.  Consider $(x'_1,t_1),(x'_2,t_2) \in B((x_0,t_0),r)$.   By the triangle inequality, Theorem \ref{thm:rapprcont}, and Proposition \ref{prop-warpmet}, we obtain
\begin{align*}
\tilde d(T(x'_1,t_1),T(x'_2,t_2))  = &\, \tilde d(F_{t_1}(\iota(x'_1)),F_{t_2}(\iota(x'_2)))\\
 \leq & \, \tilde d( F_{t_1}(\iota(x'_1)), F_{t_1}(\iota(x'_2)) ) + \tilde d(   F_{t_1}(\iota(x'_2)), F_{t_2}(\iota(x'_2)) )  \\
\leq &\, \Lip(F_{t_1})d'(x'_1,x'_2) + |t_1-t_2|\\
 \leq & \,\Lip(F_{t_1}) e^{-t_0+3r}  d'_w((x',t_1),(y',t_2)) + d'_w((x',t_1),(y',t_2)). 
\end{align*}
It follows that $T$ is locally Lipschitz.

Now we prove that $S$ is Lipschitz.  
Let $\gamma: [0,1] \to \tilde X$ be a geodesic from $T(x'_1,t_1)$ to $T(x'_2,t_2)$. 
 As  $u: X \to \mathbb R$ is 1-Lipschitz,  the curve $u \circ \gamma$ is absolutely continuous and $| \dot{u}(\gamma_t)| \leq |\dot{\gamma}_t|$. From Proposition \ref{prop-mst0} (1), we know that $ e^{u(\gamma_t)}  |\dot{\tilde \gamma}_t| \leq |\dot{\gamma_t}|$, 
here $\tilde \gamma = \pi \circ \gamma$. Thus, 
\begin{align*}
2 \tilde d(T(x'_1,t_1),T(x'_2,t_2)) &=  2 \int  |\dot{\gamma}_t|\, \mathrm{d}t \\
&\geq \int e^{u(\gamma_t)}  |\dot{\tilde \gamma}_t| +  | \dot{u}(\gamma_t)|\, \mathrm{d}t\\
&\geq  \int  \sqrt{e^{2u(\gamma_t)}  |\dot{\tilde \gamma}_t|^2 +  | \dot{u}(\gamma_t)|^2} \, \mathrm{d}t \\
&\geq\   d'_w\big((x'_1,t_1),(x'_2,t_2)\big).
\end{align*}
\end{proof}

Applying Lemma \ref{lem-Aab} we see that  $T$ and $S$ are measure preserving:

\begin{prop}[$T$ and $S$ are measure preserving]\label{prop:prodmeas}
Let $T: X'_w \to \supp (\tilde m)$ and $S: \supp (\tilde m) \to X'_w$  be given by $T(x',t)= F_t(\iota (x'))$ and $S(x)=(\pi (x), u(x))$.  Then $T_\sharp(m'_{\omega})=\tilde m$ and  $S_\sharp \tilde m=m'_\omega$.
\end{prop}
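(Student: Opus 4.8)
The plan is to verify the measure-preserving property by checking it on a generating family of Borel sets, namely the ``cylinder'' sets $A_a^b = \{x \in \tilde X \mid u(x) \in [a,b], \ \pi(x) \in A\}$ for Borel $A \subset X'$ and $a \le b$, which by construction generate the Borel $\sigma$-algebra of $\tilde X$ (because $u$ and $\pi$ are continuous and jointly separate points modulo the fibre structure). On the warped-product side, the corresponding set $T^{-1}(A_a^b)$ is exactly the product-type set $A \times [a,b] \subset X'_w$, since $S(x) = (\pi(x), u(x))$ and $T = S^{-1}$. So the statement $T_\sharp(m'_w) = \tilde m$ is equivalent to the identity $m'_w(A \times [a,b]) = \tilde m(A_a^b)$, and $S_\sharp \tilde m = m'_w$ follows from the same identity since $S$ and $T$ are mutually inverse homeomorphisms (Proposition \ref{prop:homeo}).

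First I would compute the left-hand side using the definition of the warped measure \eqref{eq:warpmeas} with warping function $w_{m'}(t) = e^{(N-1)t}$: taking $f = 1_A$ on $X'$ and $g = 1_{[a,b]}$ on $\mathbb R$, formula \eqref{eq:warpmeas} gives
\[
m'_w(A \times [a,b]) = \left(\int_a^b e^{(N-1)s}\,\mathrm{d}s\right) m'(A).
\]
Then I would invoke Lemma \ref{lem-Aab}, which asserts precisely $\tilde m(A_a^b) = m'(A)\int_a^b e^{(N-1)s}\,\mathrm{d}s$. The two expressions agree, so $T_\sharp(m'_w)$ and $\tilde m$ coincide on all cylinder sets $A_a^b$. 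Since these sets form a $\pi$-system generating the Borel $\sigma$-algebra and both measures are $\sigma$-finite (as $\tilde m$ is Radon and $\tilde X = \bigcup_n u^{-1}([-n,n])$ with each piece of finite measure by the Bishop–Gromov bound), the standard uniqueness-of-measures argument (Dynkin's $\pi$–$\lambda$ theorem) forces $T_\sharp(m'_w) = \tilde m$ globally. Applying $S_\sharp$ to both sides and using $S \circ T = \mathrm{Id}$ gives $m'_w = S_\sharp \tilde m$.

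I do not expect a serious obstacle here: the content is entirely packaged into Lemma \ref{lem-Aab} and the definition of $m_w$, and the only care needed is the measure-theoretic bookkeeping to pass from agreement on a generating $\pi$-system to agreement on all Borel sets, plus checking $\sigma$-finiteness so that the extension is unique. One minor point worth spelling out is that $T$ and $S$ are defined on $\mathrm{supp}(\tilde m)$ rather than all of $\tilde X$, so the identity should be read as an identity of Borel measures on $\mathrm{supp}(\tilde m)$ (equivalently on $\tilde X$, since $\tilde m$ charges nothing outside its support); this causes no difficulty because the cylinder sets $A_a^b$, intersected with $\mathrm{supp}(\tilde m)$, still generate the relevant $\sigma$-algebra. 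The cleanest write-up simply states: ``By Lemma \ref{lem-Aab} and \eqref{eq:warpmeas}, $T_\sharp(m'_w)$ and $\tilde m$ agree on the family $\{A_a^b\}$, which generates the Borel $\sigma$-algebra and is stable under finite intersections; both measures are $\sigma$-finite, hence they coincide. Since $S = T^{-1}$, we also get $S_\sharp \tilde m = m'_w$.''
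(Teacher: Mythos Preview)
Your proposal is correct and follows essentially the same approach as the paper: both reduce to checking agreement on product sets $A \times [a,b]$ (equivalently cylinders $A_a^b$), invoke Lemma~\ref{lem-Aab} for the $\tilde m$-side and the definition \eqref{eq:warpmeas} for the $m'_w$-side, and then use that $S$ and $T$ are mutual inverses. If anything, you are slightly more explicit than the paper about the $\pi$-$\lambda$/$\sigma$-finiteness justification for passing from the generating family to all Borel sets.
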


\begin{proof}
As $S$ and $T$ are inverses of each other, it is sufficient to  prove that $S_\sharp \tilde m=m'_w$. Given that both $m'_w$ and $S_\sharp \tilde m$ are Borel measures defined on $X'_w$ which has positive warping functions, it is enough to prove that for any Borel set $E\subset X'$ and any interval $I=[a,b]\subset \mathbb R$ the following equality holds 

\begin{equation*}\label{eq:rettangoli}
S_\sharp \tilde m(E\times I)= m'_w(E \times I).
\end{equation*}

Equation (\ref{eq-Aab}) implies,
\begin{equation*}
S_\sharp \tilde m(E \times I)= \tilde m (S^{-1} (E \times I))= \tilde m (E_a^b)= m'(E) \int_a^b e^{(N-1)s}\,\mathrm{d}s.
\end{equation*}
By the definition of $m'_w$, 
\begin{eqnarray*}
m'_w(E \times I)=  \int_I \left(\int_{X'} \chi_E(x)w_{m'}(t)\,\mathrm{d}m'(x) \right)\,\mathrm{d}t =  m'(E) \int_a^b w_{m'}(t)\,\mathrm{d} t   = m'(E) \int_a^b e^{(N-1)t}\,\mathrm{d}t.
\end{eqnarray*}
\end{proof}

Before we continue with our discussion we establish the following lemma which is needed here.

\begin{lem}\label{lem-Dist}
For any $x, y\in \tilde X$, 
	\begin{equation}\label{eq-Dist}
	\tilde d(x, y) \geq |u(x)-u(y)|.
	\end{equation}
\end{lem}

\begin{proof}
Without loss of generality we assume that $u(x) \geq u(y)$. For any $t\in \mathbb R$ sufficiently negative, by the triangle inequality and Theorem \ref{thm:rapprcont} we have 
\begin{align} \label{eq-dist} \tilde d(x, y) & \geq \tilde d(x, F_t(x)) - \tilde d(F_t(x), y) \nonumber \\
& \geq  -t - \tilde d(F_t(x), F_{t+u(x)-u(y)}(y)) - \tilde d(F_{t+u(x)-u(y)}(y), y) \nonumber \\
& = -t +(t+u(x)-u(y)) -\tilde d(F_t(x), F_{t+u(x)-u(y)}(y)) \nonumber \\
& = u(x)-u(y) -\tilde d(F_t(x), F_{t+u(x)-u(y)}(y)).
\end{align}

Let $\gamma$ be a minimizing geodesic connecting $x$ to $y$ and $\gamma_1= F_{t+u(x)- u(\gamma)}(\gamma)$. Then by Proposition \ref{prop-mst0} (see also the proof where a similar shift is needed)
\[ \tilde d(F_t(x), F_{t+u(x)-u(y)}(y)) \leq L(\gamma_1) \leq e^{t+u(x) -C} L(\gamma), \] where $C=\min u(\gamma)$. Clearly the right hand side of the above inequality goes to zero as $t \rightarrow -\infty$. Thus, by taking $t \rightarrow -\infty$ in \eqref{eq-dist}, we obtain 
\[ \tilde d(x, y) \geq u(x)-u(y). \]
\end{proof}

The following proposition will be helpful in the next subsection.

\begin{prop}\label{prop:sezioni2}  
Let $h\in S^2_{\rm loc}( w_{m'}\mathbb R)$ and define $f:\tilde X\to\mathbb R$ by $f:=h\circ u$. Then $f\in S^2_{\rm loc}(\tilde X)$ and
\[
|\nabla f|_{\tilde X}(x)=|\nabla h|_{w_{m'}\mathbb R}(u(x)),\qquad \tilde m-a.e.\,  x\in \tilde X.
\]
\end{prop}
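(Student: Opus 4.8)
The plan is to transfer the structure of the warped product back to $\tilde X$ via the measure-preserving homeomorphism $S$ of Proposition \ref{prop:prodmeas} and then apply the section results on warped products recorded in Corollary \ref{cor:sezioniprod}. First I would observe that the function $f=h\circ u$ on $\tilde X$ corresponds, under the identification $S:\tilde X\to X'_w$, to the function $\bar f:X'_w\to\mathbb R$ given by $\bar f(x',t)=h(t)$; indeed $\bar f\circ S(x)=h(u(x))=f(x)$. Thus $f=\bar f\circ S$, and since $S$ is $2$-Lipschitz (Proposition \ref{prop:homeo}) and measure preserving, the locality and stability properties of the minimal weak upper gradient reduce the claim to the corresponding statement for $\bar f$ on $X'_w$.

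Next I would invoke part (ii) of Corollary \ref{cor:sezioniprod}: for $f_1\in S^2_{\rm loc}(w_{m'}\mathbb R)$, the function $(x',t)\mapsto f_1(t)$ lies in $S^2_{\rm loc}(X'_w)$ and has $|\nabla f_1|_{X'_w}(x',t)=|\nabla f_1|_{w_{m'}\mathbb R}(t)$ for $m'_w$-a.e. $(x',t)$. Applying this with $f_1=h$ gives $\bar f\in S^2_{\rm loc}(X'_w)$ with $|\nabla\bar f|_{X'_w}(x',t)=|\nabla h|_{w_{m'}\mathbb R}(t)$, $m'_w$-a.e. Now I would push this equality back to $\tilde X$ along the isomorphism $S$ (or equivalently $T=S^{-1}$, which is locally Lipschitz and measure preserving by Propositions \ref{prop:homeo} and \ref{prop:prodmeas}): using that $S$ and $T$ are mutually inverse locally Lipschitz maps of bounded compression, $f=\bar f\circ S\in S^2_{\rm loc}(\tilde X)$ and $|\nabla f|_{\tilde X}(x)=|\nabla\bar f|_{X'_w}(S(x))=|\nabla h|_{w_{m'}\mathbb R}(u(x))$ for $\tilde m$-a.e. $x$, since the second coordinate of $S(x)$ is exactly $u(x)$. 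Here one uses the general principle that a measure-preserving bi-Lipschitz (indeed bi-local-Lipschitz) homeomorphism between two metric measure spaces induces an isomorphism of the $S^2_{\rm loc}$ classes which is moreover an isometry at the level of pointwise minimal weak upper gradients; this follows from Lemma \ref{le:contrdual} applied in both directions, together with the identification of $X'_w$ via Theorem \ref{thm:StoLipGH} (and the Sobolev-to-Lipschitz property, which both spaces enjoy).

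The main technical point — and the step I expect to require the most care — is the transfer of the \emph{pointwise} gradient identity (not merely the Sobolev membership) across $S$. The inequality $|\nabla f|_{\tilde X}\le |\nabla h|_{w_{m'}\mathbb R}\circ u$ follows directly from $T$ being locally Lipschitz: composing a minimal weak upper gradient of $\bar f$ with the locally Lipschitz $S$ and using the chain rule bound for weak upper gradients under Lipschitz maps, exactly as in the proof of Theorem \ref{thm-DfDg2}. For the reverse inequality one uses that $T$ is likewise of bounded compression and locally Lipschitz, so that the same argument applied to $T$ yields $|\nabla\bar f|_{X'_w}\circ S\le |\nabla f|_{\tilde X}$ $\tilde m$-a.e.; combining the two gives equality. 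Alternatively, and perhaps most cleanly, one can note that $f=h\circ u$ and that by Definition \ref{def-regular-lagrangian-flow}(iii) the curves of the flow $F$ realize the metric derivative of $u$ with $|\dot F_s(x)|=1$, so along flow lines $f$ varies with derivative $h'$; away from the flow direction $|\nabla u|=1$ already forces the gradient of $f$ to point along $\nabla u$, whence $|\nabla f|_{\tilde X}=|h'(u)|\cdot|\nabla u|=|h'(u)|$ a.e., which is precisely $|\nabla h|_{w_{m'}\mathbb R}(u(x))$ by the $1$-dimensional identification. I would present the argument via $S$ and Corollary \ref{cor:sezioniprod}, reserving the flow-line computation as a remark, since it makes the appearance of $w_{m'}$ in the statement transparent.
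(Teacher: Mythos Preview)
Your main argument has a genuine gap, and in fact is circular relative to the paper's logical order. The assertion that a measure-preserving locally bi-Lipschitz homeomorphism ``induces an isomorphism of the $S^2_{\rm loc}$ classes which is moreover an isometry at the level of pointwise minimal weak upper gradients'' is not true in general: Lemma \ref{le:contrdual} only gives $|\nabla(f\circ S)|_{\tilde X}\le \Lip(S)\,|\nabla f|_{X'_w}\circ S$, and here $S$ is $2$-Lipschitz while $T$ is merely locally Lipschitz with constants typically strictly greater than $1$ (see Proposition \ref{prop:homeo}). Applying the lemma in both directions therefore yields only a two-sided bound with nontrivial constants, not the pointwise equality you need. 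The only way to get equality via this route is to already know that $S$ is an isometry --- but that is precisely Theorem \ref{thm:pitagora}, and Proposition \ref{prop:sezioni2} is one of the ingredients used to prove it (via Proposition \ref{cor-mapsGH}). So invoking the isometry here would be circular.

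The paper instead proves both inequalities by direct arguments that do not rely on $S$ being an isometry. For $|\nabla f|_{\tilde X}\le |\nabla h|_{w_{m'}\mathbb R}\circ u$ it approximates $h$ by Lipschitz $h_n$ and uses that $u$ is $1$-Lipschitz to compare local Lipschitz constants, then passes to the limit via lower semicontinuity of minimal weak upper gradients. For the reverse inequality it exploits the specific geometric fact that along flow lines $\pi(x)=\pi(y)$ one has $\tilde d(x,y)=|u(x)-u(y)|$ (Lemma \ref{lem-distance-flow-representatives}), which allows one to bound the local Lipschitz constant of the ``radial'' section $f^{(x')}(t)=f(T(x',t))$ from above by $\mathrm{lip}_{\tilde X}f$; again an approximation and semicontinuity argument finishes. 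Your closing ``alternative'' paragraph gestures at exactly this second step, and that is indeed the key idea --- but it is the substance of the proof, not a remark to be reserved, and it needs to be made precise via approximation as the paper does.
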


\begin{proof}
The proof follows the same strategy as that of \cite[Proposition 5.29]{Gig}. 

Let $R>0$ and $\chi:\mathbb{R}\to [0,1]$ be a Lipschitz function which is compactly supported and identically $1$ on $[-R,R]$. Firstly we observe that, since the claim is a local statement, to provide a proof it is enough to show that, if $h\in W^{1,2}_{loc}(w_{m'}\mathbb{R})$ then $f (\chi\circ u)\in W^{1,2}_{loc}(\tilde{X})$ and that 
\begin{equation}
\label{eq:wug-f-vs-wug-h}
|\nabla f|_{\tilde X}(x)=|\nabla h|_{w_{m'}\mathbb R}(u(x))
\end{equation}
is valid for $\tilde{m}$-a.e. $x\in u^{-1}([-R,R])$. 

Let $h_n$ be a sequence of Lipschitz functions on $w_{m'}\mathbb{R}$ such that $h_n\to h$ and $\mathrm{lip}_{w_{m'}\mathbb{R}}h_n \to |\nabla h|_{w_{m'}\mathbb{R}}$  in $L^2(w_{m'}\mathbb{R})$. Such a sequence exists by \cite[Theorem 4.3]{Gig}. Now, we consider the functions $f_n:= (h_n\circ u)(\chi\circ u)$. Proposition \ref{prop:prodmeas} implies that $f_n\to f (\chi\circ u)$ in $L^2(\tilde{X})$. Moreover, since $u$ is $1$-Lipschitz, for $x\in u^{-1}([-R,R])$ and $n\in\mathbb{N}$, by Lemma \ref{lem-Dist},
\begin{equation}
\label{eq:lip-const-fn}
\mathrm{lip}_{\tilde{X}}(f_n)(x) = \limsup_{y\to x} \frac{\left|f_n(y) - f_n(x) \right|}{ \tilde d(x,y)}\leq \limsup_{y\to x}\frac{\left| h_n\circ u(y) - h_n\circ u(x)\right|}{|u(y)-u(x)|}= \mathrm{lip}_{w_{m'}\mathbb{R}}h_n\circ u(x).
\end{equation}
From the previous inequality, the Leibniz rule \cite[(3.9)]{Gig} and the convergence of $h_n$ we conclude that $\mathrm{lip}_{\tilde{X}}(f_n)$ is bounded in $L^2(\tilde{X})$. Therefore, passing to a subsequence if necessary, we can assume that there exists a Borel function $G:\tilde{X}\to \mathbb{R}$ such that $\mathrm{lip}_{\tilde{X}}(f_n)\to G$ weakly in $L^2(\tilde X)$.

The lower semicontinuity of minimal weak upper gradients (see the paragraph after \cite[Definition 3.8]{Gig}) and the convergence of $f_n$ to $f(\chi\circ u)$ in $L^2(\tilde{X})$ imply that $|\nabla f(\chi\circ u)|_{\tilde{X}}\leq G$ $\tilde{m}$-a.e.. Moreover by the locality of minimal weak upper gradients \cite[(3.6)]{Gig}, $|\nabla f|_{\tilde{X}}=|\nabla f(\chi\circ u)|_{\tilde{X}}$, $\tilde{m}$-a.e. on $u^{-1}([-R,R])$. Now, passing to the limit in \eqref{eq:lip-const-fn}, we obtain the $\leq$ inequality in \eqref{eq:wug-f-vs-wug-h}.\\

We now proceed to prove the other inequality in \eqref{eq:wug-f-vs-wug-h} by showing the following result, and applying it to $t=u(x')$: Let $f\in W^{1,2}(\tilde{X})$ and for $x'\in X'$ let $f^{(x')}:w_{m'}\mathbb{R}\to \mathbb{R}$ be given by $f^{(x')}(t):=f(T(x',t))$, then for $m'$-a.e. $x'$, $f^{(x')}\in S^2_{loc}(w_{m'}\mathbb{R})$ and 
\[
|\nabla f^{(x')}|_{w_{m'}\mathbb{R}}(t) \leq |\nabla f|_{\tilde{X}}(T(x',t)), \quad m'_w-\text{a.e.}\ (x',t)\in X'_w.
\]

Using that for any $x,y\in \supp( \tilde m)$ with $\pi(x)=\pi(y)$ we have $|u(x)-u(y)|= \tilde d(x,y)$, we observe the following inequality
\begin{eqnarray}
\label{eq:lip-const-f}
\mathrm{lip}_{\tilde{X}}f(x)& = & \limsup_{y\to x}\frac{\left| f(x)-f(y) \right|}{ \tilde d(x,y)} \geq \limsup_{\overset{y\to x}{\pi(y)=\pi(x)}} \frac{\left| f(x)-f(y)
 \right|}{ \tilde d(x,y)} \\
\nonumber & = & \limsup_{\overset{y\to x}{\pi(y)=\pi(x)}} \frac{\left| f^{(\pi(x))}(u(x))-f^{(\pi(x))}(u(y))
 \right|}{|u(x)-u(y)|} = \mathrm{lip}_{w_{m'}\mathbb{R}} f^{(\pi(x))}(u(x)).
\end{eqnarray}

By \cite[Theorem 4.3]{Gig}, there exists a sequence $(f_n)\subset L^2(\tilde{X})$ of Lipschitz functions such that $f_n\to f$ and $\mathrm{lip}_{\tilde{X}}(f_n)\to |\nabla f|_{\tilde{X}}$ in $L^2(\tilde{X})$. Passing to a subsequence if necessary we can further assume that $\sum_n \| f_n- f_{n+1} \|_{L^2(\tilde{X})}<\infty$ and  $\sum_n \| \mathrm{lip}_{\tilde{X}} f_n - |\nabla f|_{\tilde{X}} \|_{L^2(\tilde{X})}<\infty$. This, together with Proposition \ref{prop:prodmeas}, implies that for $m'$-a.e. $x'$,  $f_n(T(x',\cdot)) \to f(T(x',\cdot))$ and $\mathrm{lip}_{\tilde{X}}(f_n)(T(x',\cdot)) \to |\nabla f|_{\tilde{X}}(T(x', \cdot))$ in $L^2(w_{m'}\mathbb{R})$.  

We now fix such an $x'$, apply inequality \eqref{eq:lip-const-f} to the function $f_n$ on $u^{-1}(t)$ and take the limit when $n\to \infty$. Finally, we use that $|\nabla f^{(\pi(x))}|_{w_{m'}\mathbb{R}}\leq \mathrm{lip}_{w_{m'}\mathbb{R}} f^{(\pi(x))}$ (by \cite[(3.8)]{Gig}) and the lower semicontinuity of the minimal weak upper gradients to conclude. 
\end{proof}


\subsection{$W^{1,2}(\tilde X, \tilde d, \tilde m)$ is isomorphic to $W^{1,2}(X'_{\omega},d'_{\omega}, m'_{\omega})$}

The aim of this section is to show that $(\tilde X, \tilde d, \tilde m)$ and $(X'_{\omega},d'_{\omega}, m'_{\omega})$ are isomorphic. This will be achieved 
applying Proposition \ref{prop:isom}.  Thus, we only need to show that right composition with $S$ provides an isometry from $W^{1,2}(X'_w)$ to $W^{1,2}(\tilde X)$.

In Proposition \ref{prop:approximation}  we showed that $\mathcal A\cap W^{1,2}(X'_w)$ is dense in $W^{1,2}(X'_w)$. Here
\begin{align*}
\mathcal G = &\Big\{g \in S^2_{\rm loc}(X'_w) \ |\  g(x',t)=\tilde g(x')\textrm{ for some }  \tilde g\in S^2(X')\cap L^\infty(X') \Big\},\\
\mathcal H  = &\Big\{h \in S^2_{\rm loc}(X'_w)\ |\  h(x',t)=\tilde h(t)\textrm{ for some }  \tilde h\in S^2( w'
_{m'}\mathbb R)\cap L^\infty(\mathbb R) \Big\}\\
\mathcal A =  & \textrm{ algebra spanned by }\mathcal G\cup\mathcal H \subset S^2_{\rm loc}(X'_w).
\end{align*}

The proof that right composition with $S$ provides an isometry from $W^{1,2}(X'_w)$ to $W^{1,2}(\tilde X)$ is divided in the following way.

\begin{itemize}  
\item[0)]   Proposition \ref{cor-mapsGH}: For every $f\in\mathcal G$ or  $f\in\mathcal H$, we have that $f\circ S \in S^2_{\rm loc}(\tilde X)$ and $|\nabla(f\circ S)|_{\tilde X}= |\nabla f|_{X'_w}\circ S$ $\tilde m$-a.e.
\item[1)] Lemma \ref{lem-ort}: For every $g\in \mathcal G$ and $h\in \mathcal H$,  $\langle \nabla g,\nabla h\rangle_{X'_w}=0$ and 
$\langle\nabla( g\circ S),\nabla(h\circ S) \rangle_{\tilde X}=0$ hold $\tilde m-a.e.$.
\item[2)]   Proposition \ref{prop:astable}: Every $f\in\mathcal A$ satisfies $f\circ S \in S^2_{\rm loc}(\tilde X)$ and $|\nabla(f\circ S)|_{\tilde X}= |\nabla f|_{X'_w}\circ S$ $\tilde m$-a.e..
\item[3)] Proposition \ref{prop:almost}: Right composition with $S$ is a homeomorphism between $W^{1,2}(X'_w)$ and $W^{1,2}(\tilde X)$ 
\end{itemize}

\begin{prop}\label{cor-mapsGH}
The maps 
\[
\mathcal G \to S^2_{\rm loc}(\tilde X),  \,\,\,\,\, g \mapsto g \circ S,
\]
\[
\mathcal H \to  S^2_{\rm loc}(\tilde X),  \,\,\,\,\, h \mapsto h \circ S,
\]
are well defined, and satisfy
$|\nabla (g\circ S)|_{\tilde X}  = |\nabla g|_{X'_w}\circ S $ and $|\nabla (h\circ S)|_{\tilde X}= |\nabla h|_{X'_w}\circ S$  $\tilde m-a.e.$.
\end{prop}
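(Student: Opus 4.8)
The plan is to reduce everything to the corollaries of the warped-product Sobolev theory already established, by identifying which statement in Corollary \ref{cor:sezioniprod} corresponds to each of the two maps. Recall that by Proposition \ref{prop:prodmeas} the map $S=(\pi,u):\supp(\tilde m)\to X'_w$ satisfies $S_\sharp\tilde m=m'_w$; moreover $S$ is the inverse of the homeomorphism $T(x',t)=F_t(\iota(x'))$ of Proposition \ref{prop:homeo}. So for $f:X'_w\to\mathbb R$, writing $x=T(x',t)$ we have $f\circ S(x)=f(x',t)$; the two families $\mathcal G$ and $\mathcal H$ are precisely the functions depending only on the $X'$-variable and only on the $\mathbb R$-variable respectively.

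First I would treat the case $h\in\mathcal H$, say $h(x',t)=\tilde h(t)$ with $\tilde h\in S^2(w_{m'}\mathbb R)\cap L^\infty(\mathbb R)$. Then $h\circ S(x)=\tilde h(u(x))$, and this is exactly the situation handled by Proposition \ref{prop:sezioni2}: that proposition gives $\tilde h\circ u\in S^2_{\rm loc}(\tilde X)$ together with the pointwise identity $|\nabla(\tilde h\circ u)|_{\tilde X}(x)=|\nabla\tilde h|_{w_{m'}\mathbb R}(u(x))$ for $\tilde m$-a.e.\ $x$. On the other side, by Corollary \ref{cor:sezioniprod}(ii) applied in $X'_w$ (with the roles of $I=\mathbb R$ and $X=X'$; note $w_{m'}(t)=e^{(N-1)t}$, $w_{d'}(t)=e^t$ satisfy the standing hypotheses since $\{w_{m'}=0\}=\emptyset$), the function $h(x',t)=\tilde h(t)$ lies in $S^2_{\rm loc}(X'_w)$ with $|\nabla h|_{X'_w}(x',t)=|\nabla\tilde h|_{w_{m'}\mathbb R}(t)$, $m'_w$-a.e. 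Composing with $S$ and using $u(S^{-1}(x',t))\equiv$ the $\mathbb R$-coordinate, i.e.\ $u(x)=t$ when $x=T(x',t)$, gives $|\nabla h|_{X'_w}\circ S(x)=|\nabla\tilde h|_{w_{m'}\mathbb R}(u(x))$, which matches the expression from Proposition \ref{prop:sezioni2}. Hence $|\nabla(h\circ S)|_{\tilde X}=|\nabla h|_{X'_w}\circ S$ $\tilde m$-a.e.

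Next, for $g\in\mathcal G$, say $g(x',t)=\tilde g(x')$ with $\tilde g\in S^2(X')\cap L^\infty(X')$, the function $g\circ S(x)=\tilde g(\pi(x))$. Here I would invoke the relationship between $W^{1,2}(X')$ and $W^{1,2}(\tilde X)$ built in Theorem \ref{thm-DfDg2}. Concretely, cover $\tilde X$ by the sets $u^{-1}[a,b]$; choosing a cutoff $h\in\Lip(\mathbb R)$ with compact support, $\equiv 1$ on $[a,b]$, Theorem \ref{thm-DfDg2} gives (for $f(x)=\tilde g(\pi(x))h(u(x))$) that $\tilde g\circ\pi\in W^{1,2}$ on that slab and $|\nabla(\tilde g\circ\pi)|_{\tilde X}(x)=e^{-u(x)}|\nabla\tilde g|_{X'}(\pi(x))$ for $\tilde m$-a.e.\ $x\in u^{-1}[a,b]$; letting $[a,b]$ exhaust $\mathbb R$ and using locality of minimal weak upper gradients yields $g\circ S\in S^2_{\rm loc}(\tilde X)$ with $|\nabla(g\circ S)|_{\tilde X}(x)=e^{-u(x)}|\nabla\tilde g|_{X'}(\pi(x))$. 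Meanwhile Corollary \ref{cor:sezioniprod}(iii) applied to $X'_w$ gives $|\nabla g|_{X'_w}(x',t)=w_{d'}^{-1}(t)|\nabla\tilde g|_{X'}(x')=e^{-t}|\nabla\tilde g|_{X'}(x')$, so $|\nabla g|_{X'_w}\circ S(x)=e^{-u(x)}|\nabla\tilde g|_{X'}(\pi(x))$, again matching. This completes the identification.

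The main obstacle I anticipate is a bookkeeping one rather than a conceptual one: verifying that the measure identities and the coordinate identifications ($u\circ T(x',t)=t$, $\pi\circ T(x',t)=x'$, $S=T^{-1}$) interact correctly with the a.e.\ statements — in particular that a null set for $m'_w$ in the Corollary \ref{cor:sezioniprod} statements transports to a $\tilde m$-null set under $S$, which is immediate from $S_\sharp\tilde m=m'_w$ (Proposition \ref{prop:prodmeas}) together with $\tilde m\ll S^\sharp m'_w$ via the homeomorphism $T$. One also has to keep straight that $\mathcal G$ and $\mathcal H$ consist of $S^2_{\rm loc}$ functions (so the pointwise gradient identities are the relevant statements, no global integrability is claimed), which is why invoking the $S^2_{\rm loc}$ versions (Corollary \ref{cor:sezioniprod} and Proposition \ref{prop:sezioni2}) rather than the $W^{1,2}$ statements is the right move. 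Modulo this bookkeeping the proposition follows directly by assembling the two cases above.
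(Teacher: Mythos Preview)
Your proposal is correct and follows essentially the same route as the paper: for $\mathcal H$ you combine Proposition \ref{prop:sezioni2} with Corollary \ref{cor:sezioniprod}(ii), and for $\mathcal G$ you combine Theorem \ref{thm-DfDg2} (with a cutoff in the $u$-direction) with Corollary \ref{cor:sezioniprod}(iii), exactly as the paper does. Your write-up is in fact more explicit than the paper's two-line proof; the only point to tighten is that Theorem \ref{thm-DfDg2} as stated requires $\tilde g\in L^2(m')$, so when $m'(X')=\infty$ you should also localize in the $X'$-direction (multiply $\tilde g$ by a compactly supported Lipschitz cutoff on $X'$) before invoking it, which is harmless by locality of minimal weak upper gradients.
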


\begin{proof}
Combining Corollary \ref{cor:sezioniprod} with a cut-off function $f$ such that $\supp(f) \subset u^{-1}[a,b]$ and Theorem
\ref{thm-DfDg2} one shows that
$g\circ S \in S^2_{\rm loc}(\tilde X)$, and $|\nabla (g\circ S)|_{\tilde X}  = |\nabla g|_{X'_w}\circ S $ $\tilde m-\text{a.e.}$

Similarly,  Corollary \ref{cor:sezioniprod} and Proposition \ref{prop:sezioni2} give
$h\circ S \in S^2_{\rm loc}(\tilde X)$  and $|\nabla (h\circ S)|_{\tilde X}= |\nabla h|_{X'_w}\circ S$  $\tilde m-\text{a.e.}$
\end{proof}

\begin{lem}\label{lem-ort}(Orthogonality relations) 
With the same notation as above, let $g\in \mathcal G$ and $h\in \mathcal H$.  Then,
\begin{equation}
\label{eq:perp1}
\langle \nabla g,\nabla h\rangle_{X'_w}=0, \qquad m'_w\text{-a.e.}
\end{equation}
and
\begin{equation}
\label{eq:perp3}
\langle\nabla( g\circ S),\nabla(h\circ S) \rangle_{\tilde X}=0, \qquad \tilde m\text{-a.e.}
\end{equation}
\end{lem}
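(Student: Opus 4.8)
The plan is to prove the two orthogonality statements separately, but with the same underlying idea: a function depending only on the base variable $X'$ and a function depending only on the fiber variable $\mathbb R$ have orthogonal gradients. For \eqref{eq:perp1}, I would invoke the warped-product structure directly. Write $g(x',t) = \tilde g(x')$ and $h(x',t) = \tilde h(t)$ with $\tilde g \in S^2(X') \cap L^\infty(X')$ and $\tilde h \in S^2(w_{m'}\mathbb R)\cap L^\infty(\mathbb R)$. By Corollary \ref{cor:sezioniprod}(ii)--(iii) we have the pointwise identities $|\nabla g|_{X'_w}(t,x') = w_{d'}^{-1}(t)\,|\nabla \tilde g|_{X'}(x')$ and $|\nabla h|_{X'_w}(t,x') = |\nabla \tilde h|_{w_{m'}\mathbb R}(t)$. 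To compute the inner product itself (not just the norms), I would apply the polarization identity $\langle \nabla g, \nabla h\rangle = \tfrac14\big(|\nabla(g+h)|^2 - |\nabla(g-h)|^2\big)$ together with the grad-splitting formula \eqref{eq:warpedgrad} from Theorem \ref{def-warped-BL}, valid in the $S^2_{\rm loc}$ setting by Corollary \ref{cor:sezioniprod}: for $f = g \pm h$ we have $(f)^{(t)} = \tilde g \pm \tilde h(t)$ (a constant shift of $\tilde g$, so $|\nabla (f)^{(t)}|_{X'} = |\nabla \tilde g|_{X'}$) and $(f)^{(x')} = \pm\tilde h + \tilde g(x')$ (a constant shift of $\pm\tilde h$, so $|\nabla (f)^{(x')}|_{w_{m'}\mathbb R} = |\nabla \tilde h|_{w_{m'}\mathbb R}$). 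Plugging into \eqref{eq:warpedgrad} gives $|\nabla(g+h)|^2_{X'_w} = |\nabla(g-h)|^2_{X'_w}$ $m'_w$-a.e., whence $\langle\nabla g,\nabla h\rangle_{X'_w}=0$.

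For \eqref{eq:perp3}, the idea is to transport this to $\tilde X$ via $S = (\pi,u)$. By Proposition \ref{cor-mapsGH} we already know $g\circ S, h\circ S \in S^2_{\rm loc}(\tilde X)$ with $|\nabla(g\circ S)|_{\tilde X} = |\nabla g|_{X'_w}\circ S$ and $|\nabla(h\circ S)|_{\tilde X} = |\nabla h|_{X'_w}\circ S$ $\tilde m$-a.e.. Concretely $g\circ S (x) = \tilde g(\pi(x))$ and $h\circ S(x) = \tilde h(u(x))$. The cleanest route is again polarization: it suffices to show $|\nabla(g\circ S + h\circ S)|_{\tilde X} = |\nabla(g\circ S - h\circ S)|_{\tilde X}$ $\tilde m$-a.e.. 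But $(g\pm h)\circ S$ is again a function of the form "(function of $\pi$) $\pm$ (function of $u$)", and I claim it lies in the scope where Proposition \ref{cor-mapsGH}'s computation applies after localizing: on $u^{-1}[a,b]$, use a cutoff $\chi\in\Lip(\mathbb R)$ identically $1$ on $[a,b]$ and write $(g\pm h)\circ S$ locally in the product form so that Theorem \ref{thm-DfDg2} (for the $\pi$-part) and Proposition \ref{prop:sezioni2} (for the $u$-part) give $|\nabla((g\pm h)\circ S)|^2_{\tilde X}(x) = e^{-2u(x)}|\nabla \tilde g|^2_{X'}(\pi(x)) + |\nabla \tilde h|^2_{w_{m'}\mathbb R}(u(x))$, with no cross term and with the sign $\pm$ disappearing from each summand since a constant shift does not change minimal weak upper gradients. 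This is manifestly independent of the sign $\pm$, so the two norms agree and \eqref{eq:perp3} follows. Alternatively, since $S$ is a measure-preserving homeomorphism (Propositions \ref{prop:homeo}, \ref{prop:prodmeas}) and $|\nabla(f\circ S)|_{\tilde X} = |\nabla f|_{X'_w}\circ S$ for $f$ in $\mathcal G\cup\mathcal H$ and for their pairwise sums, \eqref{eq:perp3} is just \eqref{eq:perp1} pulled back along $S$.

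The main obstacle I anticipate is bookkeeping rather than conceptual: verifying that the grad-splitting formula \eqref{eq:warpedgrad} and the identities of Proposition \ref{cor-mapsGH} genuinely apply to the sums $g \pm h$ and their compositions with $S$ — i.e. that these sums are in the relevant $S^2_{\rm loc}$ classes and that the product-form decomposition needed for Theorem \ref{thm-DfDg2} and Proposition \ref{prop:sezioni2} is available after a suitable cutoff in the $u$-variable. Care is needed because $g+h$ is not itself of product form, only a sum of two functions each depending on one variable; the polarization trick is precisely what reduces everything back to the product-form / single-variable cases already established, so the argument should go through by locality of minimal weak upper gradients without new estimates.
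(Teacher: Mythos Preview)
Your argument for \eqref{eq:perp1} is correct and is essentially the paper's argument (the paper uses the polarization variant $2\langle\nabla g,\nabla h\rangle = |\nabla(g+h)|^2 - |\nabla g|^2 - |\nabla h|^2$ rather than $\tfrac14(|\nabla(g+h)|^2-|\nabla(g-h)|^2)$, but this is immaterial).

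Your argument for \eqref{eq:perp3}, however, is circular. You want to compute $|\nabla((g\pm h)\circ S)|^2_{\tilde X}$ and claim that Theorem~\ref{thm-DfDg2} and Proposition~\ref{prop:sezioni2} deliver the formula $e^{-2u(x)}|\nabla\tilde g|^2_{X'}(\pi(x)) + |\nabla\tilde h|^2_{w_{m'}\mathbb R}(u(x))$ with no cross term. But neither result applies to the \emph{sum} $(g+h)\circ S = \tilde g\circ\pi + \tilde h\circ u$: Theorem~\ref{thm-DfDg2} treats products $\tilde g(\pi(x))\chi(u(x))$, and Proposition~\ref{prop:sezioni2} treats functions of $u$ alone. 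To pass from these single-factor results to the sum you would expand $|\nabla(g\circ S + h\circ S)|^2 = |\nabla(g\circ S)|^2 + 2\langle\nabla(g\circ S),\nabla(h\circ S)\rangle + |\nabla(h\circ S)|^2$ and then need the cross term to vanish --- which is exactly \eqref{eq:perp3}. Your alternative route has the same defect: the assertion that $|\nabla(f\circ S)|_{\tilde X} = |\nabla f|_{X'_w}\circ S$ holds for \emph{pairwise sums} $f=g+h$ is not provided by Proposition~\ref{cor-mapsGH} (which covers only $\mathcal G$ and $\mathcal H$ separately); given \eqref{eq:perp1}, that assertion is equivalent to \eqref{eq:perp3}.

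The paper avoids this circularity by a direct argument on $\tilde X$ that does not pass through $|\nabla((g\pm h)\circ S)|$. Since $h\circ S = \tilde h\circ u$, the chain rule gives $\langle\nabla(g\circ S),\nabla(h\circ S)\rangle_{\tilde X} = (\tilde h'\circ u)\,\langle\nabla(g\circ S),\nabla u\rangle_{\tilde X}$, so it suffices to show $\langle\nabla(g\circ S),\nabla u\rangle_{\tilde X}=0$. This is where the geometry of the flow enters: $g\circ S = \tilde g\circ\pi$ and $\pi\circ F_t = \pi$ for all $t$, so $g\circ S$ is invariant under the Regular Lagrangian flow of $\nabla u$. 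The derivation identity $\langle\nabla f,\nabla u\rangle = \lim_{t\to 0}\frac{f\circ F_t - f}{t}$ (valid for $f\in S^2_{\rm loc}$ after a truncation argument) then forces $\langle\nabla(g\circ S),\nabla u\rangle_{\tilde X}=0$. This is the missing idea in your proposal.
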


\begin{proof}
Let $\tilde g\in S^2(X')\cap L^\infty(X')$ and $\tilde h\in S^2(w_{m'} \mathbb R)\cap L^\infty(\mathbb R)$ be such that $g(x',t)=\tilde g(x')$ and $h(x',t)=\tilde h(t)$.
Corollary \ref{cor:sezioniprod} implies
\[
|\nabla(g+h)|^2_{X'_w}(x',t)= w_d^{-2}(t) |\nabla \tilde g|_{X'}^2(x')+ |\nabla \tilde h|_{w_{m'}\mathbb R}^2(t), \qquad m'_w\text{-a.e.}\ (x',t).
\]
Using equation \eqref{eq:warpedgrad}  we get equation \eqref{eq:perp1}:
\[
\begin{split}
2 \langle \nabla g, \nabla h\rangle_{X'_w}=|\nabla(g+h)|^2_{X'_w}-|\nabla g|^2_{X'_w}-|\nabla h|^2_{X'_w}=0,\qquad m'_w\text{-a.e.} 
\end{split}
\]

To prove equation \eqref{eq:perp3}, notice that the Chain rule and the identity $ h\circ S=\tilde h\circ u$ yield
\[
\langle \nabla ( g\circ S ),\nabla( h\circ S) \rangle_{\tilde X}=\tilde h'\circ u \langle\nabla (g\circ S ),\nabla u \rangle_{\tilde X}, \qquad \tilde m\text{-a.e.}
\]
Then to conclude it is sufficient to show that 
\[
\langle\nabla (g\circ S),\nabla u \rangle_{\tilde X}=0, \qquad \tilde m\text{-a.e.}
\]

The previous equality holds because  $\tilde g\circ \pi \circ F_t = \tilde g\circ \pi$, and with a truncation argument we can see that  the following derivation rule is also valid for functions in $S^2_{\rm loc}(\tilde X)$: 
\[
\langle\nabla (g\circ S),\nabla u \rangle_{\tilde X }= \lim_{t \to 0} \frac{g\circ S \circ F_t - g\circ S}{t}= \lim_{t \to 0} \frac{\tilde g\circ \pi \circ F_t - \tilde g\circ \pi}{t}=0, \qquad \tilde m\text{-a.e.}
\]
\end{proof}

\begin{prop}\label{prop:astable} 
With the same notation as above, every $f\in\mathcal A$ satisfies $f\circ S \in S^2_{\rm loc}(\tilde X,\tilde d,\tilde m)$, and
\[
|\nabla (f\circ S)|_{\tilde X}= |\nabla f|_{X'_w}\circ S, \qquad \tilde m-a.e..
\]
\end{prop}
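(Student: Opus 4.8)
# Proof Proposal for Proposition \ref{prop:astable}

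The plan is to bootstrap from the two building-block cases already established in Proposition \ref{cor-mapsGH} (that $g \circ S$ and $h \circ S$ behave correctly for $g \in \mathcal G$, $h \in \mathcal H$) together with the orthogonality relations of Lemma \ref{lem-ort}, and then to propagate these properties through the algebra $\mathcal A$ using the Leibniz rule for minimal weak upper gradients. Since $\mathcal A$ is, by definition, the algebra generated by $\mathcal G \cup \mathcal H$, every $f \in \mathcal A$ is a finite sum of finite products of elements of $\mathcal G \cup \mathcal H$. So the argument is a two-layer induction: first handle a single product $g_1 \cdots g_k h_1 \cdots h_l$, then handle a finite sum of such products.

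First I would treat a single monomial. Write $f = g \cdot h$ where $g$ is a product of elements of $\mathcal G$ and $h$ is a product of elements of $\mathcal H$; by the locality and Leibniz rules for $S^2_{\rm loc}$ (subsection \ref{ssec-calculus}), and since $S^2_{\rm loc}(X')$ is an algebra when intersected with $L^\infty_{\rm loc}$, one reduces to $g \in \mathcal G$, $h \in \mathcal H$ after a truncation/cut-off argument (note the elements of $\mathcal G$, $\mathcal H$ are bounded by definition, so products are locally bounded). For such $f = gh$, the Leibniz rule on $X'_w$ gives $|\nabla f|^2_{X'_w} = g^2 |\nabla h|^2_{X'_w} + h^2 |\nabla g|^2_{X'_w} + 2gh\langle \nabla g, \nabla h\rangle_{X'_w}$, and by \eqref{eq:perp1} the cross term vanishes, so $|\nabla f|^2_{X'_w} = g^2 |\nabla h|^2_{X'_w} + h^2 |\nabla g|^2_{X'_w}$. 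On the other side, $f \circ S = (g \circ S)(h \circ S)$, and since $g \circ S, h \circ S \in S^2_{\rm loc}(\tilde X)$ by Proposition \ref{cor-mapsGH} and are locally bounded, the Leibniz rule on $\tilde X$ gives $|\nabla (f \circ S)|^2_{\tilde X} = (g\circ S)^2 |\nabla (h \circ S)|^2_{\tilde X} + (h \circ S)^2 |\nabla (g \circ S)|^2_{\tilde X} + 2 (g \circ S)(h \circ S)\langle \nabla(g\circ S), \nabla(h\circ S)\rangle_{\tilde X}$; the cross term vanishes by \eqref{eq:perp3}. Substituting the identities $|\nabla(g \circ S)|_{\tilde X} = |\nabla g|_{X'_w}\circ S$ and $|\nabla(h\circ S)|_{\tilde X} = |\nabla h|_{X'_w}\circ S$ from Proposition \ref{cor-mapsGH}, together with $(g \circ S)^2 = g^2 \circ S$ and $(h \circ S)^2 = h^2 \circ S$, yields $|\nabla(f \circ S)|^2_{\tilde X} = \bigl(g^2 |\nabla h|^2_{X'_w} + h^2 |\nabla g|^2_{X'_w}\bigr)\circ S = |\nabla f|^2_{X'_w}\circ S$, as desired. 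The general monomial follows by iterating this, observing that a product of elements of $\mathcal G$ is still "of the form $\tilde g(x')$" and similarly for $\mathcal H$, so at each stage the orthogonality relations \eqref{eq:perp1}, \eqref{eq:perp3} continue to apply between the accumulated $\mathcal G$-part and the accumulated $\mathcal H$-part.

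Next I would handle a finite sum $f = \sum_{i=1}^n f_i$ of monomials. Here one cannot expect orthogonality between distinct summands, so the cleanest route is a direct argument: for each such $f$ one has $f \circ S = \sum_i (f_i \circ S) \in S^2_{\rm loc}(\tilde X)$ (finite sum of $S^2_{\rm loc}$ functions). To get the pointwise identity $|\nabla(f\circ S)|_{\tilde X} = |\nabla f|_{X'_w}\circ S$ I would argue locally on a chart of the form $u^{-1}([a,b])$, intersected with a neighborhood over which $\pi$ is Lipschitz (Proposition \ref{prop-mst0}(2)); there the map $S$ is bi-Lipschitz onto its image $[a,b]\times_w (\text{open set in } X')$ with measure-preserving behavior (Lemma \ref{lem-Aab}, Proposition \ref{prop:prodmeas}), so right composition with $S$ induces an isomorphism of the local Sobolev spaces compatible with minimal weak upper gradients. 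More concretely, I would use that for $S$ a locally bi-Lipschitz, locally measure-preserving homeomorphism, $f \mapsto f \circ S$ preserves $S^2_{\rm loc}$ and the minimal weak upper gradient transforms correctly provided the identity is known on a generating set that is dense in the appropriate local Sobolev space — and the monomial case plus Proposition \ref{prop:approximation} supplies exactly such a generating/dense set. Alternatively, and perhaps more transparently, one invokes that minimal weak upper gradients are local and the map $S$ (being locally bi-Lipschitz and locally measure preserving) pulls back weak upper gradients to weak upper gradients with the exact same pointwise norm; the monomial case then identifies the constant and one is done.

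The main obstacle I anticipate is the sum step: for a genuine sum of monomials $|\nabla(\sum_i f_i)|$ is not simply expressible in terms of the $|\nabla f_i|$, so one cannot argue purely formally via Leibniz. The resolution is to stop treating $S$ combinatorially and instead exploit its analytic structure — namely that $S: \supp(\tilde m) \to X'_w$ is a locally bi-Lipschitz, locally measure-preserving bijection (Propositions \ref{prop:homeo}, \ref{prop:prodmeas}, together with the measure formula of Lemma \ref{lem-Aab}) — which guarantees that $f \mapsto f \circ S$ is an isometry at the level of local minimal weak upper gradients once it is an isometry on any set whose span is dense in $W^{1,2}_{\rm loc}$. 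Since the monomial case already gives $|\nabla(f\circ S)|_{\tilde X} = |\nabla f|_{X'_w}\circ S$ for a generating family of $\mathcal A$, and $\mathcal A$ is this family's linear span, a routine closure/locality argument (using lower semicontinuity of minimal weak upper gradients and cut-offs supported in sets $u^{-1}([a,b])$) upgrades the identity from monomials to all of $\mathcal A$. The only point requiring care is to confirm that the local bi-Lipschitz constants of $S$ and $S^{-1}=T$ remain finite on the relevant sublevel sets, which is guaranteed by Proposition \ref{prop:homeo} (local Lipschitzness of $T$) and the global $2$-Lipschitz bound for $S$.
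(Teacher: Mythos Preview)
Your monomial step is fine and matches the paper's computation in spirit, but the sum step has a genuine gap. You correctly identify the difficulty --- $|\nabla(\sum_i f_i)|$ is not expressible in terms of the $|\nabla f_i|$ alone --- but your proposed resolution via the locally bi-Lipschitz, measure-preserving nature of $S$ does not work. A bi-Lipschitz map only gives two-sided \emph{inequalities} between $|\nabla(f\circ S)|_{\tilde X}$ and $|\nabla f|_{X'_w}\circ S$, with constants given by the local Lipschitz constants of $S$ and $T$; it does not give the pointwise equality you need. Your sentence ``the monomial case then identifies the constant and one is done'' is not correct: there is no single constant to identify, and the exact pointwise identity for monomials cannot be upgraded to sums by a closure or density argument, since $\mathcal A$ is not dense in anything relevant at this stage (density in $W^{1,2}(X'_w)$ is used only \emph{after} this proposition, in Theorem~\ref{thm:pitagora}).

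The fix --- and this is what the paper does --- is to use infinitesimal Hilbertianity directly on the sum. Write $f=\sum_{i\in I} g_i h_i$ with $g_i\in\mathcal G$, $h_i\in\mathcal H$ (every element of $\mathcal A$ has this form since $\mathcal G$ and $\mathcal H$ are each closed under products). Since $X'_w$ is infinitesimally Hilbertian (Corollary~\ref{cor:warpHil}), expand
\[
|\nabla f|^2_{X'_w}=\sum_{i,j\in I}\Big(g_ig_j\langle\nabla h_i,\nabla h_j\rangle_{X'_w}+h_ih_j\langle\nabla g_i,\nabla g_j\rangle_{X'_w}\Big),
\]
the mixed terms $\langle\nabla g_i,\nabla h_j\rangle_{X'_w}$ vanishing by \eqref{eq:perp1} for \emph{every} pair $(i,j)$, not just $i=j$. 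Now $\mathcal G$ and $\mathcal H$ are vector spaces, so Proposition~\ref{cor-mapsGH} applies to $g_i\pm g_j$ and $h_i\pm h_j$; by polarization one obtains $\langle\nabla g_i,\nabla g_j\rangle_{X'_w}\circ S=\langle\nabla(g_i\circ S),\nabla(g_j\circ S)\rangle_{\tilde X}$ and similarly for the $h$'s. Composing the displayed identity with $S$, and then running the same bilinear expansion backwards on $\tilde X$ (using infinitesimal Hilbertianity of $\tilde X$ and \eqref{eq:perp3} to reinstate the vanishing cross terms), one arrives at $|\nabla f|^2_{X'_w}\circ S=|\nabla(f\circ S)|^2_{\tilde X}$. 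No analytic properties of $S$ beyond Proposition~\ref{cor-mapsGH} and Lemma~\ref{lem-ort} are needed.
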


\begin{proof}
Let $f \in \mathcal A$. Then $f$ can be written as $f=\sum_{i\in I}g_ih_i$ for some finite set $I$, $g_i\in \mathcal G$ and $h_i\in \mathcal H$, $i\in I$.
By the infinitesimal Hilbertianity of $X'_w$,  Proposition \ref{prop-SobtoLip}  and  Corollary \ref{cor:warpHil}, we know that $m'_w$-a.e. 
\begin{equation}
\label{eq:stanchino}
\begin{split}
|\nabla f|^2_{X'_w}&=\sum_{i,j\in I} g_ig_j \langle\nabla h_i,\nabla h_j\rangle _{X'_w}+ g_ih_j \langle\nabla h_i,\nabla g_j\rangle_{X'_w}\\
&\qquad\qquad+ h_ig_j \langle\nabla g_i,\nabla h_j\rangle_{X'_w}+ h_ih_j \langle\nabla g_i,\nabla g_j\rangle_{X'_w}\\
&=\sum_{i,j\in I} g_ig_j\langle\nabla h_i,\nabla h_j\rangle_{X'_w}+h_ih_j \langle\nabla g_i,\nabla g_j\rangle_{X'_w},
\end{split}
\end{equation}
where we used  \eqref{eq:perp1} in the second step. 

Corollary \ref{cor-mapsGH} implies
\[
\begin{split}
\langle\nabla h_i,\nabla h_j\rangle_{X'_x}\circ S=\langle\nabla (h_i\circ S),\nabla (h_j\circ S)\rangle_{\tilde X},\\
\langle\nabla g_i,\nabla g_j\rangle_{X'_w}\circ S=\langle\nabla (g_i\circ S),\nabla (g_j\circ S)\rangle_{\tilde X},
\end{split}
\]
$\tilde m$-a.e. for any $i,j\in I$. Thus writing---to shorten the notation---$\bar g_i,\bar h_i$ in place of $g_i\circ S,h_i\circ S$ respectively, from \eqref{eq:stanchino} we have
\[
|\nabla f|^2_{X'_w}\circ S=\sum_{i,j\in I} \bar g_i\bar g_j \langle\nabla\bar  h_i,\nabla \bar h_j\rangle_{X'_w}+\bar h_i\bar h_j\langle\nabla \bar g_i,\nabla\bar  g_j\rangle_{X'_w}.
\]
Using  the orthogonality relation  \eqref{eq:perp3} and the fact that $\tilde X$ is infinitesimally Hilbertian we can do the same computations as in \eqref{eq:stanchino}, in reverse order, to get
\[
\begin{split}
|\nabla f|^2_{X'_w}\circ S&=\sum_{i,j\in I} \bar g_i\bar g_j\langle\nabla\bar  h_i,\nabla\bar  h_j\rangle_{\tilde X}+ \bar g_i\bar h_j\langle\nabla\bar  h_i,\nabla\bar  g_j\rangle_{\tilde X}\\
&\qquad+ \bar h_i\bar g_j\langle\nabla\bar  g_i,\nabla \bar h_j\rangle_{\tilde X}+ \bar h_i\bar h_j\langle\nabla\bar  g_i,\nabla \bar g_j\rangle_{\tilde X}=|\nabla (f\circ S)|_{\tilde X}^2,
\end{split}
\]
$\tilde m$-a.e.
\end{proof}

Recall that in Proposition \ref{prop:homeo} we defined functions $S: \tilde X \to X'_w$ and $T:X'_w \to \tilde X$ inverses of each other such that $S$ is $1$-Lipschitz and $T$ is locally Lipschitz.

\begin{prop}\label{prop:almost} 
With the same notation as above the following holds 
\begin{enumerate}[(i)]
\item  If $f\in W^{1,2}({X'_w})$ then $f\circ S \in W^{1,2}(\tilde X)$ and
\begin{equation}
\label{eq:almost1}
\||\nabla(f\circ S)|\|_{L^2(\tilde X)}   \leq  \||\nabla f|\|_{L^2({X'_w})}. 
\end{equation}
\item If  $f\circ S \in W^{1,2}(\tilde X)$  then $ f \in S_{loc}^2(X'_w)$ and each $x \in \tilde X$ has a neighborhood  $\Omega_x$ such that 
\begin{equation}
\label{eq:almost2}
L^{-1}\||\nabla f|\|_{L^2( S(\Omega_x))}   \leq   \||\nabla(f\circ S)|\|_{L^2(\Omega_x)}. 
\end{equation}
\end{enumerate}
Here $L=\Lip^{-1}(T^{-1}(x))$.
\end{prop}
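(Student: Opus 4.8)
The plan is to prove the two implications of Proposition \ref{prop:almost} separately, in both cases reducing to the approximation result of Proposition \ref{prop:approximation} together with the identity $|\nabla(f\circ S)|_{\tilde X}= |\nabla f|_{X'_w}\circ S$ for $f\in\mathcal A$ established in Proposition \ref{prop:astable}, and the measure-preserving property $S_\sharp\tilde m = m'_w$ from Proposition \ref{prop:prodmeas}. For part (i), first take $f\in \mathcal A\cap W^{1,2}(X'_w)$. Then $f\circ S\in S^2_{\rm loc}(\tilde X)$ by Proposition \ref{prop:astable}, and
\[
\||\nabla(f\circ S)|\|^2_{L^2(\tilde X)} = \int_{\tilde X}\bigl(|\nabla f|_{X'_w}\circ S\bigr)^2\,\mathrm d\tilde m = \int_{X'_w}|\nabla f|^2_{X'_w}\,\mathrm d m'_w = \||\nabla f|\|^2_{L^2(X'_w)},
\]
using $S_\sharp\tilde m=m'_w$; combined with $\|f\circ S\|_{L^2(\tilde X)}=\|f\|_{L^2(X'_w)}$ (again Proposition \ref{prop:prodmeas}) this gives $f\circ S\in W^{1,2}(\tilde X)$ with equality of $W^{1,2}$-norms. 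For general $f\in W^{1,2}(X'_w)$, approximate $f$ in the $W^{1,2}(X'_w)$-norm by a sequence $f_n\in\mathcal A\cap W^{1,2}(X'_w)$ (Proposition \ref{prop:approximation}); then $f_n\circ S$ is Cauchy in $W^{1,2}(\tilde X)$, hence converges to some $g$, while $f_n\circ S\to f\circ S$ in $L^2(\tilde X)$ because $S_\sharp\tilde m=m'_w$, so $g=f\circ S$ and \eqref{eq:almost1} follows (in fact with equality).

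For part (ii), the strategy is dual but localized, which is why the statement only asserts a local inequality. Assume $f\circ S\in W^{1,2}(\tilde X)$. Given $x\in\tilde X$, choose a neighbourhood $\Omega_x$ small enough that $T$ is Lipschitz on $S(\Omega_x)$ with constant $L=\Lip(T|_{S(\Omega_x)})$ — possible by Proposition \ref{prop:homeo}, since $T$ is locally Lipschitz. Apply the already-proven part (i) (or rather its contraction form) to the map $T:X'_w\supset S(\Omega_x)\to\Omega_x\subset\tilde X$: since $T$ is $L$-Lipschitz and of bounded compression on $S(\Omega_x)$ (use $T_\sharp m'_w=\tilde m$), the function $(f\circ S)\circ T = f$ lies in $S^2_{\rm loc}$ over $S(\Omega_x)$ and satisfies the chain-rule bound $|\nabla f|_{X'_w}\le L\,|\nabla(f\circ S)|_{\tilde X}\circ T$ $m'_w$-a.e. on $S(\Omega_x)$; integrating and using $T_\sharp(m'_w|_{S(\Omega_x)})=\tilde m|_{\Omega_x}$ yields \eqref{eq:almost2}. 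Here one can invoke Lemma \ref{le:contrdual} with $T$ in place of the map there, having checked the Sobolev-to-Lipschitz property for both spaces (Proposition \ref{prop-SobtoLip} for $X'$, hence Theorem \ref{thm:StoLipGH} for $X'_w$; and $\RCDst$ for $\tilde X$).

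I expect the main obstacle to be the careful bookkeeping in part (ii): one must verify that the localized maps genuinely satisfy the hypotheses of the contraction-by-local-duality lemma — in particular the bounded-compression bound $T_\sharp m'_w \le C\,\tilde m$ on the relevant sets, which follows from Proposition \ref{prop:prodmeas} but needs to be phrased over the restricted domain — and that the pointwise chain-rule inequality for minimal weak upper gradients under a locally Lipschitz map of bounded compression can be localized correctly. A secondary technical point is ensuring $\Omega_x$ can be chosen so that $S(\Omega_x)$ is the sort of set to which the warped-product Sobolev calculus applies; but since $S$ is globally $2$-Lipschitz and a homeomorphism onto $X'_w$ (Proposition \ref{prop:homeo}), open sets pull back and push forward cleanly, so this is routine. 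Once both parts are in hand, Proposition \ref{prop:isom} applies — condition (ii-a) is furnished by $S$ being a genuine inverse of $T$ on $\supp(\tilde m)$ (Proposition \ref{prop:homeo}), and condition (ii-b) follows because part (i) gives a norm-nonincreasing injection $W^{1,2}(X'_w)\hookrightarrow W^{1,2}(\tilde X)$ while part (i) applied to $S^{-1}=T$ (via (ii), globally, since $T_\sharp m'_w=\tilde m$ globally) gives the reverse inequality, forcing an isometry — so that $(\tilde X,\tilde d,\tilde m)$ and $(X'_w,d'_w,m'_w)$ are isomorphic as metric measure spaces.
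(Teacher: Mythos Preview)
Your argument is correct, but for part (i) you take a different route from the paper. The paper proves \eqref{eq:almost1} in one line by invoking Lemma~\ref{le:contrdual} directly: since $S$ is globally Lipschitz (Proposition~\ref{prop:homeo}) and $S_\sharp\tilde m = m'_w$ (Proposition~\ref{prop:prodmeas}), the contraction-by-local-duality lemma immediately gives $|\nabla(f\circ S)|_{\tilde X}\le |\nabla f|_{X'_w}\circ S$ for every $f\in W^{1,2}(X'_w)$, hence the $L^2$ inequality. You instead approximate $f$ by elements of $\mathcal A\cap W^{1,2}(X'_w)$ and use the pointwise identity of Proposition~\ref{prop:astable} plus $S_\sharp\tilde m=m'_w$; this yields \emph{equality} in \eqref{eq:almost1}, not merely the inequality, and is precisely the argument the paper postpones to Theorem~\ref{thm:pitagora}. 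So your approach is stronger but front-loads the density machinery, while the paper's is shorter and keeps Proposition~\ref{prop:almost} and Theorem~\ref{thm:pitagora} logically separated. For part (ii) your argument---localize, use that $T$ is $L$-Lipschitz on $S(\Omega_x)$, apply Lemma~\ref{le:contrdual} with roles reversed---is essentially the paper's, which phrases it as rescaling $d'_w$ by $L$ so that $T$ becomes $1$-Lipschitz on the ball $B_r(T^{-1}(x))$ and then applying the lemma.
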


\begin{proof} 
Note that $(\tilde X, d, \tilde m)$ and $X'_{w}= (X' \times _w \mathbb R, d'_w,m'_w)$ satisfy the hypotheses of 
Lemma \ref{le:contrdual}. That is, they satisfy the Sobolev to Lipschitz property, see the paragraph after \cite[Definition 4.9]{Gig} and Proposition \ref{prop-SobtoLip}. Moreover,  $T_\sharp(m'_w)=\tilde m$ and $S_\sharp \tilde m = m'_w$ by Proposition \ref{prop:prodmeas}.  To prove the first inequality recall that by Proposition \ref{prop:homeo} the map $S$ is $1$-Lipschitz. Then equation \ref{eq:almost1} follows by Lemma \ref{le:contrdual}. 

To prove the second inequality, choose $\Omega_x= T(B(T^{-1}(x), r))$ and rescale $d'_w$ by $L$. Then we get  $\Lip(T|_{B(T^{-1}(x), r)}) \leq 1$.
With this rescaling the corresponding gradient part of the Sobolev norm is scaled by $\frac1L $.  The result follows by  Lemma \ref{le:contrdual}.
\end{proof}

The main theorem of this section follows.

\begin{thm}[$(\tilde X, \tilde d, \tilde m)$ is isomorphic to $(X'_{\omega},d'_{\omega}, m'_{\omega})$]\label{thm:pitagora}  
The maps $T$ and $S$ given in Proposition \ref{prop:homeo} are isomorphisms of metric measure spaces.
\end{thm}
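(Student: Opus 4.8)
The plan is to apply Proposition \ref{prop:isom} with $X_1=\tilde X$, $X_2=X'_w$, and the Borel map $S:\supp(\tilde m)\to X'_w$ constructed in Proposition \ref{prop:homeo}. Both spaces satisfy the Sobolev-to-Lipschitz property: $\tilde X$ because it is $\RCDst(-(N-1),N)$ (hence $\mathsf{CD}$ for a suitable constant, see the paragraph after \cite[Definition 4.9]{Gig}), and $X'_w$ by Proposition \ref{prop-SobtoLip} together with Theorem \ref{thm:StoLipGH} applied to the warping functions $w_{m'}(t)=e^{(N-1)t}$, $w_{d'}(t)=e^{t}$ (these are strictly positive everywhere, so $\{w_{m'}=0\}=\emptyset$ and the hypotheses of that theorem are trivially met). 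Moreover both $\tilde m$ and $m'_w$ give finite mass to bounded sets — for $\tilde m$ this is the local doubling property, and for $m'_w$ it follows from the definition \eqref{eq:warpmeas} together with local doubling of $m'$ proved in Proposition \ref{prop-SobtoLip}. So condition (ii-a) of Proposition \ref{prop:isom} holds by taking $T$ of Proposition \ref{prop:homeo} as the inverse map (with $\mathcal N$ the $\tilde m$-negligible set on which $S$ is only defined up to modification), since $T\circ S=\mathrm{Id}$ on $\supp(\tilde m)$.

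The remaining and only substantial point is (ii-b): right composition with $S$ is a bijective isometry of $W^{1,2}(X'_w)$ onto $W^{1,2}(\tilde X)$. The bijectivity at the level of underlying sets is immediate from $S$ and $T$ being mutually inverse Borel maps together with the measure-preservation $S_\sharp\tilde m=m'_w$, $T_\sharp m'_w=\tilde m$ from Proposition \ref{prop:prodmeas}, which also ensures $\|f\circ S\|_{L^2(\tilde X)}=\|f\|_{L^2(X'_w)}$. For the gradient part, I would combine Proposition \ref{prop:astable} with Proposition \ref{prop:almost} and a density argument. Concretely: for $f\in\mathcal A\cap W^{1,2}(X'_w)$ Proposition \ref{prop:astable} gives $|\nabla(f\circ S)|_{\tilde X}=|\nabla f|_{X'_w}\circ S$ $\tilde m$-a.e., and hence, using $S_\sharp\tilde m=m'_w$,
\[
\||\nabla(f\circ S)|\|_{L^2(\tilde X)}=\||\nabla f|\|_{L^2(X'_w)},
\]
so $\|f\circ S\|_{W^{1,2}(\tilde X)}=\|f\|_{W^{1,2}(X'_w)}$ on the dense subalgebra. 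Since Proposition \ref{prop:approximation} guarantees $\mathcal A\cap W^{1,2}(X'_w)$ is dense in $W^{1,2}(X'_w)$, and since right composition with $S$ is a bounded operator $W^{1,2}(X'_w)\to W^{1,2}(\tilde X)$ by \eqref{eq:almost1} of Proposition \ref{prop:almost}, the isometry identity extends by continuity to all of $W^{1,2}(X'_w)$: given $f\in W^{1,2}(X'_w)$, pick $f_n\in\mathcal A\cap W^{1,2}(X'_w)$ with $f_n\to f$ in $W^{1,2}(X'_w)$; then $f_n\circ S\to f\circ S$ in $W^{1,2}(\tilde X)$ by \eqref{eq:almost1}, and passing to the limit in $\|f_n\circ S\|_{W^{1,2}(\tilde X)}=\|f_n\|_{W^{1,2}(X'_w)}$ gives the claim.

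It remains to check surjectivity of this isometry onto $W^{1,2}(\tilde X)$, i.e.\ that every $g\in W^{1,2}(\tilde X)$ is of the form $f\circ S$ with $f\in W^{1,2}(X'_w)$. Here I would set $f:=g\circ T$; then $f\circ S=g$ $\tilde m$-a.e., and I need $f\in W^{1,2}(X'_w)$. The function $f$ lies in $L^2(X'_w)$ by $T_\sharp m'_w=\tilde m$; for the Sobolev regularity I would invoke the local estimate \eqref{eq:almost2} of Proposition \ref{prop:almost}: it shows $f\in S^2_{loc}(X'_w)$ with, on each chart $\Omega_x$, $\||\nabla f|\|_{L^2(S(\Omega_x))}\le L\||\nabla(f\circ S)|\|_{L^2(\Omega_x)}$, and combining this with the already-established global isometry on $\mathcal A\cap W^{1,2}$ (equivalently, noting that the image of right composition with $S$ is a closed subspace of $W^{1,2}(\tilde X)$ containing a dense set, hence is everything) forces $|\nabla f|\in L^2(X'_w)$ and $\|f\|_{W^{1,2}(X'_w)}=\|g\|_{W^{1,2}(\tilde X)}$. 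The cleanest packaging is: right composition with $S$ is a linear isometry from $W^{1,2}(X'_w)$ into $W^{1,2}(\tilde X)$ whose range is closed and dense, hence onto. This closes condition (ii-b), and Proposition \ref{prop:isom} then yields that $S$ (up to an $\tilde m$-negligible modification) and its inverse $T$ are isomorphisms of metric measure spaces. I expect the main obstacle to be the bookkeeping in the surjectivity/density step — making sure the local statements of Proposition \ref{prop:almost} assemble into the global conclusion without circularity — but no genuinely new input beyond Propositions \ref{prop:astable}, \ref{prop:almost}, \ref{prop:approximation}, and \ref{prop:prodmeas} is needed.
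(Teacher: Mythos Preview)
Your proposal follows exactly the paper's strategy: apply Proposition \ref{prop:isom}, verify the Sobolev-to-Lipschitz property for both spaces, use $T$ as the Borel inverse for (ii-a), and reduce (ii-b) to the density of $\mathcal A\cap W^{1,2}(X'_w)$ together with Propositions \ref{prop:astable} and \ref{prop:almost}. The forward direction---the isometric embedding $W^{1,2}(X'_w)\hookrightarrow W^{1,2}(\tilde X)$ via approximation by $\mathcal A$, Proposition \ref{prop:astable}, and the bound \eqref{eq:almost1}---is handled identically in both proofs.

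The gap is in your surjectivity argument. Your ``closed range plus dense image'' packaging requires the image of $f\mapsto f\circ S$ to be dense in $W^{1,2}(\tilde X)$, and you do not justify this; asserting it is essentially assuming what you want to prove. The local bound \eqref{eq:almost2} alone does not give global $L^2$-integrability of $|\nabla f|_{X'_w}$, since the constants $L=L(x)$ are not uniformly controlled. The paper avoids density of the image altogether: given $g=f\circ S\in W^{1,2}(\tilde X)$, Proposition \ref{prop:almost}(ii) says that on each neighborhood $\Omega_x$ the map $T$ is Lipschitz after rescaling, so both directions of Lemma \ref{le:contrdual} are available locally and the forward-direction approximation argument can be rerun on $\Omega_x$ to yield the \emph{pointwise} identity $|\nabla f|_{X'_w}\circ S=|\nabla(f\circ S)|_{\tilde X}$ $\tilde m$-a.e.\ on $\Omega_x$. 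Locality of minimal weak upper gradients then patches these into a global pointwise identity, and $|\nabla f|\in L^2(X'_w)$ follows immediately from $S_\sharp\tilde m=m'_w$. So your instinct to start from \eqref{eq:almost2} is right, but the assembly step is ``local pointwise identity plus locality of the gradient,'' not a global density claim about the image.
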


\begin{proof}
By the paragraph after \cite[Definition 4.9]{Gig}   $\tilde X$ has the Sobolev to Lipschitz property and by Proposition \ref{prop-SobtoLip}  and Theorem  \ref{thm:StoLipGH} $X'_w$ also has the Sobolev to Lipschitz property. Hence, it is enough to apply Proposition \ref{prop:isom}.  By Proposition \ref{prop:prodmeas} we know that  $T$ and $S$ are measure preserving.  It remains to prove that   $f\in W^{1,2}(X'_w)$ if and only if $f\circ S \in W^{1,2}(\tilde X)$ and that
\begin{equation}
\label{eq:splitting}
\||\nabla (f\circ S)|_{\tilde X}\|_{L^{2}( \tilde X)}=\||\nabla f|_{X'_w}\|_{L^{2}(X'_w)}.
\end{equation}

Let $f\in W^{1,2}(X'_w)$. By Proposition \ref{prop:approximation}  there exists a sequence $\{f_n \}\subset \mathcal A\cap W^{1,2}(X'_w)$ converging to $f$ in $W^{1,2}(X'_w)$. Then the first inequality in Proposition \ref{prop:almost}  implies that both  $f_n\circ S $, and $f\circ S$ are in $W^{1,2}(\tilde X)$, with  $f_n\circ S$ converging to $f\circ S$ in $W^{1,2}(\tilde X)$. From Proposition \ref{prop:astable} we get, 
\[
|\nabla f_n|_{X'_w}\circ S=|\nabla ( f_n\circ S)|_{\tilde X},\qquad\tilde m\text{-a.e.}
\]
Taking the $L^2$ norm of the functions in the previous equality and taking the limit as $n\to\infty$ we get \eqref{eq:splitting}.

If $f:X'_w \to \mathbb R$ is such that $f\circ S\in W^{1,2}(\tilde X_{a,b})$, the second inequality in Proposition \ref{prop:almost}  implies that 
each $x \in \tilde X$ has a neighborhood  $\Omega_x$ on which the above argument can be repeated. Thus 
\[
|\nabla f_n|_{S(\Omega_x)}\circ S=|\nabla ( f_n\circ S)|_{\Omega_x},\qquad\tilde m\text{-a.e.}
\]
By the locality of the weak upper gradient we have equality in the whole space and therefore $f \in W^{1,2}(X'_w)$.
\end{proof}


\section{$\RCDst(0,N)$-condition for $X'$}\label{sec-RCDcond}

Recall that $X'$ is an infinitesimally Hilbertian space satisfying the Sobolev to Lipschitz property. Under these conditions, \cite[Theorem 7]{EKS} implies that the validity of the Bochner inequality is equivalent to the $\RCDst$ condition. Hence, to prove that $X'$ is an $\RCDst(0,N)$-space, we will show that the weak Bochner inequality holds. 

We begin with the following technical lemma about extending test functions on $X'$ to that of $X_w'$. From this we will obtain that $(X',d',m')$ is an $\RCDst(0,N)$ space via a limiting argument. 
Denote the Laplacian operator of $X'$ by $\underline{\Delta}$.

\begin{lem}
\label{lem-regularity-of-extension}
Let $\rho \in C_0^{\infty}(\mathbb R)$ and $f\in D(\underline{\Delta})\cap L^{\infty}(X')$ be such that $\underline{\Delta}f\in W^{1,2}(X')\cap L^{\infty}(X')$. Let $\overline{f}:X'_w\to \mathbb{R}$ be defined as $\overline{f}(x,t)=f(x)$ and $\overline{\rho}:X'_w\to\mathbb{R}$ as $\overline{\rho}(x,t)=\rho(t)$. Then $\overline{f}\overline{\rho}\in D(\Delta)\cap L^{\infty}(X'_w)$ and $\Delta(\overline{f}\overline{\rho})\in W^{1,2}(X'_w)\cap L^{\infty}(X'_w)$.
\end{lem}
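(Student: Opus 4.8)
The plan is to use the structure theory of warped products developed in Subsection \ref{ssec-warp}, principally Theorem \ref{def-warped-BL} and Corollary \ref{cor:sezioniprod}, together with the change of variables formulas in Proposition \ref{prop:change_of_variables}, to compute $\Delta(\overline f\,\overline{\chi_I})$ explicitly and to verify that it lands in $W^{1,2}(X'_w)\cap L^\infty(X'_w)$. The warping functions here are $w_{d'}(t)=e^t$ and $w_{m'}(t)=e^{(N-1)t}$, so $\{w_{m'}=0\}=\emptyset$ and the hypotheses of Theorem \ref{def-warped-BL}, Corollary \ref{cor:sezioniprod}, and Proposition \ref{prop:approximation} are satisfied trivially (there are no zeros of $w_{m'}$ to worry about, so the bound $w_{m'}(t)\le C\inf_{s:w_{m'}(s)=0}|t-s|$ is vacuous).

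First I would record that $\overline f\,\overline{\chi_I}$ is bounded since $f\in L^\infty(X')$ and $\chi_I$ is bounded, and that it is in $W^{1,2}(X'_w)$: indeed its $t$-sections are $f(x)\chi_I(t)$, which lie in $W^{1,2}(\mathbb R, d_{Euc}, w_{m'}\mathcal L^1)$ because $\chi_I$ is Lipschitz with bounded support, and its $x$-sections are $\chi_I(t)f(x)\in W^{1,2}(X')$; using \eqref{eq:warpedgrad}, $|\nabla(\overline f\,\overline{\chi_I})|^2_{X'_w}(x,t)=e^{-2t}\chi_I(t)^2|\nabla f|_{X'}^2(x)+f(x)^2|\chi_I'|^2(t)$, which is in $L^2(m'_w)$ because the integration in $t$ is over the compact support of $\chi_I$ and $\chi_I'$. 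Then I would identify the Laplacian. The cleanest way is to test: for a dense class of test functions on $X'_w$ (product functions $\overline g\,\overline h$ from the algebra $\mathcal A$ of Proposition \ref{prop:approximation}) compute $-\int_{X'_w}\langle\nabla(\overline f\,\overline{\chi_I}),\nabla(\overline g\,\overline h)\rangle\,dm'_w$ using the orthogonality $\langle\nabla\overline f,\nabla\overline h\rangle_{X'_w}=0$ (the analogue of Lemma \ref{lem-ort} in the $X'_w$ side), Fubini with respect to $m'_w$ as in \eqref{eq:warpmeas}, and the facts that $-\int_{X'}\langle\nabla f,\nabla g\rangle_{X'}\,dm'=\int_{X'}g\,\underline\Delta f\,dm'$ and that on $\mathbb R$ with weight $e^{(N-1)t}$ the weighted Laplacian of $\chi_I$ is $\chi_I''+(N-1)\chi_I'$ in the distributional sense. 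This yields the expected formula
\[
\Delta(\overline f\,\overline{\chi_I}) = e^{-2t}\overline{\chi_I}\,\overline{\underline\Delta f} + \overline f\left(\chi_I'' + (N-1)\chi_I'\right),
\]
interpreted as a measure; but since $\chi_I=\chi_{[0,1]}$ is only Lipschitz and not $C^2$, the second term is genuinely measure-valued, containing a singular part supported on $\partial I=\{0,1\}$, which would contradict $\Delta(\overline f\,\overline{\chi_I})\in L^2$. This is the main obstacle: the literal characteristic function $\chi_I$ is not smooth enough, and I expect the statement is really meant with $\chi_I$ replaced by (or the proof to proceed via) a smooth cutoff that is identically $1$ on a slightly smaller interval; alternatively, the relevant object downstream is $\overline f$ multiplied by such a cutoff, and the role of $I$ is only to localize. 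I would therefore carry out the argument with $\chi_I$ a fixed Lipschitz-or-smooth cutoff equal to $1$ on $[0,1]$ and compactly supported, in which case $\chi_I',\chi_I''\in L^\infty$ and the displayed formula shows directly that $\Delta(\overline f\,\overline{\chi_I})\in L^\infty(X'_w)$ (each factor is bounded, and the $t$-dependent factors have compact support) and $\Delta(\overline f\,\overline{\chi_I})\in L^2(X'_w)$ (same reason, plus $\underline\Delta f\in L^2(X')$).

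Finally, to get $\Delta(\overline f\,\overline{\chi_I})\in W^{1,2}(X'_w)$ I would apply Theorem \ref{def-warped-BL} once more to the function on the right-hand side: its $x$-sections are $e^{-2t}\chi_I(t)\underline\Delta f(x)+f(x)(\chi_I''+(N-1)\chi_I')(t)$, which are in $W^{1,2}(X')$ because both $\underline\Delta f$ and $f$ are (here we use $\underline\Delta f\in W^{1,2}(X')$ and $f\in D(\underline\Delta)\subset W^{1,2}(X')$), with $W^{1,2}(X')$-norms bounded uniformly for $t$ in the compact support; and its $t$-sections are smooth compactly supported (or Lipschitz) functions of $t$, hence in $W^{1,2}(w_{m'}\mathbb R)$. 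The gradient formula \eqref{eq:warpedgrad} then expresses $|\nabla\Delta(\overline f\,\overline{\chi_I})|^2_{X'_w}$ as a sum of an $e^{-2t}$-weighted $X'$-gradient-squared term involving $|\nabla\underline\Delta f|^2$ and $|\nabla f|^2$ and a $t$-derivative term involving $|\chi_I'|,|\chi_I''|,|\chi_I'''|$ (or, in the Lipschitz case, the appropriate weak derivatives), all integrable against $m'_w$ since the $t$-integration is over a compact set and $\underline\Delta f,f\in W^{1,2}(X')$. Assembling these observations gives $\overline f\,\overline{\chi_I}\in D(\Delta)\cap L^\infty(X'_w)$ with $\Delta(\overline f\,\overline{\chi_I})\in W^{1,2}(X'_w)\cap L^\infty(X'_w)$, which is the claim. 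I expect the only real care needed is in handling the (lack of) smoothness of $\chi_I$ consistently — either by fixing a smooth cutoff at the outset or by a routine mollification argument — and in justifying the Fubini/integration-by-parts step on the warped product, for which Corollary \ref{cor:sezioniprod} and the explicit form \eqref{eq:warpmeas} of $m'_w$ are exactly what is needed.
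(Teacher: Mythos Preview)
Your approach is essentially the paper's: test $\overline f\,\overline{\chi_I}$ against product functions in the algebra $\mathcal A$, use the orthogonality relations (Lemma~\ref{lem-ort}/\eqref{eq:perp1}) to kill the cross terms, integrate by parts in the $X'$-factor via $f\in D(\underline\Delta)$, and read off $\Delta(\overline f\,\overline{\chi_I})$; then check the required regularity of the result. The paper shows $\overline f\,\overline{\chi_I}\in S^2(X'_w)$ directly from the test-plan definition rather than via Theorem~\ref{def-warped-BL}, but that is cosmetic.

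Two substantive points of comparison. First, your concern about the regularity of $\chi_I$ is well placed. The paper's proof in fact asserts ``$\chi_I\in S^2(\mathbb R)$ with $|\nabla\chi_I|=0$'' and uses this to drop the term $\|f\|_{L^\infty}\!\int|\chi_I(\beta_1)-\chi_I(\beta_0)|\,\mathrm d(p_2)_\#\pi$; as you note, for the literal characteristic function this is not correct (the $t$-sections $f(x)\chi_I(\cdot)$ are not in $W^{1,2}(w_{m'}\mathbb R)$ when $f(x)\neq 0$, so by Theorem~\ref{def-warped-BL} the product $\overline f\,\overline{\chi_I}$ is not in $W^{1,2}(X'_w)$). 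Your proposed repair---replace $\chi_I$ by a Lipschitz (or smooth) cutoff identically $1$ on $I$ with compact support---is exactly what makes the argument go through, and it is harmless for the downstream application (the next proposition only needs that the common $t$-factor cancel across all four Bochner terms, and that remains true). Second, your displayed formula carries the warping factor $e^{-2t}$ in front of $\overline{\chi_I}\,\overline{\underline\Delta f}$, coming from $\langle\nabla\overline f,\nabla g\rangle_{X'_w}=w_{d'}^{-2}\langle\nabla f,\nabla\tilde g\rangle_{X'}$ (Corollary~\ref{cor:sezioniprod}~(iii) and polarization). The paper's stated identity $\Delta(\overline f\,\overline{\chi_I})=\overline{\chi_I}\,(\underline\Delta f\circ p_1)$ omits this factor; your version is the correct one, though the omission does not affect the qualitative conclusion since $e^{-2t}$ is bounded on the support of the cutoff.

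In short: your plan is the paper's plan, and your caveat about $\chi_I$ identifies a genuine gap in the statement/proof that your smooth-cutoff fix resolves.
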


\begin{proof}
Clearly $\overline{f}\overline{\rho}\in L^{\infty}(X'_w, m'_w)$. 
Also, $\overline{f}\overline{\rho}\in L^2(X'_w, m_w)$ because
\[
\int_{X'_w}|\overline{f}\overline{\rho}|^2\, \mathrm{d}m'_w 
\leq \| f\|^2_{L^2(X')} \int_{\mathbb R}\rho(s)^2 w_{m'}(s)\,\mathrm{d}s.
\]

By a result of \cite{GigHan}, see Theorem \ref{def-warped-BL} in Section 2,  $\overline{f}\overline{\rho}\in W^{1,2}(X'_w)\cap L^{\infty}(X'_w)$. We will now prove that $\overline{f}\overline{\rho}\in D(\Delta)$. It is clear that $\mathrm{Test}(X'_w)\cap \mathcal{A}\neq \emptyset$. Let $\varphi\in  \mathrm{Test}(X'_w)\cap \mathcal{A}$ be given by $\varphi= \sum_i^{n}a_ih_ig_i$, where $a_i\in\mathbb{R}$, $h_i\in\mathcal{H}$ and $g_i\in \mathcal{G}$. Then,
\begin{eqnarray*}
\int_{X'_w}\left\langle \nabla (\overline{f}\overline{\rho}), \nabla \varphi \right\rangle_{X'_w}\, \mathrm{d}m'_w 
&=& \sum a_i\int_{X'_w} g_i\left\langle \nabla (\overline{f}\overline{\rho}), \nabla h_i\right\rangle_{X'_w} + h_i\left\langle \nabla (\overline{f}\overline{\rho}), \nabla g_i\right\rangle_{X'_w}\, \mathrm{d}m'_w\\
&=& \sum a_i\int_{X'_w} \left[ \overline{\rho}g_i \left\langle \nabla \overline{f}, \nabla h_i\right\rangle_{X'_w} + \overline{f}g_i\left\langle \nabla \overline{\rho}, \nabla h_i\right\rangle_{X'_w} \right. \\
&+& \left. \overline{\rho}h_i\left\langle \nabla \overline{f}, \nabla g_i\right\rangle_{X'_w}  + \overline{f}h_i\left\langle \nabla \overline{\rho}, \nabla g_i\right\rangle_{X'_w}\right] \mathrm{d}m'_w\\
&=& \sum a_i\int_{X'_w} \left( \overline{f}g_i\left\langle \nabla \overline{\rho}, \nabla h_i\right\rangle_{X'_w} + \overline{\rho}h_i\left\langle \nabla \overline{f}, \nabla g_i\right\rangle_{X'_w} \right) \mathrm{d}m'_w.
\end{eqnarray*}
Here we have used the validity of the Leibniz rule due to the regularity of the functions involved as well as the orthogonality relations. Now we note that 
$$\left\langle \nabla\overline{f}, \nabla g_i\right\rangle_{X'_w}= w_{d'}^{-2}\left\langle \nabla f, \nabla g_i\right\rangle_{X'}, \ \ \ \ \ \  \left\langle \nabla \overline{\rho}, \nabla h_i\right\rangle_{X'_w}= \rho' h_i' , $$
 as a consequence of Theorem \ref{def-warped-BL} and polarization. Therefore we obtain, 
\begin{eqnarray*}
\sum a_i\int_{X'_w} \overline{\rho}h_i\left\langle \nabla \overline{f}, \nabla g_i\right\rangle_{X'_w} \, \mathrm{d}m'_w &=& \sum a_i\int_{\mathbb{R}}\rho h_i w_{d'}^{-2}  w_{m'} \int_{X'} \left\langle \nabla \overline{f}, \nabla g_i\right\rangle_{X'} \, \mathrm{d}m'\,\mathrm{d}s\\
&=& -\sum a_i\int_{\mathbb{R}}\rho h_i w_{d'}^{-2} w_{m'}\int_{X'}g_i\underline{\Delta}f\,\mathrm{d}m'\,\mathrm{d}s,
\end{eqnarray*}
and 
\begin{eqnarray*}
	\sum a_i\int_{X'_w} \overline{f}g_i\left\langle \nabla \overline{\rho}, \nabla h_i\right\rangle_{X'_w} \, \mathrm{d}m'_w &=& \sum a_i\int_{\mathbb{R}}\rho' h_i' w_{m'} \,\mathrm{d}s \int_{X'} f g_i \, \mathrm{d}m'\\
	&=& -\sum a_i\int_{\mathbb{R}} (\rho' w_{m'})' h_i \,\mathrm{d}s  \int_{X'}f g_i \,\mathrm{d}m'.
\end{eqnarray*}

Hence, for all $\varphi\in \mathrm{Test(X'_w})\cap \mathcal{A}$, we have that
\[
\int_{X'_w}\left\langle \nabla (\overline{f}\overline{\rho}), \nabla \varphi \right\rangle_{X'_w}\, \mathrm{d}m'_w = -\int_{X'_w}\varphi [ \rho w_{d'}^{-2} \left(\underline{\Delta}f \right) + \overline{f} (w_{m'}^{-1} (\rho' w_{m'})')   ]\mathrm{d}m'_w.
\]
Here we have abused notation and denote $\underline{\Delta}f=\underline{\Delta}f\circ p_1$ and similarly $\rho w_{d'}^{-2} =( \rho w_{d'}^{-2}) \circ p_2$ etc.\ (We do so as well in what follows).
Since $\mathrm{Test(X'_w})\cap \mathcal{A}$ is dense in $W^{1,2}(X'_w)$, so by an approximation argument the previous equality holds for all $\varphi\in W^{1,2}(X'_w)$. Hence $\overline{f}\overline{\rho}\in D(\Delta)$ and 
\begin{equation} \label{lapforwarp}
\Delta(\overline{f}\overline{\rho})= \rho\left(w_{d'}^{-2}\underline{\Delta}f \right) + \overline{f} w_{m'}^{-1} (\rho' w_{m'})'. 
\end{equation} It immediately follows that $\Delta(\overline{f}\overline{\rho})\in W^{1,2}(X'_w)\cap L^{\infty}(X'_w)$.
\end{proof}

We now come to the main result of this section, the $\RCDst$-condition for $X'$. We will accomplish this by using the $\RCDst$-condition for $X'_w$. Namely we plug into Bochner inequality for $X'_w$ those test functions constructed in Lemma \ref{lem-regularity-of-extension}. The Bochner inequality for $X'$ will come out of a suitable limit.

\begin{prop}\label{prop-X1-RCD}
For all $f\in D(\underline{\Delta})$ such that $\underline{\Delta}f\in W^{1,2}(X',d',m')$ and all non-negative $g\in D(\underline{\Delta})\cap L^{\infty}(X',m')$ such that $\underline{\Delta}g\in L^{\infty}(X',m')$,  the following is satisfied:
\[
\frac{1}{2}\int_{X'}\!\underline{\Delta}g |\nabla f|^2_{X'}\, \mathrm{d}m'-\int_{X'}\!g\left\langle \nabla (\underline{\Delta}f), \nabla f\right\rangle_{X'}\, \mathrm{d}m'\geq 
 \frac{1}{N}\int_{X'}g(\underline{\Delta} f)^2\, \mathrm{d}m'.
\] 
In other words, $(X',d',m')$ is an $\RCDst(0,N)$ space.
\end{prop}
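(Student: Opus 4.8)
The plan is to transport the weak Bochner inequality, which holds on the warped product $X'_w = \mathbb{R}\times_w X'$ since that space is isomorphic to $\tilde X$ (Theorem \ref{thm:pitagora}) and $\tilde X$ is $\RCDst(-(N-1),N)$, down to a Bochner inequality on $X'$ itself. Given $f\in D(\underline{\Delta})$ with $\underline{\Delta}f\in W^{1,2}(X')$ and a non-negative $g\in D(\underline{\Delta})\cap L^\infty(X')$ with $\underline{\Delta}g\in L^\infty(X')$, I would form the lifted functions $\bar f(x,t)=f(x)$, $\bar g(x,t)=g(x)$ on $X'_w$, multiplied by the cutoff $\overline{\chi_I}$ with $I=[0,1]$ as in Lemma \ref{lem-regularity-of-extension}. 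That lemma gives exactly the regularity needed to feed $\bar f\,\overline{\chi_I}$ and $\bar g\,\overline{\chi_I}$ into the weak Bochner inequality \eqref{eq-Bochner} on $X'_w$: namely $\bar f\,\overline{\chi_I}\in D(\Delta)$ with $\Delta(\bar f\,\overline{\chi_I})=\overline{\chi_I}(\underline{\Delta}f\circ p_1)\in W^{1,2}(X'_w)$, and likewise for $\bar g\,\overline{\chi_I}$, which is moreover non-negative and has $L^\infty$ Laplacian. (A preliminary reduction may be needed: assume first that $f,g$ are supported where things are nice, or use a further cutoff in $X'$ and a locality/truncation argument to pass to the general case.)

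The core computation is then to evaluate each of the four integrals appearing in \eqref{eq-Bochner} for $X'_w$ on these lifted functions and identify them with the corresponding integrals over $X'$ up to the explicit constant $\int_0^1 w_{m'}(s)\,ds = \int_0^1 e^{(N-1)s}\,ds$. The key structural inputs are: (i) by Theorem \ref{def-warped-BL} and polarization, for functions that are pulled back from $X'$ one has $|\nabla(\bar f\,\overline{\chi_I})|^2_{X'_w}(x,t) = w_d^{-2}(t)|\nabla f|^2_{X'}(x) = e^{-2t}|\nabla f|^2_{X'}(x)$ on $I$, and similarly $\langle\nabla\bar f,\nabla\bar h\rangle_{X'_w}=\langle\nabla f,\nabla h\rangle_{X'}$ for such functions; (ii) $\Delta(\bar g\,\overline{\chi_I}) = \overline{\chi_I}(\underline{\Delta}g\circ p_1)$ on the interior of $I$; and (iii) the measure disintegrates as $\mathrm{d}m'_w = w_{m'}(t)\,\mathrm{d}m'\,\mathrm{d}t = e^{(N-1)t}\,\mathrm{d}m'\,\mathrm{d}t$. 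Carrying out the $t$-integration, the factor $e^{-2t}$ from the gradient and $e^{(N-1)t}$ from the measure combine to $e^{(N-3)t}$; I would check that the same weight appears in every term — the Bochner inequality \eqref{eq-Bochner} on $X'_w$ for these lifts reads, after dividing through by the common positive constant $\int_0^1 e^{(N-3)s}\,ds$ (or rather, after verifying each term carries precisely this weight), as the asserted inequality on $X'$ with $K=-(N-1)$ and dimension parameter $N$. The $\frac1N(\Delta\cdot)^2$ term transfers directly since $\Delta(\bar f\,\overline{\chi_I})=\underline{\Delta}f$ on the slab (no extra gradient weight), so one must be slightly careful that the weights still match; this is where the choice of warping exponents $w_m=e^{(N-1)t}$, $w_d=e^t$ is exactly calibrated.

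The main obstacle will be handling the boundary of the slab $I=[0,1]$: the cutoff $\overline{\chi_I}$ is only Lipschitz, not smooth, so terms involving $\nabla\overline{\chi_I}$ or $\Delta\overline{\chi_I}$ (as a measure supported on $\{t=0,1\}$) appear when one expands the Bochner integrand by Leibniz rules. The resolution is that these boundary contributions either integrate to zero against the lifted functions (which are $t$-independent, so their $t$-gradients vanish and the cross terms with $\nabla\overline{\chi_I}$ drop out by the orthogonality relations of Lemma \ref{lem-ort}) or can be controlled/discarded by choosing $g$ supported in the open slab, or by replacing $\chi_I$ with a smooth bump and then localizing. I would also need to confirm that the weak Bochner inequality genuinely holds on $X'_w$ — this follows because $X'_w$ is isomorphic as a metric measure space to $\tilde X$, which is $\RCDst(-(N-1),N)$ by Mondino--Wei, and isomorphic spaces satisfy the same Bochner inequality. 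Once the inequality is established for nice $f,g$, a standard approximation/truncation argument using the density of test functions and locality of weak upper gradients extends it to all $f,g$ in the stated classes, after which \cite[Theorem 7]{EKS} upgrades it (together with infinitesimal Hilbertianity and the Sobolev-to-Lipschitz property of $X'$, already proved in Proposition \ref{prop-SobtoLip}) to the conclusion that $X'$ is $\RCDst(-(N-1),N)$; refining the dimension to $N-1$ then uses the finer warped structure, as indicated in the paragraph preceding the lemma.
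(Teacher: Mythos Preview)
Your plan is exactly the paper's: reduce to $f,g\in\mathrm{Test}(X')$, lift to $X'_w$ via multiplication by $\overline{\chi_I}$, invoke Lemma~\ref{lem-regularity-of-extension} for the needed regularity, apply the weak Bochner inequality on $X'_w\cong\tilde X$, factor each of the four integrals as a $t$-integral times an $X'$-integral, and divide by the common $t$-factor. Your boundary worry is dispatched in the paper without smoothing: since $|\nabla\chi_I|=0$ in $L^2$ and the orthogonality relations of Lemma~\ref{lem-ort} kill every cross term with $\nabla\overline{\chi_I}$, the cutoff contributes nothing beyond the scalar $\overline{\chi_I}$ in each factor.

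The one substantive divergence is the weight bookkeeping you flag. The paper's computation uses $|\nabla\bar f|_{X'_w}=|\nabla f|_{X'}$ and $\Delta(\bar f\,\overline{\chi_I})=\overline{\chi_I}(\underline\Delta f\circ p_1)$ with \emph{no} $w_d^{-2}$ factor, so all four terms carry the identical weight $\overline{\chi_I}^3 e^{(N-1)s}$ and the division is trivial. You (correctly, by Corollary~\ref{cor:sezioniprod}(iii) and Theorem~\ref{thm-DfDg2}) insert $e^{-2t}$ in the gradient; but then, to be consistent, the Laplacian formula should also pick up an $e^{-2t}$, and the four terms acquire weights $e^{(N-5)t}$ (for $\Delta\bar g\,|\nabla\bar f|^2$, the $\langle\nabla\Delta\bar f,\nabla\bar f\rangle$ term, and the $(\Delta\bar f)^2$ term) versus $e^{(N-3)t}$ (for the $K|\nabla\bar f|^2$ term). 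These do \emph{not} coincide, and on $I=[0,1]$ the ratio $\int_I e^{(N-3)t}\,dt\big/\int_I e^{(N-5)t}\,dt$ exceeds $1$, so the division gives an inequality \emph{weaker} than the one claimed. Your instinct that the warping exponents are ``calibrated'' is not enough to make the weights equal; what does work is choosing the slab on the negative side, say $I=[-1,0]$, so that the ratio is $\le 1$ and the resulting inequality is only stronger than the target. This is a genuine gap in both your sketch and the paper's written argument.
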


\begin{proof}
We will show the inequality holds for functions $f,g\in \mathrm{Test}(X')$ since the general case follows by the density of $\mathrm{Test}(X')$ in $W^{1,2}(X')$. With this assumption, it follows from Lemma \ref{lem-regularity-of-extension} that we can apply Bochner's inequality on $X'_w$ for the functions $\overline{f}\overline{\rho}$ and $\overline{g}\,\overline{\rho}$, that is
\begin{eqnarray*}
\frac{1}{2}\int_{X'_w}\!\Delta(\overline{g}\overline{\rho}) |\nabla (\overline{f}\overline{\rho})|^2_{X'_w}\, \mathrm{d}m'_w-\int_{X'_w}\!\overline{g}\overline{\rho}\left\langle \nabla (\Delta(\overline{f}\overline{\rho})), \nabla (\overline{f}\overline{\rho})\right\rangle_{X'_w}\, \mathrm{d}m'_w  &\\
\geq  -(N-1)\int_{X'_w}\overline{g}\overline{\rho}|\nabla(\overline{f}\overline{\rho})|^2_{X'_w}\, \mathrm{d}m'_w+ \frac{1}{N}\int_{X'_w}\overline{g}\overline{\rho}(\Delta(\overline{f}\overline{\rho}))^2\, \mathrm{d}m'_w. & & 
\end{eqnarray*}

We now compute each term of the inequality, using Theorem \ref{def-warped-BL}, the orthogonality relations and equation (\ref{lapforwarp}). We will first compute them for general functions $\rho\in C_0^{\infty}(\mathbb R)$ and specialize later for the limiting argument after some simplifications.

\begin{eqnarray*}
\int_{X'_w}\!\Delta(\overline{g}\overline{\rho}) |\nabla (\overline{f}\overline{\rho})|^2_{X'_w}\, \mathrm{d}m'_w
&  =  & \int_{X'_w} (\rho\left(w_{d'}^{-2}\underline{\Delta}g \right) + g w_{m'}^{-1} (\rho' w_{m'})')(\rho^2 w_{d'}^{-2} | \nabla f |_{X'}^2+ f^2 (\rho')^2)\, \mathrm{d}m'_w \\
& = & \int_{\mathbb{R}}\rho^3w_{d'}^{-4}w_{m'}\, \mathrm{d}s \int_{X'}(\underline{\Delta}g)|\nabla f|^2_{X'}\, \mathrm{d}m' \\
& &  + \int_{\mathbb{R}}\rho (\rho')^2 w_{d'}^{-2}w_{m'}\, \mathrm{d}s \int_{X'}f (\underline{\Delta}g) \, \mathrm{d}m' \\
& &  + \int_{\mathbb{R}}\rho^2 w_{d'}^{-2}(\rho' w_{m'})'\, \mathrm{d}s \int_{X'} g | \nabla f |_{X'}^2\, \mathrm{d}m' \\
& &  + \int_{\mathbb{R}}(\rho')^2 (\rho' w_{m'})'\, \mathrm{d}s \int_{X'} g f^2\, \mathrm{d}m'. 
\end{eqnarray*}

Similarly, 
\begin{eqnarray*}
\int_{X'_w}\!\overline{g}\overline{\rho}\left\langle \nabla (\Delta(\overline{f}\overline{\rho})), \nabla \overline{f}\overline{\rho}\right\rangle_{X'_w}\, \mathrm{d}m'_w &=& \int_{\mathbb{R}}\rho^3w_{d'}^{-4}w_{m'}\, \mathrm{d}s \int_{X'}g \left\langle \nabla \underline{\Delta}f, \nabla f\right\rangle_{X'}\, \mathrm{d}m' \hspace*{1in} \\
& &  + \int_{\mathbb{R}}\rho^2 w_{d'}^{-2}(\rho' w_{m'})'\, \mathrm{d}s \int_{X'} g | \nabla f |_{X'}^2\, \mathrm{d}m' \\
& & + \int_{\mathbb{R}}\rho \rho' (\rho w_{d'}^{-2})' w_{m'}\, \mathrm{d}s \int_{X'}g f (\underline{\Delta} f) \, \mathrm{d}m' \\
& &  + \int_{\mathbb{R}}\rho \rho' (w_{m'}^{-1}(\rho' w_{m'})')' w_{m'}\, \mathrm{d}s \int_{X'} g f^2\, \mathrm{d}m', 
\end{eqnarray*}

\begin{eqnarray*}
\int_{X'_w}\overline{g}\overline{\rho}|\nabla(\overline{f}\overline{\rho})|^2_{X'_w}\, \mathrm{d}m'_w = \int_{\mathbb{R}}\rho^3 w_{d'}^{-2} w_{m'}\, \mathrm{d}s \int_{X'} g | \nabla f |_{X'}^2\, \mathrm{d}m'  + \int_{\mathbb{R}}\rho (\rho')^2 w_{m'}\, \mathrm{d}s \int_{X'} g f^2\, \mathrm{d}m',
\end{eqnarray*}

and
\begin{eqnarray*}
\int_{X'_w}\overline{g}\overline{\rho}(\Delta(\overline{f}\overline{\rho}))^2\, \mathrm{d}m'_w & = & \int_{\mathbb{R}}\rho^3w_{d'}^{-4}w_{m'}\, \mathrm{d}s \int_{X'}g (\underline{\Delta}f)^2\, \mathrm{d}m' + \int_{\mathbb{R}} \rho (w_{m'}^{-1}(\rho' w_{m'})')^2 w_{m'}\, \mathrm{d}s \int_{X'} g f^2\, \mathrm{d}m' \\
& & + 2 \int_{\mathbb{R}} \rho^2 w_{d'}^{-2} (\rho' w_{m'})'\, \mathrm{d}s \int_{X'}g f (\underline{\Delta} f) \, \mathrm{d}m'.
\end{eqnarray*}

Now let $\rho\in C_0^{\infty}(\mathbb R)$ be a cut-off function on $\mathbb R$ such that $\rho(t)=1$ for $t\in [-1, 1]$, $\rho(t)=0$ for $|t|\geq 2$ and $0\leq \rho \leq 1$. For each $n\in \mathbb N$, set $\rho_n(t)=\rho(t+n)$. Replace $\rho$ by $\rho_n$ in all the formulas above and plug them into Bochner's inequality on $X'_w$. Using $w_{d'}(t)=e^t, \ w_{m'}(t)=e^{(N-1)t}$, we find that $$\int_{\mathbb{R}}\rho_n^3w_{d'}^{-4}w_{m'}\, \mathrm{d}s \geq \int_{-1-n}^{1-n} e^{(N-5)s} \mathrm{d}s = C(N) e^{-(N-5)n}, \ \ \ C(N)>0, $$ while all other integrals over $\mathbb R$ are of lower order. For example,
$$
 \int_{\mathbb{R}}\rho (\rho')^2 w_{d'}^{-2}w_{m'}\, \mathrm{d}s=O(e^{-(N-3)n}), \ \ \ \int_{\mathbb{R}}\rho \rho' (w_{m'}^{-1}(\rho' w_{m'})')' w_{m'}\, \mathrm{d}s=O(e^{-Nn}).
$$

Therefore, dividing every term by $\int_{\mathbb{R}}\rho_n^3w_{d'}^{-4}w_{m'}\, \mathrm{d}s$ and letting $n \rightarrow \infty$, we obtain
$$
\frac{1}{2}\int_{X'}\!\underline{\Delta}g |\nabla f|^2_{X'}\, \mathrm{d}m'-\int_{X'}\!g\left\langle \nabla (\underline{\Delta}f), \nabla f\right\rangle_{X'}\, \mathrm{d}m'\geq 
\frac{1}{N}\int_{X'}g(\underline{\Delta} f)^2\, \mathrm{d}m'
$$
which is the desired result. 
\end{proof}

Theorem~\ref{thm-main2} now follows from Theorem~\ref{thm:pitagora} and Proposition~\ref{prop-X1-RCD}.


\section{Proof of Theorems~\ref{thm-main1} and \ref{stability}}

We first adapt the ideas of Chen-Rong-Xu \cite[Lemma 4.4]{Chen-Rong-Xu} to conclude that $(\tilde X, \tilde d, \tilde m)$ is isometric to a real hyperbolic space. 
Then we prove the stability of the volume entropy when imposing a uniform lower bound on the systoles.

\begin{proof}[Proof of Theorem \ref{thm-main1}]

By Theorem  \ref{thm-main2}   and  Proposition \ref{prop-X1-RCD}, we know that $(\tilde X, \tilde{d}, \tilde m)$ is isomorphic to the warped product space $(X'_w, d'_w, m'_w)$,  with $w_{d'}(t)=e^t$ and $w_{m'}(t)= e^{(N-1)t}$, and that $(X', d', m')$ is an $\RCDst(0, N)$ space.  Thus, by Mondino-Naber \cite[Corollary 1.2]{MN}, there exists a point $y \in X'$ such that every tangent space of  $(X', {d}', m')$ at $y$ is isometric to $(\mathbb R^{k-1}, d_{Euc},   \mathcal L_{k-1},0)$ for some $k-1 \leq N$. 
 That is,  for any sequence of positive numbers $r_i \to 0$, we have that $(X',   r_i^{-1}d',    c_{r_i}^y m',  y)$ converges in the pointed measured Gromov-Hausdorff (pmGH) sense to $(\mathbb R^{k-1}, d_{Euc},   \mathcal L_{k-1},0)$, where  
$\mathcal L_{k-1}$ denotes the normalized $(k-1)$-Lebesgue measure  so that 
\begin{equation}
\int_{B(0,1)}  (1  -  |x|) d\mathcal L_{k-1}(x) =  1
\end{equation}
and the numbers $c_r^y$ are given by
\begin{equation}
c_{r}^y  = \Big( \int_{B(y,r)}  (1  -  \tfrac{1}{r}d'(y, \cdot)) d m' \Big)^{-1}.
\end{equation}

From now on we identify $(\tilde X, \tilde{d}, \tilde m)$ with $(X'_w, d'_w, m'_w)$. 
For any $t \in \mathbb R$ there is a deck transformation $\gamma_t $ 
of $\tilde{X}$ such that 
\begin{equation*}
\tilde d(\gamma_t((0,y)), (t,y))\leq {\rm diam} (X) < \infty. 
\end{equation*}
Note that in the last inequality we used that $X$ is compact. 
As the measure $\tilde m$ is equivariant,  $\gamma_t$  is an isomorphism of metric spaces, i.e. an isometry that preserves the measures. 

 We now take a sequence $t_i  \to \infty$ and a subsequence if necessary so that $\gamma_{t_i}^{-1}(t_i, y)$ converges to $\tilde p$ in $\tilde X$ (again using the compactness of $X$). 
 Then, in the pmGH sense:
\begin{equation}\label{eq-lim0}
(\tilde X,  \tilde d, \tilde m, \tilde p)= (\tilde X, \tilde d, \tilde m, \gamma_{t_i}(\tilde p)),  \ \ \   \lim_{i \to \infty} (\tilde X, \tilde d, \tilde m, \gamma_{t_i}(\tilde p))  = \lim_{i \to \infty} (\tilde X,   \tilde d, \tilde m, (t_i, y)).
\end{equation}
In particular, we have
\begin{equation}\label{eq-lim1}
(\tilde X,  \tilde d, \tilde m, \tilde p) = \lim_{i \to \infty} (\tilde X,   \tilde d, \tilde m, (t_i, y)).
\end{equation}

Now we calculate the limit in \eqref{eq-lim1}.  
For $t_i \in \mathbb R$, define $(X'_{i}, d'_{i}, m'_{i})=  (X', e^{t_i} d',  e^{(N-1)t_i}m')$. 
Consider the following sequence of positive numbers,

\begin{equation}\label{meas-constant}
\frac{e^{(N-1)t_i}}{c^y_{e^{-t_i}}} .
\end{equation}

%

After passing to a subsequence, it converges to a value $c\in[0,\infty]$.	
We will analyze the three possible cases, $c=0$, $c \in (0, \infty)$ and $c=\infty$. 

From the definition of tangent space, in the pmGH sense, 
\begin{equation}\label{eq-TanSpX'}
\lim_{i \to \infty} (X'_{i}, d'_{i}, m'_{i},y) = (\mathbb R^{k-1}, d_{Euc},   c\mathcal L_{k-1},0).
\end{equation}

The map   $(X'_w, d'_w, m'_w, (t_i,y))   \longrightarrow  ( {X'_{i}}_w,   {d'_{i}}_w,     {m'_{i}}_w    , (0,y))$
given by  $(t,x) \mapsto (t-t_i,x)$ is a pointed isometry that preserves the measure and hence
\[ 
  (X'_w, d'_w, m'_w, (t_i,y))\cong ( {X'_{i}}_w,   {d'_{i}}_w,     {m'_{i}}_w    , (0,y)).
  \]
 In combination with equation (\ref{eq-TanSpX'}) it implies that  in  the pmGH sense  (\ref{eq-lim1}) can be written as, 
\begin{align}\label{eq-lim2}
(\tilde X,  \tilde d, \tilde m, \tilde p)=  \lim_{i \to \infty} (\tilde X,  \tilde d, \tilde m, (t_i, y)) =  &  \lim_{i \to \infty}  ( {X'_{i}}_w,   {d'_{i}}_w,     {m'_{i}}_w    , (0,y)) \nonumber \\
 = & \, (\mathbb R \times _w \mathbb R^{k-1}, {d_{Euc}}_w, { c  \mathcal L_{k-1}}_w,0) \nonumber \\
 = &  (\mathbb H^{k}, d_{\mathbb H^k},  c_1{\mathcal H^{k}},0),
\end{align}
where $c_1=\frac{kc}{\omega_{k-1}}$, and $\omega_{k-1}$ denotes the volume of the unit ball in $\mathbb R^{k-1}$. The extra constant comes from the normalization of the Euclidean Lebesgue measure indicated before.

If $c=\infty$ then $ { c\mathcal L_{k-1}}$ is not locally finite and this implies that $\tilde m$ is not locally finite, which is a contradiction. 

If $c  \in (0,\infty)$ then recall that for the hyperbolic space, 
\[
h(\mathbb H^{k}, d_{\mathbb H^k},  c_1{\mathcal H^{k}}) =  k-1.
\]
Since we know that  $h(X, d, m) =  N-1$ then $k=N$.    If $c=0$ then 
\[
h(\mathbb H^{k}, d_{\mathbb H^k},  c_1{\mathcal H^{k}}) =  0, 
\]
which contradicts $h(X, d, m) =  N-1$. Hence $(\tilde X,  \tilde d, \tilde m)$ is isomorphic to $(\mathbb H^{N}, d_{\mathbb H^N},  c_1{\mathcal H^{N}})$ for some $c_1 \in (0, \infty)$, and an integer $N \ge 2$.

\end{proof}

%

Before proving Theorem~\ref{stability} we recall the definition of a systole and the following result. 

\begin{defn}
If $(X,d)$ is a compact length space that admits a universal covering $(\tilde{X}, d_{\tilde{X}})\to (X, d)$, we define the \textit{systole of} $(X,d)$ as,
\[
{\rm sys}(X,d):= \inf \{ d_{\tilde{X}}(\tilde{x}, \gamma \cdot \tilde{x}) \,\, : \,\, \tilde{x}\in  \tilde{X}, \gamma \in \bar{\pi}_{1}(X) - \{ {\rm Id} \} 
\},
\]
where $\bar{\pi}_{1}(X)$ is the group of deck tranformations of $\tilde{X}$, which is referred as the revised fundamental group of $X$ in \cite{Sormani-Wei}. 
\end{defn}

\begin{prop}[Proposition 38 \cite{Rev}]\label{reviron-prop-38}
Let $(X_i,d_i)$  be a sequence of length spaces that have a universal covering space such their systoles are uniformly bounded from below, and that converge to a length space $(Y,d_Y)$ in Gromov-Hausdorff sense. 
Then $(Y,d_Y)$ has a universal covering space and $h(X_i, d_i)$  converges to $h(Y,d_Y)$.  
\end{prop}

Now we are ready to prove the theorem. 

\begin{proof}[Proof of Theorem~\ref{stability}]
By contradiction, assume that there exists a sequence $(X_i,\sfd_i, m_i)$ of  compact  $\RCDst(-(N-1), N)$  spaces, satisfying ${\rm diam}    \le D, \ h(X_i,\sfd_i, m_i) \ge N-1 - 1/i,\  \rm{sys(X_i,d_i)} \ge s$, and such that none of the spaces $X_i$ are 
mGH close to the quotient of a  $N$-dimensional hyperbolic space.  Since the collection of $\RCDst(K, N)$ spaces, for fixed $K \in \mathbb R$ and $N \in [1, \infty)$,  are compact with respect to mGH convergence,  we can assume that  $(X_i,\sfd_i, m_i)$ mGH converges to some $\RCDst(-(N-1), N)$ space, $(X_\infty,  \sfd_\infty, m_\infty)$.  Therefore Proposition \ref{reviron-prop-38} above yields $\ h(X_\infty,  \sfd_\infty, m_\infty) \ge N-1$.   By Theorem \ref{thm-main1}, we also know that  $\ h(X_\infty,  \sfd_\infty, m_\infty) \leq N-1$. Thus,  $(X_\infty,  \sfd_\infty, m_\infty)$ attains the equality case of Theorem \ref{thm-main1}.  Then $N$ is an integer and the universal covering $(\tilde{X}_\infty,  \tilde \sfd_\infty,  \tilde  m_\infty)$ of $(X_\infty,  \sfd_\infty, m_\infty)$  is isometric to an $N$-dimensional real hyperbolic space. The convergence to $X_\infty$ is equivariant with respect to the actions of the revised fundamental groups $\bar \pi_1(X_i)$ along the sequence of spaces $X_{i}$ and $\tilde{X}_{i}$. Therefore $X_\infty$ is isometric to the quotient $\tilde{X}_{\infty} / \bar \pi_1(X_{\infty})$ of a $N$-dimensional real hyperbolic space. As the systoles are bounded below, the corresponding group actions are free and hence $X_\infty$ is isometric to an $N$-dimensional real hyperbolic manifold. This is a contradiction, and we have that $X$ is $\Psi(\epsilon|N,s,D)$ mGH close to an $N$-dimensional hyperbolic manifold.     It now follows from Theorem 6.5  of  Kapovitch-Mondino  \cite{KM} that $X$ is also bi-H\"older homeomorphic to $(X_\infty,  \sfd_\infty)$.  
\end{proof}


\bibliographystyle{amsalpha}
\bibliography{stormod}

\newcommand{\etalchar}[1]{$^{#1}$}
\providecommand{\bysame}{\leavevmode\hbox to3em{\hrulefill}\thinspace}
\providecommand{\MR}{\relax\ifhmode\unskip\space\fi MR }
\providecommand{\MRhref}[2]{%
  \href{http://www.ams.org/mathscinet-getitem?mr=#1}{#2}
}
\providecommand{\href}[2]{#2}
\begin{thebibliography}{AGMR15}

\bibitem[AC]{AC}
Luigi Ambrosio and Gianluca Crippa, \emph{Existence, uniqueness, stability and
  differentiability properties of the flow associated to weakly differentiable
  vector fields}, {L}ect. {N}otes {U}nione {M}at. {I}tal. {S}pringer, {B}erlin.
  {\bf{5}} (2008) 3--57.

\bibitem[AGMR15]{AGMR}
Luigi Ambrosio, Nicola Gigli, Andrea Mondino, and Tapio Rajala,
  \emph{Riemannian {R}icci curvature lower bounds in metric measure spaces with
  {$\sigma$}-finite measure}, Trans. Amer. Math. Soc. \textbf{367} (2015),
  no.~7, 4661--4701.

\bibitem[AGS14a]{AGS14}
Luigi Ambrosio, Nicola Gigli, and Giuseppe Savar\'{e}, \emph{Calculus and heat
  flow in metric measure spaces and applications to spaces with {R}icci bounds
  from below}, Invent. Math. \textbf{195} (2014), no.~2, 289--391.

\bibitem[AGS14b]{AGSrcd}
\bysame, \emph{Metric measure spaces with {R}iemannian {R}icci curvature
  bounded from below}, Duke Math. J. \textbf{163} (2014), no.~7, 1405--1490.

\bibitem[AMS19]{AMS}
Luigi Ambrosio, Andrea Mondino, and Giuseppe Savar\'{e}, \emph{Nonlinear
  diffusion equations and curvature conditions in metric measure spaces}, Mem.
  Amer. Math. Soc. \textbf{262} (2019), no.~1270.

\bibitem[AT14]{AT}
Luigi Ambrosio and Dario Trevisan, \emph{Well-posedness of {L}agrangian flows
  and continuity equations in metric measure spaces}, Anal. PDE \textbf{7}
  (2014), no.~5, 1179--1234.

\bibitem[BCG95]{BCGlong}
G\'{e}rard Besson, Gilles Courtois, and Sylvestre Gallot, \emph{Entropies et
  rigidit\'{e}s des espaces localement sym\'{e}triques de courbure strictement
  n\'{e}gative}, Geom. Funct. Anal. \textbf{5} (1995), no.~5, 731--799.

\bibitem[BCG99]{BCG-acta}
\bysame, \emph{Lemme de {S}chwarz r\'{e}el et applications
  g\'{e}om\'{e}triques}, Acta Math. \textbf{183} (1999), no.~2, 145--169.

\bibitem[BCGS]{BCGS}
G\'{e}rard Besson, Gilles Courtois, Sylvestre Gallot, and Andrea Sambusetti,
  \emph{Curvature-free {M}argulis lemma for {G}romov-hyperbolic spaces},
  preprint arXiv: 1712:08386.

\bibitem[BS10]{BaSt}
Kathrin Bacher and Karl-Theodor Sturm, \emph{Localization and tensorization
  properties of the curvature-dimension condition for metric measure spaces},
  J. Funct. Anal. \textbf{259} (2010), no.~1, 28--56.

\bibitem[Cav14]{Cav}
Fabio Cavalletti, \emph{Decomposition of geodesics in the {W}asserstein space
  and the globalization problem}, Geom. Funct. Anal. \textbf{24} (2014), no.~2,
  493--551.

\bibitem[CDNnZ{\etalchar{+}}]{RCD-Bary}
Christopher Connell, Xianzhe Dai, Jes\'us N\'u\~nez Zimbr\'on, Raquel Perales,
  Pablo Su\'arez-Serrato, and Guofang Wei, \emph{A {S}chwarz lemma for
  {R}{C}{D}$^*(${K},{N}$)$ spaces}, preprint 2020.

\bibitem[CMa]{CavMil}
Fabio Cavalletti and Emanuel Milman, \emph{{The globalization theorem for the
  curvature-dimension condition}}, preprint arXiv: 1612.07623.

\bibitem[CMb]{CavMon}
Fabio Cavalletti and Andrea Mondino, \emph{{New formulas for the Laplacian of
  distance functions and applications}}, to appear in Analysis and PDE, arXiv:
  1803.09687.

\bibitem[CRX19]{Chen-Rong-Xu}
Lina Chen, Xiaochun Rong, and Shicheng Xu, \emph{Quantitative volume space form
  rigidity under lower {R}icci curvature bound {I}}, J. Differential Geom.
  \textbf{113} (2019), no.~2, 227--272.

\bibitem[CS12]{CavStu}
Fabio Cavalletti and Karl-Theodor Sturm, \emph{Local curvature-dimension
  condition implies measure-contraction property}, J. Funct. Anal. \textbf{262}
  (2012), no.~12, 5110--5127.

\bibitem[DPG16]{DePG}
Guido De~Philippis and Nicola Gigli, \emph{From volume cone to metric cone in
  the nonsmooth setting}, Geom. Funct. Anal. \textbf{26} (2016), no.~6,
  1526--1587.

\bibitem[EKS15]{EKS}
Matthias Erbar, Kazumasa Kuwada, and Karl-Theodor Sturm, \emph{On the
  equivalence of the entropic curvature-dimension condition and {B}ochner's
  inequality on metric measure spaces}, Invent. Math. \textbf{201} (2015),
  no.~3, 993--1071.

\bibitem[GH18]{GigHan}
Nicola Gigli and Bang-Xian Han, \emph{Sobolev spaces on warped products}, J.
  Funct. Anal. \textbf{275} (2018), no.~8, 2059--2095.

\bibitem[Gig]{Gig}
Nicola Gigli, \emph{{The splitting theorem in non-smooth context}}, preprint
  arXiv:1302.5555.

\bibitem[Gig10]{Gig10}
\bysame, \emph{On the heat flow on metric measure spaces: existence, uniqueness
  and stability}, Calc. Var. Partial Differential Equations \textbf{39} (2010),
  no.~1-2, 101--120.

\bibitem[Gig15]{Gig15}
\bysame, \emph{On the differential structure of metric measure spaces and
  applications}, Mem. Amer. Math. Soc. \textbf{236} (2015), no.~1113.

\bibitem[Gig18a]{Gig0}
\bysame, \emph{Lecture notes on differential calculus on {R}{C}{D} spaces},
  Publ. Res. Inst. Math. Sci. \textbf{54} (2018), no.~4, 855--918.

\bibitem[Gig18b]{Gig2}
\bysame, \emph{Nonsmooth differential geometry---an approach tailored for
  spaces with {R}icci curvature bounded from below}, Mem. Amer. Math. Soc.
  \textbf{251} (2018), no.~1196.

\bibitem[GKO13]{GKO}
Nicola Gigli, Kazumasa Kuwada, and Shin-Ichi Ohta, \emph{Heat flow on
  {A}lexandrov spaces}, Comm. Pure Appl. Math. \textbf{66} (2013), no.~3,
  307--331.

\bibitem[GM14]{GM}
Nicola Gigli and Sunra Mosconi, \emph{The {A}bresch-{G}romoll inequality in a
  non-smooth setting}, Discrete Contin. Dyn. Syst. \textbf{34} (2014), no.~4,
  1481--1509.

\bibitem[GR18]{GigRig}
Nicola Gigli and Chiara Rigoni, \emph{Recognizing the flat torus among
  {$RCD^*(0,N)$} spaces via the study of the first cohomology group}, Calc.
  Var. Partial Differential Equations \textbf{57} (2018), no.~4.

\bibitem[Jia19]{Jiang}
Yin Jiang, \emph{Maximal bottom of spectrum or volume entropy rigidity in
  {A}lexandrov geometry}, Math. Z. \textbf{291} (2019), no.~1-2, 55--84.

\bibitem[Ket15]{Ket}
Christian Ketterer, \emph{Obata's rigidity theorem for metric measure spaces},
  Anal. Geom. Metr. Spaces \textbf{3} (2015), no.~1, 278--295.

\bibitem[KM]{KM}
Vitali Kapovitch and Andrea Mondino, \emph{On the topology and the boundary of
  ${N}$-dimensional ${RCD^*(K,N)}$ spaces}, preprint arXiv:1907.02614v1.

\bibitem[Liu11]{Liu2011}
Gang Liu, \emph{A short proof to the rigidity of volume entropy}, Math. Res.
  Lett. \textbf{18} (2011), no.~1, 151--153.

\bibitem[LV09]{LotVil}
John Lott and C\'{e}dric Villani, \emph{Ricci curvature for metric-measure
  spaces via optimal transport}, Ann. of Math. (2) \textbf{169} (2009), no.~3,
  903--991.

\bibitem[LW10]{Ledrappier-Wang2010}
Fran\c{c}ois Ledrappier and Xiaodong Wang, \emph{An integral formula for the
  volume entropy with applications to rigidity}, J. Differential Geom.
  \textbf{85} (2010), no.~3, 461--477.

\bibitem[Man79]{Mann}
Anthony Manning, \emph{Topological entropy for geodesic flows}, Ann. of Math.
  (2) \textbf{110} (1979), no.~3, 567--573.

\bibitem[MN19]{MN}
Andrea Mondino and Aaron Naber, \emph{Structure theory of metric measure spaces
  with lower {R}icci curvature bounds}, J. Eur. Math. Soc. (JEMS) \textbf{21}
  (2019), no.~6, 1809--1854.

\bibitem[MW19]{MW}
Andrea Mondino and Guofang Wei, \emph{On the universal cover and the
  fundamental group of an {${\rm RCD}^*(K,N)$}-space}, J. Reine Angew. Math.
  \textbf{753} (2019), 211--237.

\bibitem[Oht07]{Ohta}
Shin-ichi Ohta, \emph{On the measure contraction property of metric measure
  spaces}, Comment. Math. Helv. \textbf{82} (2007), no.~4, 805--828.

\bibitem[Oht09]{Oht}
\bysame, \emph{Finsler interpolation inequalities}, Calc. Var. Partial
  Differential Equations \textbf{36} (2009), no.~2, 211--249.

\bibitem[Pet11]{Pet}
Anton Petrunin, \emph{Alexandrov meets {L}ott-{V}illani-{S}turm}, M\"{u}nster
  J. Math. \textbf{4} (2011).

\bibitem[PW]{Pan-Wei}
J.~Pan and G.~Wei, \emph{Semi-local simple connectedness of non-collapsing
  ricci limit spaces}, To appear in the Journal of the European Mathematical
  Society, arXiv: 1904.06877.

\bibitem[Raj12]{Raj}
Tapio Rajala, \emph{Local {P}oincar\'{e} inequalities from stable curvature
  conditions on metric spaces}, Calc. Var. Partial Differential Equations
  \textbf{44} (2012), no.~3-4, 477--494.

\bibitem[Rev08]{Rev}
Guillemette Reviron, \emph{Rigidit\'{e} topologique sous l'hypoth\`ese
  ``entropie major\'{e}e'' et applications}, Comment. Math. Helv. \textbf{83}
  (2008), no.~4, 815--846.

\bibitem[Sav14]{Sav}
Giuseppe Savar\'{e}, \emph{Self-improvement of the {B}akry-\'{E}mery condition
  and {W}asserstein contraction of the heat flow in {${\rm RCD}(K,\infty)$}
  metric measure spaces}, Discrete Contin. Dyn. Syst. \textbf{34} (2014),
  no.~4, 1641--1661.

\bibitem[Stu94]{Stu3}
Karl-Theodor Sturm, \emph{Analysis on local {D}irichlet spaces. {I}.
  {R}ecurrence, conservativeness and {$L^p$}-{L}iouville properties}, J. Reine
  Angew. Math. \textbf{456} (1994), 173--196.

\bibitem[Stu06a]{Stu1}
\bysame, \emph{On the geometry of metric measure spaces. {I}}, Acta Math.
  \textbf{196} (2006), no.~1, 65--131.

\bibitem[Stu06b]{Stu2}
\bysame, \emph{On the geometry of metric measure spaces. {II}}, Acta Math.
  \textbf{196} (2006), no.~1, 133--177.

\bibitem[Stu18]{Stu}
\bysame, \emph{Ricci tensor for diffusion operators and curvature-dimension
  inequalities under conformal transformations and time changes}, J. Funct.
  Anal. \textbf{275} (2018), no.~4, 793--829.

\bibitem[SW01]{Sormani-Wei}
Christina Sormani and Guofang Wei, \emph{Hausdorff convergence and universal
  covers}, Trans. Amer. Math. Soc. \textbf{353} (2001), no.~9, 3585--3602.

\bibitem[Vil09]{Villani09}
C\'{e}dric Villani, \emph{Optimal transport: {O}ld and new}, vol. 338,
  Grundlehren der Mathematischen Wissenschaften, Springer-Verlag, Berlin, 2009.

\end{thebibliography}

\end{document}